\documentclass[a4paper,11pt,reqno]{amsart} 
\usepackage{amsmath}
\usepackage{amsfonts}
\usepackage[T1]{fontenc}   
\usepackage[utf8]{inputenc}
\usepackage[mathscr]{euscript}
\usepackage{verbatim}
\usepackage{amssymb,latexsym}
\usepackage{amsthm}
\usepackage{color}
\usepackage{graphicx}
\usepackage{microtype}
\usepackage[font=scriptsize]{caption}
\usepackage{bbm}
\usepackage{cite}
\usepackage{amsmath,accents}

\fboxrule0.0001pt \fboxsep0pt
 
\newcommand{\Ab}{\mathbf{A}}
\usepackage[lmargin=3.5 cm,rmargin=3.5 cm,tmargin=3.5cm,bmargin=2.5cm,paper=a4paper]{geometry}
\DeclareMathOperator{\curl}{curl}\DeclareMathOperator{\Div}{div}
 \DeclareMathOperator{\dist}{dist} 
\DeclareMathOperator{\supp}{supp} \DeclareMathOperator{\dom}{\mathbf {Dom}}
\newtheorem{thm}{Theorem}[section]

\newtheorem{lem}[thm]{Lemma}

\newtheorem{theorem}[thm]{Theorem}
\newtheorem{assumption}[thm]{Assumption}

\newtheorem{lemma}[thm]{Lemma}
\newtheorem{proposition}[thm]{Proposition}
\newtheorem{corollary}[thm]{Corollary}

\newtheorem{conj}[thm]{Conjecture}
\theoremstyle{remark}
\newtheorem{rem}[thm]{Remark}

\newcommand{\nb}{\nabla}
\newcommand{\R}{\mathbb{R}}
\newcommand{\Fb}{\mathbf{F}}

\newcommand{\N}{\mathbb{N}}
\newcommand{\C}{\mathbb{C}}

\newcommand{\Hd}{H_{{\rm div}}^1(\Omega)}
\newcommand{\Om}{\Omega}
\newcommand{\kp}{\kappa}
\newcommand{\kn}{\nabla-i\kappa H{\bf A}}
\newcommand{\Es}{{\rm E}_{\rm g.st}(\kappa,  H)}
\newcommand{\GL}{\mathcal E_{\kappa,H}}

\def\sig#1{\vbox{\hsize=5.5cm
		\kern2cm\hrule\kern1ex
		\hbox to \hsize{\strut\hfil #1 \hfil}}}
\newcommand\signatures[4]{%
	\vspace{3cm}
	\hbox to \hsize{\hfil #1, \today\hfil}
	\vspace{3cm}
	\hbox to \hsize{\quad#2\hfil\hfil #3\quad}
	\vspace{3cm}
	\hbox to \hsize{\hfil#4\hfil}}
\numberwithin{equation}{section}

\title[Distribution of superconductivity]{The distribution of  superconductivity\\ near a magnetic barrier}
\author[W. Assaad]{Wafaa Assaad}
\address{Lund University, Department of Mathematics, Lund, Sweden}
\email{wafaa.assaad@math.lth.se}
\author[A. Kachmar]{Ayman Kachmar}
\address{Lebanese University, Department of Mathematics, Hadat, Lebanon}
\email{ayman.kashmar@gmail.com}
\author[M.P. Sundqvist]{Mikael Persson-Sundqvist}
\address{Lund University, Department of Mathematics, Lund, Sweden}
\email{mikael.persson\_sundqvist@math.lth.se}
\keywords{Ginzburg--Landau, magnetic fields, Type II superconductors}
\subjclass[2010]{35Q56; 35J10}
\date{\today}
\begin{document}
\maketitle
\begin{abstract}
We consider the Ginzburg--Landau functional, defined on a two-dimensional simply connected domain with smooth boundary, in the situation when the applied magnetic field is piecewise constant with a jump discontinuity along a smooth curve. In the regime of large Ginzburg--Landau parameter and strong magnetic field, we study the concentration of minimizing configurations along this discontinuity.
\end{abstract}

\section{Introduction}\label{sec:int}

\subsection{Motivation}
The Ginzburg--Landau theory, introduced in~\cite{landau1950theory}, is a  phenomenological macroscopic model describing  the response of a superconducting sample to an external magnetic field.   
The phenomenological quantities associated with a superconductor are the order parameter $\psi$ and the magnetic  potential $\Ab$, where $|\psi|^2$ measures the density of the superconducting Cooper pairs and $\curl \Ab$ represents the induced magnetic field in the sample.

In this paper, the superconducting sample is an infinite cylindrical domain subjected to a magnetic field with direction parallel to the axis of the cylinder. For this specific geometry, it is enough to consider the horizontal cross section of the sample,  $\Omega\subset\R^2$. The phenomenological configuration $(\psi,\Ab)$ is then defined on the domain $\Omega$.

The study of the Ginzburg--Landau model  in the case of a \emph{uniform} or a \emph{smooth} non-uniform applied magnetic field has been the focus of much attention in literature. We refer to the two monographs~\cite{fournais2010spectral,sandier2007vortices} for the  uniform magnetic field case. Smooth magnetic fields are the subject of the papers~\cite{attar2015energy, attar2015ground,  Helffer, lu1999estimates, pan2002schrodinger}. In this paper, we focus on  the case where the applied magnetic field is a step function, which is not covered in the aforementioned  papers. Such magnetic fields are interesting because they give rise to edge currents on the interface separating the distinct values of the magnetic field---\emph{the magnetic barrier} (see~\cite{dombrowski2014edge, hislop2016band, reijniers2000snake}). Our configuration is illustrated in   Figure~\ref{fig2}.

In an earlier contribution~\cite{Assaad}, we explored the influence of a step magnetic field on the distribution of bulk superconductivity, which highlighted the regime where an edge current might occur near the magnetic barrier. In this contribution, we will demonstrate the existence of such a current by providing examples where the superconductivity concentrates at the interface separating the distinct values of the magnetic field.

\subsection{The functional and the mathematical set-up}
We assume that the domain  $\Om$ is open in $\R^2$, bounded and simply connected. The Ginzburg--Landau (GL) free energy is given by the functional
\begin{equation}\label{eq:GL}
\GL (\psi,\Ab)= \int_\Om \Big( \big|(\nb-i\kp H {\bf
A})\psi\big|^2-\kp^2|\psi|^2+\frac{\kappa^2}{2}|\psi|^4 \Big)\,dx
 +\kp^2H^2\int_\Om\big|\curl{\bf A}-B_0\big|^2\,dx\,,
\end{equation}
with $\psi \in H^1(\Om;\C)$ and  ${\bf A}=(A_1,A_2) \in H^1(\Om;\R^2)$. Here, $\kp>0$ is a large GL parameter, the function $B_0:\Om \rightarrow [-1,1]$ is the profile of the applied magnetic field and  $H > 0$ is  the intensity of this applied magnetic field.

The parameter $\kappa$ depends on the temperature and the type of the material. It is a characteristic scale of the sample that measures the size of vortex cores (which is proportional to $\kappa^{-1}$). Vortex cores are narrow regions in the sample, which corresponds to $\kappa$ being a large parameter. That is the reason behind our analysis of the asymptotic regime $\kappa\to+\infty$, following  many early papers addressing this asymptotic regime (see e.g.~\cite{sandier2007vortices}).  
We work under the following assumptions on the domain $\Om$ and the magnetic field $B_0$~(illustrated in Figure~\ref{fig1}):
\begin{assumption}\label{assump}~
\begin{enumerate}
\item $\Omega_1\subset\Omega$ and $\Omega_2\subset\Omega$ are two disjoint open sets\,.
\item $\Omega_1$ and $\Omega_2$ have a finite number of connected components\,.
\item $\partial\Omega_1$ and $\partial\Omega_2$ are piecewise smooth with (possibly) a finite number of corners\,.
\item $\Gamma=\partial\Omega_1\cap\partial\Omega_2$ is the union of a finite number of smooth curves\,. we will refer to  $\Gamma$ as the {\bf magnetic barrier}\,.
\item $\Omega=\Omega_1\cup\Omega_2\cup\Gamma^\circ$ and  $\partial\Omega$  is smooth\,. 
\item $\Gamma\cap\partial\Omega$ is either empty or finite\,.
\item If $\Gamma\cap\partial\Omega\neq \emptyset$, then $\Gamma$ intersects $\partial\Omega$ transversely, i.e. $\nu_{\partial \Omega} \times \nu_\Gamma \neq 0$, where $\nu_{\partial \Omega}$ and  $\nu_\Gamma$ are respectively the unit normal vectors of $\partial \Omega$ and $\Gamma$\,.
\item $a \in [-1,1)\setminus\{0\}$ is a given constant\,.
\item $B_0={\mathbbm 1}_{\Omega_1}+a{\mathbbm 1}_{\Omega_2}$\,.
\end{enumerate} 
\end{assumption}
\begin{figure}
	\centering
	\includegraphics[scale=0.4]
	{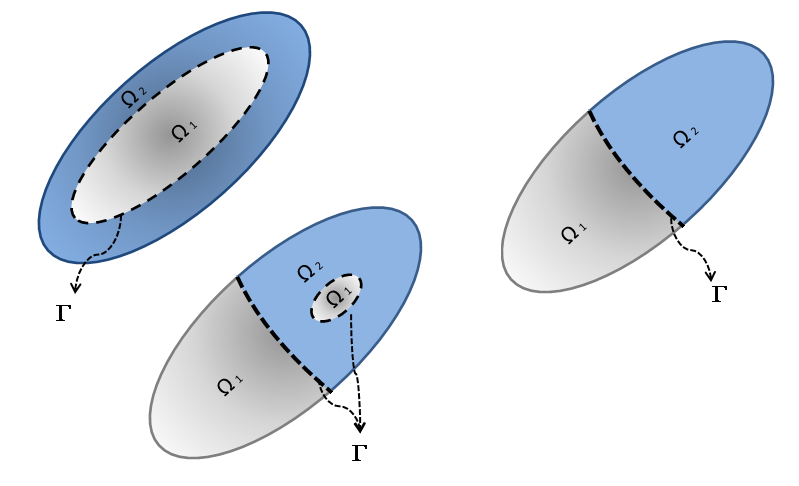} 
	\caption{Schematic representations of the set $\Om$.} 
	\label{fig1}
\end{figure}
\begin{figure}
	\centering
	\includegraphics[scale=0.4]{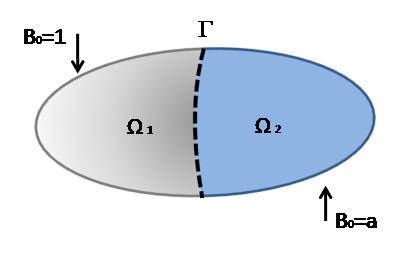} 
	\caption{Schematic representation of the set $\Om$ subjected to a step magnetic field $B_0$, with the magnetic barrier $\Gamma$. }
	\label{fig2}
\end{figure}
 The ground state of the superconductor  describes its behaviour at equilibrium. It is obtained by minimizing the GL functional in~\eqref{eq:GL} with respect to $(\psi,\Ab)$. The corresponding energy is called the ground state energy, denoted by $\Es$, where
\begin{equation*} 
\Es=\inf\{\GL(\psi,\Ab)~:~(\psi,\Ab) \in H^1(\Om;\C)\times H^1(\Om;\R^2)\}\,.
\end{equation*}
The functional in~\eqref{eq:GL} enjoys the property of  \emph{gauge invariance}. It does  not change under the  transformation $(\psi,\Ab)\mapsto (e^{i\varphi\kappa H}\psi,\Ab+\nb \varphi)$, for any (say smooth) function $\varphi:\R^2 \rightarrow \R$. It  follows that the only physically meaningful quantities are the gauge invariant ones, such as $|\psi|$ and $\curl \Ab$. The gauge invariance  permits us to restrict the GL function to the space 
$H^1(\Om;\C)\times\Hd$ where
\begin{equation}\label{eq:Hd}
\Hd=
\left\{
 \Ab \in H^1(\Om;\R^2)~:~ \Div\Ab=0 \ \mathrm{in}\ \Om,\ \Ab\cdot\nu_{\partial\Omega}=0\ \mathrm{on}\ \partial \Om
\right\}\,.
\end{equation}
More precisely, the ground state energy can be written as follows (see~\cite[Appendix~D]{fournais2010spectral})
\begin{equation} \label{eq:gr_st}
\Es=\inf\{\GL(\psi,\Ab)~:~(\psi,\Ab) \in H^1(\Om;\C)\times\Hd\}\,.
\end{equation}
This restriction allows us to make profit from some well-known  regularity properties of vector fields in $\Hd$ (see~\cite[Appendix B]{Assaad}).

Critical points $(\psi, \Ab) \in H^1(\Om;\C)\times\Hd $ of $\GL$ are weak solutions of the following GL equations:
\begin{equation}\label{eq:Euler}
\begin{cases}
\big(\kn\big)^2\psi=\kp^2(|\psi|^2-1)\psi &\mathrm{in}\ \Om\,,\\
-\nb^{\perp}  \big(\curl\Ab-B_0\big)= \frac{1}{\kp H}\mathrm{Im}\big(\overline{\psi}(\nb-i\kp H \Ab)\psi\big) & \mathrm{in}\ \Om\,,\\
\nu\cdot(\kn)\psi=0 & \mathrm{on}\ \partial \Om \,,\\
\curl\Ab=B_0 & \mathrm{on}~ \partial \Om \,.
\end{cases}
\end{equation}
Here,
\[(\nb-i\kp H\Ab)^2\psi= \Delta \psi -i\kp H(\Div\Ab)\psi -2i\kp H\Ab\cdot\nb \psi-\kp^2 H^2 |\Ab|^2\psi\]
and $\nb^\perp=(\partial_{x_2},-\partial_{x_1})\ \text{is the Hodge gradient}\,.$

\subsection{Some earlier results for uniform magnetic fields} The value of the ground state energy $\Es$ depends on $\kappa$ and $H$ in a non-trivial fashion. The physical explanation of this is that a superconductor undergoes phase transitions as the intensity of the applied magnetic field varies.

To illustrate the dependence on the intensity of the applied magnetic field, we assume that  $H=b\kappa$, for some fixed parameter $b>0$. Such magnetic field strengths are considered in many papers,~\cite{almog2007distribution, lu1999estimates, pan2002surface, sandier2003decrease}.

Assuming that the applied magnetic field is  uniform, which corresponds to taking $B_0\equiv1$ in~\eqref{eq:GL},  the following scenario takes place. If $b>\Theta_0^{-1}$, where $\Theta_0\approx 0.59$ is a universal constant  defined in~\eqref{eq:theta1} below, the only minimizer of the GL functional (up to change of gauge) is the trivial state $(0,\mathbf{\hat{F}})$ where $\curl\mathbf{\hat{F}}=1$ (see~\cite{giorgi2002breakdown}). This corresponds in Physics to the destruction of superconductivity when the sample is submitted to a large external magnetic field, and occurs when the intensity $H$ crosses a specific threshold value, the  so-called  \emph{third critical field},  denoted by $H_{C_3}$.

Another well-known critical field to be considered is the \emph{second critical field} $H_{C_2}$, which is much harder to define. When $H<H_{C_2}$, then the superconductivity is uniformly distributed in the interior of the  sample (see~\cite{sandier2003decrease}). This is the bulk superconductivity regime.   When $H_{C_2}<H<H_{C_3}$, then the surface superconductivity regime occurs: the superconductivity disappears from the interior and is localised in a thin layer near the boundary of the sample (see~\cite{almog2007distribution, helffer2011superconductivity, pan2002surface, Correggi }). The transition from surface to bulk superconductivity takes place when $H$ varies around  the critical value $\kappa$, which we informally take as the definition of $H_{C_2}$ (see~\cite{fournais2011nucleation}). 
One more critical field left is $H_{C_1}$. It marks the transition from the pure superconducting phase to the phase with vortices. We refer to~\cite{sandier2007vortices} for its definition.

\subsection{Expected behaviour of magnetic steps}

Let us return back to the  case  where the magnetic field is a step function as in Assumption~\ref{assump}. At some stage, the expected behaviour of the superconductor in question deviates from the one submitted to a uniform magnetic field.  Recently, this case was considered in~\cite{Assaad} and the following was obtained. Suppose that $H=b\kappa$ and $\kappa$ is large. If $b<1/|a|$ then the bulk superconductivity persists, if $b>1/|a|$ then superconductivity decays exponentially in the bulk of $\Om_1$ and $\Om_2$, and \emph{may} nucleate in  thin layers near $\Gamma \cup \partial \Om $ (see Assumption~\ref{assump} and  Figure~\ref{fig2}).
The present contribution  affirms the presence of superconductivity in the vicinity of $\Gamma$  when $b$ is greater than, but close to the value $1/|a|$,  for some negative values of $a$. The precise statements are given in  Theorems~\ref{thm:Eg} and~\ref{thm:E_loc} below.

The aforementioned  behaviour of the superconductor in presence of magnetic steps  is consistent with the existing literature (for instance see~\cite{dombrowski2014edge,hislop2016band,hislop2015edge,iwatsuka1985examples,reijniers2000snake}). Particularly, the case where $a \in [-1,0)$ is called the \emph{trapping magnetic step} (see~\cite{hislop2016band}), where the discontinuous magnetic field may create supercurrents (\emph{snake orbits}) flowing along the \emph{magnetic barrier} ($\Gamma$ in our context). On the other hand, no such snake orbits are formed in the case where $a \in (0,1)$, which is called the \emph{non-trapping magnetic step}. However, the approach was generally spectral where some  properties of relevant linear models were analysed ~\cite{hislop2016band,hislop2015edge,iwatsuka1985examples,reijniers2000snake}, and no estimates for the non-linear  GL energy in~\eqref{eq:GL} were established.

This contribution  together with~\cite{Assaad} provide such estimates. Particularly in the case where $a\in[-1,0)$ and $b>1/|a|$, Theorems~\ref{thm:Eg} and~\ref{thm:E_loc} below establish  global and local asymptotic estimates for the ground state energy $\Es$ and the $L^4$-norm of the minimizing  order parameter $\psi$. These theorems assert the nucleation of superconductivity near the magnetic barrier $\Gamma$ (and the surface $\partial \Om$) when $b$ exceeds  the threshold value $1/|a|$. 

\subsection{Main results}

Our results are valid under the following additional assumption.
\begin{assumption}\label{A_2}
The parameter $H$ depends on $\kappa$ in the following manner
\begin{equation}\label{eq:A_2}
	H=b\kappa\,,
	\end{equation}
where $b$ is a \textbf{fixed} parameter satisfying 
	\[b >\frac 1{|a|},\quad a \in [-1,1)\setminus\{0\}\,.\]
\end{assumption}

\begin{rem}\label{rem:a_pos}
	Even though the case $a\in(0,1)$ is included in Assumption~\ref{A_2}, it will not be central  in our study (the reader may notice this in the majority of our theorems statements). The reason is that, our main concern is to analyse the interesting phenomenon happening when the bulk superconductivity is only restricted to a narrow neighbourhood of the magnetic barrier $\Gamma$, and  this only occurs when the values of the two  magnetic fields \emph{interacting} near $\Gamma$ are of \emph{opposite signs}, that is when $a \in [-1,0)$, (see Figure~\ref{fig2}). This can be seen through the trivial cases in Section~3.2, and is consistent with the aforementioned literature findings (non-trapping magnetic steps).
	Moreover, the case $b\leq 1/|a|$ is treated previously in~\cite{Assaad} and corresponds to the bulk regime.
\end{rem}
The statements of the main theorems involve two non-decreasing continuous functions~:
\[\mathfrak e_a:\big[|a|^{-1},+\infty) \rightarrow (-\infty,0] \quad\mathrm{and}\quad  E_\mathrm{surf}: [1,+\infty) \rightarrow (-\infty,0]\,,\]
respectively defined in~\eqref{eq:eba1} and~\eqref{eq:Esurf}. The energy  $E_\mathrm{surf}$ has been studied in  many papers~\cite{almog2007distribution, Correggi ,  pan2002surface, helffer2011superconductivity, fournais2011nucleation}. We will refer to $E_\mathrm{surf}$ as the \emph{surface energy}. The function $\mathfrak e_a$ is constructed in this paper, and we will refer to it as the \emph{barrier energy}. 

\begin{rem}\label{rem:a}
It is worthy to mention that $\mathfrak e_a(b)$ vanishes if and only if  
\begin{itemize}
	\item $a\in(0,1)$\,; or
	\item $a\in[-1,0)$ and $b\geq 1/\beta_a$, where $\beta_a$ is defined in~\eqref{eq:lamda}  below and satisfies $\beta_a\in(0,|a|\,)$ (see Theorem~\ref{thm:up_eigen}).
\end{itemize}
The surface energy $E_\mathrm{surf}(b)$ vanishes if and only if $b\geq \Theta_0^{-1}$, where $\Theta_0$ is the constant in~\eqref{eq:theta1}.
\end{rem}
\begin{theorem}[Global asymptotics]\label{thm:Eg}
For all  $a \in[-1,1)\setminus\{0\}$ and $b> 1/|a|$,  the ground state energy $\Es$ in~\eqref{eq:gr_st} satisfies, when $H=b\kappa$,
		\begin{equation}\label{eq:Eg}
		\Es =E^\mathrm{L}_a(b)\kappa +o(\kappa) \qquad(\kappa\to+\infty)
		\end{equation}
	where
	\[E^\mathrm{L}_a(b)=b^{-\frac 12}\Big(|\Gamma|  \mathfrak e_a(b)+|\partial \Omega_1 \cap \partial \Omega|  E_\mathrm{surf}(b)+|\partial \Omega_2 \cap \partial \Omega|\, |a|^{-\frac 12} E_\mathrm{surf}\big(b|a|\big)\Big)\,.\]
		Furthermore,  every minimizer $(\psi, \Ab)_{\kappa,H} \in H^1(\Om;\C)\times\Hd $ of the functional  in~\eqref{eq:GL} satisfies
		\begin{equation}\label{eq:psi4}
		\int_\Omega|\psi|^4\,dx=-2E^\mathrm{L}_a(b)\kappa^{-1}+o(\kappa^{-1})\qquad(\kappa\to+\infty)\,.
			\end{equation}
\end{theorem}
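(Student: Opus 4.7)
The plan is to establish matching upper and lower bounds for $\Es$ and then to derive the $L^4$-asymptotic~\eqref{eq:psi4} from the first Ginzburg--Landau equation in~\eqref{eq:Euler} combined with~\eqref{eq:Eg}. The natural magnetic length scale is $(\kappa H)^{-1/2}=b^{-1/2}\kappa^{-1}$, so I would work at intermediate scales $\ell=\kappa^{-\alpha}$ for some $0<\alpha<1$, chosen so that $\ell\to 0$ yet $\ell\sqrt{\kappa H}\to\infty$.

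\textbf{Upper bound.} I would build an admissible trial configuration $(\psi_\kappa,\Ab_\kappa)$ by pasting three families of local quasi-minimizers. Along each smooth arc of $\Gamma$, insert a cut-off, rescaled quasi-minimizer of the barrier model defining $\mathfrak e_a(b)$. Along each smooth arc of $\partial\Omega\cap\partial\Omega_j$, insert a surface quasi-minimizer of the half-plane problem defining $E_\mathrm{surf}(b)$ (for $j=1$) or $E_\mathrm{surf}(b|a|)$ (for $j=2$). The overall $b^{-1/2}$ factor in $E^\mathrm{L}_a(b)$ comes from the magnetic blow-up $y=\sqrt{\kappa H}\,(x-x_0)$ that is common to all three models; the additional $|a|^{-1/2}$ in front of $E_\mathrm{surf}(b|a|)$ arises from the subsequent rescaling $y\mapsto\sqrt{|a|}\,y$ that normalizes the local magnetic-field amplitude in $\Omega_2$ to $1$. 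Set $\psi_\kappa\equiv 0$ in the remaining bulk and choose $\Ab_\kappa$ in Coulomb gauge matching the local model potentials. Riemann-sum arguments along the smooth arcs of $\Gamma$ and $\partial\Omega\cap\partial\Omega_j$ produce the three arc-length factors in $E^\mathrm{L}_a(b)$; the finitely many junctions $\Gamma\cap\partial\Omega$ (Assumption~\ref{assump}(6)) and the overlaps between adjacent tubes contribute $O(\ell\kappa)=o(\kappa)$ in total.

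\textbf{Lower bound.} Fix a minimizer $(\psi,\Ab)$ and apply a standard IMS-type localization with a partition of unity $(\chi_j^2)$ subordinate to cells of side $\ell$. The cells split into three classes: (i) bulk cells at positive distance from $\Gamma\cup\partial\Omega$, (ii) cells touching $\partial\Omega\setminus\Gamma$, and (iii) cells touching $\Gamma$. Because $b>1/|a|$, the Agmon-type exponential decay estimates from~\cite{Assaad} make the contribution of type (i) cells negligible. For type (ii) cells, a boundary straightening and the magnetic blow-up reduce the localized energy to the half-plane surface problem, yielding after integration along arc-length the two surface contributions in $E^\mathrm{L}_a(b)$, as in~\cite{fournais2011nucleation,Correggi}. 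For type (iii) cells the same procedure---now straightening $\Gamma$ and choosing a gauge adapted to the step field---brings the local energy to the infimum defining $\mathfrak e_a(b)$, producing the $|\Gamma|\,\mathfrak e_a(b)$ term. The IMS localization error is $O(\ell^{-2}\|\psi\|_{L^2}^2)=o(\kappa)$, and the cells around the finitely many transverse intersections $\Gamma\cap\partial\Omega$ are excised at total cost $O(\ell\kappa)=o(\kappa)$, which is harmless because $\mathfrak e_a$ and $E_\mathrm{surf}$ are bounded.

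\textbf{From~\eqref{eq:Eg} to~\eqref{eq:psi4}, and main obstacle.} Testing the first equation in~\eqref{eq:Euler} against $\overline\psi$ and integrating by parts using the Neumann condition yields $\|(\kn)\psi\|_{L^2}^2=\kappa^2\int_\Omega(|\psi|^2-|\psi|^4)\,dx$, whence substitution into~\eqref{eq:GL} gives the identity
\[
\GL(\psi,\Ab)=-\frac{\kappa^2}{2}\int_\Omega|\psi|^4\,dx+\kappa^2H^2\int_\Omega|\curl\Ab-B_0|^2\,dx.
\]
Combined with~\eqref{eq:Eg}, the $L^4$-estimate~\eqref{eq:psi4} reduces to $\kappa^2H^2\|\curl\Ab-B_0\|_{L^2}^2=o(\kappa)$, which follows from standard elliptic regularity on the second equation in~\eqref{eq:Euler}, exploiting the $H_{\mathrm{div}}^1$-structure~\eqref{eq:Hd} together with $|\psi|\le 1$ and the control $\|(\kn)\psi\|_{L^2}=O(\sqrt\kappa)$ already available from the energy bound. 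The most technical step is the lower bound on type (iii) cells: one must reduce the localized, gauge-transformed energy to the barrier model while absorbing curvature corrections from $\Gamma$ and the boundary terms coming from the sides of each cell, then pass from the cell-wise estimates to a uniform Riemann sum. Ensuring that the barrier model $\mathfrak e_a(b)$ is the correct effective energy per unit length---in particular, that its construction in~\eqref{eq:eba1} accommodates the $b^{-1/2}$ scaling and supplies the required matching quasi-minimizer for the upper bound---is the central analytic hurdle that the rest of the paper must clear.
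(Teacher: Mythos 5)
Your strategy for the energy asymptotics \eqref{eq:Eg} is essentially the paper's: matching upper and lower bounds obtained by covering tubular neighbourhoods of $\Gamma$ and of $\partial\Omega\cap\partial\Omega_j$ by cells of side $\ell$, reducing each cell via Frenet coordinates, a local gauge choice and the magnetic blow-up to the reduced models $\mathfrak e_a(b)$, $E_\mathrm{surf}(b)$ and $E_\mathrm{surf}(b|a|)$ (with the $|a|^{-1/2}$ from renormalizing the field amplitude in $\Omega_2$), using the Agmon decay of \cite{Assaad} to discard the bulk, and excising the finitely many junction cells around $\Gamma\cap\partial\Omega$ at cost $O(\kappa\ell)=o(\kappa)$. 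The paper works with $\ell\approx\kappa^{-3/4}$, assembles the lower bound from disjoint cells $\mathcal N_{x_i}(\ell)$ using the local estimates of Theorems \ref{thm:E_loc1} and \ref{thm:surf} rather than a global IMS partition, and takes the fixed potential $\Fb$ with $\curl\Fb=B_0$ in the trial configuration so that the magnetic term vanishes identically; these are differences of bookkeeping, not of substance.

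There is, however, a genuine gap in your derivation of \eqref{eq:psi4}. You reduce it to the claim $\kappa^2H^2\|\curl\Ab-B_0\|_{L^2(\Omega)}^2=o(\kappa)$ and assert that this follows from elliptic regularity for the second equation in \eqref{eq:Euler} together with $|\psi|\le1$ and $\|(\kn)\psi\|_{L^2}=O(\sqrt\kappa)$. This does not close: the a priori bound actually available (Theorem \ref{thm:priori}) is $\|(\kn)\psi\|_{L^2}\le C\kappa$, which through the second equation and the boundary condition yields only $\|\curl\Ab-B_0\|_{L^2}\le C/\kappa$, hence $\kappa^2H^2\|\curl\Ab-B_0\|_{L^2}^2=O(\kappa^2)$; even the improved input $\|(\kn)\psi\|_{L^2}=O(\kappa^{3/4})$ (which itself presupposes the $L^4$ concentration you are trying to prove) only gives $O(\kappa^{3/2})$, still not $o(\kappa)$. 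The claim is true, but only as a consequence of the two matching energy bounds, not as an input obtained from elliptic theory. The clean route, and the one the paper takes, avoids estimating the magnetic term altogether: testing the first equation against $\overline\psi$ gives $\mathcal E_0(\psi,\Ab;\Omega)=-\tfrac12\kappa^2\int_\Omega|\psi|^4\,dx$, where $\mathcal E_0$ is the energy without the field term. Then the inequality $\mathcal E_0(\psi,\Ab;\Omega)\le\GL(\psi,\Ab)=\Es\le E^\mathrm{L}_a(b)\kappa+o(\kappa)$ yields the lower bound $\int_\Omega|\psi|^4\,dx\ge-2E^\mathrm{L}_a(b)\kappa^{-1}+o(\kappa^{-1})$, while the cell-by-cell lower bound you establish (note it controls $\mathcal E_0$, not $\GL$), namely $\mathcal E_0(\psi,\Ab;\Omega)\ge E^\mathrm{L}_a(b)\kappa-o(\kappa)$, yields the matching upper bound. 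As a by-product one then gets $\kappa^2H^2\|\curl\Ab-B_0\|_{L^2}^2=\Es-\mathcal E_0(\psi,\Ab;\Omega)=o(\kappa)$ from the squeeze; you should either argue this way or drop the magnetic-term claim entirely.
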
	
\begin{rem}\label{rem:on-MT}
In the asymptotics displayed in Theorem~\ref{thm:Eg}, the term $|\Gamma|b^{-\frac 12}\mathfrak e_a(b)$ corresponds to the energy contribution of the magnetic barrier. The rest of the terms indicate the energy contributions of the surface of the sample.  
\end{rem}
\emph{Discussion of Theorem~\ref{thm:Eg}.}

We will discuss the result in Theorem~\ref{thm:Eg} in the interesting case where the magnetic barrier $\Gamma$ intersects the boundary of $\Omega$. Hence we will assume that   $\partial \Omega_j \cap \partial \Omega\not=\emptyset$ for $j\in\{1,2\}$. When this condition is violated,  the discussion  below can be adjusted easily.

Theorem~\ref{thm:Eg} leads to the following observation that mainly relies on Remark~\ref{rem:a} and the order of the values $|a|\Theta_0$, $\Theta_0$, $\beta_a$ and $|a|$. 
\begin{itemize}
	\begin{figure}[h] 
		\centering
		\includegraphics[scale=0.3]{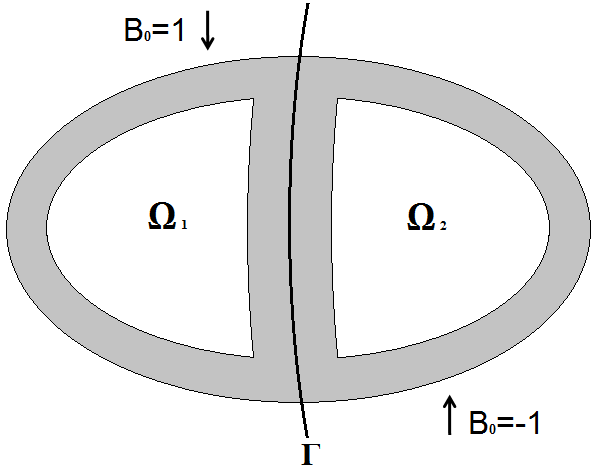} 
		\caption{Superconductivity distribution in the set $\Omega$ subjected to a magnetic field $B_0$, in the regime where $a=-1$, $H=b\kappa$ and $|a|^{-1}<b<\Theta_0^{-1}$. The white regions are in a normal state, while the dark regions carry superconductivity.}
		\label{dis_1}
	\end{figure}
	\item For $a=-1$, we have $\beta_a=\Theta_0<|a|$ (see~\eqref{eq:theta4*}). Consequently,  in light of Remark~\ref{rem:a}:
	   \begin{itemize}
	   	\item If $1<b<\Theta_0^{-1}$, then  the surface of the sample carries superconductivity and the entire bulk is in a normal state  \emph{except} for the region near the magnetic barrier (see Figure~\ref{dis_1}).  Moreover, the energy contributions of the magnetic barrier and the surface of the sample are of the same order and described by the surface energy, since in this case $\mathfrak e_a(b)=E_\mathrm{surf}(b)$,  see~\eqref{eq:conj*}. This behaviour is remarkably  distinct from the  case of a uniform applied magnetic field. 
	   	\item If $b \geq \Theta_0^{-1}$, then all the aforementioned energy contributions vanish, $E_a^\mathrm{L}(b)=0$.
	   \end{itemize}
    \item For $a \in (-1,0)$,  comparing the values $\beta_a$, $\Theta_0$ and $|a|$  is more subtle. In~\eqref{eq:beta},~\eqref{eq:teta_beta1} and Theorem~\ref{thm:up_eigen} below, we proved  that
    \begin{equation}\label{eq:comp}
    \forall~ a\in (-\Theta_0,0), \qquad |a|\Theta_0<\beta_a<|a|<\Theta_0\,.
    \end{equation} 
However, the ordering of $\beta_a$ and $\Theta_0$  is not known yet  for $a\in(-1,-\Theta_0)$.  The inequality $\beta_a<|a|$ is new and  is a slight improvement of the estimates in~\cite{hislop2016band}. With~\eqref{eq:comp} in hand, Theorem~\ref{thm:Eg} and Remark~\ref{rem:a} indicate the following behaviour  for $a \in (-\Theta_0,0)$ and  $b>|a|^{-1}$:
     \begin{figure}[h]
     	\centering
     	\includegraphics[scale=0.3]{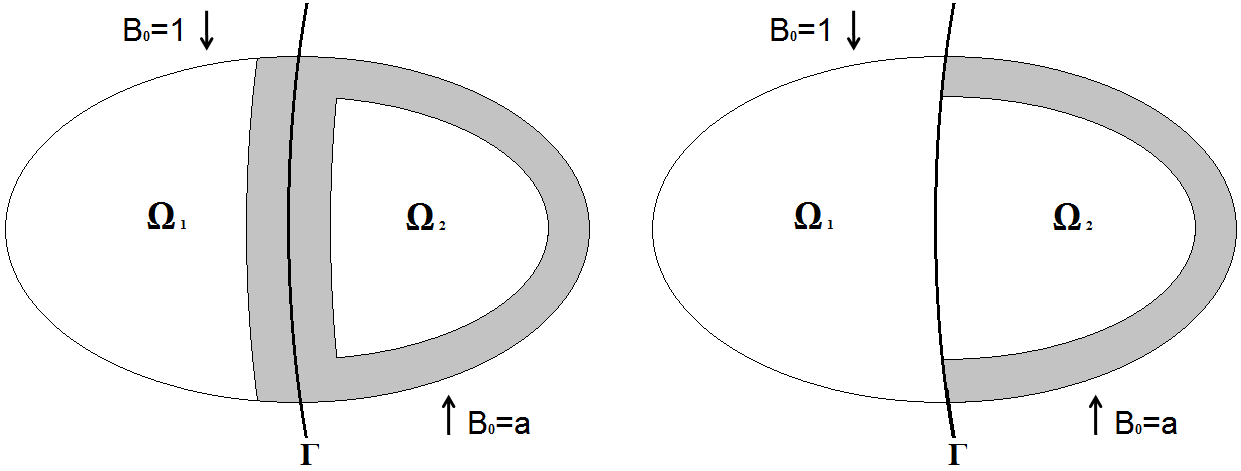} 
     	\caption{Superconductivity distribution in the set $\Omega$ subjected to a magnetic field $B_0$, in the regime where $a\in (-\Theta_0,0)$, $H=b\kappa$ and respectively $|a|^{-1} < b <\beta_a^{-1}$ and $\beta_a^{-1}\leq b<|a|^{-1}\Theta_0^{-1}$. The white regions are in a normal state, while the dark regions carry superconductivity. }
     	\label{dis_2}
     \end{figure}
    \begin{itemize}
    \item  The part of  the sample's surface near $\partial \Omega_1 \cap \partial \Omega$ does not carry superconductivity.
       \item If $|a|^{-1} < b <\beta_a^{-1}$, then \emph{surface superconductivity} is confined to the  \emph{part} of the surface near $\partial\Omega_2 \cap \partial \Omega$. At the same time,  superconductivity is observed  along the magnetic barrier $\Gamma$ (see Figure~\ref{dis_2}),  its strength is described by the function $\mathfrak e_a(b)$. This behaviour is interesting for two reasons. Firstly, it demonstrates the existence of the \emph{edge current} along the magnetic barrier, which is consistent with physics (see~\cite{hislop2016band}). Secondly,  it marks a distinct behaviour from the one known for uniform applied magnetic fields, in which case  the whole surface carries superconductivity evenly (see for instance~\cite{helffer2017decay,fournais2013ground,pan2002surface}).
    \item If $\beta_a^{-1}\leq b<|a|^{-1}\Theta_0^{-1}$, then superconductivity only survives  along  $\partial \Omega_2 \cap \partial \Omega$ (see Figure~\ref{dis_2}).  Our results then display the strength of the applied magnetic field responsible  for the breakdown of the edge current along the barrier.
    \item If $b \geq |a|^{-1}\Theta_0^{-1}$, then all energy contributions in Theorem~\ref{thm:Eg} disappear.
    \end{itemize}
\item For $a \in (0,1)$,  $\beta_a=a$ (see~\eqref{eq:lambda_a}). When $b>a^{-1}$, Theorem~\ref{thm:Eg} reveals the absence of superconductivity along the magnetic barrier. As for the distribution of superconductivity along the surface of the sample,  we distinguish between  two regimes:
	\paragraph{\itshape Regime~1, $a \in (0,\Theta_0]$} The part of the boundary,  $\partial \Omega_1 \cap \partial \Omega$, does not carry superconductivity. It remains to inspect the energy contribution of $\partial\Omega_2\cap\partial\Omega$. In that respect:
	\begin{itemize}
		\item If $a^{-1}<b<a^{-1}\Theta_0^{-1}$, then  superconductivity exists along $\partial \Omega_2 \cap \partial \Omega$.
		\item If $b \geq a^{-1}\Theta_0^{-1}$, then superconductivity disappears along $\partial \Omega_2 \cap \partial \Omega$.
	\end{itemize}
	\paragraph{\itshape Regime~2, $a \in (\Theta_0,1)$} We observe the following~:
	\begin{itemize}
		\item If $a^{-1}<b<\Theta_0^{-1}$, then the entire surface of the sample, $\partial\Omega$, is in a superconducting state, though its distribution is not uniform.
		\item If $\Theta_0^{-1}\leq b<a^{-1}\Theta_0^{-1}$, then only  $\partial \Omega_2 \cap \partial \Omega$ carries superconductivity.
		\item If $b \geq a^{-1}\Theta_0^{-1}$, then all the energy contributions in Theorem~\ref{thm:Eg} vanish.
	\end{itemize}
\end{itemize}

Our next theorem describes the local behaviour of the minimizing order parameter $\psi$. To that end, we define the following distribution in $\R^2$,
\[
C_c^\infty(\R^2) \ni \varphi \mapsto \mathcal T^b(\varphi)\,,\]
where
\[
\mathcal T^b(\varphi)=
-2b^{-\frac 12}
\left(\mathfrak e_a(b)\int_{\Gamma}\varphi \,ds_{\Gamma}
+ E_\mathrm{surf}(b)\int_{\partial \Omega_1 \cap \partial \Omega}\varphi \,ds +|a|^{-\frac 12}E_\mathrm{surf}\big(b|a|\big)\int_{\partial \Omega_2 \cap \partial \Omega}\varphi \,ds \right)\,.
\]
Here  $ds_\Gamma$ and $ds$ denote the arc-length measures  on $\Gamma$ and  $\partial \Omega$ respectively.

\begin{theorem}[Local asymptotics]\label{thm:E_loc}
	For all $a \in[-1,1)\setminus\{0\}$ and $b> 1/|a|$, if $(\psi, \Ab)_{\kappa,H} \in H^1(\Om;\C)\times\Hd $ is a minimizer of the functional  in~\eqref{eq:GL} for $H=b\kappa$, then, as $\kappa\to+\infty$,
	\[\kappa\mathcal T_\kappa^b  \rightharpoonup \mathcal T\ \mathrm{in}\ \mathcal D'(\R^2)\,,\]
where 	 $\mathcal T_\kappa$ is  the distribution in $\R^2$ defined as follows
\[C_c^\infty(\R^2)\ni \varphi\mapsto\mathcal T_\kappa^b(\varphi)=\int_\Omega |\psi|^4 \varphi\,dx\,,\]
and the convergence of $\mathcal T_\kappa^b$ to $\mathcal T$ is understood in the following sense:
\[\forall~\varphi\in C_c^\infty(\R^2)\,,\quad \lim_{\kappa\to+\infty}\kappa\mathcal T_{\kappa}^b(\varphi)=\mathcal T^b(\varphi)\,.\]
\end{theorem}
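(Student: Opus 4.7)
I would first derive a $\varphi$-weighted version of the well-known GL identity. Testing the first equation of~\eqref{eq:Euler} against $\varphi\bar\psi$, integrating by parts on $\Omega$, and using the boundary condition $\nu\cdot(\nabla-i\kappa H\mathbf A)\psi=0$ give
\[
\int_\Omega\varphi\bigl|(\nabla-i\kappa H\mathbf A)\psi\bigr|^2\,dx=\kappa^2\!\int_\Omega\varphi(|\psi|^2-|\psi|^4)\,dx-\mathrm{Re}\!\int_\Omega\bar\psi(\nabla-i\kappa H\mathbf A)\psi\cdot\nabla\varphi\,dx\,.
\]
Introducing the natural localised functional
\[
\mathcal E_{\kappa,H}(\psi,\mathbf A;\varphi):=\!\int_\Omega\!\varphi\Bigl(\bigl|(\nabla-i\kappa H\mathbf A)\psi\bigr|^2-\kappa^2|\psi|^2+\tfrac{\kappa^2}{2}|\psi|^4\Bigr)dx+\kappa^2H^2\!\int_\Omega\!\varphi|\curl\mathbf A-B_0|^2\,dx\,,
\]
a direct substitution yields the key identity
\[
\kappa\!\int_\Omega\varphi|\psi|^4\,dx=-\frac{2}{\kappa}\mathcal E_{\kappa,H}(\psi,\mathbf A;\varphi)+r_\kappa(\varphi)\,,
\]
with $r_\kappa(\varphi)\to 0$. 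The gradient part of $r_\kappa$ is $O(\kappa^{-1/2}\|\nabla\varphi\|_\infty)$ by Cauchy--Schwarz combined with the a priori estimates $\|\psi\|_{L^2}=O(\kappa^{-1/4})$ and $\|(\nabla-i\kappa H\mathbf A)\psi\|_{L^2}=O(\kappa^{3/4})$, which themselves follow from~\eqref{eq:psi4} and the $\varphi\equiv 1$ version of the identity; the magnetic part is $o(1)$ because those same ingredients force $H^2\!\int|\curl\mathbf A-B_0|^2\,dx=o(\kappa^{-1})$. Thus the theorem reduces to establishing
\[
\tfrac{1}{\kappa}\mathcal E_{\kappa,H}(\psi,\mathbf A;\varphi)\ \xrightarrow[\kappa\to+\infty]{}\ -\tfrac{1}{2}\mathcal T^b(\varphi)\,.
\]

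\textbf{Step 2 (Polarisation).} Since $\varphi\mapsto\mathcal T^b(\varphi)$ and $\varphi\mapsto\kappa\!\int_\Omega\varphi|\psi|^4\,dx$ are both linear in $\varphi$, the shift $\varphi\leftarrow\varphi+C\chi$, with a fixed $\chi\in C_c^\infty(\R^2)$ equal to $1$ on $\overline\Omega$ and $C$ large enough, reduces the problem to the case $\varphi\ge 0$. For non-negative $\varphi$ it is then enough to prove the one-sided bound
\[
\limsup_{\kappa\to+\infty}\kappa\!\int_\Omega\varphi|\psi|^4\,dx\le\mathcal T^b(\varphi)\,.
\]
The matching liminf follows: apply the same bound to $\tilde\varphi:=M\chi-\varphi\ge 0$ with $M>\sup\varphi$, and combine with the global limit $\kappa\!\int_\Omega\chi|\psi|^4\,dx=\kappa\!\int_\Omega|\psi|^4\,dx\to\mathcal T^b(\chi)$ furnished by~\eqref{eq:psi4}.

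\textbf{Step 3 (Localised lower bound on $\mathcal E_{\kappa,H}(\cdot\,;\varphi)$).} By Step~1, the $\limsup$ bound above is equivalent to
\[
\liminf_{\kappa\to+\infty}\tfrac{1}{\kappa}\mathcal E_{\kappa,H}(\psi,\mathbf A;\varphi)\ge b^{-1/2}\Bigl(\mathfrak e_a(b)\!\!\int_\Gamma\!\!\varphi\,ds_\Gamma+E_\mathrm{surf}(b)\!\!\int_{\partial\Omega_1\cap\partial\Omega}\!\!\varphi\,ds+|a|^{-1/2}E_\mathrm{surf}(b|a|)\!\!\int_{\partial\Omega_2\cap\partial\Omega}\!\!\varphi\,ds\Bigr)\,.
\]
I would obtain this by the same machinery used in the lower bound of Theorem~\ref{thm:Eg}, now with a weight. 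Fix a mesoscopic scale $\ell=\ell(\kappa)$ with $\kappa^{-1/2}\ll\ell\ll 1$ and a partition of unity $\{\chi_{j,\kappa}^2\}_j$, $\sum_j\chi_{j,\kappa}^2=1$, with $\supp\chi_{j,\kappa}$ of diameter $O(\ell)$, adapted to the local geometry: boundary-fitted coordinates near $\partial\Omega$, barrier-fitted curvilinear tubes near $\Gamma$, euclidean boxes in the interior. On the support of each $\chi_{j,\kappa}$, replace $\varphi$ by its value $\varphi(x_j)$ at a chosen centre (oscillation error $O(\ell\|\nabla\varphi\|_\infty)$) and bound the cell energy from below by the relevant effective model: the half-plane barrier model gives $b^{-1/2}\mathfrak e_a(b)\,\ell$ per unit barrier length for cells meeting $\Gamma$; the surface model gives $b^{-1/2}E_\mathrm{surf}(b)\,\ell$ for cells meeting $\partial\Omega_1\cap\partial\Omega$; the rescaled surface model (replace $b$ by $b|a|$ and include the Jacobian factor $|a|^{-1/2}$) gives $b^{-1/2}|a|^{-1/2}E_\mathrm{surf}(b|a|)\,\ell$ for cells meeting $\partial\Omega_2\cap\partial\Omega$; cells contained in the bulk of $\Omega_1\cup\Omega_2$ contribute $o(1/\kappa)$ thanks to the exponential decay of $|\psi|$ away from $\Gamma\cup\partial\Omega$ proved in~\cite{Assaad}. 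The IMS localisation remainder $\sum_j\!\int|\nabla\chi_{j,\kappa}|^2|\psi|^2\,dx=O(\ell^{-2}\kappa^{-1/2})$ is $o(\kappa)$ as soon as $\ell^{-2}\ll\kappa^{1/2}$ and therefore disappears after dividing by $\kappa$. Summing in $j$, the quantities $\sum_j\varphi(x_j)\,\ell$ are Riemann sums converging to the corresponding line integrals $\int_\Gamma\varphi\,ds_\Gamma$ and $\int_{\partial\Omega_i\cap\partial\Omega}\varphi\,ds$, which yields the claim.

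\textbf{Anticipated main difficulty.} The most delicate piece is the cell-wise lower bound near the magnetic barrier $\Gamma$: it requires barrier-fitted coordinates, a gauge transformation reducing $\mathbf A$ locally to the canonical potential generating the piecewise constant field $\mathbf 1_{\{y>0\}}+a\mathbf 1_{\{y<0\}}$, and the identification of the resulting half-plane infimum with $\mathfrak e_a(b)$ from~\eqref{eq:eba1}. The finitely many transverse intersections $\Gamma\cap\partial\Omega$ have to be excised by discs of radius $\ell$; the bound $|\psi|\le 1$ together with the standard magnetic energy estimate on such a disc shows that the excised part of $\kappa^{-1}\mathcal E_{\kappa,H}(\psi,\mathbf A;\varphi)$ is $O(\ell)=o(1)$, so it does not affect the leading asymptotics. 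Once the weightless lower bound built into the proof of Theorem~\ref{thm:Eg} is secured, the presence of $\varphi$ contributes only the Riemann-sum convergence described above and raises no new essential obstacle.
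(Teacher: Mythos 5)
Your argument is correct in its architecture, but it takes a genuinely different route from the paper. The paper's proof of Theorem~\ref{thm:E_loc} is a direct one: it covers $\Gamma$ and $\partial\Omega$ by the disjoint cells $\mathcal N_{x_i}(\ell)$, $\mathcal N^j_{y_i}(\ell)$ of size $\ell\approx\kappa^{-3/4}$, freezes $\varphi$ at the cell centres, and feeds in the \emph{two-sided} local $L^4$ asymptotics already established in Theorems~\ref{thm:E_loc1} and~\ref{thm:surf} (whose lower bounds themselves use minimality and a trial-state construction), so that $\kappa\int_\Omega\varphi|\psi|^4\,dx$ becomes a Riemann sum for $\mathcal T^b(\varphi)$ up to errors controlled by Theorem~\ref{thm:decay} and $|T|=\mathcal O(\ell^2)$. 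You instead test the first GL equation against $\varphi\bar\psi$ to get a weighted energy identity, then polarise ($\varphi=(\varphi+C\chi)-C\chi$ with $\chi\equiv1$ near $\overline\Omega$) so that only a one-sided lower bound on the weighted energy for $\varphi\ge0$ is needed, the matching direction being recovered from the global limit~\eqref{eq:psi4}. This is the scheme of~\cite{attar2015energy,kachmar2016distribution} rather than the paper's; it buys you the luxury of never needing local \emph{upper} bounds on the energy (hence no localised trial functions), at the price of re-deriving a weighted version of the global lower bound of Theorem~\ref{thm:Eg} and of the extra a priori input $H^2\int_\Omega|\curl\Ab-B_0|^2\,dx=o(\kappa^{-1})$ — which does follow, as you say, but only by combining \emph{both} statements~\eqref{eq:Eg} and~\eqref{eq:psi4} of Theorem~\ref{thm:Eg}, since the crude estimate of Theorem~\ref{thm:priori} only gives $\mathcal O(\kappa^{-1})$ there. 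Your Step 1 error bounds ($\|\psi\|_2=\mathcal O(\kappa^{-1/4})$, $\|(\nabla-i\kappa H\Ab)\psi\|_2=\mathcal O(\kappa^{3/4})$) are correct and suffice.

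One concrete slip to fix: the window $\kappa^{-1/2}\ll\ell\ll1$ in Step 3 is incompatible with the cell-wise reduction to the half-plane models. The errors incurred when replacing $\Ab$ by the canonical potential (via the $C^{0,\alpha}$ estimate $|\Ab-\nabla\phi_{x_0}-\Fb|\le C\kappa^{-1}\ell^{\alpha}$) and when dropping the curvature correction $\tfrac{t^2}{2}k_r$ produce, after the Cauchy--Schwarz splitting, terms of order $\kappa^{5/2}\ell^{10/3}$ and $\kappa^{9/2}\ell^{6}$ per cell against a main term of order $\kappa\ell$; these are negligible only for $\ell\ll\kappa^{-7/10}$ (roughly), and certainly not for $\ell\gg\kappa^{-1/2}$. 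The correct regime is $\kappa^{-1}\ll\ell\ll\kappa^{-7/10}$, and the paper fixes $\ell\approx\kappa^{-3/4}$ throughout; with that choice your IMS remainder $\mathcal O(\ell^{-2}\kappa^{-1/2})$ and the oscillation error $\mathcal O(\ell\|\nabla\varphi\|_\infty)$ per cell are both $o(\kappa)$ in total, and the rest of your argument goes through.
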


\subsection{Notation}
\begin{itemize}
\item[]
\item The letter $C$ denotes a positive constant whose value may change from one formula to another. Unless otherwise stated, the constant $C$ depends on the value of $a$ and the domain $\Omega$, and is independent of $\kappa$ and $H$ .
\item Let $a(\kp)$ and $b(\kp)$ be two positive functions, we write :
\begin{itemize}
\item $a(\kp) \ll b(\kp)$, if $a(\kp)/b(\kp) \rightarrow 0$ as $\kp \rightarrow +\infty$.
\item $a(\kp) \approx b(\kp)$, if there exist constants $\kappa_0$, $C_1$ and $C_2$ such that for all $\kp\geq\kp_0$,\break $C_1 a(\kp)\leq b(\kp) \leq C_2 a(\kp).$
\end{itemize}
\item The quantity $o(1)$ indicates a function of $\kappa$, defined by universal quantities, the domain $\Omega$, given functions, etc and  such that $|o(1)|\ll 1$. Any expression $o(1)$ is independent of the minimizer $(\psi,\Ab)$ of~\eqref{eq:GL}. Similarly, $\mathcal O(1)$ indicates a  function of $\kappa$, bounded by a constant independent of the minimizers of~\eqref{eq:GL}. 
\item  Let $n \in \N$, $p \in \N$, $N \in \N$, $\alpha \in (0,1)$,  $K \subset \R^N$ be an open set.   We use the following
H\"{o}lder space:  
\[C^{n,\alpha}({\overline K})=\left\{f \in C^n({\overline K})\ | \sup_{x\neq y\in K}\frac {|D^nf(x)-D^nf(y)|}{|x-y|^\alpha}<+\infty\right\}\,.\]
\item Let $n \in \N$, $I \subset \R$ be an open interval. We introduce the space: 
\begin{equation} \label{eq:B_n}
B^n(I)=\{u \in L^2(I):\,x^i
	D^ju \in L^2(I),\,\forall i, j \in \N\ \mathrm{s.t.}\ i+j\leq n \}\,.
\end{equation}
\end{itemize}
\subsection{Organization of the paper} The rest of the paper is divided into seven sections and one appendix. Section~\ref{sec:prel} presents some preliminaries, particularly, some a priori estimates, exponential decay results, and a linear 2D operator with a step magnetic field. Theorem~\ref{thm:up_eigen} is an improvement of a result in~\cite{hislop2016band}.

Section~\ref{sec:lim-en} introduces a reduced GL energy crucial in the study of superconductivity near the magnetic barrier $\Gamma$ and we introduce the barrier energy $\mathfrak e_a(\cdot)$.

In Section~\ref{sec:bc}, we present the Frenet coordinates defined in a tubular neighbourhood of the curve $\Gamma$.  These coordinates are frequently used  in the study of surface superconductivity (see~\cite[Appendix~F]{fournais2010spectral}).

 Sections~\ref{sec:local_en} and~\ref{sec:local_est} are devoted for the analysis of the local behaviour of the minimizing order parameter near the magnetic barrier  $\Gamma$, while Section~\ref{sec:surf} recalls well-known results about the local behaviour the order parameter near the surface  $\partial \Om$.  
 
Collecting the local estimates established in Sections~\ref{sec:local_est} and~\ref{sec:surf}, we prove in Section~\ref{sec:main} our main theorems (Theorems~\ref{thm:Eg} and \ref{thm:E_loc} above). 

Finally, in the appendix, we collect some common spectral results used throughout the paper.

One remarkable aspect of our proofs is that  we have not used the \emph{a priori} elliptic $L^\infty$-estimate $\|(\nabla-i\kappa H\Ab)\psi\|_\infty\leq C\kappa$. Such estimate is not known to hold in our case of  discontinuous magnetic field $B_0$.  Instead, we used the easy energy estimate  $\|(\nabla-i\kappa H\Ab)\psi\|_2\leq C\kappa$ and the regularity of the $\curl$-$\Div$ system (cf.~Theorem~\ref{thm:priori}).  This also spares us the complex derivation of the $L^\infty$-estimate (see~\cite[Chapter 11]{fournais2010spectral}).  We have made an effort to keep the proofs reasonably self-contained.
\section{Preliminaries}\label{sec:prel}

\subsection{A Priori Estimates}\label{sec:a-p-est}
We present some celebrated estimates needed in the sequel to control the various errors arising while estimating the energy in~\eqref{eq:GL}.

We begin by the following well-known estimate of the
order parameter:
\begin{proposition}\label{prop:psi}
If $(\psi,\Ab) \in H^1(\Om;\C)\times H^1(\Om;\R^2)$ is a weak solution to~\eqref{eq:Euler}, then
\[\|\psi\|_{L^\infty(\Omega)}\leq 1.\]
\end{proposition}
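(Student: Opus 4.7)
The plan is to exploit the first Ginzburg--Landau equation as a nonlinear equation enjoying a maximum-principle structure: the nonlinearity $\kappa^2(|\psi|^2-1)\psi$ on the right-hand side of
\[
(\nabla - i\kappa H \mathbf{A})^2 \psi = \kappa^2(|\psi|^2-1)\psi
\]
pushes $|\psi|$ back down whenever $|\psi|^2>1$, and the boundary condition $\nu\cdot(\nabla - i\kappa H \mathbf{A})\psi=0$ is exactly the condition needed so that no boundary term spoils the energy argument. Concretely, set $\mathbf{D}=\nabla - i\kappa H \mathbf{A}$ and test the equation against $\overline{\psi}\,(|\psi|^2-1)_+$.

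To justify that choice of test function in the weak formulation, I would first use the truncation $\chi_n(t)=\min((t-1)_+,n)$, so that $\psi\,\chi_n(|\psi|^2)$ lies in $H^1(\Omega)\cap L^\infty(\Omega)$ and is a legitimate test function. Integration by parts, killing the boundary term via $\nu\cdot\mathbf{D}\psi=0$, gives
\[
-\int_\Omega \mathbf{D}\psi\cdot\overline{\mathbf{D}\bigl(\psi\,\chi_n(|\psi|^2)\bigr)}\,dx=\kappa^2\int_\Omega(|\psi|^2-1)|\psi|^2\,\chi_n(|\psi|^2)\,dx.
\]
Expanding the Leibniz rule $\mathbf{D}(\psi\,\chi_n(|\psi|^2))=\chi_n(|\psi|^2)\,\mathbf{D}\psi+\psi\,\nabla\chi_n(|\psi|^2)$, using the identity $\nabla|\psi|^2=2\operatorname{Re}(\overline{\psi}\,\mathbf{D}\psi)$ (which kills the gauge contribution), and taking the real part, the left-hand side becomes
\[
-\int_\Omega \chi_n(|\psi|^2)\,|\mathbf{D}\psi|^2\,dx-\tfrac12\int_\Omega \chi_n'(|\psi|^2)\,\bigl|\nabla|\psi|^2\bigr|^2\,dx,
\]
which is manifestly non-positive since $\chi_n\ge 0$ and $\chi_n'\ge 0$. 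The right-hand side, however, is non-negative pointwise. Letting $n\to\infty$ by monotone convergence, I arrive at
\[
\kappa^2\int_\Omega(|\psi|^2-1)_+^2\,|\psi|^2\,dx\le 0,
\]
so the integrand vanishes almost everywhere, which forces $(|\psi|^2-1)_+\equiv 0$ and hence $\|\psi\|_{L^\infty(\Omega)}\le 1$.

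The only real obstacle is the one I flagged: making $\psi\,(|\psi|^2-1)_+$ an admissible test function when, a priori, $\psi$ is only known to be in $H^1(\Omega;\mathbb{C})$. This is precisely what the truncation $\chi_n$ resolves, together with a standard dominated/monotone convergence pass to the limit. Everything else is just algebra: the Leibniz rule, the fact that $\nabla|\psi|^2$ is gauge-invariant, and the observation that the ``cross term'' in $|\mathbf{D}(\psi\,\chi_n(|\psi|^2))|^2$ rearranges, after taking real parts, into a manifestly non-negative multiple of $|\nabla|\psi|^2|^2$.
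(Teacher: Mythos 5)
Your proof is correct and is essentially the argument the paper relies on: the paper does not reprove this statement but cites~\cite[Proposition~10.3.1]{fournais2010spectral}, whose proof is the same energy/maximum-principle computation obtained by testing the first Ginzburg--Landau equation against $\overline{\psi}\,(|\psi|^2-1)_+$ and using $\mathrm{Re}\big(\overline{\psi}\,(\nabla-i\kappa H\Ab)\psi\big)=\tfrac12\nabla|\psi|^2$ together with the magnetic Neumann condition to kill the boundary term. Your truncation $\chi_n(t)=\min((t-1)_+,n)$ correctly resolves the only delicate point (admissibility of the test function in $H^1$), since $\chi_n'$ is supported in $\{1\le|\psi|^2\le 1+n\}$, making both $\psi\,\nabla\chi_n(|\psi|^2)$ and the cross term square-integrable.
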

A detailed proof of Proposition~\ref{prop:psi} can be found
in~\cite[Proposition~10.3.1]{fournais2010spectral}. 
The $L^\infty$- bound in Proposition~\ref{prop:psi} is crucial in deriving some \emph{a
priori} estimates on the solutions of the Ginzburg--Landau equations~\eqref{eq:Euler} listed in Theorem~\ref{thm:priori} below. 

Recall the magnetic field $B_0$ introduced in Assumption~\ref{assump}.  In the next lemma, we will fix the gauge for the  magnetic potential  generating  $B_0$ (see~\cite[Lemma A.1]{Assaad}):
\begin{lem}\label{A_1}
	Suppose that  the conditions in Assumption~\ref{assump} hold.  There exists a unique vector field $\Fb
		\in \Hd$ such that
	\[\curl\Fb=B_0\,.\]
	Furthermore, $\Fb$ is in $C^\infty(\Omega_i)$ and in $H^2(\Omega_i)$, $i=1,2$.
\end{lem}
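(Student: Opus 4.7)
The plan is to build $\Fb$ as the rotated gradient of a stream function solving a Poisson problem, to deduce uniqueness from a Hodge/de Rham--type argument, and to derive the piecewise regularity from elliptic and transmission regularity, using crucially that $B_0$ is constant inside each $\Omega_i$. For uniqueness, if $\Fb_1,\Fb_2\in\Hd$ both satisfy $\curl\Fb_j=B_0$, the difference $\mathbf{W}:=\Fb_1-\Fb_2\in\Hd$ is curl-free; since $\Omega$ is simply connected, there exists $\psi\in H^2(\Omega)$ with $\nb\psi=\mathbf{W}$, and the conditions $\Div\mathbf{W}=0$ in $\Omega$ together with $\mathbf{W}\cdot\nu_{\partial\Omega}=0$ on $\partial\Omega$ translate into the homogeneous Neumann problem $\Delta\psi=0$, $\partial_\nu\psi=0$, forcing $\psi$ to be constant and $\mathbf{W}=0$.

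For existence, I would solve, via Lax--Milgram and elliptic $H^2$-regularity on the smooth domain $\Omega$ (using $B_0\in L^\infty(\Omega)$), the Dirichlet problem
\[-\Delta\phi=B_0\ \text{in }\Omega,\qquad \phi=0\ \text{on }\partial\Omega,\]
producing $\phi\in H^2(\Omega)\cap H^1_0(\Omega)$. Set $\Fb:=\nb^\perp\phi=(\partial_{x_2}\phi,-\partial_{x_1}\phi)\in H^1(\Omega;\R^2)$. A direct calculation gives $\Div\Fb=0$, $\curl\Fb=-\Delta\phi=B_0$, and $\Fb\cdot\nu_{\partial\Omega}=-\partial_\tau\phi=0$ on $\partial\Omega$ (because $\phi\equiv 0$ there); hence $\Fb\in\Hd$. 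Inside $\Omega_i$, $-\Delta\phi=c_i\in\{1,a\}$ is a constant, so $\phi$ is real-analytic on the open set $\Omega_i$ and $\Fb\in C^\infty(\Omega_i)$ follows immediately.

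To upgrade $\Fb\in H^1(\Omega_i)$ to $\Fb\in H^2(\Omega_i)$, I would localize near $\partial\Omega_i$ via a finite partition of unity and distinguish three kinds of boundary points: (a) a smooth piece of $\partial\Omega\cap\partial\Omega_i$, where $\phi=0$ on a smooth boundary and classical boundary $H^k$-regularity yields smoothness up to that portion; (b) a point of $\Gamma\setminus\partial\Omega$, where flattening $\Gamma$ by a local smooth diffeomorphism and invoking transmission regularity for $-\Delta$ with a piecewise constant right-hand side (e.g.\ by subtracting an explicit $C^{1,1}$ particular solution matching the jump across the flat interface and noting that the remainder is harmonic across it) yields $C^\infty$-regularity of $\phi$ up to $\Gamma$ from each side; (c) a transverse intersection $\Gamma\cap\partial\Omega$, where one appeals to corner regularity for the mixed Dirichlet--transmission problem. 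Case (c) is the main obstacle: Assumption~\ref{assump}(7) is essential there, because transversality forces an opening angle strictly less than $\pi$ at each such corner, which is precisely what classical corner estimates require in order to provide the $H^2$ integrability of $\Fb$ up to the corner. Combining the three local estimates yields $\Fb\in H^2(\Omega_i)$, completing the proof.
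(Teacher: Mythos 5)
Your existence and uniqueness arguments are correct and are the standard route: the paper itself gives no proof of this lemma but simply cites \cite[Lemma~A.1]{Assaad}, and the stream-function construction ($\Fb=\nb^\perp\phi$ with $-\Delta\phi=B_0$, $\phi|_{\partial\Omega}=0$) together with the Hodge argument for uniqueness is exactly what one expects there. The interior smoothness $\Fb\in C^\infty(\Omega_i)$ and your cases (a) and (b) (smooth Dirichlet boundary; transmission across $\Gamma$ away from $\partial\Omega$ by subtracting a $C^{1,1}$ corrector) are also fine.

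The gap is in case (c), and it is precisely the only delicate point of the lemma. Since $\Fb=\nb^\perp\phi$, the claim $\Fb\in H^2(\Omega_i)$ requires $\phi\in H^3$ up to the points of $\Gamma\cap\partial\Omega$, whereas the corner estimate you invoke (``opening angle $<\pi$'') is the classical criterion for $H^2$-regularity of $\phi$, not $H^3$. Worse, the local model suggests the estimate you assert is simply not available: flatten so that $\partial\Omega=\{x_2=0\}$ and $\Gamma$ is a ray meeting it at angle $\theta_0\in(0,\pi)$, subtract the particular solution $-\tfrac12 d_\Gamma^2B_0$ (which is $C^{1,1}$ across $\Gamma$); the remainder $w$ is harmonic near the corner with Dirichlet data $\tfrac12\sin^2\theta_0\,x_1^2\big(c_+\mathbbm 1_{x_1>0}+c_-\mathbbm 1_{x_1<0}\big)$, where $\{c_+,c_-\}=\{1,a\}$. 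Modulo smooth terms this is a nonzero multiple of $x_1^2\mathbbm 1_{x_1<0}$ (nonzero because $a\neq1$ and $\sin\theta_0\neq0$ by transversality), and its bounded harmonic extension is $\tfrac{c}{\pi}\,\mathrm{Im}(z^2\log z)$ plus a function smooth up to the boundary; since $\tfrac{d^3}{dz^3}(z^2\log z)=2/z$, the third derivatives of $\phi$ behave like $1/|z|$ and are \emph{not} square-integrable near the corner. (Equivalently, an odd reflection across $\partial\Omega$ and a piecewise-quadratic ansatz on the resulting four sectors leads to an overdetermined linear system that is solvable only when $a=1$, confirming the resonance.) So your argument does not establish $\Fb\in H^2(\Omega_i)$ near $\Gamma\cap\partial\Omega$, and you should either restrict to $\Gamma\cap\partial\Omega=\emptyset$, prove the weaker statement that is actually used downstream (e.g.\ $H^2$ away from these finitely many points, or $W^{1,p}$/$C^{0,\alpha}$ up to the corner), or consult \cite[Lemma~A.1]{Assaad} to see how the corner case is treated there.
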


We collect below some useful estimates whose proofs are given in~\cite[Theorem~4.2]{Assaad}.

\begin{thm}\label{thm:priori}
Let $\alpha\in(0,1)$ be a constant. Suppose that the
conditions in Assumption~\ref{assump} hold.
There exist two constants $\kappa_0>0$ and $C>0$ such that, if~\eqref{eq:A_2} is satisfied
 and $(\psi,\Ab)\in H^1(\Om;\C)\times \Hd$ is a solution of~\eqref{eq:Euler}, then 
\begin{enumerate}
\item $\|(\nb-i \kp H\Ab)\psi\|_{L^2(\Om)}\leq C\kp$\,.
\item $\displaystyle\|{\curl(\Ab-\Fb)}\|_{L^2(\Om)}\leq  C/
\kp$\,.
\item $\Ab-\Fb\in H^2(\Omega)$ and $\displaystyle\|\Ab-\Fb\|_{H^2(\Om)}\leq  C/\kp$\,.
\item $\Ab-\Fb\in C^{0,\alpha}(\overline{\Om})$ and $\displaystyle\|\Ab-\Fb\|_{C^{0,\alpha}(\overline \Om)}\leq C/\kp$\,.
\end{enumerate}
\end{thm}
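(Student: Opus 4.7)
The plan is to establish the four bounds in the order (1)$\to$(2)$\to$(3)$\to$(4), passing from a variational/energy bound on $\psi$ to a regularity bound on the magnetic vector potential through the second Ginzburg--Landau equation.

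For (1), I would test the first equation in \eqref{eq:Euler} against $\bar\psi$ and integrate over $\Omega$. The boundary condition $\nu\cdot(\kn)\psi=0$ kills the boundary term in Green's formula, yielding
\[
\int_\Omega \big|(\kn)\psi\big|^2\,dx = \kappa^2\int_\Omega\big(|\psi|^2-1\big)|\psi|^2\,dx.
\]
Combined with the $L^\infty$ bound $\|\psi\|_\infty\le 1$ from Proposition~\ref{prop:psi}, the right-hand side is at most $\kappa^2|\Omega|$, which proves (1).

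For (2), set $a := \Ab-\Fb$. By Lemma~\ref{A_1} and the definition of $\Hd$ we have $\Div a = 0$ in $\Omega$ and $a\cdot\nu_{\partial\Omega}=0$ on $\partial\Omega$, while the boundary condition $\curl \Ab = B_0 = \curl\Fb$ gives $\curl a = 0$ on $\partial\Omega$. Dot the second equation in \eqref{eq:Euler} with $a$ and integrate; using $\nabla^\perp f\cdot v$ integrated by parts equals $\int f\,\curl v$ up to a boundary term that vanishes because $\curl a|_{\partial\Omega}=0$, we get
\[
\int_\Omega (\curl a)^2\,dx = -\frac{1}{\kappa H}\int_\Omega \mathrm{Im}\bigl(\bar\psi(\kn)\psi\bigr)\cdot a\,dx.
\]
Cauchy--Schwarz together with (1) and $\|\psi\|_\infty\le 1$ bounds the right-hand side by $C\kappa^{-1}\|a\|_{L^2}$. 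A Poincaré-type inequality for the div-curl system in $\Hd$ on a simply connected smooth domain (namely $\|a\|_{H^1}\le C\|\curl a\|_{L^2}$) then absorbs $\|a\|_{L^2}$ into $\|\curl a\|_{L^2}$, yielding $\|\curl a\|_{L^2}\le C/\kappa$.

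For (3), the same second equation combined with (1) gives $\|\nabla^\perp(\curl a)\|_{L^2}\le C(\kappa H)^{-1}\kappa = C/\kappa$ (using $H=b\kappa$), so $\curl a \in H^1$ with $\|\curl a\|_{H^1}\le C/\kappa$. Together with $\Div a = 0$, $a\cdot\nu=0$ and $\curl a = 0$ on the smooth boundary $\partial\Omega$, elliptic regularity for the curl-div system promotes this to $a\in H^2(\Omega)$ with $\|a\|_{H^2}\le C/\kappa$. Finally, (4) is immediate from (3) by the Sobolev embedding $H^2(\Omega)\hookrightarrow C^{0,\alpha}(\overline\Omega)$ for every $\alpha\in(0,1)$ in two dimensions.

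The main delicate point, and the reason the estimate is nontrivial in our setting, is that the magnetic field $B_0$ is only piecewise smooth across the barrier $\Gamma$, so $\Fb$ is \emph{not} globally smooth---only $\Fb\in C^\infty(\Omega_i)\cap H^2(\Omega_i)$. All regularity must therefore be obtained for the \emph{difference} $a=\Ab-\Fb$ rather than for $\Ab$ itself: the discontinuity of $B_0$ gets absorbed into $\Fb$ via Lemma~\ref{A_1}, and the second Ginzburg--Landau equation, when rewritten in terms of $\curl a=\curl\Ab-B_0$, becomes an $L^2$-valued equation to which standard elliptic theory on the smooth domain $\Omega$ applies. Verifying that the Poincaré and curl-div regularity constants used in steps (2) and (3) depend only on $\Omega$ (and not on the jump set $\Gamma$) is the step most in need of care; it relies on the fact that $a$ solves a problem posed on the whole of $\Omega$ with smooth boundary data.
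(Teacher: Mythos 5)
Your argument is correct and is essentially the standard proof that the paper delegates to~\cite[Theorem~4.2]{Assaad}: the energy identity obtained by testing the first equation against $\overline{\psi}$ for (1), testing the second equation against $\Ab-\Fb$ combined with the Poincar\'e/curl--div inequality for fields in $\Hd$ for (2)--(3), and the two-dimensional Sobolev embedding $H^2(\Omega)\hookrightarrow C^{0,\alpha}(\overline{\Omega})$ for (4); your closing remark correctly identifies why working with the difference $\Ab-\Fb$ absorbs the discontinuity of $B_0$. The only slip is a sign in your first display: integration by parts gives $\int_\Omega\big|(\nabla-i\kappa H\Ab)\psi\big|^2\,dx=\kappa^2\int_\Omega\big(1-|\psi|^2\big)|\psi|^2\,dx$, whose right-hand side is nonnegative and bounded by $\kappa^2|\Omega|$, which is what your subsequent estimate actually uses.
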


\subsection{Exponential decay of the order parameter}
The following theorem displays a regime for the intensity of the applied magnetic field where the order parameter and the GL energy are exponentially small in the bulk of the domains $\Omega_1$ and $\Omega_2$. 
\begin{theorem}\label{thm:decay}
Given $a \in[-1,1)\setminus\{0\}$ and $b >1/|a|$, there exist constants $\kappa_0>0$, $C>0$, and $\alpha_0>0$  such that, if
\[\kappa \geq \kappa_0,\quad  \kappa_0 \kappa^{-1}\leq \ell<1 \quad \mathrm{and}\ (\psi,\Ab)~\mathrm{is~a~solution~of~}~\eqref{eq:Euler}\ \mathrm{for}\ H=b\kappa\,,\] 
then
\begin{equation*}
\int_{\Omega_j\cap\{\mathrm{dist}(x,\partial\Omega_j)\geq \ell\}}
\Big(|\psi|^2+|(\nabla-i\kappa H\Ab)\psi|^2\Big)\,dx \leq C e^{-\alpha_0\kappa\ell}\,,
\end{equation*}
for $j \in \{1,2\}$.
\end{theorem}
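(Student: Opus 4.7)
The plan is a standard Agmon-type exponential weight argument applied separately in each subdomain $\Omega_j$, $j\in\{1,2\}$, exploiting that the hypothesis $b>1/|a|$ yields a spectral gap $bB_j>1$ in each subregion, where $B_1:=1$ and $B_2:=|a|$. Fix $j$, fix a constant $\alpha_0>0$ to be chosen small later, and introduce a Lipschitz weight $\Phi_j\geq 0$ on $\overline{\Omega_j}$ that vanishes on $\partial\Omega_j$, equals $\alpha_0\kappa\,\dist(x,\partial\Omega_j)$ whenever $\dist(x,\partial\Omega_j)\geq \kappa_0/\kappa$, and satisfies $|\nabla\Phi_j|\leq \alpha_0\kappa$. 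Multiply $e^{\Phi_j}$ by a smooth cutoff $\eta_j\in C^\infty(\overline{\Omega_j})$ that vanishes in a $\kappa_0/\kappa$-neighborhood of $\partial\Omega_j$ and equals $1$ in $\{\dist(x,\partial\Omega_j)\geq 2\kappa_0/\kappa\}$ to form $\chi_j:=\eta_j\,e^{\Phi_j}$, so that $\chi_j\psi$ is supported in $\overline{\Omega_j}$ and vanishes on $\partial\Omega_j$.

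I would then multiply the first equation of~\eqref{eq:Euler} by $\chi_j^2\overline{\psi}$ and integrate over $\Omega_j$. Since $\chi_j$ vanishes on $\partial\Omega_j$, no boundary terms appear, and the standard IMS manipulation gives
\[
\int_{\Omega_j}|(\kn)(\chi_j\psi)|^2\,dx-\int_{\Omega_j}|\nabla\chi_j|^2|\psi|^2\,dx=\kappa^2\int_{\Omega_j}\chi_j^2(1-|\psi|^2)|\psi|^2\,dx\leq \kappa^2\int_{\Omega_j}\chi_j^2|\psi|^2\,dx.
\]
For the lower bound on the magnetic energy, I would apply the two-dimensional pointwise inequality $\int|(\kn)\phi|^2\,dx\geq \kappa H\int\curl\Ab\,|\phi|^2\,dx$ to $\phi=\chi_j\psi$, then decompose $\curl\Ab=B_0+\curl(\Ab-\Fb)$ and use Theorem~\ref{thm:priori} together with the diamagnetic inequality $\|\nabla|\chi_j\psi|\|_{L^2}\leq \|(\kn)(\chi_j\psi)\|_{L^2}$ and a Gagliardo--Nirenberg-type $L^4$ bound to absorb the correction coming from $\curl(\Ab-\Fb)$. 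This should produce
\[
\int_{\Omega_j}|(\kn)(\chi_j\psi)|^2\,dx\geq \bigl(bB_j-o(1)\bigr)\kappa^2\int_{\Omega_j}\chi_j^2|\psi|^2\,dx
\]
as $\kappa\to+\infty$.

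Combining the two estimates, and bounding $|\nabla\chi_j|^2\leq \alpha_0^2\kappa^2\chi_j^2+C\kappa^2{\mathbbm 1}_{\{\kappa_0/\kappa\leq \dist(x,\partial\Omega_j)\leq 2\kappa_0/\kappa\}}$ (the latter contribution being bounded by $\mathcal O(\kappa)$, since the thin strip has measure $\mathcal O(\kappa^{-1})$ and $|\psi|\leq 1$ by Proposition~\ref{prop:psi}), yields $(bB_j-1-\alpha_0^2-o(1))\kappa^2\int_{\Omega_j}\chi_j^2|\psi|^2\,dx\leq C\kappa$. Choosing $\alpha_0^2<bB_j-1$, which is possible since $bB_j>1$, forces $\int_{\Omega_j}\chi_j^2|\psi|^2\,dx=\mathcal O(\kappa^{-1})$; since $\chi_j^2\geq e^{2\alpha_0\kappa\ell}$ on $\{\dist(x,\partial\Omega_j)\geq\ell\}$, the stated $L^2$-decay of $\psi$ follows after halving $\alpha_0$ to absorb the remaining polynomial loss. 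The corresponding bound for $|(\kn)\psi|$ comes by reinserting this decay into the IMS identity to control $\int\chi_j^2|(\kn)\psi|^2\,dx$, then restricting to $\{\dist(x,\partial\Omega_j)\geq\ell\}$. The hard part will be the magnetic lower bound: since Theorem~\ref{thm:priori} provides only $H^2$-control on $\Ab-\Fb$, no $L^\infty$ estimate on $\curl(\Ab-\Fb)$ is directly available in two dimensions, and the error $\kappa H\int\curl(\Ab-\Fb)\chi_j^2|\psi|^2\,dx$ must be absorbed through an $L^p$-interpolation argument that exploits the strictness of $b>1/|a|$.
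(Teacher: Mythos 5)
The paper does not actually prove Theorem~\ref{thm:decay} in the text; it defers to the Agmon-type estimates of~\cite[Theorems~1.5~\&~7.3]{Assaad}. Your proposal reconstructs precisely the argument that underlies those results: exponential weight vanishing on $\partial\Omega_j$, the integration-by-parts identity obtained by testing~\eqref{eq:Euler} against $\chi_j^2\overline{\psi}$, the spectral lower bound on the magnetic energy, and absorption of the field perturbation. You also correctly identify the genuinely delicate point: since only $\|\Ab-\Fb\|_{H^2}\leq C/\kappa$ is available (Theorem~\ref{thm:priori}), the error $\kappa H\int\curl(\Ab-\Fb)\,\chi_j^2|\psi|^2$ must be handled by Sobolev embedding ($\|\curl(\Ab-\Fb)\|_{L^p}\leq C_p/\kappa$ for all $p<\infty$), H\"older, Gagliardo--Nirenberg and the diamagnetic inequality; this does close, yielding an error of the form $\epsilon\|(\kn)(\chi_j\psi)\|_{L^2}^2+o(\kappa^2)\|\chi_j\psi\|_{L^2}^2$, which the strict gap $b\,|B_0|>1$ absorbs. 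One small imprecision: the commutator inequality gives $\int|(\kn)\phi|^2\geq\pm\,\kappa H\int\curl\Ab\,|\phi|^2$, and on $\Omega_2$ with $a<0$ the sign you wrote is the useless one; you must take the other sign and use that $B_0$ is \emph{constant} on each $\Omega_j$ to convert $\big|\int B_0|\phi|^2\big|$ into $|a|\int|\phi|^2$. Your final formula with $B_2=|a|$ shows you intend this, but it should be said.

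The one place where the argument does not deliver the statement as literally written is the gradient term at the bottom of the admissible range of $\ell$. Your scheme gives $\int\chi_j^2|\psi|^2\leq C\kappa^{-1}$ (the source term $\int|\nabla\eta_j|^2e^{2\Phi_j}|\psi|^2$ lives on a strip of measure $\mathcal O(\kappa^{-1})$), hence $\int\chi_j^2|(\kn)\psi|^2\leq C\kappa$, \emph{not} $C$. Restricting to $\{\dist(x,\partial\Omega_j)\geq\ell\}$ therefore yields $C\kappa e^{-2\alpha_0\kappa\ell}$ for the gradient part, and ``halving $\alpha_0$'' absorbs the prefactor $\kappa$ only when $\kappa\ell\to+\infty$; for $\ell$ comparable to $\kappa_0\kappa^{-1}$ the factor $\kappa$ survives (and indeed the surface layer heuristic shows $\int_{\{\dist\geq\kappa_0/\kappa\}}|(\kn)\psi|^2$ is genuinely of order $\kappa$, so no proof can remove it). This is a defect of the statement rather than of your argument --- every application in the paper has $\kappa\ell\geq\kappa^{1/16}$, where the polynomial loss is harmless --- but you should either record the prefactor or normalize the gradient as $(\kappa H)^{-1}|(\kn)\psi|^2$.
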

The proof of Theorem~\ref{thm:decay} follows from stronger Agmon-type estimates established in~\cite[Theorems~1.5~\&~7.3]{Assaad}.

\subsection{Families of Sturm--Liouville operators on $L^2(\R_+)$}\label{sec:sturm1}
In this section, we will briefly present some spectral properties of the self-adjoint realization on $L^2(\R_+)$ of the Sturm--Liouville operator:
\begin{equation}\label{eq:H_gamma}
H[\gamma,\xi]=-\frac{d^2}{dt^2}+(t-\xi)^2\,,
\end{equation}
defined over the domain:
\[\dom\big(H[\gamma,\xi] \big)=\{u \in B^2(\R_+)~:~ u'(0)=\gamma u(0)\}\,,\]
where $\xi$ and $\gamma$ are two real parameters, and the space $B^n(\R_+)$ was introduced in~\eqref{eq:B_n}.
The quadratic form associated to $H[\gamma,\xi]$ is
\[B^1(\R_+)\ni u\longmapsto q[\gamma,\xi](u)=\int_0^{+\infty} \Big(|u'(t)|^2+|(t-\xi)u(t)|^2 \Big)\,dt +\gamma|u(0)|^2\,.\]
Since the embedding of the form domain $B^1(\R_+)$ in $L^2(\R_+)$ is compact, the spectrum of $H[\gamma,\xi]$ is an increasing sequence of eigenvalues tending to $+\infty$ .
Denote by $\mu(\gamma,\xi)$ the first eigenvalue of the operator $H[\gamma,\xi]$:
\begin{equation}\label{eq:mu1}
\mu(\gamma,\xi)=\inf \mathrm{sp}\big(H[\gamma,\xi]\big)=\inf_{u\in B^1(\R_+)}\frac{q[\gamma,\xi](u)}{\|u\|^2_{L^2(\R)}}
\end{equation}
and let 
\begin{equation}\label{eq:theta_gamma}
\Theta(\gamma)=\inf_{\xi \in \R} \mu(\gamma,\xi)\,.
\end{equation}

\emph{The Neumann realization.} The particular case where $\gamma=0$ corresponds to the \emph{Neumann} realization, denoted by $H^N[\xi]$, with the associated quadratic form $q^N[\xi]=q[0,\xi]$. The first eigenvalue of $H^N[\xi]$ is denoted by
\begin{equation}\label{eq:mu_n}
\mu^N(\xi)=\inf \mathrm{sp}\big(H^N[\xi]\big)=\mu(0,\xi)\,.
\end{equation}

\emph{The Dirichlet realization.} Besides the Robin and Neumann realizations, we introduce the Dirichlet realization 
\[H^D[\xi]=-\frac{d^2}{dt^2}+(t-\xi)^2\] 
with domain
\[\dom\big(H^D[\xi] \big)=\{u \in B^2(\R_+)~:~ u(0)=0\}\,.\]
The associated quadratic form is defined by
\[B^1(\R_+)\cap H_0^1(\R_+)\ni u\longmapsto q^D[\xi](u)=\int_0^{+\infty} \Big(|u'(t)|^2+|(t-\xi)u(t)|^2 \Big)\,dt\,.\]
We introduce the first eigenvalue of $H^D[\xi]$ as follows
\begin{equation}\label{eq:mu_D}
\mu^D(\xi)=\inf \mathrm{sp}\big(H^D[\xi]\big)=\inf_{u\in B^1(\R_+)\cap H^1_0(\R_+)}\frac{q^D[\xi](u)}{\|u\|^2_{L^2(\R)}}\,.
\end{equation}
The perturbation theory~\cite{Kato} ensures that the functions
\[\xi\mapsto \mu^D(\xi),\quad \xi\mapsto \mu^N(\xi)\quad,\ \mathrm{and} \quad \xi\mapsto \mu(\gamma,\xi)\]
are analytic.

In addition, recall the following well-known Sturm--Liouville theorems (For instance, see~\cite{dauge1993eigenvalues, simon1972methods}):
\begin{theorem}\label{thm:dirichlet}
 The function $\xi\mapsto \mu^D(\xi)$ introduced in~\eqref{eq:mu_D} is decreasing,
\[\displaystyle\lim_{\xi \rightarrow -\infty}\mu^D(\xi)=+\infty\quad\mathrm{and}\quad \displaystyle\lim_{\xi \rightarrow +\infty}\mu^D(\xi)=1\,.\] 
\end{theorem}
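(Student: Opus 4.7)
The plan is to treat the three claims of the theorem in order: strict monotonicity via a Feynman--Hellmann type identity, the limit at $-\infty$ by a trivial bound on the potential, and the limit at $+\infty$ by comparison with the harmonic oscillator on the whole line.

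For the monotonicity, let $u_\xi \in B^2(\R_+)\cap H_0^1(\R_+)$ denote the $L^2$-normalized positive first eigenfunction of $H^D[\xi]$. The analyticity of $\xi\mapsto\mu^D(\xi)$ combined with $\partial_\xi H^D[\xi] = -2(t-\xi)$ yields, via the Feynman--Hellmann formula,
\begin{equation*}
\frac{d\mu^D}{d\xi}(\xi) = -2\int_0^{+\infty}(t-\xi)\,u_\xi(t)^2\,dt.
\end{equation*}
To evaluate the integral, I multiply the eigenvalue equation $u_\xi'' = [(t-\xi)^2-\mu^D(\xi)]u_\xi$ by $u_\xi'$, turning both sides into total derivatives:
\begin{equation*}
\tfrac{1}{2}\tfrac{d}{dt}(u_\xi')^2 = \bigl[(t-\xi)^2-\mu^D(\xi)\bigr]\tfrac{1}{2}\tfrac{d}{dt}u_\xi^2.
\end{equation*}
Integrating over $\R_+$, using $u_\xi(0)=0$ together with the exponential decay of $u_\xi$ and $u_\xi'$ at infinity, and integrating by parts on the right-hand side, the boundary contributions collapse to
\begin{equation*}
-\tfrac{1}{2}(u_\xi'(0))^2 = -\int_0^{+\infty}(t-\xi)\,u_\xi^2\,dt,
\end{equation*}
so that $\tfrac{d\mu^D}{d\xi}(\xi) = -(u_\xi'(0))^2$. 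Cauchy uniqueness for second-order linear ODEs forces $u_\xi'(0)\neq 0$ (otherwise $u_\xi\equiv 0$), hence the derivative is strictly negative and $\mu^D$ is strictly decreasing.

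For $\xi\to-\infty$, the inequality $(t-\xi)^2\geq \xi^2$ holds on $t\geq 0$ as soon as $\xi\leq 0$, so $q^D[\xi](u)\geq \xi^2\|u\|^2$ and $\mu^D(\xi)\geq \xi^2\to+\infty$. For $\xi\to+\infty$, I sandwich $\mu^D(\xi)$ between two quantities both tending to $1$. The lower bound $\mu^D(\xi)\geq 1$ follows by extending any admissible test function by zero to the whole line (its Dirichlet form and $L^2$-norm are preserved because $u(0)=0$) and invoking the fact that the full-line harmonic oscillator $-d^2/dt^2+(t-\xi)^2$ has ground state energy $1$ for every $\xi$. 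For the matching upper bound, I plug into the Rayleigh quotient the cut-off trial function $v_\xi(t) = \chi(t)\exp\bigl(-(t-\xi)^2/2\bigr)$, where $\chi\in C^\infty(\R)$ vanishes on $(-\infty,0]$ and equals $1$ on $[1,+\infty)$. Since $\chi'$ is supported in $[0,1]$, as $\xi\to+\infty$ the cut-off contributions are $O(e^{-\xi^2/4})$, and a routine IMS-type computation gives $q^D[\xi](v_\xi)/\|v_\xi\|^2 = 1 + O(e^{-\xi^2/4})$.

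The only step requiring any finesse is the identity $\int_0^{+\infty}(t-\xi)u_\xi^2\,dt = \tfrac{1}{2}(u_\xi'(0))^2$. This is precisely what endows the Feynman--Hellmann derivative with a definite sign and crucially exploits the Dirichlet condition $u_\xi(0)=0$: under Neumann or Robin boundary conditions an extra boundary term $\tfrac{1}{2}(\xi^2-\mu)u_\xi(0)^2$ survives and can change sign, which is exactly why $\mu^N(\xi)$ and $\mu(\gamma,\xi)$ fail to be monotone on $\R$. The limit arguments at $\pm\infty$ are standard once the sandwich strategy is set up.
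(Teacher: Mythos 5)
Your proof is correct. The paper does not prove Theorem~\ref{thm:dirichlet} at all --- it is quoted as a well-known Sturm--Liouville result with a pointer to the references of Dauge--Helffer and Reed--Simon --- and your argument is precisely the standard one from that literature: the Feynman--Hellmann derivative combined with the multiply-by-$u'$ identity gives $\partial_\xi\mu^D(\xi)=-(u_\xi'(0))^2<0$ (the Dirichlet specialization of the Dauge--Helffer formula, whose surviving boundary term $(\xi^2-\mu)u(0)^2$ in the Neumann case you correctly identify as the obstruction to monotonicity there), and the two limits follow from the trivial bound $(t-\xi)^2\ge\xi^2$ for $\xi\le0$ and the zero-extension/Gaussian-cutoff sandwich for $\xi\to+\infty$.
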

Theorems~\ref{thm:sturm} and~\ref{thm:teta_gamma} below are proved in~\cite[Section~2]{Kachmar}.
\begin{theorem}\label{thm:sturm} The following statements hold
\begin{enumerate}
\item For all $(\gamma,\xi)\in\R^2$, the first eigenvalue $\mu(\gamma,\xi)$ of $H[\gamma,\xi]$ defined in~\eqref{eq:mu1} is simple, and there exists a unique eigenfunction $\varphi_{\gamma,\xi}$ satisfying
\[\begin{cases}
-\varphi''_{\gamma,\xi}+(t-\xi)^2\varphi_{\gamma,\xi}=\mu(\gamma,\xi)\varphi_{\gamma,\xi},~\varphi_{\gamma,\xi}>0\ \mathrm{in~}\R_+\,,\\
\varphi'_{\gamma,\xi}(0)=\gamma\varphi_{\gamma,\xi}(0)\,,\\
\int_{\R_+}|\varphi_{\gamma,\xi}(t)|^2\,dt=1\,.\end{cases}
\]
\item For all $\gamma\in\R$,  $\displaystyle\lim_{\xi \rightarrow -\infty}\mu(\gamma,\xi)=+\infty$, and $\displaystyle\lim_{\xi \rightarrow +\infty}\mu(\gamma,\xi)=1$ .
\end{enumerate}
\end{theorem}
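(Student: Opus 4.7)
For part~(1), I would first use that $H[\gamma,\xi]$ is a Sturm--Liouville operator with confining potential $(t-\xi)^2\to+\infty$, so the form domain $B^1(\R_+)$ embeds compactly into $L^2(\R_+)$ (as already noted in the excerpt). Consequently, $H[\gamma,\xi]$ has compact resolvent and its spectrum is a discrete sequence of eigenvalues tending to $+\infty$. Simplicity of every eigenvalue is immediate from ODE theory: the equation $-u''+(t-\xi)^2u=\mu u$ has a one-dimensional space of $L^2$-solutions near $+\infty$ by Weyl's limit point alternative, and the Robin condition $u'(0)=\gamma u(0)$ then pins $u$ down up to a multiplicative constant. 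A minimizer of the Rayleigh quotient exists by the direct method. If $u$ is a real ground state, then so is $|u|$ (the form depends only on $|u'|$, $|u|$ and $|u(0)|$), and the strong maximum principle applied to the ODE promotes this to $|u|>0$ on $(0,\infty)$; normalizing in $L^2$ yields the unique $\varphi_{\gamma,\xi}$ of the statement.

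For part~(2), the limit $\xi\to-\infty$ is short: on $\R_+$ one has $(t-\xi)^2\geq\xi^2$, so
\[ q[\gamma,\xi](u) \;\geq\; \|u'\|_{L^2(\R_+)}^2 + \xi^2\|u\|_{L^2(\R_+)}^2 + \gamma|u(0)|^2. \]
When $\gamma<0$, I would absorb the boundary term via the trace inequality $|u(0)|^2\leq \epsilon\|u'\|^2+\epsilon^{-1}\|u\|^2$ with $\epsilon=(2|\gamma|)^{-1}$, obtaining $\mu(\gamma,\xi)\geq \xi^2-C_\gamma\to+\infty$. For the opposite limit $\xi\to+\infty$, the upper bound $\mu(\gamma,\xi)\leq 1+o(1)$ is obtained by testing the form against the translated Hermite ground state $h_\xi(t)=\pi^{-1/4}e^{-(t-\xi)^2/2}$; its $L^2(\R_+)$-norm and the bulk part of $q[\gamma,\xi](h_\xi)$ both converge as $\xi\to+\infty$ to their values on $\R$ (which equal $1$), while $\gamma|h_\xi(0)|^2=O(e^{-\xi^2})$ is negligible.

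The matching lower bound $\mu(\gamma,\xi)\geq 1-o(1)$ is the only genuinely delicate piece. I would use an IMS partition of unity $\chi_1^2+\chi_2^2\equiv 1$ with $\chi_1$ supported in $[0,\xi/2]$, $\chi_2$ in $[\xi/4,+\infty)$, $|\chi_j'|=O(\xi^{-1})$ and $\chi_2(0)=0$, to decompose
\[ q[\gamma,\xi](u) \;=\; q[\gamma,\xi](\chi_1 u) + q^N[\xi](\chi_2 u) - \sum_{j=1}^{2}\int_0^\infty |\chi_j'|^2\,|u|^2\,dt, \]
with localization error $O(\xi^{-2})\|u\|^2$. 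On the support of $\chi_2u$, extending by zero to $\R$ and using the harmonic oscillator inequality on the full line yields $q^N[\xi](\chi_2 u)\geq \|\chi_2 u\|^2$. On $[0,\xi/2]$ the potential satisfies $(t-\xi)^2\geq\xi^2/4$, and after absorbing $\gamma|u(0)|^2$ by the same trace inequality as above, one gets $q[\gamma,\xi](\chi_1 u)\geq \|\chi_1 u\|^2$ once $\xi$ is large enough. Summing and using $\|\chi_1 u\|^2+\|\chi_2 u\|^2=\|u\|^2$ delivers $\mu(\gamma,\xi)\geq 1-o(1)$. The main obstacle is precisely the indefinite Robin term $\gamma|u(0)|^2$ when $\gamma<0$: one cannot conclude $\mu(\gamma,\xi)\geq 1$ by comparing naively with $\mu^N(\xi)$, and the point of the IMS localization is to push the boundary far from the potential well at $t=\xi$, so that the growth of $(t-\xi)^2$ near $t=0$ pays for the trace estimate that absorbs the Robin contribution.
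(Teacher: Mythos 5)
Your argument is correct. Note that the paper does not prove this theorem itself: it defers to~\cite[Section~2]{Kachmar}, so there is no in-text proof to compare against; your write-up is a self-contained version of the standard argument used there. All the steps check out: compact resolvent from the compact embedding of $B^1(\R_+)$, simplicity via the limit-point property at $+\infty$ (any two $L^2$-eigenfunctions are proportional), positivity via $q[\gamma,\xi](|u|)=q[\gamma,\xi](u)$ for real $u$ together with Cauchy uniqueness for the ODE (including at $t=0$, where $\varphi(0)=0$ would force $\varphi'(0)=\gamma\varphi(0)=0$ and hence $\varphi\equiv0$), and the trace inequality $|u(0)|^2\leq\epsilon\|u'\|^2+\epsilon^{-1}\|u\|^2$ to absorb the indefinite Robin term both as $\xi\to-\infty$ and in the $\chi_1$-piece of the IMS decomposition. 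You correctly identify the lower bound $\mu(\gamma,\xi)\geq 1-o(1)$ as the only delicate point — a naive comparison with $\mu^N(\xi)$ fails for $\gamma<0$ — and the localization pushing the boundary away from the well at $t=\xi$, so that $(t-\xi)^2\geq\xi^2/4$ on $\mathrm{supp}\,\chi_1$ pays for the trace estimate, is exactly the right fix.
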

\begin{theorem} \label{thm:teta_gamma}
Let  $\Theta(\cdot)$ be the function defined in~\eqref{eq:theta_gamma}. It holds the following:
\begin{enumerate}
\item The function $\Theta(\cdot)$ is continuous and increasing\,.
\item For all $\gamma\in\R$, $-\gamma^2\leq\Theta(\gamma)<1$\,.
\item For all $\gamma<0$, 
$-\gamma^2\leq\Theta(\gamma)\leq-\gamma^2+\frac{1}{2\gamma^2}$\,.
\item For all $\gamma \geq 0$, 
$\Theta(\gamma)>0\,.$
\item There exist  $C_0,\gamma_0>0$  such that, $\forall\, \gamma \in [\gamma_0,+\infty)$,
$1-C_0\gamma \exp{(-\gamma^2)}\leq \Theta(\gamma)\,.$
\item For all $\gamma\in\R$, the function $\xi \mapsto \mu(\gamma,\xi)$ admits a unique minimum attained at 
\begin{equation}\label{eq:psi-gamma}
\xi(\gamma)=\sqrt{\Theta(\gamma)+\gamma^2}\,.
\end{equation}
Furthermore, this minimum is non-degenerate, $\partial^2_\xi\mu\big(\gamma,\xi(\gamma)\big)>0$.
\end{enumerate}
\end{theorem}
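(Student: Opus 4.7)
My plan is to derive the six items by combining the affine decomposition $q[\gamma,\xi](u) = q^N[\xi](u) + \gamma|u(0)|^2$, the substitution $u = e^{\gamma t}\eta$ (which conjugates the Robin boundary condition to a Neumann one), and a Feynman--Hellmann/virial argument for item (6), building on the spectral information in Theorem~\ref{thm:sturm}. For item (1), the decomposition gives $\mu(\gamma_1,\xi) \leq \mu(\gamma_2,\xi)$ when $\gamma_1<\gamma_2$ pointwise in $\xi$, so $\Theta$ is non-decreasing; strict monotonicity follows from $\varphi_{\gamma,\xi}(0)>0$ (otherwise the Robin condition would force $\varphi_{\gamma,\xi}\equiv 0$). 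Continuity follows from analyticity of $\mu(\gamma,\xi)$ plus the fact that, once the strict upper bound $\Theta<1$ is known, the infimum in $\xi$ is attained on a bounded interval depending continuously on $\gamma$. The lower bound $\Theta(\gamma)\geq -\gamma^2$ in (2) is obtained by inserting $u=e^{\gamma t}\eta$ (with $\eta'(0)=0$) and integrating by parts to get $\int_0^\infty|u'|^2\,dt + \gamma|u(0)|^2 = -\gamma^2\|u\|^2 + \int_0^\infty e^{2\gamma t}|\eta'|^2\,dt$, so that $q[\gamma,\xi](u) \geq -\gamma^2\|u\|^2$. Item (4) is then immediate: monotonicity gives $\Theta(\gamma)\geq\Theta(0)=\Theta_0>0$ for $\gamma\geq 0$.

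The strict upper bound $\Theta(\gamma)<1$ in (2) and item (3) are both trial-function computations: a Gaussian centered at large $\xi$ for the former, and the normalized Robin-Laplacian ground state $u(t)=\sqrt{-2\gamma}\,e^{\gamma t}$ for the latter. A direct calculation yields $q[\gamma,\xi](u) = -\gamma^2 + (-2\gamma)\int_0^\infty (t-\xi)^2 e^{2\gamma t}\,dt$; minimizing the integral in $\xi$ reduces to computing the variance of the exponential density with rate $-2\gamma$, yielding the optimum $1/(4\gamma^2)$ and hence the bound in (3). For item (5), I would build a quasimode by combining the Dirichlet ground state $\psi^D_{\xi^D}$ (which automatically satisfies Robin since $\psi^D_{\xi^D}(0)=0$) with a small exponential boundary correction restoring the Robin data; the rate $\gamma e^{-\gamma^2}$ reflects the Gaussian decay of $\psi^D_{\xi^D}$ near $t=0$ at the asymptotic optimal shift, and the correction is controlled via the Dirichlet resolvent.

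For item (6), Feynman--Hellmann gives $\partial_\xi\mu(\gamma,\xi) = -2\int_0^\infty (t-\xi)\varphi_{\gamma,\xi}^2\,dt$. Multiplying the eigenvalue equation by $\varphi_{\gamma,\xi}'$ and integrating on $\R_+$, using $\varphi'(0)=\gamma\varphi(0)$ and decay at infinity, yields the virial identity
\begin{equation*}
\tfrac{1}{2}\bigl(\gamma^2-\xi^2+\mu(\gamma,\xi)\bigr)\,\varphi_{\gamma,\xi}(0)^2 = \int_0^\infty (t-\xi)\,\varphi_{\gamma,\xi}^2\,dt\,.
\end{equation*}
At any critical point of $\mu(\gamma,\cdot)$ the right-hand side vanishes, and since $\varphi_{\gamma,\xi}(0)>0$ this forces $\xi^2=\mu+\gamma^2$, hence $\xi(\gamma)=\sqrt{\Theta(\gamma)+\gamma^2}$, pinning down a unique candidate (and explaining why one takes the positive root). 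Non-degeneracy $\partial_\xi^2\mu(\gamma,\xi(\gamma))>0$ is then obtained by differentiating Feynman--Hellmann once more and combining with simplicity of $\mu$ and analytic perturbation theory. I expect the hardest step to be item (5): capturing the sharp exponential rate $e^{-\gamma^2}$ requires a matched-asymptotic analysis of $\psi^D_{\xi^D}$ near the boundary together with quasimode error control, rather than a straightforward monotone comparison. A subsidiary technicality is rigorous justification of the virial identity, which needs Agmon-type decay estimates on $\varphi_{\gamma,\xi}$ and $\varphi_{\gamma,\xi}'$ at infinity for the anharmonic potential $(t-\xi)^2$.
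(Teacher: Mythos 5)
First, a remark on the comparison: the paper does not actually prove Theorem~\ref{thm:teta_gamma}; it is imported from~\cite[Section~2]{Kachmar}, so there is no in-paper argument to measure you against. On their own merits, your arguments for items (1)--(4) and (6) are essentially the standard ones and are sound. The affine dependence of $q[\gamma,\xi]$ on $\gamma$ gives monotonicity (and, since $\Theta$ is an infimum of affine functions of $\gamma$, even concavity, hence continuity); the substitution $u=e^{\gamma t}\eta$ gives the lower bound $-\gamma^2$ --- note the resulting identity holds for \emph{every} $u\in B^1(\R_+)$, not only those satisfying the Robin condition, which is what you need to bound the infimum from below; your exponential trial function in (3) in fact yields the sharper value $-\gamma^2+1/(4\gamma^2)$; and combining Feynman--Hellmann with the virial identity into $\partial_\xi\mu(\gamma,\xi)=\bigl(\xi^2-\gamma^2-\mu(\gamma,\xi)\bigr)\,|\varphi_{\gamma,\xi}(0)|^2$ is exactly the right tool for (6). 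You do leave two steps of (6) unproved: excluding the negative root and uniqueness. Both follow from noting that $\partial_\xi\mu<0$ for $\xi\le0$ (so all critical points are positive) and that at any critical point $\partial_\xi^2\mu(\gamma,\xi_c)=2\xi_c|\varphi_{\gamma,\xi_c}(0)|^2>0$ (so every critical point is a non-degenerate local minimum, hence there is exactly one).

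The genuine gap is item (5). You propose to ``build a quasimode'' from the Dirichlet ground state plus a boundary correction, but a quasimode --- like any trial function --- can only produce an \emph{upper} bound on the ground state energy, whereas (5) asserts the \emph{lower} bound $\Theta(\gamma)\ge1-C_0\gamma e^{-\gamma^2}$. (Moreover, $\psi^D_\xi$ does not satisfy the Robin condition: $\psi^D_\xi(0)=0$ but $(\psi^D_\xi)'(0)\neq0$; what is true is only that the boundary term of the Robin form vanishes on it, which again yields the upper bound $\mu(\gamma,\xi)\le\mu^D(\xi)$.) The correct direction runs the other way: start from the Robin minimizer $\varphi=\varphi_{\gamma,\xi(\gamma)}$, prove the a priori bound $|\varphi(0)|^2\le Ce^{-\gamma^2}$ --- which requires $\xi(\gamma)=\sqrt{\Theta(\gamma)+\gamma^2}\sim\gamma$ together with Agmon/Gaussian decay of $\varphi$ away from $t=\xi(\gamma)$ --- and then compare $q[\gamma,\xi](\varphi)$ from below with the Dirichlet energy of a corrected competitor such as $\varphi-\varphi(0)\chi$, using $\mu^D(\xi)>1$. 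Without that pointwise decay at $t=0$ the comparison only yields $\Theta(\gamma)\ge1-C/\gamma$. As written, your plan for (5) does not close.
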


\subsection{An operator with a step magnetic field}\label{sec:sturm2}

Let  $a \in [-1,1)\setminus\{0\}$. We consider the magnetic potential $\Ab_0$ defined by
\begin{equation}\label{canon}
\Ab_0(x) = (-x_2,0) \qquad \Big(x = (x_1,x_2) \in \R ^2\Big)
\end{equation}
which satisfies $\curl\Ab_0=1$. We define the step function $\sigma$ as follows. For  $x=(x_1,x_2) \in \R^2$,
\begin{equation}\label{eq:sigma1}
\sigma(x)=\mathbbm{1}_{\R_+}(x_2)+a\mathbbm{1}_{\R_-}(x_2)\,.
\end{equation}
We introduce the self-adjoint \emph{magnetic Hamiltonian} in $L^2(\R^2)$
\begin{equation}\label{eq:ham_operator}
\mathcal L_a=-(\nabla-i\sigma\Ab_0)^2\,,
\end{equation}
where 
\[(\nabla-i\sigma\Ab_0)^2=\left(\Delta -2i\sigma\Ab_0\cdot\nb-\sigma^2 |\Ab_0|^2\right)=\partial^2_{x_2}+\big(\partial_{x_1}+i\sigma x_2\big)^2\,.\]
We denote the ground state energy of the operator $\mathcal L_a$ by
\begin{equation}\label{eq:lamda}
\beta_a= \inf \mathrm{sp}\big(\mathcal L_a \big)\,.
\end{equation}
Since the Hamiltonian defined in~\eqref{eq:ham_operator} is invariant with respect to translations in the $x_1$-direction, we can reduce it to a family of  Shr\"odinger operators on $L^2(\R)$, $\mathfrak h_a[\xi]$, parametrized by $\xi\in\R$ and called \emph{fiber operators} 
(see~\cite{hislop2016band,hislop2015edge}). The operator $\mathfrak h_a[\xi]$ is defined by
\[\mathfrak h_a[\xi]=-\frac{d^2}{dt^2}+V_a(\xi,t)\,,\]
with
\begin{equation}\label{eq:potential}
V_a(\xi,t)=
\begin{cases}
          (\xi+at)^2,& t<0\,,\\
					(\xi+t)^2,& t>0\,.
\end{cases}
\end{equation}
The domain of $\mathfrak h_a[\xi]$ is given by:
\[\dom\big(\mathfrak h_a[\xi]\big)=\left\{u\in B^1(\R)~:~\Big(-\frac{d^2}{dt^2}+V_a(\xi,t)\Big)u \in L^2(\R)\,,~u'(0_+)=u'(0_-)\right\}\]
where  $B^1(\R)$ is defined in~\eqref{eq:B_n}.
The quadratic form associated to $\mathfrak h_a[\xi]$ is
\begin{equation} \label{eq:quad}
q_a[\xi](u)=\int_\R \big(|u'(t)|^2+V_a(\xi,t)|u(t)|^2 \big)\,dt
\end{equation}
defined on the form domain
\begin{equation}\label{eq:dom_q}
\dom\big(q_a[\xi]\big)=B^1(\R)\,.
\end{equation}
The spectra of the operators $\mathcal L_a$ and $\mathfrak h_a[\xi]$ are linked together as follows (see~\cite[Sec.~4.3]{fournais2010spectral})
\begin{equation}\label{eq:L_h}
\mathrm{sp}\big(\mathcal L_a\big)=\overline{\bigcup_{\xi \in \R} \mathrm{sp}\big(\mathfrak h_a[\xi] \big)}\,.
\end{equation}
We introduce the first eigenvalue of the fiber operator $\mathfrak h_a[\xi]$,
\begin{equation}\label{mu_a_1}
\mu_a(\xi)=\inf_{u\in B^1(\R),u\neq0} \frac{q_a[\xi](u)}{\|u\|^2_{L^2(\R)}}\,.
\end{equation}
Consequently, for all $a\in[-1,1)\setminus\{0\}$,  we may express the ground state energy in~\eqref{eq:lamda} by
\begin{equation}\label{eq:beta}
\beta_a=\inf_{\xi \in \R} \mu_a(\xi)\,.
\end{equation}
Below, we collect  some properties of the eigenvalue $\mu_a(\xi)$.
\paragraph{\itshape The case $0<a<1$} This case is studied in~\cite{hislop2015edge,iwatsuka1985examples}. The  eigenvalue $\mu_a(\xi)$ is simple and is a decreasing  function of $\xi$.  The monotonicity of $\mu_a(\cdot)$ and its asymptotics  in Proposition~\ref{prop:mu_a_lim} imply that
\begin{equation*}
a<\mu_a(\xi) < 1\qquad(\xi \in \R)\,,
\end{equation*}
and that $\beta_a$ introduced in~\eqref{eq:lamda} satisfies
\begin{equation}\label{eq:lambda_a}
\beta_a=a\,.
\end{equation}
\paragraph{\itshape The case $a=-1$} This case is studied in~\cite{hislop2016band}. Using symmetry arguments, the operator $\mathfrak h_a[\xi]$  can be linked to the operator $H^N[\xi]$, the Neumann realization on $\R_+$ of $-d^2/dt^2+(t-\xi)^2$ introduced in  Section~\ref{sec:sturm1}. The first eigenvalue $\mu_a(\xi)$ of $\mathfrak h_a[\xi]$ is then simple and satisfies
\begin{equation}\label{eq:ma_mN}
\mu_a(\xi)=\mu^N(-\xi)\,,
\end{equation}
where $\mu^N(\cdot)$ is introduced in~\eqref{eq:mu_n}.
By Theorem~\ref{thm:teta_gamma}, used for $\gamma=0$, we get that
\begin{equation}\label{eq:theta2} 
0<\min_{\xi \in \R} \mu_a(\xi)=\mu_a(\zeta_0)=\Theta_0<1\,,
\end{equation}
where
\begin{equation}\label{eq:theta1}
\zeta_0=-\sqrt{\Theta_0}\quad\mathrm{and}\quad\Theta_0=\Theta(0)~\mathrm{introduced~ in~}~\eqref{eq:theta_gamma} \,.
\end{equation}
Furthermore, the minimum at $\zeta_0$ is  non-degenerate.

\paragraph{\itshape The case $-1<a<0$} See also~\cite{hislop2016band} for the study of this case. The eigenvalue $\mu_a(\xi)$  is simple, and there exists $\zeta_a<0$ satisfying 
\begin{equation}\label{eq:theta3}
|a|\geq \mu_a(\zeta_a)=\min_{\xi \in \R} \mu_a(\xi)\,.
\end{equation}
 Moreover, using the min-max principle, one can easily prove that
\begin{equation}\label{eq:teta_beta1}
|a|\Theta_0<\min_{\xi \in \R} \mu_a(\xi)\,.
\end{equation}

Combining the foregoing discussion in the case $a\in[-1,0)$, we get $\beta_a$ introduced in~\eqref{eq:lamda} satisfies
\begin{equation}\label{eq:theta4}
|a|\Theta_0\leq\beta_a \leq |a|\,, 
\end{equation}
and
\begin{equation}\label{eq:theta4*}
\beta_{-1}=\Theta_0\,.\
\end{equation}
By defining $\zeta_a=-\sqrt{\Theta_0}$ for $a=-1$, we get furthermore that, for all $a\in[-1,0)$,
\begin{equation}\label{eq:theta4**}
\beta_{a}=\mu_a(\zeta_a)\mathrm{~with~}\zeta_a<0\,.
\end{equation}

In the next theorem, we will use a direct approach, different from the one in~\cite{hislop2016band}, to establish the existence of a global minimum $\zeta_a$ in the case where $a \in (-1,0)$ and to prove that  $\beta_a<|a|$. This slightly improves the estimates in~\cite{hislop2016band} (see Remark~\ref{rem:HPRS} below). Theorem~\ref{thm:up_eigen} is necessary to validate  Assumption~\eqref{eq:A2}, under which we work in Section~\ref{sec:lim-en}.
\begin{thm} \label{thm:up_eigen} 

For all $a \in (-1,0)$, there exists $\xi <0$ such that $\mu_a(\xi)$, the first eigenvalue of the operator $\mathfrak h_a[\xi]$, satisfies
\[\mu_a(\xi) < |a|\,.\]
Consequently, the function $\xi\mapsto \mu_a(\xi)$ admits a global minimum satisfying
\[\min_{\xi \in \R} \mu_a(\xi) < |a|\,.\]
\end{thm}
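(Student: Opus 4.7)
The plan is to construct an explicit trial function and apply the min--max principle. The key observation is that the ground state of the full-line operator $-d^2/dt^2 + (\xi + at)^2$ on $L^2(\R)$ has eigenvalue exactly $|a|$, and this potential coincides with $V_a(\xi,\cdot)$ on the half-line $\{t<0\}$; on $\{t>0\}$ the two potentials differ, but since the trial function will be concentrated deep in the region $t \ll 0$, the discrepancy will be exponentially small and, crucially, it will \emph{lower} the energy.

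Concretely, let $\phi(s) = \pi^{-1/4} e^{-s^2/2}$ be the normalized Hermite ground state and set
\[ u_\xi(t) = |a|^{1/4}\,\phi\bigl(\sqrt{|a|}\,(t - \xi/|a|)\bigr), \qquad \xi < 0. \]
Then $u_\xi \in B^2(\R)$, is $C^\infty$, and automatically satisfies $u_\xi'(0_+) = u_\xi'(0_-)$, so $u_\xi \in \dom(\mathfrak h_a[\xi])$; moreover $\|u_\xi\|_{L^2(\R)} = 1$ and $u_\xi$ is the normalized ground state of $-d^2/dt^2 + (\xi+at)^2$ on $L^2(\R)$ with eigenvalue $|a|$. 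A direct splitting of the quadratic form then yields
\[ q_a[\xi](u_\xi) = |a| + R(\xi), \qquad R(\xi) := (1-a)\int_0^\infty t\,\bigl[2\xi + (1+a)t\bigr]\,u_\xi(t)^2\,dt. \]

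The main task is to prove that $R(\xi) < 0$ for $\xi$ sufficiently negative. After the change of variables $s = \sqrt{|a|}\,(t - \xi/|a|)$, $R(\xi)$ becomes a shifted Gaussian integral over $[\alpha,+\infty)$ with $\alpha := -\xi/\sqrt{|a|} > 0$, and a straightforward Laplace-type expansion shows that to leading order $R(\xi) \sim -C_a\, \alpha^{-1} e^{-\alpha^2} < 0$ as $\alpha \to +\infty$. The main obstacle is controlling the sign: the bracket $2\xi + (1+a)t$ changes sign at $t = -2\xi/(1+a) > 0$, so one must verify that the negative contribution near $t = 0$, carried by the linear-in-$t$ piece $2\xi t$, dominates the positive tail coming from $(1+a)t^2$. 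This dominance is transparent after the substitution: the Gaussian weight $u_\xi^2$ restricted to $t > 0$ is sharply peaked at $t = 0^+$ (since the global maximum at $t = \xi/|a| < 0$ is far to the left), and on the relevant length scale near $t = 0$ the bracket is approximately $2\xi < 0$. The coefficient $(1-a)$ in front is positive, so $R(\xi) < 0$ for $|\xi|$ large.

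Combining the above, $\mu_a(\xi) \leq q_a[\xi](u_\xi) < |a|$ for some $\xi < 0$, which proves the first statement. The existence of a global minimum follows from continuity of $\xi \mapsto \mu_a(\xi)$ (supplied by analytic perturbation theory, as already recalled) together with the limiting behaviour at $\pm\infty$ from Proposition~\ref{prop:mu_a_lim}, whose limit values are at least $|a|$; hence the strict inequality at an interior point forces $\mu_a$ to attain its infimum at some finite point, with value strictly below $|a|$.
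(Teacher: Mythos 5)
Your argument is correct, but it is genuinely different from the one in the paper. The paper builds a piecewise trial function from the Robin half-line eigenfunction $\varphi_\gamma$ of Theorem~\ref{thm:teta_gamma} (rescaled on $t<0$) glued to a decaying exponential on $t>0$, chooses $\xi=-\sqrt{|a|}\,\xi(\gamma)$ and tunes $\gamma,m$ so that all boundary terms cancel; this yields the quantitative bound $\beta_a<|a|\,\Theta\bigl(\sqrt{1/(2|a|(1-|a|))}\bigr)$ recorded in Remark~\ref{rem:HPRS}, i.e.\ a uniform multiplicative gap below $|a|$. You instead take the exact Gaussian ground state of the full-line oscillator $-d^2/dt^2+(\xi+at)^2$ (eigenvalue $|a|$), which lies in the form domain $B^1(\R)$ and agrees with the true potential on $t<0$, so that $q_a[\xi](u_\xi)=|a|+R(\xi)$ with $R(\xi)=(1-a)\int_0^\infty t\,[2\xi+(1+a)t]\,u_\xi^2\,dt$; this identity is correct, and after the substitution $t=r/\sqrt{|a|}$, $\alpha=-\xi/\sqrt{|a|}$, one gets
\begin{equation*}
R(\xi)=\frac{(1-a)e^{-\alpha^2}}{\sqrt{\pi}}\int_0^\infty\Bigl[\tfrac{2\xi}{\sqrt{|a|}}\,r+\tfrac{1+a}{|a|}\,r^2\Bigr]e^{-2\alpha r-r^2}\,dr
=-\frac{(1-a)}{2\sqrt{\pi}\,\alpha}e^{-\alpha^2}\bigl(1+o(1)\bigr),
\end{equation*}
since the negative linear term contributes at order $\alpha^{-1}e^{-\alpha^2}$ while the positive quadratic term is $O(\alpha^{-3}e^{-\alpha^2})$; so indeed $R(\xi)<0$ for $\xi$ sufficiently negative. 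You should spell out this last moment computation rather than assert it, but it is routine and I have checked it closes. The trade-off between the two proofs: yours is more elementary (no Robin spectral input, no parameter tuning) but produces only a non-explicit, exponentially small improvement over $|a|$ at a single far-out $\xi$, whereas the paper's construction gives the stronger, explicit upper bound of Remark~\ref{rem:HPRS}. The deduction of the global minimum from continuity, $\lim_{\xi\to-\infty}\mu_a(\xi)=|a|$ and $\lim_{\xi\to+\infty}\mu_a(\xi)=+\infty$ (Proposition~\ref{prop:mu_a_lim}) is the same in both proofs.
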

\begin{proof}
The proof here is inspired by~\cite{kachmar2007perfect}.  Define the function
\begin{equation}\label{eq:u}
u(t)=
\begin{cases}
          \varphi_\gamma(0)\exp(-mt),& t\geq 0\,,\\
					\varphi_\gamma(-\sqrt{|a|}t),& t<0\,.
\end{cases}
\end{equation}
where $\gamma$ and $m$ are two positive constants to be fixed later, and $\varphi_\gamma=\varphi_{\gamma,\xi(\gamma)}$ is the normalized eigenfunction defined in Theorem~\ref{thm:teta_gamma} and  associated to the eigenvalue $\Theta(\gamma)=\mu\big(\gamma,\xi(\gamma)\big)$ introduced in~\eqref{eq:theta_gamma}. 
One can check that $u \in \dom\big(q_a[\xi]\big)$, hence by the min-max principle, for all $\xi\in\R$,
\begin{equation} \label{eq:mu_quad}
\mu_a(\xi)\leq\frac{q_a[\xi](u)}{\|u\|^2_{L^2(\R)}}\,.
\end{equation}
Pick $\xi \in \R$. We will choose $\xi$ precisely later. The quadratic form $q_a[\xi](u)$ defined in~\eqref{eq:quad} can be decomposed as follows:
\[q_a[\xi](u)=q_a^{(1)}[\xi](u)+q_a^{(2)}[\xi](u)\]
where
\[q_a^{(1)}[\xi](u) =\int_0^{+\infty} \big(|u'(t)|^2+|(t+\xi)u(t)|^2\big)\,dt\,,\]
and
\[q_a^{(2)}[\xi](u)=\int_{-\infty}^0 \big(|u'(t)|^2+|(at+\xi)u(t)|^2\big)\,dt\,.\] 
A simple computation gives
\begin{equation} \label{eq:quad1}
q_a^{(1)}[\xi](u)=\Big(\frac m2 +\frac {\xi^2}{2m} +\frac {\xi}{2m^2} +\frac 1{4m^3}\Big) |\varphi_\gamma(0)|^2\,.
\end{equation}
On the other hand, 
for $t<0$, $u(t)=\varphi_\gamma(-\sqrt{|a|}t)$, so we do the change of variable $y=-\sqrt{|a|}t$ which in turn yields
\begin{equation*} 
q_a^{(2)}[\xi](u)=\sqrt{|a|} \int_0^{+\infty} \left(\big|\varphi'_\gamma(y)\big|^2+\left|\left(y+\frac \xi {\sqrt{|a|}}\right)\varphi_\gamma(y)\right|^2\right)\,dy\,.
\end{equation*}
Now we select $\xi=-\sqrt{|a|}\xi(\gamma)$, where $\xi(\gamma)$ is the value defined in Theorem~\ref{thm:teta_gamma}. That way we get
\begin{equation} \label{eq:quad2}
q_a^{(2)}[\xi](u)=\sqrt{|a|}\, \big(\Theta(\gamma)-\gamma|\varphi_\gamma(0)|^2\big)\,.
\end{equation}
The definition of the function $u$ in~\eqref{eq:u} yields
\begin{equation} \label{eq:norm_u}
\int_{-\infty}^{+\infty} |u(t)|^2\,dt= \frac {|\varphi_\gamma(0)|^2}{2m} +\frac 1{\sqrt{|a|}}\,.
\end{equation}
Combining the results in~\eqref{eq:quad1}--\eqref{eq:norm_u} and using Theorem~\ref{thm:teta_gamma}, we rewrite~\eqref{eq:mu_quad} as follows
\begin{align*}
\mu_a(\xi)&\leq \frac {\sqrt{|a|}\Theta(\gamma)+\Big(\frac m2 -\sqrt{|a|}\gamma+\frac {\xi^2}{2m} +\frac {\xi}{2m^2} +\frac 1{4m^3}\Big)|\varphi_\gamma(0)|^2}{\frac 1 {\sqrt{|a|}}+ \frac {|\varphi_\gamma(0)|^2}{2m}}\\
                &= \frac {\sqrt{|a|}\Theta(\gamma)+\Big(\frac m2 -\sqrt{|a|}\gamma+\frac {|a|\Theta(\gamma)}{2m} +\frac {|a|\gamma^2}{2m}-\frac {\sqrt{|a|\big(\Theta(\gamma)+\gamma^2\big)}}{2m^2} +\frac 1{4m^3}\Big)|\varphi_\gamma(0)|^2}{\frac 1 {\sqrt{|a|}}+ \frac {|\varphi_\gamma(0)|^2}{2m}}\,.
\end{align*}
By Theorem~\ref{thm:teta_gamma},  for all $\gamma>0$ we have $0<\Theta(\gamma)<1$. Thus
\[\mu_a(\xi)\leq\frac {\sqrt{|a|}\Theta(\gamma)+\Big(\frac m2 -\sqrt{|a|}\gamma+\frac {|a|}{2m} +\frac {|a|\gamma^2}{2m}-\frac {\sqrt{|a|}\gamma}{2m^2} +\frac 1{4m^3}\Big)|\varphi_\gamma(0)|^2}{\frac 1 {\sqrt{|a|}}+ \frac {|\varphi_\gamma(0)|^2}{2m}}\,.\]
To finish the proof of the lemma, we choose $\gamma= \sqrt{ 1/(2|a|(1-|a|))}$ and  $m=\sqrt{|a|}\gamma$ so that the quantity
$ m/2 -\sqrt{|a|}\gamma+|a|/2m +|a|\gamma^2/2m-\sqrt{|a|}\gamma/2m^2 + 1/4m^3$ vanishes. Using the fact that $\Theta(\gamma)<1$, we obtain
\begin{equation}\label{eq:eigen_up}
\mu_a(\xi)\leq\frac {\sqrt {|a|} \Theta(\gamma)}{\frac 1{\sqrt {|a|}}+ \frac {|\varphi_\gamma(0)|^2}{2m}}<|a|\Theta(\gamma)<|a|\,.
\end{equation}
Now, the existence of the global minimum becomes a consequence of  Theorem~\ref{thm:regularity} and Proposition~\ref{prop:mu_a_lim} in Appendix~A.
\end{proof}
\begin{rem}\label{rem:HPRS}
Note that our proof of Theorem~\ref{thm:up_eigen} yields the  upper bound
\[\forall~a\in(-1,0)\,,\quad \beta_a < |a|\Theta\left(\,\sqrt{\frac1{2|a|(1-|a|)}}\,\right)\,,\]
which is stronger than  $\displaystyle\beta_a< |a|$.
\end{rem}

Collecting~\eqref{eq:lambda_a}--\eqref{eq:teta_beta1} and the result in Theorem~\ref{thm:up_eigen}, we deduce the following facts regarding the bottom of the spectrum of the   operator $\mathcal L_a$ introduced in~\eqref{eq:ham_operator}.

\begin{proposition}\label{prop:op-La}
For all  $a\in[-1,1)\setminus\{0\}$, let $\mathcal L_a$ and $\beta_a$ be as in~\eqref{eq:ham_operator} and~\eqref{eq:lamda} respectively, and $\Theta_0$ be as in~\eqref{eq:theta1}. The following statements hold
\begin{enumerate}
\item For all $a\in(0,1)$, $\beta_a=a$ .
\item For all $a\in[-1,0)$, $|a|\Theta_0 \leq\beta_a<|a|$ , there exist $\zeta_a<0$ and a function $\phi_a\in L^2(\R)$ such that
\begin{equation}\label{eq:phi}
\int_{\R} | \phi_a|^2\,dt=1, \qquad -\phi''_a(t)+V_a(\zeta_a,t)\phi_a(t)=\beta_a\phi_a(t)\ \mathrm{in}\ \R\,,
\end{equation}
where $V(\cdot,\cdot)$ is introduced in~\eqref{eq:potential}.
\item For all $a\in[-1,0)$, the function 
\begin{equation}\label{eq:psi_0} 
\psi_a(x_1,x_2)=e^{i\zeta_ax_1}\phi_a(x_2)
\end{equation}
is a bounded eigenfunction of the operator $\mathcal L_a$ and satisfies
\[\mathcal L_a\psi_a=\beta_a\psi_a\,.\]
\end{enumerate}
\end{proposition}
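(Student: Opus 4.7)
The plan is to use the min-max principle with a carefully designed piecewise trial function that exploits the fact that, on the half-line $t<0$, after rescaling by $\sqrt{|a|}$, the fiber operator takes the form $|a|$ times a shifted harmonic oscillator on $\R_+$; the resulting boundary condition at $t=0$ from the rescaling can be made to match a Robin condition for the standard Sturm--Liouville operator $H[\gamma,\xi]$ studied in Section~\ref{sec:sturm1}. Concretely, for free parameters $\gamma,m>0$ to be chosen, I would take
\[
u(t)=\begin{cases}\varphi_\gamma(0)\,e^{-mt},& t\geq 0,\\ \varphi_\gamma(-\sqrt{|a|}\,t),& t<0,\end{cases}
\]
where $\varphi_\gamma=\varphi_{\gamma,\xi(\gamma)}$ is the normalized Robin ground state with eigenvalue $\Theta(\gamma)$ from Theorem~\ref{thm:teta_gamma}. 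This $u$ is continuous at $0$ and lies in $B^1(\R)=\dom(q_a[\xi])$, so the min-max principle gives $\mu_a(\xi)\leq q_a[\xi](u)/\|u\|_{L^2(\R)}^2$.

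Next, I would split $q_a[\xi](u)$ into contributions from $t>0$ and $t<0$. The half-line $t>0$ contribution is a Gaussian-type integral of $e^{-2mt}(t+\xi)^2$ that reduces to an explicit polynomial in $1/m$ and $\xi$. For $t<0$, the substitution $y=-\sqrt{|a|}\,t$ converts the integrand into the Robin quadratic form of $\varphi_\gamma$ with argument $y+\xi/\sqrt{|a|}$; choosing $\xi=-\sqrt{|a|}\,\xi(\gamma)$ makes this argument exactly the one at which $\varphi_\gamma$ realizes the minimum $\Theta(\gamma)$. Integration by parts together with the Robin condition $\varphi'_\gamma(0)=\gamma\varphi_\gamma(0)$ then yields $\sqrt{|a|}\,(\Theta(\gamma)-\gamma|\varphi_\gamma(0)|^2)$. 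The norm $\|u\|_{L^2(\R)}^2$ is just $\frac{|\varphi_\gamma(0)|^2}{2m}+\frac{1}{\sqrt{|a|}}$.

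The crux is the optimization: I need to choose $m,\gamma>0$ so the resulting upper bound is strictly below $|a|$. Using $\xi(\gamma)^2=\Theta(\gamma)+\gamma^2$, the bracket multiplying $|\varphi_\gamma(0)|^2$ in the numerator simplifies to a sum of terms scaling like $m$, $\sqrt{|a|}\gamma$, and inverse powers of $m$. The natural simultaneous choice $m=\sqrt{|a|}\,\gamma$ together with $\gamma=\sqrt{1/(2|a|(1-|a|))}$ is designed so that this bracket vanishes after bounding $\Theta(\gamma)\leq 1$, collapsing the inequality to
\[
\mu_a(\xi)\leq \frac{\sqrt{|a|}\,\Theta(\gamma)}{1/\sqrt{|a|}+|\varphi_\gamma(0)|^2/(2m)}<|a|\,\Theta(\gamma)<|a|,
\]
where the strict inequalities use $\Theta(\gamma)<1$ (Theorem~\ref{thm:teta_gamma}) and $|\varphi_\gamma(0)|^2>0$. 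Finally, since $\mu_a$ is continuous (Theorem~\ref{thm:regularity}) and tends to a limit $\geq|a|$ as $|\xi|\to\infty$ (Proposition~\ref{prop:mu_a_lim}), the strict bound $\mu_a(\xi)<|a|$ at one finite point forces the infimum to be attained, giving the global minimum statement.

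The main obstacle I anticipate is the algebraic gymnastics in the optimization step: identifying the combined choice of $\gamma$, $m$, and $\xi$ that cancels the undesirable $|\varphi_\gamma(0)|^2$ term and leaves a clean inequality against $|a|$. Everything else—the continuity and domain verification, the integration by parts, and the harmonic-oscillator asymptotics—is routine, but the specific algebraic tuning is where the real content of the proof lies.
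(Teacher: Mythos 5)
Your argument is a faithful reconstruction of the paper's proof of Theorem~\ref{thm:up_eigen}: the trial function, the choice $\xi=-\sqrt{|a|}\,\xi(\gamma)$, and the tuning $m=\sqrt{|a|}\gamma$ with $\gamma=\sqrt{1/(2|a|(1-|a|))}$ are exactly what the paper does, and that part is correct. The problem is that Theorem~\ref{thm:up_eigen} is only one ingredient of Proposition~\ref{prop:op-La}; the paper's proof of the proposition consists of collecting \eqref{eq:lambda_a}--\eqref{eq:teta_beta1} together with that theorem, and your proposal supplies none of the other pieces. Concretely, you do not address item~(1) ($\beta_a=a$ for $a\in(0,1)$, which follows from the monotonicity of $\mu_a$ and the limits in Proposition~\ref{prop:mu_a_lim}); you do not prove the lower bound $|a|\Theta_0\leq\beta_a$ (the min-max comparison in \eqref{eq:teta_beta1}); and you do not touch item~(3), namely that $\phi_a$ is bounded (via $B^1(\R)\hookrightarrow L^\infty$) and that separation of variables turns the fiber eigenvalue equation into $\mathcal L_a\psi_a=\beta_a\psi_a$.

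There is also a genuine failure point inside the part you do treat: your choice $\gamma=\sqrt{1/(2|a|(1-|a|))}$ is undefined at $a=-1$, yet the proposition's item~(2) is asserted for all $a\in[-1,0)$. The paper handles $a=-1$ separately by the symmetry reduction \eqref{eq:ma_mN} to the Neumann problem, giving $\beta_{-1}=\Theta_0<1=|a|$ with the minimum at $\zeta_{-1}=-\sqrt{\Theta_0}$; your construction cannot produce this case, so you need either that symmetry argument or some other device for $a=-1$. Finally, once the global minimum $\zeta_a$ is obtained, you should say a word about why the ground state $\phi_a$ of $\mathfrak h_a[\zeta_a]$ exists and is normalizable (compactness of the embedding $B^1(\R)\hookrightarrow L^2(\R)$ gives discrete spectrum), since \eqref{eq:phi} is part of the claim.
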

\section{Reduced Ginzburg--Landau Energy}\label{sec:lim-en}

\subsection{The functional and the main result}
Assume that $a\in[-1,1)\setminus\{0\}$ is fixed,  $\sigma$ is the step function defined in~\eqref{eq:sigma1} and $\Ab_0$ is the magnetic potential  defined in~\eqref{canon}. For every $R>0$, consider the strip
\begin{equation}\label{eq:SR}
S_R=(-R/2,R/2)\times(-\infty,+\infty)\,.
\end{equation}
We introduce the space
\begin{equation}\label{eq:DR}
\mathcal D_R=\left\{u \in L^2(S_R)~:~\big(\nabla-i\sigma \mathbf A_0 \big)u \in L^2(S_R),~u\Big(x_1=\pm \frac R2,x_2\Big)=0 \right\}\,.\end{equation}
For $b> 0$, we define the following Ginzburg--Landau energy  on $\mathcal D_R$ by
\begin{equation}\label{eq:Gb}
\mathcal G_{a,b,R}(u)=\int_{S_R} \left(b\big|(\nb-i\sigma \Ab_0)u\big|^2-|u|^2+\frac 12 |u|^4\right)\,dx\,,
\end{equation}
along with the ground state energy
\begin{equation}\label{eq:m0-mN}
\begin{aligned}
\mathfrak g_a (b,R)=\displaystyle\inf_{u \in \mathcal D_R}
\mathcal G_{a,b,R}(u)\,. 
\end{aligned}
\end{equation}
Our objective is to prove 

\begin{theorem}\label{thm:eba}
Assume that $a \in [-1,1)\setminus\{0\}$, $b \geq 1/|a|$, $R>0$, $\mathfrak g_a (b,R)$ is the ground state energy in~\eqref{eq:m0-mN}, and $\beta_a$ is  defined in~\eqref{eq:lamda}.

The following holds:
\begin{enumerate}
\item $\mathfrak g_a (b,R)\leq 0$.
\item If $a \in (0,1)$,  then $\mathfrak g_a (b,R)=0$.
\item If $a \in [-1,0)$,  then there exists a constant $\mathfrak e_a(b)\leq0$ such that 
\begin{equation}\label{eq:eba1}
\lim_{R\rightarrow +\infty}\frac {\mathfrak g_a (b,R)}{R}=\mathfrak e_a(b)\,.
\end{equation}
Furthermore, $\mathfrak e_a(b)=0$ if and only if $b\geq 1/\beta_a$.
\item For all $a \in [-1,0)$, the function $\left[1/|a|,+\infty\right)\ni b \mapsto \mathfrak e_a(b)$ is  monotone non-decreasing and continuous.
\item For all $a \in [-1,0)$, there exists a universal positive constant $C$  such that 
\begin{equation}\label{eq:eba2}
\forall\,R\geq 4\,,\quad \mathfrak e_a(b) \leq \frac {\mathfrak g_a (b,R)}R \leq \mathfrak e_a(b)+C\frac {b^2}{R^{\frac13}}\,.
\end{equation}
\end{enumerate}
\end{theorem}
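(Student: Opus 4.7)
Parts (1) and (2) are the quick entry point. Testing the functional at $u\equiv 0$ immediately gives $\mathfrak g_a(b,R)\leq 0$. For $a\in(0,1)$, I would extend any $u\in\mathcal D_R$ by zero to $\R^2$ and invoke the spectral identity $\inf\Spec(\mathcal L_a)=\beta_a=a$ from Proposition~\ref{prop:op-La}; since $b\geq 1/a=1/\beta_a$,
\[\mathcal G_{a,b,R}(u)\geq(b\beta_a-1)\|u\|_{L^2}^2+\tfrac12\|u\|_{L^4}^4\geq 0,\]
which together with (1) forces $\mathfrak g_a(b,R)=0$.

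The heart of Part~(3) is a Fekete-type argument. The subadditivity $\mathfrak g_a(b,R_1+R_2)\leq\mathfrak g_a(b,R_1)+\mathfrak g_a(b,R_2)$ follows by translating quasi-minimizers $u_j\in\mathcal D_{R_j}$ in the $x_1$-direction into disjoint substrips of $S_{R_1+R_2}$; translation invariance of the functional holds because $\sigma$ and $\mathbf A_0$ depend only on $x_2$, and the Dirichlet conditions allow the translates to be glued with no interaction term. To obtain a linear lower bound $\mathfrak g_a(b,R)\geq -CR$, I would adapt the Agmon-type estimate behind Theorem~\ref{thm:decay} to the strip setting and show that any minimizer is exponentially concentrated in a slab $\{|x_2|\leq L\}$ with $L$ independent of $R$; the pointwise inequality $-|u|^2+\tfrac12|u|^4\geq-\tfrac12$ on that slab closes the bound. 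Fekete's lemma then yields $\mathfrak e_a(b)=\lim_R\mathfrak g_a(b,R)/R=\inf_R\mathfrak g_a(b,R)/R\in(-\infty,0]$. For the zero-locus characterization, the implication $b\geq 1/\beta_a\Rightarrow\mathfrak e_a(b)=0$ repeats the spectral computation of Part~(2); for the converse, I would use the eigenfunction $\psi_a(x_1,x_2)=e^{i\zeta_a x_1}\phi_a(x_2)$ of Proposition~\ref{prop:op-La}, localized in $x_1$ by a Dirichlet cutoff $\chi$ on $(-R/2,R/2)$ and scaled by a small amplitude $\varepsilon$, to obtain after optimization
\[\frac{\mathfrak g_a(b,R)}R\leq-\frac{(1-b\beta_a)^2}{2}\frac{\|\phi_a\|_{L^2}^4}{\|\phi_a\|_{L^4}^4}+O(1/R)<0.\]

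For Part~(4), monotonicity in $b$ is immediate from $\partial_b\mathcal G_{a,b,R}(u)=\|(\nabla-i\sigma\mathbf A_0)u\|_{L^2}^2\geq 0$. Lipschitz continuity then follows by picking a quasi-minimizer $u$ at $b_1$ and bounding its kinetic energy by $b_1\|(\nabla-i\sigma\mathbf A_0)u\|_{L^2}^2\leq \|u\|_{L^2}^2\leq CR$ via the exponential localization above, which gives $|\mathfrak g_a(b_2,R)-\mathfrak g_a(b_1,R)|\leq C|b_2-b_1|R/b_1$ and hence continuity of $\mathfrak e_a$.

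Part~(5) is the main technical step. The bound $\mathfrak e_a(b)\leq\mathfrak g_a(b,R)/R$ is immediate from $\mathfrak e_a(b)=\inf_R$. For the upper bound, my plan is to tile $S_R$ by $N\sim R/R_0$ copies of $S_{R_0}$ separated by thin Dirichlet buffers of width $\eta$, with each cell carrying a minimizer of $\mathcal G_{a,b,R_0}$. The resulting inequality
\[\mathfrak g_a(b,R)\leq N\mathfrak g_a(b,R_0)+N\cdot(\text{buffer cost})\]
is then optimized in $R_0$ and $\eta$: balancing the buffer contribution (controlled by the factor $b$ on the kinetic term and by the Agmon length in $x_2$, which itself scales with $b$) against the lost length $N\eta/R$ leads to the choice $R_0\sim R^{2/3}$ and the claimed $R^{-1/3}$ rate, with the $b^2$ prefactor coming from the two powers of $b$ just mentioned. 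The principal obstacle is making the buffer-loss bound uniform in $R_0$ without invoking any global $L^\infty$ or gradient bound on the strip-minimizer; I would handle this by a cutoff-and-comparison argument, multiplying a minimizer of $\mathcal G_{a,b,R_0+2\eta}$ by a smooth $x_1$-cutoff and carefully estimating the $L^2$-mass and kinetic energy lost in the buffer region using the exponential $x_2$-localization established in Part~(3).
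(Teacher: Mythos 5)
Your handling of Parts (1), (2), the trial-state computation showing $\mathfrak e_a(b)<0$ for $b<1/\beta_a$, and the monotonicity in (4) matches the paper, and your Fekete route to the limit in (3) and Lipschitz argument for continuity in (4) are acceptable variants of the paper's use of the abstract Lemma~\ref{lem:df}. There are, however, two genuine gaps. The first concerns the a priori mass bound $\int_{S_R}|u|^2\,dx\leq CbR$ for (quasi-)minimizers on the \emph{unbounded} strip, on which every quantitative step rests: the linear lower bound (note that $-|u|^2+\tfrac12|u|^4\geq-\tfrac12$ is useless since $|S_R|=+\infty$, and $\mathcal G_{a,b,R}(u)\geq(b\beta_a-1)\|u\|_2^2$ is not coercive without controlling $\|u\|_2$), the kinetic-energy bound in your continuity argument, and the localization errors in (5). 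You propose to obtain it from Agmon-type exponential concentration in $x_2$ with a length $L$ independent of $R$, but at the admissible endpoint $b=1/|a|$ (and at $a=-1$, $b=1$) the inequalities $b\geq1$ and $b|a|\geq1$ degenerate to equalities, so there is no spectral gap on either half-plane and no uniform exponential decay; the paper substitutes the logarithmic-weight multiplier argument of Lemma~\ref{lem:Ln_mm}, valid for all $b\geq1/|a|$, and needs a separate exhaustion-and-diagonal argument (Lemmas~\ref{lem:phim_bound}--\ref{lem:min_ex}) even to produce a minimizer on $S_R$, a point your proposal takes for granted.

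The second gap is in Part (5), where your construction points the wrong way. Gluing minimizers of $S_{R_0}$ into $S_R$ costs nothing (Dirichlet functions juxtapose freely, as in Lemma~\ref{lem:periodic}) and yields only $\mathfrak g_a(b,R)\leq N\mathfrak g_a(b,R_0)$, i.e.\ $\mathfrak g_a(b,R)/R\lesssim\mathfrak g_a(b,R_0)/R_0$; since $\mathfrak g_a(b,R_0)/R_0\geq\mathfrak e_a(b)$ for \emph{every} $R_0$, no choice of $R_0$ or buffer width can convert this into $\mathfrak g_a(b,R)/R\leq\mathfrak e_a(b)+Cb^2R^{-1/3}$ --- the bound you would need on $\mathfrak g_a(b,R_0)/R_0$ is exactly the statement being proved. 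What is required is the almost-superadditivity $\mathfrak g_a(b,R')\geq N\,\mathfrak g_a\big(b,(1+\alpha)R\big)-Cb^2N/(\alpha^2R)$ for $R'\gg R$, obtained by cutting a minimizer of the \emph{large} strip with an IMS partition of unity and paying the localization error $b\sum_j\||\nabla\chi_{R,j}|\varphi\|_2^2$ (this is Lemma~\ref{lem:limit}, and it again consumes the mass bound above); dividing by $R'$ and letting $R'\to+\infty$ gives $\mathfrak e_a(b)\geq\mathfrak g_a\big(b,(1+\alpha)R\big)/R-Cb^2/(\alpha^2R^2)$, and the choice $\alpha\sim R^{-1/3}$ delivers the stated rate via Lemma~\ref{lem:df}. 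Your closing remark about cutting a minimizer of $\mathcal G_{a,b,R_0+2\eta}$ with an $x_1$-cutoff gestures at the right mechanism, but as written it only compares two nearby scales and never reaches $\mathfrak e_a(b)$.
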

The proof of Theorem~\ref{thm:eba} will occupy the rest of this section through a sequence of lemmas.

\subsection{The trivial case}

 We start by handling the trivial situation where the ground state energy vanishes:

\begin{lemma}\label{lem:trivial}
If $a \in [-1,1)\setminus\{0\}$ and $b\geq 1/\beta_a$, then for all $R>0$, 
	  $\mathfrak g_a (b,R)=0$.
\end{lemma}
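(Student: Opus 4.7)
The plan is to establish the two inequalities $\mathfrak g_a(b,R)\leq 0$ and $\mathfrak g_a(b,R)\geq 0$ separately. The upper bound is immediate: the function $u\equiv 0$ belongs to $\mathcal D_R$ and gives $\mathcal G_{a,b,R}(0)=0$, so $\mathfrak g_a(b,R)\leq 0$.

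For the lower bound, I would exploit the variational characterization of $\beta_a$ from~\eqref{eq:lamda}. Given any $u\in\mathcal D_R$, since $u$ vanishes on the vertical lines $x_1=\pm R/2$, its extension $\tilde u$ by zero to all of $\R^2$ lies in $H^1(\R^2)$ and
\[
\int_{\R^2}\bigl|(\nabla-i\sigma\Ab_0)\tilde u\bigr|^2\,dx=\int_{S_R}\bigl|(\nabla-i\sigma\Ab_0)u\bigr|^2\,dx,\qquad \|\tilde u\|_{L^2(\R^2)}=\|u\|_{L^2(S_R)}.
\]
Applying the min-max principle to the self-adjoint operator $\mathcal L_a$ gives
\[
\int_{S_R}\bigl|(\nabla-i\sigma\Ab_0)u\bigr|^2\,dx\geq \beta_a\int_{S_R}|u|^2\,dx.
\]

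Substituting this spectral inequality into $\mathcal G_{a,b,R}(u)$ yields
\[
\mathcal G_{a,b,R}(u)\geq (b\beta_a-1)\int_{S_R}|u|^2\,dx+\frac12\int_{S_R}|u|^4\,dx.
\]
By Proposition~\ref{prop:op-La} we have $\beta_a>0$ in all cases ($\beta_a=a>0$ for $a\in(0,1)$ and $\beta_a\geq |a|\Theta_0>0$ for $a\in[-1,0)$), so the assumption $b\geq 1/\beta_a$ makes $b\beta_a-1\geq 0$. Both terms on the right are then non-negative, giving $\mathcal G_{a,b,R}(u)\geq 0$ and hence $\mathfrak g_a(b,R)\geq 0$.

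There is no real obstacle here; the only point requiring care is the extension-by-zero argument, which is legitimate precisely because the test functions in $\mathcal D_R$ are defined with vanishing trace on the lateral boundary $\{x_1=\pm R/2\}$. Combining the two inequalities yields $\mathfrak g_a(b,R)=0$.
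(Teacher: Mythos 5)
Your proof is correct and follows essentially the same route as the paper: the trivial upper bound via $u\equiv 0$, then extension by zero and the min-max principle for $\mathcal L_a$ to get the spectral inequality $\int_{S_R}|(\nabla-i\sigma\Ab_0)u|^2\,dx\geq\beta_a\int_{S_R}|u|^2\,dx$, which makes every term in $\mathcal G_{a,b,R}(u)$ non-negative when $b\beta_a\geq 1$. (Only a cosmetic remark: what the extension argument really needs is that $\tilde u$ lies in the form domain of $\mathcal L_a$, i.e. $\tilde u,(\nabla-i\sigma\Ab_0)\tilde u\in L^2(\R^2)$, rather than literally $\tilde u\in H^1(\R^2)$; the vanishing lateral trace gives exactly that.)
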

\begin{rem}\label{rem:trivial}~
\begin{enumerate}
\item 
Under the assumptions in Lemma~\ref{lem:trivial}, the function $u=0\in\mathcal D_R$ is a minimizer  of the functional in~\eqref{eq:Gb}. 
\item When $a \in (0,1)$, $\beta_a=a$ by Proposition~\ref{prop:op-La}, hence Lemma~\ref{lem:trivial} yields that  $\mathfrak g_a (b,R)= 0$ for all $b\geq 1/a$ and $R>0$.
\end{enumerate}
\end{rem}
\begin{proof}[Proof of Lemma~\ref{lem:trivial}] 
We have the obvious upper bound	$\mathfrak g_a (b,R)\leq \mathcal G_{a,b,R}(0)=0$. Next we prove the lower bound $\mathfrak g_a (b,R)\geq 0$. Pick an arbitrary function $u \in \mathcal D_R$ and extend it  by zero on $\R^2$. Using the min-max principle, we get 
	\[\mathcal G_{a,b,R}(u)\geq b\beta_a\int_{S_R} |u|^2\,dx+\int_{S_R}\Big(-|u|^2+\frac 12 |u|^4\Big)\,dx \geq 0~\mathrm{since~}b\geq \frac1{\beta_a}\,.\]
Minimizing over $u\in\mathcal D_R$\,, we get  $\mathfrak g_a (b,R)\geq 0$.
\end{proof}

\subsection{Existence of minimizers}

Now we handle the following case (which is complementary to the one in Lemma~\ref{lem:trivial}):
\begin{equation}\label{eq:A2}
-1\leq a<0\quad \mathrm{and}\quad \frac 1{|a|} \leq b < \frac 1 {\beta_a}\,,
\end{equation} 
where $\beta_a$ is the first eigenvalue introduced in~\eqref{eq:lamda} .
Under Assumption~\eqref{eq:A2}, we will demonstrate the existence of a  minimizer of the functional in~\eqref{eq:Gb} along with an estimate of its decay at infinity. This is the content of
\begin{proposition}\label{prop:Ln}
Assume that~\eqref{eq:A2} holds. For all  $R>0$, there exists a function $\varphi_{a,b,R}\in \mathcal D_R$ such that
\begin{equation}\label{eq:phi_norm}
\mathcal G_{a,b,R}(\varphi_{a,b,R})=\mathfrak g_a (b,R)\quad\mathrm{and}\quad \|\varphi_{a,b,R}\|_{L^\infty(S_R)}\leq 1\,.
\end{equation}
Here $\mathcal G_{a,b,R}$ is the functional introduced in~\eqref{eq:Gb} and $\mathfrak g_a(b,R)$ is the ground state energy introduced in~\eqref{eq:m0-mN}.

Furthermore, there exists a universal constant $C>0$ such that, for all $R>0$, the function $\varphi_{a,b,R}$ satisfies
\begin{equation}\label{eq:Ln}
\int_{S_R\cap\{|x_2|\geq 4\}} \frac{|x_2|}{\big(\ln|x_2|\big)^2} \Big(\big|(\nabla-i\sigma\Ab_0)\varphi_{a,b,R} \big|^2+|\varphi_{a,b,R}|^2 \Big)\,dx\leq CbR\,,
\end{equation}
\begin{equation}\label{eq:Ln1}
\int_{S_R\cap\{|x_2|\geq 4\}} \frac{|x_2|^3}{\big(\ln|x_2|\big)^2}|\varphi_{a,b,R}|^4\,dx\leq Cb^2R\,,
\end{equation}
and
\begin{equation}\label{eq:phi_decay}
\int_{S_R} \Big(b\big|(\nabla-i\sigma\Ab_0)\varphi_{a,b,R}\big|^2+|\varphi_{a,b,R}|^2 \Big)\,dx\leq CbR\,.
\end{equation}
\end{proposition}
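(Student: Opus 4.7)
The proposition has three parts — existence of $\varphi_{a,b,R}$, the pointwise bound $\|\varphi_{a,b,R}\|_\infty\leq 1$, and the weighted decay estimates \eqref{eq:Ln}, \eqref{eq:Ln1}, \eqref{eq:phi_decay}. I would treat them in the order: (i) truncation reduction for the $L^\infty$ bound, (ii) existence on a bounded box plus limit, (iii) weighted estimates (the technical core). Given $u\in\mathcal{D}_R$, set $\tilde u := u\min\bigl(1,1/|u|\bigr)$. On $\{|u|\geq 1\}$, writing $u=|u|e^{i\theta}$ and applying the diamagnetic inequality gives $|(\nabla-i\sigma\mathbf{A}_0)\tilde u|\leq|(\nabla-i\sigma\mathbf{A}_0)u|$, while $s\mapsto -s+\tfrac12 s^2$ is decreasing on $[1,\infty)$ for $s=|u|^2$; hence $\mathcal{G}_{a,b,R}(\tilde u)\leq\mathcal{G}_{a,b,R}(u)$, and minimizing sequences may be taken with $\|u_n\|_\infty\leq 1$. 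For existence I would first minimize on the bounded rectangle $S_{R,L}=(-R/2,R/2)\times(-L,L)$ with homogeneous Dirichlet conditions on the whole boundary, where weak compactness in $H^1$ and the compact embedding $H^1(S_{R,L})\hookrightarrow L^4$ produce a minimizer $u_L$ with $\|u_L\|_\infty\leq 1$, solving
\[
-b(\nabla-i\sigma\mathbf{A}_0)^2 u_L=(1-|u_L|^2)u_L\quad\text{in }S_{R,L}.
\]
Assuming the weighted bounds for $u_L$ uniformly in $L$, extending by zero and passing to $L\to\infty$ would give a candidate $\varphi_{a,b,R}\in\mathcal{D}_R$; lower semicontinuity of the magnetic Dirichlet form together with strong $L^4(S_R)$-convergence coming from the weighted tail control would identify $\varphi_{a,b,R}$ as a minimizer, since every $C_c^\infty$-competitor is eventually supported in some $S_{R,L}$ and hence tested against $u_L$.

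The technical core is the weighted estimate for $u_L$. I would introduce smooth cutoffs $W_\pm(x_2)$ supported in $\{\pm x_2\geq 2\}$ and designed so that $W_\pm^2(x_2)\sim |x_2|/(\ln|x_2|)^2$ at infinity. With $D=\nabla-i\sigma\mathbf{A}_0$, the IMS localization identity gives
\[
\int W_\pm^2|Du_L|^2\,dx = \int|D(W_\pm u_L)|^2\,dx + \int W_\pm W_\pm''\,|u_L|^2\,dx.
\]
Since $W_\pm u_L$ is supported in a half-plane where the magnetic field equals $1$ (resp.\ $a$), the min-max principle (together with the well-known fact that the ground-state energy of the constant-field magnetic Laplacian of strength $B$ in a half-plane equals $|B|$) yields $\int|D(W_\pm u_L)|^2\geq\kappa_\pm\int W_\pm^2|u_L|^2$ with $\kappa_+=1$ and $\kappa_-=|a|$. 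Pairing this with the identity obtained by testing the Euler--Lagrange equation against $W_\pm^2\overline{u_L}$ and rearranging produces
\[
(b\kappa_\pm-1)\int W_\pm^2|u_L|^2\,dx+\int W_\pm^2|u_L|^4\,dx \leq b\int(W_\pm')^2|u_L|^2\,dx.
\]
Since $b\kappa_\pm\geq 1$ by assumption, the first term is nonnegative. The right-hand side is controlled because $(W_\pm')^2$ is of order $1/(|x_2|(\ln|x_2|)^2)$, hence bounded on $\{|x_2|\geq 4\}$, and a baseline $L^2$-mass estimate of order $R$ in the transition region $\{|x_2|\leq 4\}$ (obtained by testing the equation against $\overline{u_L}$ itself, using $\|u_L\|_\infty\leq 1$ and $\mathcal{G}_{a,b,R}(u_L)\leq 0$) propagates outward to give \eqref{eq:Ln1}; adding $\int W^2|Du_L|^2$ back through the same identity yields \eqref{eq:Ln}, and \eqref{eq:phi_decay} follows as an unweighted by-product.

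\textbf{Main obstacle.} The regime $b|a|\geq 1$ is \emph{critical}: one only has $b\kappa_\pm\geq 1$, with possible equality at $b=1/|a|$, so the linear magnetic Laplacian has no transverse spectral gap and Agmon-style exponential decay is unavailable. The correct scale is the subexponential weight $|x_2|/(\ln|x_2|)^2$; pinpointing it, arranging the IMS identity so that the curvature contribution $W_\pm W_\pm''$ is absorbed while the weight itself still grows, and closing the resulting inequality uniformly in $R$ and $L$ against a baseline mass bound that itself grows linearly in $R$, is the main technical difficulty — and the estimates must remain strong enough under $L\to\infty$ to secure the $L^4$-compactness used in the existence step.
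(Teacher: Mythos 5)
Your overall architecture matches the paper's: minimize first on the truncated boxes $S_{R,m}=(-R/2,R/2)\times(-m,m)$, prove weighted decay estimates uniformly in $m$ by testing the Euler--Lagrange equation against weighted copies of the minimizer and invoking the spectral inequality $\int|(\nabla-i\sigma\Ab_0)v|^2\geq\int|\sigma|\,|v|^2$ for $v$ supported where the field is constant, then pass to the limit. The $L^\infty$ truncation and the identification of the limit as a minimizer are sound in outline (the paper instead gets $\|\varphi_m\|_\infty\leq1$ from the elliptic argument of \cite[Prop.~10.3.1]{fournais2010spectral}, and identifies the limit via $\mathfrak g_a(b,R,m)\to\mathfrak g_a(b,R)$ together with the identity $\mathcal G_{a,b,R,m}(\varphi_m)=-\tfrac12\int|\varphi_m|^4$, but your route through lower semicontinuity plus tightness is workable, granted the weighted bounds).

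The genuine gap is in the weighted estimate, which is the core of the proposition. You choose $W_\pm^2\sim|x_2|/(\ln|x_2|)^2$, so your localization identity yields at best $\int\frac{|x_2|}{(\ln|x_2|)^2}|u_L|^4\leq b\int(W_\pm')^2|u_L|^2$; this is strictly weaker than~\eqref{eq:Ln1}, which carries the weight $|x_2|^3/(\ln|x_2|)^2$ on the quartic term, and that stronger weight is exactly what is needed (via Cauchy--Schwarz against the integrable density $1/(|x_2|(\ln|x_2|)^2)$) to deduce the weighted $L^2$ bound in~\eqref{eq:Ln} and hence~\eqref{eq:phi_decay}. Worse, your control of the right-hand side is circular: $(W_\pm')^2$ being bounded on $\{|x_2|\geq4\}$ reduces $b\int(W_\pm')^2|u_L|^2$ to $Cb\int_{\{|x_2|\geq2\}}|u_L|^2$, i.e.\ to the unweighted mass over the whole strip, which is precisely what~\eqref{eq:phi_decay} asserts; the ``baseline mass of order $R$ near $\{|x_2|\leq4\}$'' does not propagate outward on its own, because in the critical regime the coefficient $b|a|-1$ may vanish and the nonnegative term you discard gives you nothing. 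The paper closes the loop differently: it takes $\chi\sim|x_2|^{3/2}/\ln|x_2|$, so that $\chi^2$ is the strong weight $|x_2|^3/(\ln|x_2|)^2$ while $\chi'^2\sim|x_2|/(\ln|x_2|)^2$ is \emph{unbounded} but subordinate to $\chi^2$: by Cauchy--Schwarz,
\[
\int\chi'^2|\varphi_m|^2\leq\Big(\int\chi'^4/\chi^2\Big)^{1/2}\Big(\int\chi^2|\varphi_m|^4\Big)^{1/2}\leq CR^{1/2}X^{1/2},\qquad X:=\int\chi^2|\varphi_m|^4,
\]
since $\chi'^4/\chi^2\sim1/(|x_2|(\ln|x_2|)^2)$ is integrable in $x_2$. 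The localization identity then becomes the self-improving inequality $X\leq CbR^{1/2}X^{1/2}+CbR$, whence $X\leq Cb^2R$ by Young's inequality; \eqref{eq:Ln} and \eqref{eq:phi_decay} follow by Cauchy--Schwarz and a second localization with $\eta\sim\sqrt{|x_2|}/\ln|x_2|$ for the gradient term. You need this quotient/bootstrap mechanism (or an equivalent one); with your choice of weight the estimate neither closes as argued nor, even if closed, yields~\eqref{eq:Ln1}.
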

In the proof of Proposition~\ref{prop:Ln}, we will use the approach in~\cite[Theorem~3.6]{fournais2013ground} and~\cite{pan2002surface} which can be described in a heuristic manner as follows. The unboundedness of the set $S_R$ makes the existence of the minimizer $\varphi_{a,b,R}$ in~\eqref{eq:phi_norm} non-trivial. In the following, we consider a reduced Ginzburg--Landau energy $\mathcal G_{a,b,R,m}$ defined on the bounded set $S_{R,m}=(-R/2,R/2)\times(-m,m)$, and we establish some decay estimates of its minimizer~$\varphi_{a,b,R,m}$. Later, using a limiting  argument on $\mathcal G_{a,b,R,m}$ and $\varphi_{a,b,R,m}$ for large values of $m$, we obtain the existence of the minimizer $\varphi_{a,b,R}$ together with the decay properties in Proposition~\ref{prop:Ln}.

Since the proof of Proposition~\ref{prop:Ln} is lengthy, we opt to divide it into several lemmas. First, 
for every $m \in \N$, we introduce the set $S_{R,m}=(-R/2,R/2)\times(-m,m)$ and the  functional
\begin{equation}\label{eq:bound_en}
\mathcal G_{a,b,R,m}(u)=\int_{S_{R,m}} \left(b\big|(\nb-i\sigma \Ab_0)u\big|^2-|u|^2+\frac 12 |u|^4\right)\,dx 
\end{equation}
defined over the space 
\begin{multline}\label{eq:DR,m}
\mathcal D_{R,m}=\biggl\{u \in L^2(S_{R,m})~:~\big(\nabla-i\sigma\mathbf A_0 \big)u \in L^2(S_{R,m}),\\
~u\Big(x_1=\pm \frac R2,\cdot\Big)=u\Big(\cdot,x_2=\pm m \Big)=0\,\biggr\}\,.
\end{multline}
Here $\sigma$ was defined in~\eqref{eq:sigma1}. Now we define
the ground state energy
\begin{equation} \label{eq:gr_m}
\mathfrak g_a(b,R,m)=\inf_{u \in \mathcal D_{R,m}} \mathcal G_{a,b,R,m}(u)\,.
\end{equation}
\begin{lemma}\label{lem:Ln_mm}
	Assume that~\eqref{eq:A2} holds. There exists a univeral constant $C>0$, and for all $R>0,\ m\geq 1$, there exists a function $\varphi_{a,b,R,m}\in \mathcal D_{R,m}$ satisfying,
	\begin{equation}\label{eq:phi_norm_m} \|\varphi_{a,b,R,m}\|_{L^\infty(S_{R,m})}\leq 1\,,
	\end{equation}
	\begin{equation}\label{eq:Ln_m}
	\int_{S_{R,m}\cap\{|x_2|\geq 4\}} \frac{|x_2|}{\big(\ln|x_2|\big)^2} \Big(\big|(\nabla-i\sigma\Ab_0)\varphi_{a,b,R,m} \big|^2+|\varphi_{a,b,R,m}|^2 \Big)\,dx\leq CbR\,,
	\end{equation}
	\begin{equation}\label{eq:Ln_m1}
	\int_{S_{R,m}\cap\{|x_2|\geq 4\}} \frac{|x_2|^3}{\big(\ln|x_2|\big)^2}|\varphi_{a,b,R,m}|^4\,dx\leq Cb^2R\,,
	\end{equation}
	and
	\begin{equation}\label{eq:G_m}
	\mathcal G_{a,b,R,m}(\varphi_{a,b,R,m})=\mathfrak g_a(b,R,m)\,.	
	\end{equation}
	Here	  $\mathcal G_{a,b,R,m}$ is the functional introduced in~\eqref{eq:bound_en} and $\mathfrak g_a(b,R,m)$ is the ground state energy introduced in~\eqref{eq:gr_m}.
\end{lemma}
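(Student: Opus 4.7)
The statement assembles three ingredients: the existence of a minimizer $\varphi_{a,b,R,m}$ by the direct method (together with the minimality identity~\eqref{eq:G_m}), the pointwise bound~\eqref{eq:phi_norm_m} by a truncation argument, and the weighted decay estimates~\eqref{eq:Ln_m}--\eqref{eq:Ln_m1} obtained by testing the Euler--Lagrange equation against a weighted $\omega(x_2)\overline{\varphi_{a,b,R,m}}$ with $\omega$ tailored to the magnetic barrier at $x_2=0$.

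\medskip\noindent\emph{Step 1: existence and $L^\infty$-bound.} Since $\mathcal D_{R,m}$ enforces a full Dirichlet condition on the bounded set $S_{R,m}$, the diamagnetic and Poincar\'e inequalities give $\|u\|_{H^1(S_{R,m})}^2 \le C_{R,m}\bigl(b\,\|(\nabla-i\sigma\Ab_0)u\|_{L^2}^2+\|u\|_{L^2}^2\bigr)$, so every minimizing sequence is bounded in $H^1(S_{R,m})$; weak-$H^1$ compactness, the Rellich--Kondrachov embedding $H^1\hookrightarrow L^4$, and weak lower semicontinuity of the magnetic Dirichlet form then yield a minimizer $\varphi_{a,b,R,m}$ satisfying~\eqref{eq:G_m}. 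Replacing $\varphi_{a,b,R,m}$ by $\tilde\varphi:=\varphi_{a,b,R,m}/\max(1,|\varphi_{a,b,R,m}|)$ never increases the magnetic Dirichlet term (by the pointwise chain rule for $|\cdot|$ behind the diamagnetic inequality) and strictly decreases the polynomial part on $\{|\varphi_{a,b,R,m}|>1\}$ (since $s\mapsto s^2-2s$ applied at $s=|\tilde\varphi|^2$ is strictly smaller than at $s=|\varphi_{a,b,R,m}|^2$ there), so minimality forces that set to have zero measure. This is the mechanism of Proposition~\ref{prop:psi}.

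\medskip\noindent\emph{Step 2: weighted decay.} The minimizer satisfies
\[-b(\nabla-i\sigma\Ab_0)^2\varphi_{a,b,R,m}=(1-|\varphi_{a,b,R,m}|^2)\varphi_{a,b,R,m}\quad\text{in}\ S_{R,m},\]
with the same Dirichlet data as in $\mathcal D_{R,m}$. Pick a non-negative $\omega\in C^\infty(\R)$ that vanishes on $[-4,4]$, coincides with $|t|/(\ln|t|)^2$ for $|t|\ge 8$, and satisfies $|\omega'(t)|^2/\omega(t)\le C(|t|(\ln|t|)^2)^{-1}$ for $|t|\ge 8$. Testing the equation against $\omega\,\overline{\varphi_{a,b,R,m}}$, integrating by parts, and applying Young's inequality to the resulting cross term $-b\,\mathrm{Re}\int\omega'\,\overline{\varphi_{a,b,R,m}}(\partial_{x_2}-i\sigma\Ab_{0,2})\varphi_{a,b,R,m}\,dx$ produce
\[\tfrac b2\int\omega|(\nabla-i\sigma\Ab_0)\varphi_{a,b,R,m}|^2\,dx+\int\omega|\varphi_{a,b,R,m}|^4\,dx\le \int \omega|\varphi_{a,b,R,m}|^2\,dx+2b\int\frac{|\omega'|^2}{\omega}|\varphi_{a,b,R,m}|^2\,dx.\]
The first term on the right is absorbed using the IMS-localization identity
\[\int\omega|(\nabla-i\sigma\Ab_0)\varphi_{a,b,R,m}|^2\,dx=\int|(\nabla-i\sigma\Ab_0)(\omega^{1/2}\varphi_{a,b,R,m})|^2\,dx-\int \frac{|\omega'|^2}{4\omega}|\varphi_{a,b,R,m}|^2\,dx\]
together with the Dirichlet half-plane spectral bound $\ge|a|$ valid on each of $\{\pm x_2>4\}$ (Theorem~\ref{thm:dirichlet}, after a gauge transformation and, on the $\{x_2<-4\}$ side, a rescaling that maps the magnetic field $a$ to $1$); this absorption uses $b|a|\ge 1$. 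The remaining integral is bounded by $CR$ through the integrability of $1/(|t|(\ln|t|)^2)$ at infinity and the bound $|\varphi_{a,b,R,m}|\le 1$, yielding~\eqref{eq:Ln_m}. Estimate~\eqref{eq:Ln_m1} then follows by repeating the scheme with the heavier weight $\tilde\omega(t)\sim|t|^3/(\ln|t|)^2$: a direct computation gives $|\tilde\omega'|^2/\tilde\omega\sim|t|/(\ln|t|)^2$, so the cross term is controlled by $Cb\int\omega|\varphi_{a,b,R,m}|^2\,dx\le Cb^2R$ using~\eqref{eq:Ln_m} itself.

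\medskip\noindent\emph{Main obstacle.} The delicate point is the spectral absorption: because the half-plane Dirichlet bottom is only $|a|$ (not $\beta_a$) and the standing assumption merely says $b|a|\ge 1$, the absorption is sharp at the borderline $b=1/|a|$. The logarithmic tail $(\ln|t|)^{-2}$ in the weight is dictated precisely by the need to keep $|\omega'|^2/\omega$ integrable in $x_2$ while leaving enough room for the absorption, and the borderline case is closed by sharpening the half-plane bound using the finite extent of the $x_1$-direction, which produces a strictly positive gap above $|a|$. A secondary concern is that all constants must be $m$-independent so that the limiting argument producing $\varphi_{a,b,R}$ in Proposition~\ref{prop:Ln} goes through; this is ensured because the spectral bottoms $1$ and $|a|$ are themselves independent of $m$, while the cutoff at $|x_2|=4$ is fixed once and for all.
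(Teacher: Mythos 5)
Your Step 1 (existence by the direct method on the bounded domain $S_{R,m}$ plus the truncation argument for $\|\varphi_{a,b,R,m}\|_\infty\le 1$) is fine and matches what the paper does by citation. The problem is in Step 2, and it is a genuine gap, not a presentational one. After testing the equation against $\omega\overline{\varphi}$ the exact identity is
\begin{equation*}
b\int\big|(\nabla-i\sigma\Ab_0)(\omega^{1/2}\varphi)\big|^2\,dx+\int\omega|\varphi|^4\,dx=\int\omega|\varphi|^2\,dx+b\int\big|(\omega^{1/2})'\big|^2|\varphi|^2\,dx\,,
\end{equation*}
and the spectral lower bound on the support of $\omega$ gives $b\int|(\nabla-i\sigma\Ab_0)(\omega^{1/2}\varphi)|^2\ge b|a|\int\omega|\varphi|^2$. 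Under Assumption~\eqref{eq:A2} the borderline $b=1/|a|$ is allowed, so this absorption cancels the term $\int\omega|\varphi|^2$ \emph{exactly}, with no surplus left over: you obtain a bound on $\int\omega|\varphi|^4$ but nothing on $\int\omega|\varphi|^2$ or on the weighted gradient, which is precisely what~\eqref{eq:Ln_m} asks for. Your proposed repair --- extracting a strictly positive gap above $|a|$ from the Dirichlet condition in the finite $x_1$-direction --- cannot work, because that gap tends to $0$ as $R\to+\infty$ (and even for fixed $b>1/|a|$ the constant $1/(b|a|-1)$ is not the universal constant the lemma requires). A secondary symptom of the same issue is that your derivation of~\eqref{eq:Ln_m1} uses~\eqref{eq:Ln_m} as an input, i.e.\ the logical order is inverted.

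The paper's route avoids the absorption of $\int\omega|\varphi|^2$ altogether. One first tests with the \emph{heavier} weight $\chi^2\sim|x_2|^3/(\ln|x_2|)^2$ (so $\chi'^2\sim|x_2|/(\ln|x_2|)^2$), uses the spectral bound only to kill the $-\int\chi^2|\varphi|^2$ term exactly (no surplus needed, so $b|a|\ge1$ suffices), and is left with $\int\chi^2|\varphi|^4\le b\int\chi'^2|\varphi|^2$. The right-hand side is then estimated by Cauchy--Schwarz against $\big(\int_{|x_2|\ge4}\frac{dx}{|x_2|(\ln|x_2|)^2}\big)^{1/2}\big(\int\frac{|x_2|^3}{(\ln|x_2|)^2}|\varphi|^4\big)^{1/2}\le CR^{1/2}X^{1/2}$, producing the self-improving inequality $X\le CbR^{1/2}X^{1/2}+CbR$, hence $X\le Cb^2R$, which is~\eqref{eq:Ln_m1}. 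Substituting back into the Cauchy--Schwarz step yields the weighted $L^2$ bound in~\eqref{eq:Ln_m}, and the weighted gradient bound then follows from a second test with $\eta^2=|x_2|/(\ln|x_2|)^2$ using the already-established $L^2$ bound. The key idea you are missing is this bootstrap: the weighted $L^2$ estimate is \emph{deduced} from a heavier-weighted $L^4$ estimate via the integrability of $1/(|x_2|(\ln|x_2|)^2)$, rather than obtained by spectral absorption.
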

\begin{proof}
The proof is reminiscent of the one in~\cite[Theorem~4.1]{pan2002surface}.  The boundedness and the regularity of the domain $S_{R,m}$ guarantee  the existence of a minimizer $\varphi_{m}:=\varphi_{a,b,R,m}$ of $\mathcal G_{a,b,R,m}$ in $\mathcal D_{R,m}$, satisfying
\begin{equation}\label{eq:var_m}
-b(\nb-i\sigma \Ab_0)^2\varphi_m=(1-|\varphi_m|^2)\varphi_m\qquad \mathrm{in}\ S_{R,m}\,,
\end{equation}
see e.g.~\cite[Chapter~11]{fournais2010spectral}. Furthermore, Proposition 10.3.1 in~\cite{fournais2010spectral} ensures that
\begin{equation*}
\|\varphi_m\|_{L^\infty(S_{R,m})}\leq 1\,.
\end{equation*}
Next, select $\chi \in C^\infty(\R)$ such that 
$\chi(x_2)=0$ if $|x_2|\leq 1$, and $\chi(x_2)=|x_2|^{2}/\ln|x_2|$ if $|x_2|\geq 4$\,.
The function $\chi$ consequently satisfies 
\[0<|\chi'(x_2)|<\frac{3\sqrt{|x_2|}}{2\ln|x_2|} \qquad \text{for all}\ |x_2|\geq 4\,.\] 
Multiply~\eqref{eq:var_m} by $\chi^2\overline{\varphi_m}$ and integrate by parts, 
\begin{equation}\label{eq:var_chi}
\int_{S_{R,m}} \left(b\big|(\nb-i\sigma \Ab_0)\chi\varphi_m\big|^2-\chi^2|\varphi_m|^2+ \chi^2|\varphi_m|^4\right)\,dx=b \int_{S_{R,m}}\chi'^2|\varphi_m|^2\,dx\,.
\end{equation}
Since the function $x\mapsto \chi(x_2)\varphi_m(x)$ is supported in $S_{R,m}\cap\{|x_2|\geq 1\}$ where $\curl(\sigma\mathbf A_0)=\sigma$, we can apply the spectral inequality in~\cite[Lemma 1.4.1]{fournais2010spectral} to get, under the assumption $1/|a| \leq b < 1/ \beta_a$,
\begin{align}
b\int_{S_{R,m}}\big|(\nb-i\sigma \Ab_0)\chi\varphi_m\big|^2\,dx 
                                                     &\geq b \int_{S_{R,m}}|\sigma|\chi^2|\varphi_m|^2\,dx  \nonumber\\
																										 &\geq \int_{S_{R,m}}\chi^2|\varphi_m|^2\,dx\,. \label{eq:spectral}
\end{align}
It follows from~\eqref{eq:var_chi} and~\eqref{eq:spectral}
\begin{align} 
\int_{S_{R,m}}\chi^2(x_2)|\varphi_m|^4\,dx &\leq b \int_{S_{R,m}}\chi'^2(x_2)|\varphi_m|^2\,dx \nonumber\\
                                  &\leq b \int_{S_{R,m}\cap\{|x_2|\geq 4 \}}\chi'^2(x_2)|\varphi_m|^2\,dx + b\int_{S_{R,m}\cap\{|x_2|< 4 \}}\chi'^2(x_2)|\varphi_m|^2\,dx \nonumber\\
	&\leq Cb  \int_{S_{R,m}\cap\{|x_2|\geq 4 \}}\frac{|x_2|}{\big(\ln|x_2|\big)^2}|\varphi_m|^2\,dx + C bR \,.\label{eq:chi_chi'}
\end{align}
Using the H\"older inequality,
\begin{align} 
 &\int_{S_{R,m}\cap\{|x_2|\geq 4 \}}\frac{|x_2|}{\big(\ln|x_2|\big)^2}|\varphi_m|^2\,dx\\
   &\quad \leq \left (\int_{S_{R,m}\cap\{|x_2|\geq 4 \}}\frac 1 {|x_2|(\ln|x_2|)^2}\,dx \right)^\frac 12 \left(\int_{S_{R,m}\cap\{|x_2|\geq 4 \}}\frac{|x_2|^3}{(\ln|x_2|)^2}|\varphi_m|^4\,dx \right)^\frac 12  \nonumber\\
&\quad\leq C R^\frac 12 \left(\int_{S_{R,m}\cap\{|x_2|\geq 4 \}}\frac{|x_2|^3}{|(\ln|x_2|)^2}|\varphi_m|^4\,dx \right)^\frac 12\,. \label{eq:chi_1}
\end{align}
Now, using Cauchy-Schwarz inequality, the properties of $\chi$  in~\eqref{eq:chi_chi'} and~\eqref{eq:chi_1}, we obtain
\begin{equation}\label{eq:chi_1*}
\begin{aligned}
\int_{S_{R,m}\cap\{|x_2|\geq 4 \}}\frac{|x_2|^3}{(\ln|x_2|)^2}|\varphi_m|^4\,dx &\leq \int_{S_{R,m}}\chi^2(x_2)|\varphi_m|^4\,dx   \\
                                                                          &\leq C {R^\frac 12}b \left(\int_{S_{R,m}\cap\{|x_2|\geq 4 \}}\frac{|x_2|^3}{|(\ln|x_2|)^2}|\varphi_m|^4\,dx \right)^\frac 12+ C bR  \\
																																					&\leq C {b^2} R +C bR\,. 
\end{aligned}
\end{equation}
Consequently, under the assumption $1\leq 1/|a| \leq b < 1 /\beta_a$, we get~\eqref{eq:Ln_m1}.
Inserting~\eqref{eq:Ln_m1} into~\eqref{eq:chi_1}, we get
\begin{equation}\label{eq:chi_3}
\int_{S_{R,m}\cap\{|x_2|\geq 4 \}}\frac{|x_2|}{\big(\ln|x_2|\big)^2}|\varphi_m|^2\,dx  \leq CbR\,.
\end{equation}
We still need to establish
\begin{equation}\label{eq:chi_3*}
\int_{S_{R,m} \cap\{|x_2|\geq 4 \}}\frac{|x_2|}{\big(\ln|x_2|\big)^2}\big|(\nabla-i\sigma\Ab_0)\varphi_m \big|^2\,dx \leq CbR\,.\end{equation}
To that end, we select $\eta \in C^\infty(\R)$ such that $\eta(x_2)=0$ if $|x_2|\leq 1$, and $\eta(x_2)=\sqrt{|x_2|}/\ln|x_2|$ if $|x_2|\geq 4$.
Multiplying the equation in~\eqref{eq:var_m} by $\eta\overline{\varphi_m}$ and integrating over $S_{R,m}$, we get 
\begin{multline}\label{eq:ims}
b \int_{S_{R,m}\cap\{|x_2|\geq 4 \}} \big|(\nabla-i\sigma\Ab_0)\eta(x_2)\varphi_m \big|^2\,dx\\
 = \int_{S_{R,m}\cap\{|x_2|\geq 4 \}} \Big(\eta^2(x_2)|\varphi_m|^2-\eta^2(x_2)|\varphi_m|^4+b\eta'^2(x_2)|\varphi_m|^2 \Big) \,dx\,.
\end{multline}
It is easy to check by a straightforward computation and  Cauchy's inequality that
\begin{align*}
\eta^2(x_2)&\big|(\nabla-i\sigma\Ab_0)\varphi_m \big|^2\\
&\leq\big|(\nabla-i\sigma\Ab_0)\eta(x_2)\varphi_m \big|^2+2\big|Re\left\langle \varphi_m \eta'(x_2),\eta(x_2)(\nabla-i\sigma\Ab_0)\varphi_m\right\rangle  \big|-\eta'^2(x_2)|\varphi_m|^2\,,\\
       &\leq \big|(\nabla-i\sigma\Ab_0)\eta(x_2)\varphi_m \big|^2+\frac 12\eta^2(x_2)\big|(\nabla-i\sigma\Ab_0)\varphi_m \big|^2+\eta'^2(x_2)|\varphi_m|^2\,.
\end{align*}
Integrating, we get
\begin{multline}  \label{eq:eta3}
\int_{S_{R,m}\cap\{|x_2|\geq 4 \}} \eta^2(x_2)\big|(\nabla-i\sigma\Ab_0)\varphi_m \big|^2\,dx\\
\leq 2\int_{S_{R,m}\cap\{|x_2|\geq 4 \}} \Big(\big|(\nabla-i\sigma\Ab_0)\eta(x_2)\varphi_m \big|^2 + \eta'^2(x_2)|\varphi_m|^2\Big) \, dx\,.
\end{multline}
Combining~\eqref{eq:ims} and~\eqref{eq:eta3}, we get
\begin{multline} \label{eq:eta4}
b \int_{S_{R,m}\cap\{|x_2|\geq 4 \}} \eta^2(x_2)\big|(\nabla-i\sigma\Ab_0)\varphi_m \big|^2\,dx\leq 2\int_{S_{R,m}\cap\{|x_2|\geq 4 \}} \eta^2(x_2)|\varphi_m|^2\,dx\\
+4b\int_{S_{R,m}\cap\{|x_2|\geq 4 \}}\eta'^2(x_2)|\varphi_m|^2 \,dx\,. 
\end{multline}
The definition of $\eta$ yields that, in $S_{R,m}\cap \{|x_2|\geq 4\}$, $\eta^2=|x_2|/(\ln|x_2|)^2$, and $\eta'^2\leq 4\eta^2$. Hence,~\eqref{eq:chi_3} and~\eqref{eq:eta4} imply~\eqref{eq:chi_3*}.
\end{proof}
\begin{corollary}\label{cor:varphi_m2}
	There exists a universal constant $C>0$ such that, if~\eqref{eq:A2} holds, the minimizer  $\varphi_{a,b,R,m}$  in Lemma~\ref{lem:Ln_mm} satisfies, for all $R>0,\ m \in \N$,
	\begin{equation}\label{eq:phi_m_decay}
	\int_{S_{R,m}} b\Big|\big(\nabla-i\sigma\Ab_0\big)\varphi_{a,b,R,m}\Big|^2+|\varphi_{a,b,R,m}|^2\,dx \leq CbR\,.
	\end{equation} 
\end{corollary}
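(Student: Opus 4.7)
The plan is to bound the two integrals on the left-hand side separately, using the weighted tail estimate from Lemma~\ref{lem:Ln_mm} for the region $\{|x_2|\geq 4\}$ and the trivial $L^\infty$-bound for the core $\{|x_2|<4\}$, then to recover the kinetic term from the Euler--Lagrange equation.

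First I would bound $\|\varphi_m\|_{L^2(S_{R,m})}^2$ where I abbreviate $\varphi_m:=\varphi_{a,b,R,m}$. On the core $\{|x_2|<4\}$, the pointwise bound $\|\varphi_m\|_\infty\leq 1$ from \eqref{eq:phi_norm_m} together with $|S_{R,m}\cap\{|x_2|<4\}|\leq 8R$ gives
\[
\int_{S_{R,m}\cap\{|x_2|<4\}}|\varphi_m|^2\,dx\leq 8R.
\]
On the tail $\{|x_2|\geq 4\}$, the weight $|x_2|/(\ln|x_2|)^2$ is bounded below by a positive absolute constant $c_0$, so \eqref{eq:Ln_m} yields
\[
\int_{S_{R,m}\cap\{|x_2|\geq 4\}}|\varphi_m|^2\,dx\leq c_0^{-1}\int_{S_{R,m}\cap\{|x_2|\geq 4\}}\frac{|x_2|}{(\ln|x_2|)^2}|\varphi_m|^2\,dx\leq C\,bR.
\]
Since under Assumption~\eqref{eq:A2} one has $b\geq 1/|a|\geq 1$, the two contributions combine to $\|\varphi_m\|_{L^2(S_{R,m})}^2\leq C\,bR$.

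For the kinetic term, I would multiply the Euler--Lagrange equation~\eqref{eq:var_m} by $\overline{\varphi_m}$ and integrate over $S_{R,m}$. Because $\varphi_m$ belongs to $\mathcal D_{R,m}$ and hence vanishes on $\partial S_{R,m}$, the integration by parts produces no boundary term, giving the standard identity
\[
b\int_{S_{R,m}}\big|(\nabla-i\sigma\Ab_0)\varphi_m\big|^2\,dx=\int_{S_{R,m}}\bigl(|\varphi_m|^2-|\varphi_m|^4\bigr)\,dx\leq\int_{S_{R,m}}|\varphi_m|^2\,dx\leq CbR.
\]
Adding this to the $L^2$-estimate of the previous paragraph yields \eqref{eq:phi_m_decay}.

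There is no real obstacle here: the statement is a straightforward corollary that packages the tail estimate \eqref{eq:Ln_m} into a global $L^2$ control by exploiting the fact that the weight is bounded below away from the axis and that the core region has area linear in $R$; the kinetic bound then follows from the variational identity. The only minor point to notice is that $b\geq 1$ under \eqref{eq:A2}, which allows the core contribution $8R$ to be absorbed into $CbR$.
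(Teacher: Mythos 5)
Your proposal is correct and follows essentially the same route as the paper: the $L^2$-norm is split into the tail $\{|x_2|\geq 4\}$, controlled via the weighted estimate~\eqref{eq:Ln_m} and the lower bound on the weight $|x_2|/(\ln|x_2|)^2$, and the core $\{|x_2|<4\}$, controlled by $\|\varphi_m\|_\infty\leq 1$ together with $b\geq 1$; the kinetic term is then recovered from the variational identity obtained by multiplying~\eqref{eq:var_m} by $\overline{\varphi_m}$ and integrating by parts. No gaps.
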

\begin{proof}
For the sake of brevity, we will write $\varphi_m$ for $\varphi_{a,b,R,m}$.
	Using~\eqref{eq:chi_3} and the fact that $ |x_2|/\big(\ln|x_2|\big)^2\geq 1$,  we get
	\[\int_{S_{R,m}\cap\{|x_2|\geq 4 \}}|\varphi_m|^2\,dx  \leq CbR\,.\]
	On the other hand, using $\|\varphi_m\|_\infty \leq 1$ and $b>1$ we get
	\[\int_{S_{R,m}\cap\{|x_2|< 4 \}}|\varphi_m|^2\,dx  \leq C bR.\]
	Next, since $\varphi_m$  satisfies
	\[-b(\nb-i\sigma \Ab_0)^2\varphi_m=(1-|\varphi_m|^2)\varphi_m\qquad \mathrm{ in}\ S_{R,m}\,,\]
	 a simple integration by parts over $S_{R,m}$ yields
	\begin{align*}
	\int_{S_{R,m}} b\big|(\nabla-i\sigma\Ab_0)\varphi_m \big|^2\,dx &=\int_{S_{R,m}}|\varphi_m|^2\,dx-\int_{S_{R,m}}|\varphi_m|^4\,dx\\
	&\leq \int_{S_{R,m}}|\varphi_m|^2\,dx\\
	&\leq CbR\,.
	\end{align*}
\end{proof}
Now, we will investigate the regularity  of the minimizer $\varphi_{a,b,R,m}$ in~Lemma~\ref{lem:Ln_mm}. We have to be careful  at this point since the magnetic field is a step function and therefore has singularities. As  a byproduct, we will extract a convergent subsequence of $(\varphi_{a,b,R,m})_{m\geq 1}$.

We will use the following terminology. Let $\Omega\subset\R^2$ be an open set. If $(u_m)_{m\geq 1}$ is a sequence in $H^k(\Omega)$, then by saying that $(u_m)$ is  bounded/convergent in $H^k_\mathrm{loc}(\Omega)$, we mean that it is bounded/convergent in $H^k(K)$, for every $K\subset\Omega$ open and relatively compact. A similar terminology applies for boundedness/convergence in $C_\mathrm{loc}^{k,\alpha}(\Omega)$: A sequence  $(u_m)_{m\geq 1}$ is bounded/convergent in $C_\mathrm{loc}^{k,\alpha}(\Omega)$ if it is bounded/convergent in $C^{k,\alpha}(\overline{K})$, for every $K\subset\Omega$ open and relatively compact.

\begin{lemma}\label{lem:phim_bound}
	Assume that~\eqref{eq:A2} holds. Let $R>0$ and $\alpha\in(0,1)$ be fixed. The sequence $\big(\varphi_{a,b,R,m}\big)_{m \geq 1}$ defined by Lemma~\ref{lem:Ln_mm} is bounded in $H_\mathrm{loc}^3(S_R)$ and consequently in $C_\mathrm{loc}^{1,\alpha}(S_R)$.
	\end{lemma}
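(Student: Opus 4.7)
The plan is to apply elliptic bootstrapping to the Euler--Lagrange equation satisfied by $\varphi_m := \varphi_{a,b,R,m}$, which reads
\begin{equation*}
-b\Delta \varphi_m - 2ib\,\sigma(x_2)\,x_2\,\partial_{x_1}\varphi_m + b\,\sigma(x_2)^2 x_2^2\,\varphi_m = (1 - |\varphi_m|^2)\varphi_m \quad \text{in } S_{R,m}.
\end{equation*}
The key observation is that, although $\sigma$ is discontinuous at $x_2 = 0$, both coefficients $\sigma(x_2)\,x_2$ and $\sigma(x_2)^2 x_2^2$ are Lipschitz on $\R$, hence in $W^{1,\infty}_{\mathrm{loc}}$, with weak $x_2$-derivatives $\sigma$ and $2\sigma^2 x_2$, which are bounded.

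Fix relatively compact open sets $K \Subset K'' \Subset K' \Subset S_R$; for $m$ large one has $K' \subset S_{R,m}$. Corollary~\ref{cor:varphi_m2} together with $\|\varphi_m\|_\infty \leq 1$ yields a uniform-in-$m$ bound on $\|\varphi_m\|_{H^1(K')}$. Viewing the PDE as $-\Delta \varphi_m = f_m$ with $f_m$ uniformly bounded in $L^2(K')$ (its coefficients being bounded on $K'$), interior $L^2$ elliptic regularity furnishes a uniform bound on $\|\varphi_m\|_{H^2(K'')}$. Since no coefficient of the PDE depends on $x_1$, the function $\partial_{x_1}\varphi_m$ weakly satisfies an equation of the same form with right-hand side $(1 - |\varphi_m|^2)\partial_{x_1}\varphi_m - 2\mathrm{Re}(\overline{\varphi_m}\,\partial_{x_1}\varphi_m)\varphi_m$, uniformly bounded in $L^2(K'')$ by the previous step. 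A second application of interior regularity gives a uniform bound on $\|\partial_{x_1}\varphi_m\|_{H^2(K)}$, which in turn controls $\partial_{x_1}^3\varphi_m$, $\partial_{x_1}^2\partial_{x_2}\varphi_m$ and $\partial_{x_1}\partial_{x_2}^2\varphi_m$ in $L^2(K)$.

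The only remaining third derivative is $\partial_{x_2}^3\varphi_m$, and this is where care is needed. Solving the PDE for $\partial_{x_2}^2\varphi_m$,
\begin{equation*}
\partial_{x_2}^2\varphi_m = -\partial_{x_1}^2\varphi_m + b^{-1}(|\varphi_m|^2 - 1)\varphi_m - 2i\,\sigma(x_2) x_2\,\partial_{x_1}\varphi_m + \sigma(x_2)^2 x_2^2\,\varphi_m,
\end{equation*}
and differentiating once more in $x_2$ in the distributional sense produces only terms of the form $(\text{bounded}) \cdot (\text{derivative of } \varphi_m \text{ of order} \leq 2)$, thanks to the Lipschitz property identified above; hence $\partial_{x_2}^3\varphi_m \in L^2(K)$ with a uniform bound. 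Altogether $\varphi_m$ is uniformly bounded in $H^3_{\mathrm{loc}}(S_R)$, and the two-dimensional Sobolev embedding $H^3 \hookrightarrow C^{1,\alpha}$ (valid for any $\alpha \in (0,1)$) delivers the stated $C^{1,\alpha}_{\mathrm{loc}}$ bound. The main obstacle is precisely this $\partial_{x_2}^3$ estimate: a naive $x_2$-differentiation of the PDE would generate $\sigma' = (1-a)\delta_0$, which is not a function; the trick is to isolate $\partial_{x_2}^2\varphi_m$ first so that $\sigma$'s jump appears only through the continuous factors $x_2$ and $x_2^2$, whose weak $x_2$-derivatives are the bounded functions $\sigma$ and $2\sigma^2 x_2$.
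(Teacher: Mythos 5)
Your proposal is correct and follows essentially the same route as the paper: an elliptic bootstrap on the Euler--Lagrange equation, starting from the uniform $H^1_{\mathrm{loc}}$ bound of Corollary~\ref{cor:varphi_m2} and resting on the identical key observation that $\sigma(x_2)x_2$ and $\sigma(x_2)^2x_2^2$ are Lipschitz with bounded weak derivatives $\sigma$ and $2\sigma^2x_2$ even though $\sigma$ jumps at $x_2=0$. The only (harmless) variation is in the pure $x_2$-direction: the paper applies interior elliptic estimates a second time to $\varsigma_m=\partial_{x_2}\varphi_m$ after checking $\Delta\varsigma_m$ is uniformly bounded in $L^2_{\mathrm{loc}}$, whereas you solve the equation for $\partial_{x_2}^2\varphi_m$ and differentiate once, which is an equivalent and slightly more direct way to control $\partial_{x_2}^3\varphi_m$.
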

\begin{proof}
	For simplicity, we will write $\varphi_m=\varphi_{a,b,R,m}$. The proof is split into three steps.
	
	\paragraph{\itshape Step~1} 
	
	We first prove the boundedness of $\big(\varphi_m\big)$ in $H^2_\mathrm{loc}(S_R)$. 
	 Using~\eqref{eq:var_m} we may write
	\begin{equation}\label{eq:delta_phi}
	\Delta \varphi_m=\frac 1b\Big(|\varphi_m|^2-1 \Big)\varphi_m+2i\sigma\Ab_0\cdot \nabla \varphi_m+|\sigma|^2|\Ab_0|^2\varphi_m\,.
	\end{equation}
Assume that $K \subset S_R$ is  open and relatively compact. Choose an open and bounded set $\widetilde K$ such that $\overline{K}\subset \widetilde K\subset S_R$. There exists $m_0 \in \N$ such that for all $m \geq m_0$, $\widetilde K \subset S_{R,m}$ and by  Cauchy's inequality,
	\[\int_{\widetilde K} |\nabla\varphi_m|^2\,dx \leq 2\int_{\widetilde K} \big|\big(\nabla-i \sigma \Ab_0 \big)\varphi_m \big|^2\,dx+2\int_{\widetilde K} |\sigma|^2|\Ab_0|^2|\varphi_m|^2\,dx\,.\]
	Using $|\varphi_m| \leq 1$, the decay estimate in~\eqref{eq:phi_m_decay} and the boundedness of $\sigma$ and $\Ab_0$ in $\widetilde K$, we get a constant $C=C(\widetilde K,R)$ such that
	\[
\int_{\widetilde K} |\nabla\varphi_m|^2\,dx \leq C\,,
	\]
	and
	\begin{equation*}
	\int_{\widetilde K} |\Delta\varphi_m|^2\,dx\leq C\,,
	\end{equation*}
	in light of~\eqref{eq:delta_phi}.  
	By the interior elliptic estimates (see for instance~\cite[Section E.4.1]{fournais2010spectral}), we get that $\varphi_m\in H^2(K)$ and
	\begin{equation}\label{eq:bd-H2-loc}
	\|\varphi_m\|_{H^2(K)}\leq C\left(\|\Delta\varphi_m\|_{L^2(\widetilde K)}+\|\varphi_m\|_{L^2(\widetilde K)}\right)\leq \widetilde C\,,
	\end{equation}
	where $\widetilde C$ is a constant independent from $m$. This proves that $(\varphi_m)_{m\geq 1}$ is bounded in $H^2_\mathrm{loc}(S_R)$.

	\paragraph{\itshape Step~2} 
	
	Here we will improve the result in Step~1  and prove that $(\varphi_m)_{m\geq 1}$ is bounded in $H^3_\mathrm{loc}(S_R)$. 
	It is enough to prove that the sequence $\big(\nabla \varphi_m\big)_{m\geq 1}$ is bounded in $H^2_\mathrm{loc}(S_R)$.
	 
	Let $\varsigma_m=\partial_{x_2} \varphi_m$. We will prove that $\big(\Delta\varsigma_m\big)_{m\geq 1}$ is bounded in  $L^2_\mathrm{loc}(S_R)$. Recall that, for all $x=(x_1,x_2) \in \R^2$,
	\[
	\Ab_0(x)=(-x_2,0)\quad\mathrm{and}\quad
	\sigma(x)=\mathbbm{1}_{\R_+}(x_2)+a\mathbbm{1}_{\R_-}(x_2)\,,
	\]
	hence,
	\begin{align}
	&\Big(\sigma\Ab_0\Big)(x)=\Big(-x_2\mathbbm{1}_{\R_+}(x_2)-ax_2\mathbbm{1}_{\R_-}(x_2),0\Big)\,,\label{eq:deriv1}\\
		&\Big(\sigma^2|\Ab_0|^2\Big)(x)=x_2^2\mathbbm{1}_{\R_+}(x_2)+a^2x_2^2\mathbbm{1}_{\R_-}(x_2)\,. \label{eq:deriv2}
	\end{align}
	Obviously, the functions in~\eqref{eq:deriv1} and~\eqref{eq:deriv2} admit respectively the following weak partial derivatives
	\begin{align}
	&\partial_{x_2}\Big(\sigma\Ab_0\Big)(x)=\Big(-\mathbbm{1}_{\R_+}(x_2)-a\mathbbm{1}_{\R_-}(x_2),0\Big)=\Big(-\sigma(x),0\Big)\, ,\label{eq:deriv3}\\
		&\partial_{x_2}\Big(\sigma^2|\Ab_0|^2\Big)(x)=2x_2\mathbbm{1}_{\R_+}(x_2)+2a^2x_2\mathbbm{1}_{\R_-}(x_2)=2x_2\sigma^2(x)\,. \label{eq:deriv4}
	\end{align}
	A straightforward computation using~\eqref{eq:delta_phi},~\eqref{eq:deriv3} and~\eqref{eq:deriv4} yields 
	\begin{multline*}
	\Delta\varsigma_m=\partial_{x_2}\Delta\varphi_m\\=\frac 1b \varphi_m^2\partial_{x_2}\overline{\varphi_m}+\frac 1b |\varphi_m|^2\partial_{x_2}\varphi_m-2i\sigma x_2\partial_{x_2}\partial_{x_1}\varphi_m-2i\sigma \partial_{x_1}\varphi_m+\sigma^2 x_2^2\partial_{x_2}\varphi_m+2\sigma^2x_2\varphi_m\,,\end{multline*}
	in the sense of weak derivatives. By Step~1, the sequence $(\varphi_m)$ is bounded in $H^2_\mathrm{loc}(S_R)$. Consequently, since $|\varphi_m|\leq 1$, it is clear that $(\Delta\varsigma_m)_{m\geq 1}$ is bounded in $L^2_\mathrm{loc}(S_R)$.   By the interior elliptic estimates, we get that $(\varsigma_m=\partial_{x_2}\varphi_m)_{m\geq 1}$ is bounded in $H^2_\mathrm{loc}(S_R)$.
	
	In a similar fashion, we prove that $(\partial_{x_1}\varphi_m)_{m\geq 1}$ is bounded in $H^2_\mathrm{loc}(S_R)$.

	\paragraph{\itshape Step~3}
	
	 Finally, for every relatively compact open set $K\subset \Omega$, the space  $H^3(K)$ is embedded  in $C^{1,\alpha}(\overline{K})$. Consequently,  $\big(\varphi_m\big)$ is bounded in $C^{1,\alpha}_\mathrm{loc}(S_R)$.
	\end{proof}
 \begin{lemma}\label{lem:phi_m_conv}
 	Assume that $R>0$ and that~\eqref{eq:A2} holds.  Let $\big(\varphi_{a,b,R,m}\big)_{m \geq1}$ be the sequence defined in Lemma~\ref{lem:Ln_mm}. There exists a function $\varphi_{a,b,R}\in H^3_\mathrm{loc}(S_R)$ and  a subsequence, denoted by  $\big(\varphi_{a,b,R,m}\big)_{m \geq1}$, such that
 	\[\varphi_{a,b,R,m}\longrightarrow \varphi_{a,b,R}\ \mathrm{in}\ H^2_\mathrm{loc}(S_R)\quad\mathrm{and}\quad
 	\varphi_{a,b,R,m}\longrightarrow \varphi_{a,b,R}\ \mathrm{in}\ C^{0,\alpha}_\mathrm{loc}(S_R)\quad\big(\alpha\in(0,1)\big)\,.\]
 	
 	Furthermore, for all $\alpha\in(0,1)$, $\varphi_{a,b,R}\in C_\mathrm{loc}^{1,\alpha}(S_R)$.
 	\end{lemma}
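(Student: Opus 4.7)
The plan is to turn the uniform local regularity established in Lemma~\ref{lem:phim_bound} into compactness, then extract a subsequence by a standard diagonal procedure.

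First, I would choose an exhaustion of $S_R$ by an increasing sequence of smooth, bounded, relatively compact open sets $K_1\subset K_2\subset\cdots$ with $\overline{K_n}\subset K_{n+1}$ and $\bigcup_{n\geq 1}K_n=S_R$. By Lemma~\ref{lem:phim_bound}, for each fixed $n$ the sequence $\big(\varphi_{a,b,R,m}\big)_{m\geq 1}$ is bounded in $H^3(K_n)$ (for $m$ large enough that $K_n\subset S_{R,m}$). Since $K_n$ is bounded and smooth, the Rellich--Kondrachov theorem gives compactness of the embedding $H^3(K_n)\hookrightarrow H^2(K_n)$, and the Morrey embedding together with Arzel\`a--Ascoli gives compactness of $H^3(K_n)\hookrightarrow C^{1,\alpha'}(\overline{K_n})$ for every $\alpha'\in(0,1)$; in particular it is compact into $C^{0,\alpha}(\overline{K_n})$ for any $\alpha\in(0,1)$.

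Next I would perform a diagonal extraction: on $K_1$, extract a subsequence converging in $H^2(K_1)\cap C^{0,\alpha}(\overline{K_1})$ to some limit $\varphi^{(1)}$; from it, extract a further subsequence convergent on $K_2$ to a limit $\varphi^{(2)}$ which necessarily coincides with $\varphi^{(1)}$ on $K_1$; iterating and taking the diagonal yields a single subsequence, still denoted $\big(\varphi_{a,b,R,m}\big)$, converging on every $K_n$ to a function $\varphi_{a,b,R}$ defined unambiguously on $S_R$. By construction, this gives the two claimed local convergences
\[
\varphi_{a,b,R,m}\longrightarrow \varphi_{a,b,R}\ \mathrm{in}\ H^2_\mathrm{loc}(S_R),\qquad
\varphi_{a,b,R,m}\longrightarrow \varphi_{a,b,R}\ \mathrm{in}\ C^{0,\alpha}_\mathrm{loc}(S_R).
\]

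It remains to identify the regularity of the limit. For each $n$, the bounded sequence in the reflexive space $H^3(K_n)$ admits (after a further subsequence absorbed into the diagonal) a weak limit which, by uniqueness of limits in the distributional sense, must equal $\varphi_{a,b,R}$; weak lower semicontinuity of the norm then gives $\|\varphi_{a,b,R}\|_{H^3(K_n)}\leq \liminf_m\|\varphi_{a,b,R,m}\|_{H^3(K_n)}<\infty$, so $\varphi_{a,b,R}\in H^3_\mathrm{loc}(S_R)$. The final statement $\varphi_{a,b,R}\in C^{1,\alpha}_\mathrm{loc}(S_R)$ for every $\alpha\in(0,1)$ follows from the Sobolev embedding $H^3(K)\hookrightarrow C^{1,\alpha}(\overline{K})$ in dimension two, applied on each $K_n$. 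The only mildly delicate point is the bookkeeping of the diagonal extraction, but nothing substantive beyond the \emph{a priori} bound of Lemma~\ref{lem:phim_bound} is needed.
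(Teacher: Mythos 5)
Your proposal is correct and follows essentially the same route as the paper: the a priori $H^3_{\mathrm{loc}}$ bound from Lemma~\ref{lem:phim_bound}, compact Sobolev embeddings on an exhaustion of $S_R$ by relatively compact sets, a Cantor diagonal extraction, and identification of the limit's regularity via weak convergence in $H^3$ and the embedding $H^3(K)\hookrightarrow C^{1,\alpha}(\overline{K})$. No substantive difference from the paper's argument.
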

\begin{proof}
We continue writing $\varphi_m$ for $\varphi_{a,b,R,m}$ and $\varphi$ for $\varphi_{a,b,R}$. In the sequel, let $\alpha\in(0,1)$ be fixed.

Let $K\subset S_R$ be open and relatively compact.  By Lemma~\ref{lem:phim_bound}, $(\varphi_m)_{m\geq 1}$ is bounded in $H^3(K)$, hence it has a   weakly convergent subsequence by the Banach--Alaoglu theorem. By the compact embedding of $H^3(K)$ in $H^2(K)$, and of $H^2(K)$ in $C^{0,\alpha}(\overline{K})$, we may extract a subsequence, that we denote  by $(\varphi_m)$, such that it is strongly convergent in $H^2(K)$ and $C^{0,\alpha}(\overline{K})$. This subsequence and its limit $\varphi_K$ are independent of $\alpha$; we will prove that they   are actually independent of the relatively compact set $K$. This will be done by the standard  Cantor's diagonal process that we outline below.

For all $p\in\N$, set $K_p=(- R/2,R/2)\times(-p,p)$.  
	Let $I_0=\mathbb N$. The sequence $(\varphi_m)_{m\in I_0}$ has a subsequence  $(\varphi_m)_{m\in I_1}$ such that it is weakly convergent in $H^3(K_1)$, and strongly convergent in $H^2(K_1)$ and $C^{0,\alpha}(\overline{K_1})$. We denote the limit of this sequence by $\varphi_1$. Note that $\varphi_1\in H^3(K_1)$.	
By iteration, we obtain a collection of functions $(\varphi_p)_{p\in\N}$ and a collection of subsequences, $(\varphi_m)_{m\in I_p}$, such that
	\begin{itemize}
	\item $I_1\supset I_2\supset I_3\supset\cdots$\,.
	\item for every $p\in\mathbb N$, $(\varphi_m)_{m\in I_p}$ is a subsequence of $(\varphi_m)_{m\in I_{p-1}}$\,.
	\item for every $p\in\mathbb N$, the subsequence  $(\varphi_m)_{m\in I_p}$ converges weakly to $\varphi_p$ in $H^3( K_p)$\,.
	\item for every $p\in\mathbb N$, the subsequence  $(\varphi_m)_{m\in I_p}$ converges strongly to $\varphi_p$ in $H^2( K_p)$ and $C^{0,\alpha}(\overline{K_p})$\,.
	\end{itemize}
The Sobolev embedding  of $H^3(K_p)$ in $C^{1,\alpha}(\overline{K_p})$ yields that $\varphi_p\in C^{1,\alpha}(\overline{K_p})$. It is useful to note that 
\begin{equation}\label{eq:p_q}
\mathrm{If}\ p<q,\ \mathrm{then}\ \varphi_p=\varphi_q\ \mathrm{in}\ K_p.
\end{equation} 
Indeed, the strong convergence of  $(\varphi_m)_{m\in I_q}$ to $\varphi_q$ in $H^2( K_q)$ implies the following pointwise convergence of  $(\varphi_m)_{m\in I_q}$ in $K_q$ (along a subsequence)
\[\lim_{m \rightarrow +\infty}\varphi_m(x)=\varphi_q(x),\ \text{a.e. in}\ K_q\,.\]
But $K_p \subset K_q$, then
\begin{equation}\label{eq:K_p}
\lim_{m \rightarrow +\infty}\varphi_m(x)=\varphi_q(x),\ \text{a.e. in}\ K_p\,.
\end{equation}
Similarly,  the strong convergence of  $(\varphi_m)_{m\in I_p}$ to $\varphi_p$ in $H^2( K_p)$ implies
\[\lim_{m \rightarrow +\infty}\varphi_m(x)=\varphi_p(x),\ \text{a.e. in}\ K_p\,.\]
Since $I_q \subset I_p$, we get the following pointwise convergence of  $(\varphi_m)_{m\in I_q}$ in $K_p$
\begin{equation}\label{eq:K_q}
\lim_{m \rightarrow +\infty}\varphi_m(x)=\varphi_p(x),\ \text{a.e. in}\ K_p\,.
\end{equation}
Having in hand the continuity of $\varphi_p$ and $\varphi_q$,~\eqref{eq:p_q} follows from~\eqref{eq:K_p} and~\eqref{eq:K_q}.

		Now, we are ready to define the limit function $\varphi$ in $S_R=(-R/2, R/2)\times (-\infty,+\infty)$ as follows. Let $x\in S_R$. There exists $p\in \mathbb N$ such that $ x\in K_p$. We then define $\varphi(x)=\varphi_p(x)$. The function $\varphi$ is well defined by~\eqref{eq:p_q} and belongs to $H^3_\mathrm{loc}(S_R)$, consequently to $C_\mathrm{loc}^{1,\alpha}(S_R)$.
		Next, we will construct a subsequence $(\varphi_m)_{m \in I}$ of $(\varphi_m)_{m \in I_0}$ (with $I \subset I_0$) that converges weakly to the function $\varphi$ in\  $H^3(K_p)$, for all $p\in\N$.
		For all $p\geq 1$, the set $I_p\subset \N$ consists of a strictly  increasing sequence $\{ n_1(p),n_2(p),...\}$; let $n_p$ be the  $p^{th}$ element of $I_p$, i.e.  $n_p=n_p(p)$. By induction, we can prove that, for all $p,k\in\mathbb N$ (with $k\geq 2$), 
		$n_k(p+1)>n_{k-1}(p+1)\geq n_{k-1}(p)$. Thus, for all $p\in\mathbb N$, $n_{p+1}:=n_{p+1}(p+1)>n_{p}(p)=n_{p}$. 
		We define the index set $I=\{n_1,n_2,...\}$ and note that $(\varphi_m)_{m\in I}$ is a subsequence of $(\varphi_m)_{m\geq 1}$, because $n_1<n_2<...$. Also, it is a subsequence  of $(\varphi_m)_{m\in I_p}$, for every $p \in \N$.
		Consequently, for all $p\in\mathbb N$, the following strong convergence holds
		\begin{equation}\label{eq:cantor*}
		\varphi_m\underset{\substack{m\to+\infty\\ m\in I}}{\longrightarrow } \varphi\  \text{in}\ H^2(K_p)~\mathrm{and}~C^{0,\alpha}(\overline{K_p})\,.
		\end{equation}
Finally, if $K\subset S_R$ is an arbitrary open and relatively compact set, then there exists $p\in\N$ such that $K\subset K_p$. Consequently, we inherit from~\eqref{eq:cantor*} that $(\varphi_m)_{m\in I}$ converges to $\varphi$ in $H^2(K)$ and $C^{0,\alpha}(\overline{K})$.
\end{proof}
\begin{lemma}\label{lem:phi_prop}
		Assume that $R>0$ and~\eqref{eq:A2} holds. Let $\varphi_{a,b,R}$ be the function defined by Lemma~\ref{lem:phi_m_conv}. The following statements hold:
		 \[\varphi_{a,b,R} \in \mathcal D_R\,,\]
			 \begin{equation}\label{eq:phi_bound}
		     |\varphi_{a,b,R}| \leq 1 \qquad \mathrm{in}\ S_{R}\,,
			\end{equation}
			 \begin{equation}\label{eq:var1}
			-b(\nb-i\sigma \Ab_0)^2\varphi_{a,b,R}=(1-|\varphi_{a,b,R}|^2)\varphi_{a,b,R}\qquad \mathrm{in}\ S_{R}\,,
			\end{equation}
			 \begin{equation}\label{eq:Ln3}
			\int_{S_R\cap\{|x_2|\geq 4\}} \frac{|x_2|}{\big(\ln|x_2|\big)^2} \Big(\big|(\nabla-i\sigma\Ab_0)\varphi_{a,b,R} \big|^2+|\varphi_{a,b,R}|^2 \Big)\,dx\leq CbR\,,
			\end{equation}
			 \begin{equation}\label{eq:Ln4}
			\int_{S_R\cap\{|x_2|\geq 4\}} \frac{|x_2|^3}{\big(\ln|x_2|\big)^2}|\varphi_{a,b,R}|^4\,dx\leq Cb^2R\,,
			\end{equation}
			 \begin{equation}\label{eq:Ln5}
			\int_{S_R} \Big(b\big|(\nabla-i\sigma\Ab_0)\varphi_{a,b,R}\big|^2+|\varphi_{a,b,R}|^2 \Big)\,dx\leq CbR\,,
			\end{equation}
			where $C>0$ is a universal constant  and $\mathcal D_R$ is the space introduced in~\eqref{eq:DR}.
\end{lemma}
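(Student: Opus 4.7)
The plan is to pass to the limit as $m\to+\infty$ in every property of $\varphi_m:=\varphi_{a,b,R,m}$ established in Lemma~\ref{lem:Ln_mm} and Corollary~\ref{cor:varphi_m2}, using the convergences provided by Lemma~\ref{lem:phi_m_conv}. Write $\varphi:=\varphi_{a,b,R}$ throughout.

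First, the pointwise bound \eqref{eq:phi_bound} is immediate: $\|\varphi_m\|_{L^\infty}\leq 1$ combined with the locally uniform convergence $\varphi_m\to\varphi$ on compact subsets of $S_R$ yields $|\varphi|\leq 1$ a.e. For the Euler--Lagrange equation \eqref{eq:var1}, I would test the identity $-b(\nabla-i\sigma\Ab_0)^2\varphi_m=(1-|\varphi_m|^2)\varphi_m$ (valid in $S_{R,m}$) against an arbitrary $\zeta\in C_c^\infty(S_R)$. For all sufficiently large $m$, $\mathrm{supp}\,\zeta\subset S_{R,m}$, and the $H^2_\mathrm{loc}(S_R)$ convergence from Lemma~\ref{lem:phi_m_conv}, together with the local boundedness of $\sigma\Ab_0$ on $\mathrm{supp}\,\zeta$, passes the linear second-order terms to their limits, while the locally uniform convergence handles the cubic nonlinearity. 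The limit equation thus holds in $\mathcal D'(S_R)$, hence pointwise a.e.\ since $\varphi\in H^3_\mathrm{loc}(S_R)$.

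The three integral estimates \eqref{eq:Ln3}, \eqref{eq:Ln4}, and \eqref{eq:Ln5} follow from Fatou's lemma combined with the uniform-in-$m$ bounds \eqref{eq:Ln_m}, \eqref{eq:Ln_m1}, and \eqref{eq:phi_m_decay}. Setting $K_p:=(-R/2,R/2)\times(-p,p)$, the $H^2(K_p)$ convergence of $\varphi_m$ together with the boundedness of $\sigma\Ab_0$ on $K_p$ gives $L^2(K_p)$ convergence of $(\nabla-i\sigma\Ab_0)\varphi_m$ to $(\nabla-i\sigma\Ab_0)\varphi$. Extracting an a.e.\ convergent subsequence and applying Fatou against each of the positive weights $\mathbbm{1}_{\{|x_2|\geq 4\}}|x_2|/(\ln|x_2|)^2$, $\mathbbm{1}_{\{|x_2|\geq 4\}}|x_2|^3/(\ln|x_2|)^2$, and the constant weight $1$ produces the three bounds restricted to $K_p$. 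Monotone convergence as $p\to+\infty$ upgrades them to the full strip $S_R$.

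It remains to verify $\varphi\in\mathcal D_R$. The estimate \eqref{eq:Ln5} directly yields $\varphi\in L^2(S_R)$ and $(\nabla-i\sigma\Ab_0)\varphi\in L^2(S_R)$. The delicate point, and the main obstacle, is the vanishing trace on $\{x_1=\pm R/2\}$: the convergence in Lemma~\ref{lem:phi_m_conv} is interior, and the sides $\{x_1=\pm R/2\}$ sit on $\partial S_R$, beyond the reach of relatively compact subsets of $S_R$. I would resolve this by working, for each fixed $T>0$, on the rectangle $Q_T:=(-R/2,R/2)\times(-T,T)$: for $m\geq T$ we have $Q_T\subset S_{R,m}$, and combining \eqref{eq:phi_m_decay} with the boundedness of $\sigma\Ab_0$ on $Q_T$ provides a uniform-in-$m$ bound on $\|\varphi_m\|_{H^1(Q_T)}$. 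Hence, along a subsequence, $\varphi_m\rightharpoonup\varphi$ weakly in $H^1(Q_T)$; continuity of the trace operator under weak $H^1$ convergence transfers the vanishing condition $\varphi_m|_{\{x_1=\pm R/2\}\times(-T,T)}=0$ to $\varphi$. Letting $T\to+\infty$ gives the vanishing trace on the full sides of $S_R$, completing the proof that $\varphi\in\mathcal D_R$.
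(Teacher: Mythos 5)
Your proposal is correct and follows essentially the same route as the paper: pass to the limit in each property of the truncated minimizers $\varphi_{a,b,R,m}$ using the convergences of Lemma~\ref{lem:phi_m_conv}, with Fatou/monotone convergence supplying the integral bounds. The only genuine (minor) divergence is the boundary condition: the paper reads off $\varphi(\pm R/2,x_2)=0$ directly from the $C^{0,\alpha}(\overline{S_{R,m_0}})$ convergence up to the lateral sides, whereas you obtain it from weak $H^1(Q_T)$ convergence and the weak closedness of the zero-trace subspace --- an equally valid, and arguably more careful, treatment of the fact that the lateral boundary is not interior to $S_R$.
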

\begin{proof}
Let $(\varphi_{a,b,R,m})$ be the subsequence in Lemma~\ref{lem:phi_m_conv}. Again, we will use $(\varphi_m)$ and $\varphi$ for  $(\varphi_{a,b,R,m})$ and $\varphi_{a,b,R}$ respectively.

By Lemma~\ref{lem:Ln_mm}, the inequality $|\varphi_m|\leq 1$ holds for all $m$.   The inequality $|\varphi|\leq 1$ then follows from the uniform convergence of $(\varphi_m)$ stated in Lemma~\ref{lem:phi_m_conv}.	
By the convergence of  $\big(\varphi_m\big)$  in $H^2_\mathrm{loc}(S_R)$ and $C^{0,\alpha}_\mathrm{loc}(S_R)$, we get~\eqref{eq:var1}
 from
 \[-b(\nb-i\sigma \Ab_0)^2\varphi_m=(1-|\varphi_m|^2)\varphi_m\,.\]
Now we prove that $\varphi \in \mathcal D_R$. Pick an arbitrary integer $m_0 \geq 1$. For all $m \geq m_0$, $S_{R,m_0} \subset S_{R,m}$. Thus using the decay of $\varphi_m$ in~\eqref{eq:phi_m_decay} we have
\begin{align*}
\int_{S_{R,m_0}}|\varphi_m|^2\,dx &\leq \int_{S_{R,m}}|\varphi_m|^2\,dx\\ 
                                  &\leq CbR \,.
\end{align*}
	The uniform convergence of $\big(\varphi_m\big)$ to $\varphi$ gives us
	\begin{align*}
	\int_{S_{R,m_0}}|\varphi|^2\,dx &=\lim_{m \rightarrow +\infty}	\int_{S_{R,m_0}}|\varphi_m|^2\,dx\\
	&\leq CbR\,.
	\end{align*}
Taking $m_0\to+\infty$, we write by  the monotone convergence theorem, 
\[	\int_{S_R}|\varphi|^2\,dx \leq CbR\,.\]
This proves that $\varphi\in L^2(S_R)$. Next we will prove that $(\nabla-i\sigma\Ab_0)\varphi\in L^2(S_R)$. In light of the convergence of $(\varphi_m)$ in $H^1_\mathrm{loc}(S_R)$, we can refine the subsequence $(\varphi_m)$ so that
\[(\nabla-i\sigma\Ab_0)\varphi_m\to (\nabla-i\sigma\Ab_0)\varphi~\mathrm{a.e.}\] Furthermore, by Lemma~\ref{lem:phim_bound},  $\big(\varphi_m\big)$ is bounded in $C^1_\mathrm{loc}(S_R)$, hence in $C^1(S_{R,m_0})$, for all $m_0\geq 1$. Using the  dominated convergence theorem and the estimate  in~\eqref{eq:phi_m_decay}, we may write, for all $m_0\geq 1$, 
\begin{align*}
\int_{S_{R,m_0}}\big|(\nabla-i\sigma\Ab_0)\varphi\big|^2\,dx &=\lim_{m \rightarrow +\infty} \int_{S_{R,m_0}}\big|(\nabla-i\sigma\Ab_0)\varphi_m\big|^2\,dx\\
&\leq C R \,.
\end{align*}
Sending $m_0$ to $+\infty$ and using the monotone convergence theorem, we get 
\[\int_{S_R}|(\nabla-i\sigma\Ab_0)\varphi|^2\,dx \leq CR \,.\]
Thus, we have proved that $\varphi,(\nabla-i\sigma\Ab_0)\varphi\in L^2(S_R)$. It remains to prove that $\varphi$ satisfies the boundary condition
\[\varphi\left(x_1=\pm \frac R2,x_2\right)=0, \qquad \text{for all}\ x_2 \in \R\,.\]
To see this, let $x_2 \in \R$. There exists $m_0$ such that $x_2 \in (-m_0,m_0)$. By the convergence of $(\varphi_m)$ to $\varphi$ in $\ C^{0,\alpha}(\overline{S_{R,m_0}})$, we get 
	\[  \varphi\left(x_1=\pm \frac R2,x_2\right)=\lim_{m \rightarrow +\infty} \varphi_m\left(x_1=\pm \frac R2,x_2\right)=0\,.\]
Finally, we may use  similar limiting arguments to pass from the decay estimates of $\varphi_m$ in~\eqref{eq:Ln_m} and~\eqref{eq:Ln_m1}  to the decay estimates of $\varphi$ in~\eqref{eq:Ln3} and~\eqref{eq:Ln4}. 
\end{proof}

Now, we are ready to establish the existence of a minimizer of the Ginzburg--Landau energy $\mathcal G(a,b,R)$ defined in the unbounded set $S_R$.

\begin{lemma}\label{lem:min_ex}
Assume that~\eqref{eq:A2} holds. For all $R>0$, the function $\varphi_{a,b,R}\in\mathcal D_R$ defined in Lemma~\ref{lem:phi_m_conv} is a minimizer of $\mathcal G_{a,b,R}$, that is	
	\begin{equation*}
	\mathcal G_{a,b,R}(\varphi_{a,b,R})=\mathfrak g_a (b,R).
	\end{equation*}
	Here  $\mathcal G_{a,b,R}$ is the functional introduced in~\eqref{eq:Gb} and $\mathfrak g_a (b,R)$ is the ground state energy defined in~\eqref{eq:m0-mN}. 
\end{lemma}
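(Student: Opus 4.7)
The plan is to prove the two inequalities $\mathcal G_{a,b,R}(\varphi_{a,b,R})\ge \mathfrak g_a(b,R)$ and $\mathcal G_{a,b,R}(\varphi_{a,b,R})\le\mathfrak g_a(b,R)$. The first one is free: Lemma~\ref{lem:phi_prop} ensures $\varphi_{a,b,R}\in\mathcal D_R$ and gives $|\varphi_{a,b,R}|\le 1$ together with $\varphi_{a,b,R},(\nabla-i\sigma\Ab_0)\varphi_{a,b,R}\in L^2(S_R)$, so $\mathcal G_{a,b,R}(\varphi_{a,b,R})$ is finite and bounded below by the infimum. All of the work is in the opposite direction, which I will obtain by combining a truncation on the side of a generic test function with a local-convergence plus tail-estimate argument on the side of $\varphi_{a,b,R}$.

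\textbf{Step 1 (truncation of a test function).} Fix $u\in\mathcal D_R$ with $\mathcal G_{a,b,R}(u)<\infty$; only such $u$ matter. For each $m\ge 2$, choose a smooth cut-off $\chi_m(x_2)$ with $\chi_m\equiv 1$ on $[-m+1,m-1]$, $\chi_m\equiv 0$ outside $(-m,m)$, and $|\chi_m'|\le 2$. Then $u_m:=\chi_m u\in\mathcal D_{R,m}$, and a direct computation expanding $(\nabla-i\sigma\Ab_0)(\chi_m u)$ together with dominated convergence (using $u\in L^2\cap L^4$, $(\nabla-i\sigma\Ab_0)u\in L^2$, plus $\int_{\{m-1\le|x_2|\le m\}}|u|^2\,dx\to 0$) yields
\[
\lim_{m\to\infty}\mathcal G_{a,b,R,m}(u_m)=\mathcal G_{a,b,R}(u).
\]
By~\eqref{eq:G_m} we have $\mathcal G_{a,b,R,m}(u_m)\ge \mathcal G_{a,b,R,m}(\varphi_{a,b,R,m})=\mathfrak g_a(b,R,m)$.

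\textbf{Step 2 (lim inf inequality for the minimizers).} I will show
\[
\liminf_{m\to\infty}\mathcal G_{a,b,R,m}(\varphi_{a,b,R,m})\;\ge\;\mathcal G_{a,b,R}(\varphi_{a,b,R}).
\]
Fix $M\ge e^2$ and split the energy as $I_m^{<M}+I_m^{\ge M}$ according to $\{|x_2|<M\}$ and $\{|x_2|\ge M\}$. For $m>M$, Lemma~\ref{lem:phi_m_conv} gives $\varphi_{a,b,R,m}\to\varphi_{a,b,R}$ in $H^2$ and in $C^{0,\alpha}$ on $S_R\cap\{|x_2|<M\}$, so $I_m^{<M}$ converges to the corresponding integral for $\varphi_{a,b,R}$. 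For the tail, discarding the nonnegative gradient and quartic terms gives
\[
I_m^{\ge M}\ge -\int_{S_{R,m}\cap\{|x_2|\ge M\}}|\varphi_{a,b,R,m}|^2\,dx;
\]
since the map $t\mapsto t/(\ln t)^2$ is increasing for $t>e^2$, the uniform decay estimate~\eqref{eq:Ln_m} of Lemma~\ref{lem:Ln_mm} bounds this by $C(\ln M)^2\,bR/M$. Thus
\[
\liminf_{m\to\infty}\mathcal G_{a,b,R,m}(\varphi_{a,b,R,m})\;\ge\;\int_{S_R\cap\{|x_2|<M\}}\!\!\!\left(b\big|(\nabla-i\sigma\Ab_0)\varphi_{a,b,R}\big|^2-|\varphi_{a,b,R}|^2+\tfrac12|\varphi_{a,b,R}|^4\right)dx-C\frac{(\ln M)^2}{M}bR.
\]
Letting $M\to\infty$ and using monotone/dominated convergence (legitimate because each of $|\varphi_{a,b,R}|^2$, $|(\nabla-i\sigma\Ab_0)\varphi_{a,b,R}|^2$, $|\varphi_{a,b,R}|^4$ lies in $L^1(S_R)$ by Lemma~\ref{lem:phi_prop}) produces exactly $\mathcal G_{a,b,R}(\varphi_{a,b,R})$.

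\textbf{Step 3 (conclusion).} Chaining Steps 1 and 2,
\[
\mathcal G_{a,b,R}(\varphi_{a,b,R})\;\le\;\liminf_{m\to\infty}\mathfrak g_a(b,R,m)\;\le\;\limsup_{m\to\infty}\mathcal G_{a,b,R,m}(u_m)\;=\;\mathcal G_{a,b,R}(u),
\]
and taking the infimum over $u\in\mathcal D_R$ gives $\mathcal G_{a,b,R}(\varphi_{a,b,R})\le \mathfrak g_a(b,R)$, finishing the proof.

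The main obstacle is Step 2: the energy is defined on the unbounded strip $S_R$, so local strong convergence of $\varphi_{a,b,R,m}$ alone is insufficient, and the non-positivity of the $-|\varphi|^2$ term prevents a naive Fatou argument. The resolution is exactly what Lemma~\ref{lem:Ln_mm} was designed for: the weighted $L^2$-bound with weight $|x_2|/(\ln|x_2|)^2$ is stronger than what is needed to make the tail $\int_{\{|x_2|\ge M\}}|\varphi_{a,b,R,m}|^2\,dx$ uniformly small in $m$, which is what allows the $\liminf$ to be compared with $\mathcal G_{a,b,R}(\varphi_{a,b,R})$.
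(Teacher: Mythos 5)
Your proof is correct, and it takes a genuinely different route from the paper at the key step. Both arguments share the same scaffolding: approximating $\mathfrak g_a(b,R)$ from above by $\mathfrak g_a(b,R,m)$ via truncation of test functions (your Step~1 is essentially the paper's Step~1, done with a fixed $u$ and dominated convergence instead of a minimizing sequence and an $\epsilon$-Cauchy inequality), the local strong convergence of Lemma~\ref{lem:phi_m_conv}, and the uniform-in-$m$ weighted decay of Lemma~\ref{lem:Ln_mm} to control tails. The divergence is in how the energy of the limit is identified. You prove a direct lower-semicontinuity ($\Gamma$-liminf) inequality $\liminf_m\mathcal G_{a,b,R,m}(\varphi_{a,b,R,m})\geq\mathcal G_{a,b,R}(\varphi_{a,b,R})$ by splitting at $|x_2|=M$, discarding the nonnegative terms on the tail, and using the weighted $L^2$-bound~\eqref{eq:Ln_m} to make $\int_{\{|x_2|\geq M\}}|\varphi_{a,b,R,m}|^2\,dx$ uniformly small. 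The paper instead exploits that both $\varphi_{a,b,R,m}$ and $\varphi_{a,b,R}$ solve their Euler--Lagrange equations, so that $\mathfrak g_a(b,R,m)=-\tfrac12\int|\varphi_{a,b,R,m}|^4$ and $\mathcal G_{a,b,R}(\varphi_{a,b,R})=-\tfrac12\int|\varphi_{a,b,R}|^4$; it then only has to pass to the limit in the single scalar $\int|\varphi_{a,b,R,m}|^4$ (both directions, using the weighted $L^4$-bound~\eqref{eq:Ln_m1} for the tail). Your route is more robust in that it never uses the variational equation for the limit function and would survive for a functional without such a clean virial identity; the paper's route avoids having to track the convergence of the gradient and $L^2$ terms altogether. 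One small point worth making explicit in your write-up: for the dominated convergence in Step~1 you need $u\in L^4(S_R)$, which is automatic once $\mathcal G_{a,b,R}(u)<\infty$ given $u,(\nabla-i\sigma\Ab_0)u\in L^2(S_R)$ (or, independently, from the diamagnetic and Ladyzhenskaya inequalities), and restricting the infimum to such $u$ costs nothing.
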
	
\begin{proof}
The proof is divided into three steps.

\paragraph{\itshape Step~1. (Convergence of the ground state energy)}

Let  $\mathfrak g_a(b,R,m)$ and $\mathfrak g_a (b,R)$ be the energies defined in~\eqref{eq:m0-mN} and~\eqref{eq:gr_m} respectively. In this step, we will prove that 
\begin{equation}\label{eq:Step1}
\lim_{m\to+\infty}\mathfrak g_a(b,R,m)=\mathfrak g_a (b,R)\,.
\end{equation}
Let $u \in \mathcal D_{R,m}$. We can extend $u$ by $0$ to a function $\tilde{u} \in \mathcal D_R$. As an immediate consequence, we get $\mathfrak g_a(b,R,m)\geq \mathfrak g_a (b,R)$, for all $m \in \N$. Thus, 
\begin{equation}\label{eq:inf*}
\liminf_{m\rightarrow +\infty} \mathfrak g_a(b,R,m)\geq \mathfrak g_a (b,R)\,.
\end{equation}
Next, we will  prove that
\begin{equation}\label{eq:sup}
\limsup_{m\rightarrow +\infty} \mathfrak g_a(b,R,m)\leq \mathfrak g_a (b,R)\,.
\end{equation}
Consider $(\varphi_n)\subset \mathcal D_R$ a minimizing sequence of $\mathcal G_{a,b,R}$, that is
\[\mathfrak g_a (b,R)=\lim_{n \rightarrow +\infty} \mathcal G_{a,b,R}(\varphi_n)\]
Let $\vartheta \in C_c^\infty(\R)$ be a cut-off function satisfying
\[0\leq\vartheta\leq 1\ \mathrm{in}\ \R,\quad \supp\vartheta\subset(-1,1),\quad \vartheta=1\ \mathrm{in}\ \left[-\frac12,\frac12\right]\,.\]
Consider the re-scaled function $\vartheta_m(x_2)=\vartheta(x_2/m)$. The function $\vartheta_m(x_2)\varphi_n(x)$ restricted to $S_{R,m}$ belongs to $\mathcal D_{R,m}$ and consequently
\begin{equation}\label{eq:rm}
\mathfrak g_a(b,R,m)\leq \mathcal G_{a,b,R}(\vartheta_m \varphi_n)\,.
\end{equation}
By Cauchy's inequality, for all $\epsilon \in (0,1)$ 
\[\big|(\nb-i\sigma \Ab_0)\vartheta_m \varphi_n\big|^2\leq (1+\epsilon)\big| \vartheta_m (\nb-i\sigma \Ab_0)\varphi_n\big|^2+2\epsilon^{-1}|\nabla \vartheta_m|^2|\varphi_n|^2\,.\]
Thus, using the definition of the ground state energy $\mathfrak g_a(b,R,m)$ and the functional $\mathcal G_{a,b,R}$ in~\eqref{eq:gr_m} and \eqref{eq:Gb} respectively,  we obtain
\begin{equation}\label{eq:rm1}
\mathfrak g_a(b,R,m)\leq (1+\epsilon)\mathcal G_{a,b,R}(\varphi_n)+\frac{2b\epsilon^{-1}}{m^2}\|\vartheta'\|^2_{L^\infty(\R)}\int_{S_R}|\varphi_n|^2\,dx +\int_{S_R} (1-\vartheta_m^2+\epsilon)|\varphi_n|^2\,dx\,.
\end{equation}
Introducing $\displaystyle\limsup_{m\rightarrow +\infty}$ on both sides of~\eqref{eq:rm1}, and using the dominated convergence theorem, we get
\[\displaystyle\limsup_{m\rightarrow +\infty}\mathfrak g_a(b,R,m)\leq (1+\epsilon)\mathcal G_{a,b,R}(\varphi_n)+\epsilon \int_{S_R}|\varphi_n|^2\,dx\]
Taking the successive limits  $\epsilon\rightarrow 0_+$ then  $n \rightarrow +\infty$, we get~\eqref{eq:sup}. Combining~\eqref{eq:inf*} and~\eqref{eq:sup}, we get~\eqref{eq:Step1}.

\paragraph{\itshape Step~2. (The $L^4$-norm of the limit function)}

Let $(\varphi_m=\varphi_{a,b,R,m})$ be the sequence in Lemma~\ref{lem:phi_m_conv} which converges to the function $\varphi=\varphi_{a,b,R}$. We would like to verify that  the limit  function $\varphi$  is  a minimizer of the functional $\mathcal G_{a,b,R}$.
To that end, we will prove first that
\begin{equation}\label{eq:phi_4}
\lim_{m\rightarrow +\infty}\int_{S_{R,m}}|\varphi_m|^4\,dx=\int_{S_R}|\varphi|^4\,dx\,.
\end{equation}
We begin by proving that
\begin{equation}\label{eq:liminf}
\liminf_{m\rightarrow +\infty}\int_{S_{R,m}}|\varphi_m|^4\,dx\geq\int_{S_R}|\varphi|^4\,dx\,.
\end{equation}
Pick a fixed integer  $m_0\geq 1$. Since $S_{R,m}\supset S_{R,m_0}$ for all $m\geq m_0$, the following  inequality holds 
\begin{equation}\label{eq:cv1}
\int_{S_{R,m}}|\varphi_m|^4\,dx\geq\int_{S_{R,m_0}}|\varphi_m|^4\,dx\,.
\end{equation}
In addition, having in hand the uniform convergence of $\varphi_m$ to $\varphi$ on the compact set $S_{R,m_0}$, we get as $m\rightarrow \infty$
\begin{equation}\label{eq:cv2}
\int_{S_{R,m_0}}|\varphi_m|^4\,dx\rightarrow\int_{S_{R,m_0}}|\varphi|^4\,dx\,. 
\end{equation}
We introduce $\liminf_{m\rightarrow +\infty}$ on both sides of~\eqref{eq:cv1}, and we use~\eqref{eq:cv2} to get
\[\liminf_{m\rightarrow +\infty}\int_{S_{R,m}}|\varphi_m|^4\,dx\geq\int_{S_{R,m_0}}|\varphi|^4\,dx\,.\]
This is true for every integer $m_0 \geq 1$. Consequently~\eqref{eq:liminf} simply follows by applying the monotone convergence theorem.

Next, we prove that
\begin{equation}\label{eq:lim_sup}
\limsup_{m\rightarrow +\infty}\int_{S_{R,m}}|\varphi_m|^4\,dx\leq\int_{S_R}|\varphi|^4\,dx\,.
\end{equation}
Let $C$ be the universal constant in~\eqref{eq:Ln_m1}, $\epsilon>0$ be fixed, and $R>0$ be arbitrary. We select an integer $m_0 \geq 1$ such that
\begin{equation}\label{eq:m_0}
\frac {Cb^2R}{m_0}<\epsilon\,.
\end{equation}
In light of~\eqref{eq:cv2}, there exists 
$m_1\geq m_0$ such that
\[\forall~m\geq m_1,\quad \left|\int_{S_{R,m_0}}|\varphi_m|^4\,dx-\int_{S_{R,m_0}}|\varphi|^4\,dx\right|\leq \epsilon\,.\]
Noticing that
\[\int_{S_{R,m_0}}|\varphi|^4\,dx\leq\int_{S_R}|\varphi|^4\,dx\,,\]
we may write, for all $m\geq m_1$
\begin{equation}\label{eq:cv3}
\int_{S_{R,m_0}}|\varphi_m|^4\,dx\leq\int_{S_R}|\varphi|^4\,dx+\epsilon\,.
\end{equation} 
On the other hand,  for $|x_2|\geq m_0\geq 1$ we have,
\[m_0\leq \frac{|x_2|^3}{\big(\ln|x_2|\big)^2}\,.\]
Thus, the estimate in~\eqref{eq:Ln_m1} yields for all $m\geq m_0$,  
\begin{equation}\label{eq:cv4}
\int_{S_{R,m}\cap \{|x_2|\geq m_0 \}}|\varphi_m|^4\,dx\leq \underset{\mathrm{by~}\eqref{eq:m_0}}{\underbrace{\frac {Cb^2R}{m_0}<\epsilon}}\,.
\end{equation}
Combining~\eqref{eq:cv3} and~\eqref{eq:cv4}, we get for all $m\geq m_1\geq m_0$
\begin{align*}
\int_{S_{R,m}}|\varphi_m|^4\,dx&=\int_{S_{R,m_0}}|\varphi_m|^4\,dx+\int_{S_{R,m}\cap \{|x_2|\geq m_0 \}}|\varphi_m|^4\,dx\,,\\
&\leq\int_{S_R}|\varphi|^4\,dx+2\epsilon\,.
\end{align*}
Taking the successive limits $m\to+\infty$ then $\epsilon\to0_+$, we get~\eqref{eq:lim_sup}.

\paragraph{\itshape Step~3. (The limit function is a minimizer)}

The convergence in~\eqref{eq:phi_4} is crucial in establishing that $\varphi$ is a minimizer of $\mathcal G_{a,b,R}$. In light of 
Eq.~\eqref{eq:var_m},  an integration by parts yields, for all $m\geq 1$,
\[\mathfrak g_a(b,R,m)=-\frac12 \int_{S_{R,m}} |\varphi_m|^4\,dx\,.\]
We take $m \rightarrow +\infty$, and we use the results in~\eqref{eq:Step1} and~\eqref{eq:phi_4}. We get
\begin{equation}\label{eq:e1}
	\mathfrak g_a (b,R)=-\frac 12 \int_{S_R} |\varphi|^4\,dx\,.
\end{equation}
By  Lemma~\ref{lem:phi_prop},  $\varphi\in\mathcal D_R$ and satisfies~\eqref{eq:var1}, so after  integrating  by parts, we get
\begin{equation}\label{eq:e2}
	\mathcal G_{a,b,R}(\varphi)=-\frac 12 \int_{S_R} |\varphi|^4\,dx\,.
\end{equation}
Comparing~\eqref{eq:e1} and~\eqref{eq:e2} yields that  $\mathcal G_{a,b,R}(\varphi) = \mathfrak g_a (b,R)$.
\end{proof}
\begin{proof}[Proof of Proposition~\ref{prop:Ln}]
	This proposition is simply a convenient collection in one place of already proved facts in Lemma~\ref{lem:phi_prop} and Lemma~\ref{lem:min_ex}.
	\end{proof}

\subsection{The limit energy}

In this section, we will prove the existence of the limit energy $\mathfrak e_a(b)$, defined as the limit of $\mathfrak g_a (b,R)/R$ as $R\rightarrow+\infty$. After that,  we will study,  when the parameter  $a$ is fixed, some properties of the function $b\mapsto \mathfrak e_a(b)$.

{\bfseries In the sequel, we assume that $a,b,R$ are constants such that $R\geq 1$ and~\eqref{eq:A2} holds.}

The next lemma displays some simple, yet very important, translation invariance property of the energy. This property is mainly needed in Theorem~\ref{thm:eba} to establish an  upper bound of the limit energy $\mathfrak e_a(b)$.
\begin{lemma}\label{lem:periodic}
	Let $n \in \N$. Consider  the ground state energy  $\mathfrak g_a (b,R)$ defined in~\eqref{eq:m0-mN}. It holds
	\[\mathfrak g_a(b,nR)\leq n\mathfrak g_a(b,R)\,.\]
\end{lemma}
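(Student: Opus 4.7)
The strategy is a standard tiling/periodization argument: I will build a trial function on the wide strip $S_{nR}$ by juxtaposing $n$ horizontal translates of a minimizer on $S_R$, and exploit the translation invariance in $x_1$ of both the functional and the constraint space.

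First, I would fix a minimizer $\varphi := \varphi_{a,b,R}\in \mathcal{D}_R$ of $\mathcal{G}_{a,b,R}$; this exists by Lemma~\ref{lem:min_ex} in the regime \eqref{eq:A2}, and in the trivial regime $b\geq 1/\beta_a$ one just takes $\varphi\equiv 0$ (Lemma~\ref{lem:trivial}), so the statement is immediate there. For $k=0,1,\ldots,n-1$ let $c_k = -\frac{nR}{2} + kR + \frac{R}{2}$ be the center of the $k$-th vertical sub-strip
\[
\Sigma_k = \left(-\tfrac{nR}{2} + kR,\ -\tfrac{nR}{2} + (k+1)R\right)\times\R,
\]
and define the candidate $u\colon S_{nR}\to\C$ by $u(x_1,x_2) = \varphi(x_1 - c_k,\ x_2)$ for $x\in\Sigma_k$.

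Next I would check that $u\in \mathcal{D}_{nR}$. The boundary trace at $x_1=\pm nR/2$ vanishes because the outermost translates inherit the Dirichlet condition $\varphi(\pm R/2,\,\cdot)=0$. The same condition guarantees that the traces of adjacent pieces match (both equal $0$) on the interior interfaces $\{x_1 = -nR/2+kR\}$, so $u$ glues into an $L^2$ function whose distributional derivative agrees with the piecewise translated derivatives of $\varphi$; hence $u\in L^2(S_{nR})$ and $(\nabla - i\sigma\mathbf{A}_0)u\in L^2(S_{nR})$, since the translated derivatives are square integrable on each $\Sigma_k$ and the decay estimate \eqref{eq:phi_decay} controls the global $L^2$-norm.

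The key computational step is that the functional is invariant under translations in the $x_1$-direction. Indeed, $\sigma$ and $\mathbf{A}_0(x)=(-x_2,0)$ depend only on $x_2$, so for any $c\in\R$ and any admissible $w$,
\[
(\nabla - i\sigma\mathbf{A}_0)\bigl(w(\,\cdot\, - c e_1)\bigr) = \bigl((\nabla - i\sigma\mathbf{A}_0)w\bigr)(\,\cdot\, - c e_1),
\]
so the integrand of $\mathcal{G}_{a,b,R}$ is translation covariant. Consequently
\[
\mathcal{G}_{a,b,nR}(u) = \sum_{k=0}^{n-1} \int_{\Sigma_k}\!\Bigl(b\bigl|(\nabla-i\sigma\mathbf{A}_0)u\bigr|^2 - |u|^2 + \tfrac12|u|^4\Bigr)\,dx = n\,\mathcal{G}_{a,b,R}(\varphi) = n\,\mathfrak g_a(b,R).
\]
Finally, invoking $u\in\mathcal{D}_{nR}$ and the definition \eqref{eq:m0-mN} of $\mathfrak g_a(b,nR)$ as an infimum gives $\mathfrak g_a(b,nR)\leq n\,\mathfrak g_a(b,R)$.

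The only subtle point I anticipate is the gluing/regularity check: verifying that the piecewise-defined $u$ lies in the correct space. This is resolved cleanly by the built-in Dirichlet vanishing of $\varphi$ on $\{x_1=\pm R/2\}$, which makes the trace continuous across the interior interfaces and confirms the boundary condition at $x_1=\pm nR/2$; the computation of the energy by summing over the $\Sigma_k$ then becomes a routine change of variables.
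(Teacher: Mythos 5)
Your proposal is correct and follows essentially the same route as the paper: tile $S_{nR}$ by $n$ horizontal translates of the strip, transplant the minimizer $\varphi_{a,b,R}$ to each sub-strip (the Dirichlet condition on $\{x_1=\pm R/2\}$ making the glued function admissible in $\mathcal D_{nR}$), and use the $x_1$-translation invariance of the functional to sum the energies. The only cosmetic difference is that the paper runs the argument for an arbitrary $u\in\mathcal D_R$ and specializes to the minimizer at the end, whereas you start from the minimizer directly.
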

\begin{proof}
	Let $S_R=\left(-R/2,R/2\right) \times \R$ be the strip defined in~\eqref{eq:SR}. Consider the strip $S_{R,\lambda}$ defined such that	
	\[S_{R,\lambda}=S_R+\lambda R,\qquad \lambda \in \R\,.\]
	Let $u \in \mathcal D_R$, where $\mathcal D_R$ is the domain defined in~\eqref{eq:DR}. We define the function $v$ in $S_{R,\lambda}$ as follows
	\begin{equation}\label{eq:trans}
	v(x_1,x_2)=u(x_1-\lambda R,x_2),\qquad (x_1,x_2)\in S_{R,\lambda}\,.
	\end{equation}
	An easy computation shows the invariance of the energy under the aforementioned translation, that is
	\begin{equation}\label{eq:grad_trans}
\int_{S_R} b\big|(\nb-i\sigma \Ab_0)u\big|^2\,dy=\int_{S_{R,\lambda}} b\big|(\nb-i\sigma \Ab_0)v\big|^2\,dx\,,
	\end{equation}
	and
	\begin{multline}\label{eq:En_trans}
\mathcal G_{a,b,R}(u)=\int_{S_R} \left(b\big|(\nb-i\sigma \Ab_0)u\big|^2-|u|^2+\frac 12 |u|^4\right)\,dy\\=\int_{S_{R,\lambda}} \left(b\big|(\nb-i\sigma \Ab_0)v\big|^2-|v|^2+\frac 12 |v|^4\right)\,dx\,.
	\end{multline}
	Now, let $n \in \N$. Noticing that 
	\[\overline{S_{nR}}=\left[-n\frac R2,n\frac R2\right] \times \R=\bigcup_{j \in  J}\overline{S_{R,j}}\,,\]
	where $J=\left\{(1-n)/2+k,\ 0\leq k\leq n-1\right\}$, we define a function $\tilde{u}$ in $S_{nR}$ as follows
	\[\tilde{u}(x_1,x_2)=u(x_1-jR,x_2),\quad \mathrm{if}\ (x_1,x_2)\in S_{R,j}\,.\]
	This definition is consistent, since  the sets $\big(S_{R,j}\big)$ are disjoint, their closures cover $S_{nR}$, and $u \in \mathcal D_R$ which yields that $\tilde u$ vanishes on the boundary of every $S_{R,j}$. Having~\eqref{eq:En_trans}, we get consequently
	\[\tilde{u} \in \mathcal D_{nR}\quad\mathrm{and}\quad\mathcal G_{a,b,nR}(\tilde{u})=n\mathcal G_{a,b,R}(u)\,,\]
	This yields that
	\[\mathfrak g_a(b,nR) \leq n \mathcal G_{a,b,R}(u)\,.\]
	We choose $u \in \mathcal D_R$ to be the minimizer $\varphi_{a,b,R}$ of $\mathcal G_{a,b,R}$, defined in Proposition~\ref{prop:Ln} and conclude that
	\[\mathfrak g_a(b,nR)\leq n\mathfrak g_a(b,R)\,.\]
\end{proof}
Our next result is concerned with the monotonicity of the function $R\mapsto\mathfrak g_a (b,R)$.
\begin{lemma}\label{lem:m_var}
The function $R\mapsto \mathfrak g_a (b,R)$ defined in~\eqref{eq:m0-mN} is monotone non-increasing. 
\end{lemma}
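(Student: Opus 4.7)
The plan is to exploit the fact that extension by zero provides an isometric-type inclusion $\mathcal{D}_{R_1}\hookrightarrow\mathcal{D}_{R_2}$ that preserves the Ginzburg--Landau energy $\mathcal{G}_{a,b,R}$. So the claim reduces to the following: given $0<R_1\leq R_2$ and any $u\in\mathcal{D}_{R_1}$, produce $\tilde u\in\mathcal{D}_{R_2}$ with $\mathcal{G}_{a,b,R_2}(\tilde u)=\mathcal{G}_{a,b,R_1}(u)$. Taking the infimum over $u$ then yields $\mathfrak{g}_a(b,R_2)\leq \mathfrak{g}_a(b,R_1)$.

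Concretely, I would set
\[
\tilde u(x)=\begin{cases} u(x), & x\in S_{R_1},\\ 0, & x\in S_{R_2}\setminus S_{R_1},\end{cases}
\]
and check that $\tilde u\in\mathcal{D}_{R_2}$. Clearly $\tilde u\in L^2(S_{R_2})$ and $\tilde u(x_1=\pm R_2/2,x_2)=0$ trivially. The only point requiring care is that $(\nabla-i\sigma\mathbf A_0)\tilde u\in L^2(S_{R_2})$. Since the boundary condition in $\mathcal D_{R_1}$ is $u(\pm R_1/2,x_2)=0$, the extension $\tilde u$ has matching traces across $\{x_1=\pm R_1/2\}$, so in the distributional sense $\nabla\tilde u$ is exactly the zero-extension of $\nabla u$ (no boundary-measure contribution), and hence
\[
(\nabla-i\sigma\mathbf A_0)\tilde u=\mathbbm 1_{S_{R_1}}\,(\nabla-i\sigma\mathbf A_0)u\in L^2(S_{R_2}).
\]
Because $\tilde u$ vanishes on $S_{R_2}\setminus S_{R_1}$, each term in $\mathcal{G}_{a,b,R_2}(\tilde u)$ reduces to an integral over $S_{R_1}$ of the corresponding integrand for $u$, giving $\mathcal{G}_{a,b,R_2}(\tilde u)=\mathcal{G}_{a,b,R_1}(u)$.

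Taking the infimum over $u\in\mathcal D_{R_1}$ yields the desired inequality $\mathfrak{g}_a(b,R_2)\leq\mathfrak{g}_a(b,R_1)$. Note that this argument needs no minimizer to exist---it works in both the trivial regime $b\geq 1/\beta_a$ (where Lemma~\ref{lem:trivial} already gives $\mathfrak g_a(b,R)=0$ for every $R$, so monotonicity is immediate) and in the non-trivial regime covered by Assumption~\eqref{eq:A2}. There is really no main obstacle here; the only delicate step is the trace argument verifying that the zero-extension respects the covariant Sobolev structure, and this is handled by the Dirichlet condition built into $\mathcal D_R$.
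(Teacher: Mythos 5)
Your proposal is correct and follows essentially the same route as the paper: extension by zero and domain monotonicity, with the Dirichlet condition on $\{x_1=\pm R/2\}$ guaranteeing that the zero-extension stays in the energy space and preserves $\mathcal G_{a,b,R}$. The only (harmless) difference is that the paper applies the extension to a minimizer of $\mathcal G_{a,b,R}$, whereas you apply it to an arbitrary $u\in\mathcal D_{R_1}$ and then take the infimum, which has the small advantage of not requiring a minimizer to exist.
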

\begin{proof}
	This follows from the domain monotonicity. Indeed, let $r>0$ and $u \in H_0^1(S_R)$ be a minimizer of $\mathcal G_{a,b,R}$. Consider the function $\tilde{u} \in H_0^1(S_{R+r})$ defined as the extension of $u$ by zero on $S_{R+r}\setminus S_{R}$. Obviously
	\[\mathfrak g_a (b,R)=\mathcal G_{a,b,R}(u)= \mathcal G_{a,b,R+r}(\tilde{u})\geq \mathfrak g_a(b,R+r).\]
\end{proof}
The existence of the limit of $\mathfrak g_a (b,R)/R$ as $R\rightarrow+\infty$ will follow from a well known abstract result, see Lemma~\ref{lem:df} below. To apply this abstract result,  we need some bounds on the energy $\mathfrak g_a (b,R)$. These are  given in  Lemma~\ref{lem:m0_bound} below.
\begin{lemma} \label{lem:m0_bound}
Let $\mathfrak g_a (b,R)$ be the ground state energy in~\eqref{eq:m0-mN}. There exist positive constants $C_1$, $C_2$, and $C_3$ dependent solely on $a$ and $b$ such that
\begin{equation}\label{eq:m0_bound}
-C_1R\leq \frac{\mathfrak g_a (b,R)}{1-b\beta_a}\leq -C_2R+\frac{C_3}{R}\,.
\end{equation}
\end{lemma}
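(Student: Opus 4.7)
\medskip

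\noindent\emph{Proof proposal.} My plan is to prove the two bounds separately, using the minimizer $\varphi=\varphi_{a,b,R}$ constructed in Lemma~\ref{lem:min_ex} for the lower bound and a separable trial state built from the eigenfunction $\psi_a$ of Proposition~\ref{prop:op-La} for the upper bound. Throughout, let $\eta=1-b\beta_a>0$, which is strictly positive under~\eqref{eq:A2}.

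For the lower bound, I would start with the minimizer $\varphi\in\mathcal D_R$. Extending $\varphi$ by zero to all of $\R^2$, one may apply the spectral inequality $\int_{\R^2}|(\nabla-i\sigma\Ab_0)\varphi|^2\,dx\ge\beta_a\int_{\R^2}|\varphi|^2\,dx$ coming from~\eqref{eq:lamda}--\eqref{eq:beta}. This yields
\[
\mathfrak g_a(b,R)=\mathcal G_{a,b,R}(\varphi)\ge -\eta\int_{S_R}|\varphi|^2\,dx+\tfrac12\int_{S_R}|\varphi|^4\,dx\ge -\eta\int_{S_R}|\varphi|^2\,dx.
\]
The decay estimate~\eqref{eq:Ln5} from Lemma~\ref{lem:phi_prop} bounds $\int_{S_R}|\varphi|^2\,dx\le CbR$ with $C$ universal, so $\mathfrak g_a(b,R)/\eta\ge -CbR$, which is the desired lower bound with $C_1=Cb$.

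For the upper bound, the idea is to use a compactly supported truncation of the ``bulk'' eigenfunction $\psi_a(x_1,x_2)=e^{i\zeta_a x_1}\phi_a(x_2)$ furnished by Proposition~\ref{prop:op-La}. Fix a smooth cutoff $\chi\in C_c^\infty\big((-1,1)\big)$ with $\chi\equiv 1$ on $[-1/2,1/2]$ and set $\chi_R(x_1)=\chi(2x_1/R)$, so that $\chi_R$ is supported in $(-R/2,R/2)$. I would take as trial function $u(x_1,x_2)=\alpha\,\chi_R(x_1)\,e^{i\zeta_a x_1}\phi_a(x_2)\in\mathcal D_R$ for a real parameter $\alpha$ to be optimized. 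Separation of variables, together with the fact that the cross term $2\mathrm{Re}\langle\chi_R',\,i(\zeta_a+\sigma x_2)\chi_R\rangle$ vanishes (real $\times$ imaginary) and with the eigenvalue identity $\int_\R(|\phi_a'|^2+V_a(\zeta_a,t)|\phi_a|^2)\,dt=\beta_a$ (finite by the fast decay of $\phi_a$ in the quadratic potential $V_a(\zeta_a,\cdot)$), gives
\[
\mathcal G_{a,b,R}(u)=\alpha^2\bigl(b\,\|\chi_R'\|_2^2-\eta\,\|\chi_R\|_2^2\bigr)+\tfrac{\alpha^4}{2}\,A\,\|\chi_R\|_4^4,
\]
where $A:=\|\phi_a\|_4^4$ is a finite positive constant depending only on $a$. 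By the scaling of $\chi_R$, $\|\chi_R\|_2^2=a_1 R$, $\|\chi_R\|_4^4=a_3 R$, and $\|\chi_R'\|_2^2=a_2/R$ for constants $a_1,a_2,a_3>0$ depending only on $\chi$. Optimizing over $\alpha^2$ (legitimate once $R$ is large enough that the bracket is negative) yields
\[
\mathfrak g_a(b,R)\le -\frac{(\eta a_1R-b a_2/R)^2}{2Aa_3R}=-\frac{\eta^2 a_1^2}{2Aa_3}\,R+\frac{\eta a_1 b a_2}{Aa_3}\,\frac1R-\frac{b^2a_2^2}{2Aa_3}\,\frac1{R^3},
\]
and dividing by $\eta$ produces the required form $-C_2R+C_3/R$ for $R$ sufficiently large (depending on $a,b$). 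For the residual small values of $R$ the trivial bound $\mathfrak g_a(b,R)\le 0$ fits into the same envelope after enlarging $C_3$.

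The main conceptual step is really the upper bound: one must be careful to handle the infinite transverse direction via the genuine eigenfunction $\phi_a$ of the fiber operator rather than a truncation, and to verify that all transverse integrals $\int|\phi_a|^2$, $\int|\phi_a|^4$, $\int V_a(\zeta_a,\cdot)|\phi_a|^2$ are finite---which follows from the super-quadratic growth of $V_a(\zeta_a,\cdot)$ at infinity and Agmon-type decay of bound states. The lower bound, by contrast, reduces to combining the already-established $L^2$ decay~\eqref{eq:Ln5} with the simple spectral inequality for $\mathcal L_a$.
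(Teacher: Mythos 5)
Your proposal is correct and follows essentially the same route as the paper: the lower bound via the minimizer $\varphi_{a,b,R}$, the spectral inequality for $\mathcal L_a$ and the $L^2$ decay estimate, and the upper bound via a trial state obtained by cutting off the generalized eigenfunction $\psi_a(x)=e^{i\zeta_a x_1}\phi_a(x_2)$ in the $x_1$-direction. The only cosmetic differences are that you evaluate the trial energy by separation of variables and optimize the amplitude exactly, whereas the paper extracts the same identity by integrating the eigenvalue equation against $\theta_R^2\overline{\psi_a}$ and makes a specific choice of amplitude.
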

\begin{proof}~

{\bfseries Upper bound.}  Let $\theta \in C_c^\infty(\R)$ be a function satisfying
\[\supp \theta \subset \Big(-\frac 12, \frac 12\Big), \quad 0\leq\theta\leq1, \quad \theta=1\ \mathrm{in}\ \Big(-\frac 14, \frac14\Big)\,,\]
and let  
\[\theta_R(x)=\theta(x/R)\,.\]
We define the function
\[v(x)=\theta_R(x_1)\psi_a(x)\,,\]
where $\psi_a$ is the eigenfunction introduced in~\eqref{eq:psi_0}. Recall that $\psi_a(x_1,x_2)=e^{i\zeta_ax_1}\phi_a(x_2)$ with $\zeta_a$ and $\phi_a$ defined in~\eqref{eq:phi}, and $\phi_a$ satisfies $\|\phi_a\|_{L^2(\R)}=1$.

The function $\psi_a$ satisfies $-(\nabla-i\sigma\Ab_0)^2\psi_a=\beta_a\psi_a$ in $\R^2$.
Multiplying this equation by $\overline{v}=\theta^2_R\overline{\psi_a}$ then  integrating on the support of the function $v$, we get
\begin{align*}
\int_{S_R}|(\nabla-i\sigma\Ab_0)v|^2\,dx &= \beta_a \int_{S_R} \theta_R^2(x_1)|\psi_a(x)|^2\,dx+\int_{S_R}\theta_R'^2(x_1)  
\phi_a^2(x_2)\,dx \\
         												 &\leq \beta_a \int_{S_R} \theta_R^2(x_1)|\psi_a(x)|^2\,dx+ \frac{C}{R}\,.
\end{align*} 
Consequently, we get for all $t>0$
\begin{align*}
\mathfrak g_a (b,R) &\leq \mathcal G_{a,b,R}(tv) \\
           &\leq t^2(b\beta_a-1)\int_{S_R} \theta_R^2(x_1)\phi_a^2(x_2)\,dx + C b \frac{t^2}R +\frac{t^4}2 \int_{S_R} \theta_R^4(x_1)\phi_a^4(x_2)\,dx  \\
					& \leq t^2(b\beta_a-1)R + C b\frac{t^2}R +\frac{t^4}2 R \int_\R \phi_a^4(x_2)\,dx_2  \\
					& = t^2R \left((b\beta_a-1) + \frac{t^2}2 \int_\R \phi_a^4(x_2)\,dx_2 \right) + C b\frac{t^2}R\,.
\end{align*} 
We select $t$ such that  
\[(b\beta_a-1) + \frac{t^2}2 \int_\R \phi_a^4(x_2)\,dx_2 =\frac 12 (b\beta_a-1)<0~\mathrm{by~}\eqref{eq:A2}\,.\]
Let $\nu_a=\int_\R \phi_a^4(x_2)\,dx_2$. Thus, for $t=\sqrt{(1-b\beta_a)/\nu_a}$ we get 
\[\frac{\mathfrak g_a (b,R)}{1-b\beta_a}\leq -C_2R+\frac{C_3}{R}\,,\]
where $C_2=(1/2) t^2$ and $C_3=C  b/\nu_a$ depend only on $a$ and $b$.

{\bfseries Lower bound.} Let $\varphi_{a,b,R}$ be the minimizer in Proposition~\ref{prop:Ln}. For simplicity, we will write  $\varphi=\varphi_{a,b,R}$ . It follows from the   min-max principle that
\[\mathfrak g_a (b,R) = \mathcal G_{a,b,R}(\varphi) \geq (b\beta_a-1) \int_{S_R}|\varphi|^2\,dx\,.\]
By~\eqref{eq:A2}, $b\beta_a-1<0$. By~\eqref{eq:phi_decay}, 
$\displaystyle\int_{S_R} |\varphi|^2\,dx\leq CbR$, where $C>0$ is a universal constant. We choose $C_1=C/\beta_a$ and get the following inequality
\[\mathfrak g_a (b,R)\geq Cb(b\beta_a-1)R\geq -C_1(1-b\beta_a) R\,.\]
Obviously, $C_1$ depends solely on $a$.
\end{proof}
The next abstract lemma is a key-ingredient in the proof of Theorem~\ref{thm:eba}, and more precisely in establishing the existence of the limit energy $\mathfrak e_a(b)$ introduced in~\eqref{eq:eba1}. Variants of it were used in many papers, see~\cite{Fournais, fournais2013ground, pan2002surface, sandier2003decrease}. Here we use the version  from~\cite[Lemma~2.2]{Fournais}. 
\begin{lemma}\label{lem:df}
Let $\delta>0$. Consider a monotone non-increasing function $d:(\delta,+\infty)\rightarrow (-\infty,0]$ such that the function $f: (\delta,+\infty) \ni l \mapsto d(l)/l^2 \in \R$ is bounded.

Suppose that there exists a constant $C>0$ such that the estimate
\[f(nl)\geq f\big((1+\alpha)l\big)-C\left(\alpha+\frac1{\alpha^2l^2}\right)\]
holds true for all $\alpha \in (0,1)$, $n \in \N$, and $\ell \geq \ell_0$.
Then $f(l)$ has a limit $A$ as $l\rightarrow +\infty$.

Furthermore, for all $l \geq 2l_0$, the following estimate holds
\[f(l) \leq A+\frac{2C}{l^{\frac 23}}\,.\]
\end{lemma}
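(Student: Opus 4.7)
Define $A := \liminf_{l \to \infty} f(l)$, which is finite and lies in $[-M, 0]$ since $f$ is bounded and nonpositive. The plan is to establish the quantitative upper bound $f(L) \leq A + 2C/L^{2/3}$ for every $L \geq 2\ell_0$; combined with the trivial $\liminf f(l) \leq f(l)$, this forces $\lim_{l \to \infty} f(l) = A$. The starting point is to rewrite the hypothesis as
\[f((1+\alpha) l) \leq f(nl) + C\alpha + \frac{C}{\alpha^2 l^2}, \qquad l \geq \ell_0, \ n \in \mathbb{N}, \ \alpha \in (0,1),\]
so it becomes an \emph{upper} bound on $f$ at the ``small'' scale $(1+\alpha) l$ in terms of $f$ at the ``large'' scale $nl$.

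Stage 1 (qualitative convergence). I would show $\limsup f \leq A$. Given $\epsilon>0$, pick $l^* \geq \ell_0$ with $f(l^*) < A + \epsilon$, and apply the rewritten hypothesis with $l = l^*$, $n=1$, and $\alpha \in (0,1)$. Optimising in $\alpha$ at the scale $\alpha \sim (l^*)^{-2/3}$ yields $f(\bar l) \leq A + \epsilon + C_0 (l^*)^{-2/3}$ at a point $\bar l \in (l^*, 2l^*)$, where $C_0$ is an absolute constant. Taking this new point as a fresh base, iterating at dyadic scales $l_k \asymp 2^k l^*$ (using $n = 2$ to jump between consecutive dyadic scales and the optimal $\alpha_k \sim l_k^{-2/3}$ at each step), the accumulated error is controlled by the geometric series $\sum_{k\geq 0} C_0 l_k^{-2/3} \asymp C_0 (l^*)^{-2/3} \sum_k 2^{-2k/3}$, which converges. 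Hence $f(l) \leq A + \epsilon + C'(l^*)^{-2/3}$ for all $l \geq l^*$, and sending $l^* \to \infty$ along the liminf-achieving sequence and $\epsilon \to 0$ gives $\limsup f \leq A$.

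Stage 2 (quantitative rate). Once the limit exists, for any $\epsilon > 0$ there is $L_0(\epsilon)$ such that $f(l) \leq A + \epsilon$ whenever $l \geq L_0(\epsilon)$. Given $L \geq 2\ell_0$, I would set $\alpha := L^{-2/3} \in (0,1)$, $l := L/(1+\alpha) \in [L/2, L)$ (so $l \geq \ell_0$), and $n := \lceil L_0(\epsilon)(1+\alpha)/L \rceil \in \mathbb{N}$, ensuring $nl \geq L_0(\epsilon)$ and therefore $f(nl) \leq A + \epsilon$. The rewritten hypothesis then gives
\[f(L) = f((1+\alpha) l) \leq f(nl) + C\alpha + \frac{C(1+\alpha)^2}{\alpha^2 L^2} \leq A + \epsilon + \frac{2C}{L^{2/3}} + O(L^{-4/3}),\]
since with $\alpha = L^{-2/3}$ both principal error terms equal $C/L^{2/3}$ and the correction from $(1+\alpha)^2 = 1 + O(L^{-2/3})$ is absorbed into the lower order. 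Sending $\epsilon \to 0$ (which is free because neither side depends on $\epsilon$ beyond $n$) yields the claimed bound for all $L \geq 2\ell_0$.

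The main obstacle is Stage 1's iterative bookkeeping. The naive iteration with $n=1$ only extends the bound within $(l_k, 2l_k)$ and blows up near the endpoints; this is why one must use $n \geq 2$ at each dyadic step, comparing $f$ at $(1+\alpha) l_k$ directly with $f(2 l_k)$ from the preceding iteration. Verifying that the per-step error is indeed $O(l_k^{-2/3})$ — rather than the much larger $O(1)$ one gets by blindly sending $\alpha \to 1$ — requires choosing $\alpha_k$ jointly with the interval decomposition, and the monotonicity of $d$ (yielding $f(l_2) \leq (l_1/l_2)^2 f(l_1)$ for $l_2 \geq l_1$) provides a complementary upper bound that fills any ``gaps'' in the dyadic covering. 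Once the uniform error estimate is in hand, both the qualitative limit and the sharp constant $2C$ in Stage 2 follow.
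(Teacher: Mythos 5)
The paper does not prove this lemma --- it quotes it from \cite[Lemma~2.2]{Fournais} --- so I am measuring your proposal against the standard argument there. Your Stage~2 is essentially that argument and is fine (modulo a harmless $(1+\alpha)^2$ excess over the stated constant $2C$), but Stage~1, the existence of the limit, which is the real content of the lemma, has a genuine gap. As you yourself observe, the hypothesis rewritten as $f((1+\alpha)l)\le f(nl)+C(\alpha+\alpha^{-2}l^{-2})$ transfers an upper bound \emph{downward} in scale when $n\ge 2$ (from $nl$ to the smaller point $(1+\alpha)l$), and \emph{upward} only by the factor $1+\alpha<2$ when $n=1$, at a cost of at least $C\alpha$ per step. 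Your dyadic scheme needs to carry the bound $f(l^*)<A+\epsilon$ upward from $l_k=2^kl^*$ to $l_{k+1}=2l_k$ with per-step error $O(l_k^{-2/3})$, and neither mechanism delivers this: with $n=2$ the inequality takes $f(2l_k)$ as \emph{input} and outputs information at $(1+\alpha)l_k<2l_k$, i.e.\ it runs in the wrong direction; with $n=1$, reaching $2l_k$ in one step forces $\alpha\to1$ and a per-step cost of order $C$, while chaining $\sim\alpha^{-1}$ optimal steps of ratio $1+\alpha$ across one dyadic block also costs $\sim\alpha^{-1}\cdot C\alpha=C$. Either way the accumulated error is $\sum_k C$, which diverges; the convergent geometric series $\sum_k C_0 l_k^{-2/3}$ you write down presupposes a per-dyadic-step error that the hypothesis cannot produce.

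The repair is to drop the iteration and spend the freedom in $n$ in a \emph{single} application of the hypothesis. Fix $L\ge 2\ell_0$ and $\alpha\in(0,1)$, set $l=L/(1+\alpha)\ge\ell_0$, take $t_j\to\infty$ with $f(t_j)\to A:=\liminf f$, and put $n_j=\lceil t_j/l\rceil$, so $t_j\le n_j l$. Monotonicity of $d$ gives $f(n_jl)\le f(t_j)\,\bigl(t_j/(n_jl)\bigr)^2\le f(t_j)+M\bigl(1-(t_j/(n_jl))^2\bigr)\le f(t_j)+2M/n_j$, whence $\inf_n f(nl)\le A$. Plugging this into the hypothesis yields $f(L)\le A+C\alpha+C(1+\alpha)^2/(\alpha^2L^2)$ for every such $L$ and $\alpha$; letting $L\to\infty$ and then $\alpha\to0$ gives $\limsup f\le A$, so the limit exists and equals $A$, and the choice $\alpha=L^{-2/3}$ gives the quantitative bound (up to the $(1+\alpha)^2$ factor). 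This is exactly the role your closing remark about monotonicity ``filling gaps'' should play: not to patch a dyadic covering, but to show that the arithmetic progression $\{nl\}_{n\in\N}$ sees the liminf. With that replacement your Stage~2 becomes redundant, since the quantitative bound drops out of the same inequality.
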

We will apply Lemma~\ref{lem:df} on the function $f:R\mapsto \mathfrak g_a (b,R)/R $ in order to define $\mathfrak e_a(b)$ as 
$\lim_{R\to+\infty}\mathfrak g_a (b,R)/R$. To that end, we establish that the above choice of $f$ fulfils the conditions in Lemma~\ref{lem:df}. This is the content of

\begin{lemma}\label{lem:limit}
There exists a universal constant $C>0$ such that, for all  $n \in \N$ and $\alpha \in (0,1)$, the ground state energy $\mathfrak g_a (b,R)$ defined in~\eqref{eq:m0-mN} satisfies
\begin{equation}\label{eq:m0n}
\frac{\mathfrak g_a(b,n^2R)}{n^2R} 
							     \geq \frac {\mathfrak g_a\big(b,(1+\alpha)^2R\big)}{(1+\alpha)^2R}-Cb^2\Big(\alpha +\frac{1}{\alpha^2 R}\Big)\,. 
\end{equation}
\end{lemma}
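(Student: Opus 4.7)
\medskip
\noindent\textbf{Proof proposal for Lemma~\ref{lem:limit}.}

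The plan is to use the minimizer $u^{*}=\varphi_{a,b,n^{2}R}$ on $S_{n^{2}R}$ (whose existence is Lemma~\ref{lem:min_ex}/Proposition~\ref{prop:Ln}) and \emph{decompose} it via a partition of unity into pieces supported in cells of width $L=(1+\alpha)^{2}R$. Each piece, once translated in the $x_{1}$-direction, becomes an admissible test function on $S_{L}$, and summing the resulting inequalities against the IMS identity will yield the claim. Translation invariance of $\mathcal G_{a,b,L}$ in $x_{1}$ is crucial and is available because $\sigma\mathbf A_{0}(x)=(-\sigma(x_{2})x_{2},0)$ is independent of $x_{1}$.

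Concretely, I would choose a family $\{\chi_{j}\}_{j=1}^{N}$ depending only on $x_{1}$, with $\sum_{j}\chi_{j}(x_{1})^{2}=1$ for $x_{1}\in[-n^{2}R/2,n^{2}R/2]$, each $\chi_{j}$ supported in a cell $I_{j}$ of length $L$ centered at some $c_{j}$, with consecutive cells overlapping on a transition interval of width $2\alpha R$ (so that $c_{j+1}-c_{j}=L-2\alpha R$), and $|\chi_{j}'|\leq C/(\alpha R)$. For interior $j$, $\chi_{j}$ vanishes at $c_{j}\pm L/2$; the two boundary cells need not vanish at the outer endpoints because $u^{*}$ itself vanishes on $\partial S_{n^{2}R}$. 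Setting $w_{j}(x_{1},x_{2}) := (\chi_{j}u^{*})(x_{1}+c_{j},x_{2})$ gives $w_{j}\in\mathcal D_{L}$, and by translation invariance
\[
\mathfrak g_{a}(b,L)\leq \mathcal G_{a,b,L}(w_{j})=\int_{I_{j}\times\R}\Bigl(b\big|(\nabla-i\sigma\mathbf A_{0})(\chi_{j}u^{*})\big|^{2}-\chi_{j}^{2}|u^{*}|^{2}+\tfrac{1}{2}\chi_{j}^{4}|u^{*}|^{4}\Bigr)\,dx .
\]
Summing over $j$ and using the IMS identity $\sum_{j}|(\nabla-i\sigma\mathbf A_{0})(\chi_{j}u^{*})|^{2}=|(\nabla-i\sigma\mathbf A_{0})u^{*}|^{2}+\sum_{j}|\chi_{j}'|^{2}|u^{*}|^{2}$ (valid because $\sum\chi_{j}\chi_{j}'=\tfrac12(\sum\chi_{j}^{2})'=0$), together with $\sum_{j}\chi_{j}^{2}=1$ and the pointwise bound $\sum_{j}\chi_{j}^{4}\leq(\sum_{j}\chi_{j}^{2})^{2}=1$ for the $L^{4}$-term, yields
\[
N\mathfrak g_{a}(b,L)\leq \sum_{j}\mathcal G_{a,b,L}(w_{j})\leq \mathfrak g_{a}(b,n^{2}R)+b\int_{S_{n^{2}R}}|u^{*}|^{2}\sum_{j}|\chi_{j}'|^{2}\,dx .
\]

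It remains to control the two error mechanisms. Since at most two $\chi_{j}'$ are simultaneously nonzero at any point, $\sum_{j}|\chi_{j}'|^{2}\leq C/(\alpha R)^{2}$ pointwise; combining this with the uniform estimate $\int_{S_{n^{2}R}}|u^{*}|^{2}\,dx\leq Cbn^{2}R$ from~\eqref{eq:phi_decay} gives a gradient error at most $Cb^{2}n^{2}/(\alpha^{2}R)$. Dividing the above display by $NL$,
\[
\frac{\mathfrak g_{a}(b,L)}{L}\leq \frac{\mathfrak g_{a}(b,n^{2}R)}{NL}+\frac{Cb^{2}n^{2}}{NL\,\alpha^{2}R}.
\]
The counting identity $NL=n^{2}R+(N-1)\cdot 2\alpha R$ (the overlaps are double-counted by $\sum|I_{j}|$) shows $NL\in[n^{2}R,n^{2}R(1+C\alpha)+L]$, so $NL\geq n^{2}R$ and $(NL-n^{2}R)/n^{2}R\leq C\alpha$ (after accommodating an integer roundoff costing at most one cell). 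Combined with the uniform bound $|\mathfrak g_{a}(b,n^{2}R)|/(n^{2}R)\leq Cb^{2}$ from Lemma~\ref{lem:m0_bound}, one obtains $\mathfrak g_{a}(b,n^{2}R)/(NL)\leq\mathfrak g_{a}(b,n^{2}R)/(n^{2}R)+Cb^{2}\alpha$. Using $NL\geq n^{2}R$ and $R\geq 1$ for the gradient term ($1/R^{2}\leq 1/R$), rearranging yields exactly~\eqref{eq:m0n}.

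The chief technical point is the $L^{4}$-term, which would \emph{not} telescope if one used a partition with $\sum\chi_{j}=1$; it is the choice $\sum\chi_{j}^{2}=1$ together with the elementary inequality $\sum\chi_{j}^{4}\leq(\sum\chi_{j}^{2})^{2}$ that makes the sum of cell energies controlled by $\mathfrak g_{a}(b,n^{2}R)$ plus a purely gradient error. Matching the cells to $S_{n^{2}R}$ with controlled overlap, and tracking the discrepancy $NL-n^{2}R$, is what produces the $Cb^{2}\alpha$ contribution; the $Cb^{2}/(\alpha^{2}R)$ contribution comes entirely from $|\chi_{j}'|^{2}\sim (\alpha R)^{-2}$ in the transition zones.
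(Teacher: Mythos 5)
Your argument is, in substance, the paper's: both decompose the minimizer on $S_{n^2R}$ by a partition of unity with $\sum_j\chi_j^2=1$, apply the IMS formula, use translation invariance in the $x_1$-direction to compare each localized piece with the ground state energy on a narrower strip, and control the gradient error by $\sum_j|\chi_j'|^2\le C(\alpha R)^{-2}$ together with the a priori bound $\int_{S_{n^2R}}|\varphi_{a,b,n^2R}|^2\,dx\le Cbn^2R$ from~\eqref{eq:phi_decay}. The only structural difference is where the $Cb^2\alpha$ term is produced: the paper tiles $S_{n^2R}$ by exactly $n^2$ strips of width $(1+\alpha)R$, obtains $\mathfrak g_a(b,n^2R)\ge n^2\mathfrak g_a\big(b,(1+\alpha)R\big)-Cb^2n^2/(\alpha^2R)$, and then passes from $(1+\alpha)R$ to $(1+\alpha)^2R$ using the monotonicity of $R\mapsto\mathfrak g_a(b,R)$ (Lemma~\ref{lem:m_var}) and the bound on $|\mathfrak g_a(b,R)|/R$ from Lemma~\ref{lem:m0_bound}; you instead tile directly at scale $L=(1+\alpha)^2R$ and track $NL$ versus $n^2R$.

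The one step that does not hold as written is your roundoff claim. From $NL\le n^2R(1+C\alpha)+L$ you conclude $(NL-n^2R)/(n^2R)\le C\alpha$, but the extra cell contributes $L/(n^2R)=(1+\alpha)^2/n^2$, which is of order $n^{-2}$, not $O(\alpha)$; for fixed small $n$ and $\alpha\to0$ this term dominates, and your final estimate would read $Cb^2\big(\alpha+n^{-2}+\alpha^{-2}R^{-1}\big)$ rather than~\eqref{eq:m0n}. The repair is easy and removes the roundoff entirely: for $n\ge2$ take exactly $N=n^2$ cells of length $L$, so that the total overlap $n^2\big((1+\alpha)^2-1\big)R$ is distributed over $n^2-1$ junctions, each of width between $2\alpha R$ and $3\alpha R$; then $|\chi_j'|\le C(\alpha R)^{-1}$ still holds and $(NL-n^2R)/(NL)\le 2\alpha$ exactly. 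The case $n=1$ requires no partition at all: it follows from the monotonicity of $\mathfrak g_a(b,\cdot)$ together with Lemma~\ref{lem:m0_bound}. With that adjustment your proof is complete and matches the paper's.
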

\begin{proof}

Let $n\geq1$ be a natural number, $\alpha \in (0,1)$ and consider the family of strips
\[S_j=\left(-n^2-1-\alpha+(2j-1)\Big(1+\frac \alpha2\Big),-n^2-1+(2j+1)\Big(1+\frac \alpha2\Big)\right)\times \R,\quad (j \in \mathbb Z)\]
Notice that the width of $S_j$ is $2(1+\alpha)$, and the width of the overlapping region between two strips, when it exists, is $\alpha$. We consider the partition of unity of $\R^2$ :
\[\sum_j |\chi_j|^2=1,\quad 0 \leq \chi_j \leq 1,\quad \sum_j|\nabla \chi_j|^2\leq \frac C{\alpha^2},\quad \supp\chi_j\subset S_j\,,\]
where $C$  is a universal constant. Define 
\[\chi_{R,j}(x)=\chi_j(2x/R)\]
$(\chi_{R,j})$ is then a new partition of unity satisfying
\begin{equation}\label{eq:chij}
\sum_j |\chi_{R,j}|^2=1,\quad 0 \leq \chi_{R,j} \leq 1,\quad \sum_j|\nabla \chi_{R,j}|^2\leq \frac C{\alpha^2 R^2},\quad \supp\chi_{R,j}\subset S_{R,j}\,,
\end{equation}
where $S_{R,j}=\{xR/2~:~x \in S_j\}$.
The family of strips $(S_{R,j})_{j \in \{1,2,...,n^2\}}$ yields a covering of $S_{n^2R}=\left(-n^2R/2,n^2 R/2\right)\times \R$ by $n^2$ strips, each of width $(1+\alpha)R$.
Let $\varphi_{a,b,n^2R} \in \mathcal D_{n^2R}$
be the minimizer in Proposition~\ref{prop:Ln}. We decompose the energy associated to $\varphi_{a,b,n^2R}$ as follows
\begin{align*}
\mathfrak g_a(b,n^2R) &=\mathcal G_{a,b,n^2R}(\varphi_{a,b,n^2R})\\
           &\geq \sum_{j=1}^{n^2}\Big(\mathcal G_{a,b,n^2R}(\chi_{R,j}\varphi_{a,b,n^2R})-b \big\| |\nabla \chi_{R,j}|\varphi_{a,b,n^2R}\big\|^2_{L^2\big(S_{n^2R}\big)}\Big) \\
					 &= \sum_{j=1}^{n^2}\mathcal G_{a,b,n^2R}(\chi_{R,j}\varphi_{a,b,n^2R})-b \int_{S_{n^2R}}\Big(\sum_{j=1}^{n^2} |\nabla \chi_{R,j}|^2\Big)|\varphi_{a,b,n^2R}|^2\,dx \\
					 &\geq \sum_{j=1}^{n^2}\mathcal G_{a,b,n^2R}(\chi_{R,j}\varphi_{a,b,n^2R})-C \frac{b^2n^2}{\alpha^2 R}\,.  
\end{align*}
The first inequality above follows from the celebrated \emph{IMS localization formula} (see~\cite[Theorem~3.2]{cycon2009schrodinger}), while the second comes from~\eqref{eq:phi_decay} and the properties of $(\chi_{R,j})$ in~\eqref{eq:chij}.
Notice that $\chi_{R,j}\varphi_{a,b,n^2R}$ is supported in an infinite strip of width $(1+\alpha)R$. By energy translation invariance along the $x_1$-direction (see~\eqref{eq:En_trans}), we have 
\[\mathcal G_{a,b,n^2R}(\chi_{R,j}\varphi_{a,b,n^2R})\geq \mathfrak g_a(b,(1+\alpha)R)\,.\]
As a consequence,
\[\mathfrak g_a(b,n^2R)\geq n^2 \mathfrak g_a(b,(1+\alpha)R)-C \frac {b^2n^2}{\alpha^2R}\,.\]
For $R \geq 1$, dividing both sides by $n^2R$ and using the monotonicity of $R \mapsto\mathfrak g_a (b,R)$, we get
\begin{align*}
\frac{\mathfrak g_a(b,n^2R)}{n^2R} &\geq \frac {\mathfrak g_a \big(b,(1+\alpha)R\big)}{R} - \frac{Cb^2}{\alpha^2 R^2}  \\
													  &\geq \frac {\mathfrak g_a \big(b,(1+\alpha)^2R\big)}{(1+\alpha)^2R}-Cb^2\Big(\alpha +\frac{1}{\alpha^2 R}\Big)\,.
\end{align*}
\end{proof}

\subsection{Proof of Theorem~\ref{thm:eba}}

Here we will verify all the statements appearing in Theorem~\ref{thm:eba}. Noticing that $\mathcal G_{a,b,R}(0)=0$, we get Item~(1). The second item is already proved in Lemma~\ref{lem:trivial}.
Defining $\mathfrak e_a(b)=0$ for $b\geq 1/\beta_a$, the items~(3) and~(5) hold trivially since $\mathfrak g_a(b,R)=0$ in this case. For these two items, we handle now the case where $1/|a| \leq b<1/\beta_a$.

In the proof of Item~(3), we define the two functions $d_{a,b}(l)=\mathfrak g_a(b,l^2)$ and $f_{a,b}(l)=d_{a,b}(l)/l^2$.
Using Lemmas~\ref{lem:m_var} and~\ref{lem:m0_bound}, we see that $d_{a,b}(\cdot)$ is non-positive, monotone non-increasing, and that $f_{a,b}(\cdot)$ is bounded.
Reformulating~\eqref{eq:m0n} by taking $R=l^2$, we get for $\ell \geq 2$
\[f_{a,b}(nl)\geq f_{a,b}\big((1+\alpha)l\big)-Cb^2\Big(\alpha+\frac1{\alpha^2l^2}\Big)\,.\]
Thus, the functions $d_{a,b}(l)$ and $f_{a,b}(l)$ satisfy the assumptions in Lemma~\ref{lem:df}. This assures the existence of a constant $\mathfrak e_a(b)\leq0$, depending on $a$ and $b$, such that
\[\lim_{l\rightarrow +\infty} f_{a,b}(l)=\mathfrak e_a(b)\,,\]
that is, with the choice $l=\sqrt{R}$
\[\lim_{R\rightarrow +\infty} \frac {\mathfrak g_a (b,R)}R=\mathfrak e_a(b)\,.\]
Moreover, Lemma~\ref{lem:m0_bound} ensures that $\mathfrak e_a(b)<0$.
The upper bound in Item~(5) of Theorem~\ref{thm:eba} follows from Lemma~\ref{lem:df}.
It remains to establish a lower bound for $\mathfrak g_a (b,R)/R$. Let $n\geq 1$ be an integer. By Lemma~\ref{lem:periodic},
\[\mathfrak g_a(b,nR)\leq n\,\mathfrak g_a (b,R)\,.\]
Dividing both sides by $nR$ and taking $n \rightarrow +\infty$ yields
\[\frac{\mathfrak g_a (b,R)}{R}\geq \mathfrak e_a(b)\,.\]
The monotonicity of the function $\mathfrak e_a(\cdot)$ is straightforward and follows from that of $\mathfrak g_a(\cdot,R)$. Let $\epsilon>0$, we have $\mathfrak g_a(b+\epsilon,R)\geq \mathfrak g_a (b,R)$. Dividing both sides of this inequality by $R$ then taking $R\rightarrow +\infty$ gives us $\mathfrak e_a(b+\epsilon)\geq \mathfrak e_a(b)$.
Our final task is to  prove the continuity of the function $\mathfrak e_a(\cdot)$.
Let $b \in \Big[1/|a|,1/\beta_a\Big)$, and $\epsilon>0$. We will prove that $\mathfrak e_a(\cdot)$ is right continuous at $b$. Since $\mathfrak e_a(\cdot)$ is monotone non-decreasing, 
$\mathfrak e_a(b+\epsilon)\geq \mathfrak e_a(b)$. Consequently, 
\[\liminf_{\epsilon\to0_+}\mathfrak e_a(b+\epsilon)\geq \mathfrak e_a(b)\,.\]
Hence, it is sufficient to prove that 
$\displaystyle\limsup_{\epsilon\rightarrow 0_+}\mathfrak e_a(b+\epsilon)\leq \mathfrak e_a(b)$. We may use the  following lower bound from~\eqref{eq:eba2},
\[\mathfrak e_a(b+\epsilon)\leq \frac{\mathfrak g_a(b+\epsilon,R)}{R}\]
which in turn yields 
\begin{equation}\label{eq:limsup}
\limsup_{\epsilon\rightarrow 0_+}\mathfrak e_a(b+\epsilon)\leq \limsup_{\epsilon\rightarrow 0_+}\frac{\mathfrak g_a(b+\epsilon,R)}{R}\,.
\end{equation}
Let $u \in \mathcal D_R$. We have $\mathfrak g_a(b+\epsilon,R)\leq \mathcal G_{a,b+\epsilon,R}(u)$, where the functional $\mathcal G_{a,\cdot,R}$ is defined in~\eqref{eq:Gb}.  We infer from~\eqref{eq:limsup} that
\begin{align*}
\limsup_{\epsilon\rightarrow 0_+}\mathfrak e_a(b+\epsilon)&\leq \frac 1R	\limsup_{\epsilon\rightarrow 0_+}\mathcal G_{a,b+\epsilon,R}(u)\\
                                                 &\leq \frac {\mathcal G_{a,b,R}(u)}{R}+\limsup_{\epsilon\rightarrow 0_+}\frac {\epsilon}{R}\int \big|(\nabla-i\sigma\Ab_0)u\big|^2\,dx\\
                                                 &=\frac {\mathcal G_{a,b,R}(u)}{R}\,.
\end{align*}
This is true for all $u \in \mathcal D_R$ and $R\geq 1$. Minimizing over $u\in\mathcal D_R$ yields, for all $R\geq 1$,
\[\limsup_{\epsilon\rightarrow 0_+}\mathfrak e_a(b+\epsilon)\leq \frac{\mathfrak g_a (b,R)}{R}\]
Taking $R\rightarrow +\infty$, we get the desired inequality.\\
Let $b \in \Big(1/|a|, 1/\beta_a\Big]$ and $\epsilon<0$.  Now we prove the left continuity at $b$. The monotonicity of $\mathfrak e_a(\cdot)$ yields that
$\displaystyle\limsup_{\epsilon\to0_-} \mathfrak e_a(b+\epsilon)\leq \mathfrak e_a(b)$. So,  
it is sufficient to prove that 
\[\liminf_{\epsilon\rightarrow 0_-}\mathfrak e_a(b+\epsilon)\geq \mathfrak e_a(b)\,.\]
Let   $\varphi_{a,b+\epsilon,R}$ be the minimizer of $\mathcal G_{a,b+\epsilon,R}$ defined in~\eqref{eq:phi_norm}. Using the upper bound in~\eqref{eq:eba2} together with~\eqref{eq:phi_decay}, we get
\begin{align*}
\mathfrak e_a(b+\epsilon)&\geq \frac {\mathfrak g_a(b+\epsilon,R)}R -C\frac{(b+\epsilon)^2} {R^{\frac13}}\\
               &\geq\frac {\mathcal G_{a,b+\epsilon,R}\big( \varphi_{a,b+\epsilon,R}\big)}R-C\frac{(b+\epsilon)^2} {R^{\frac13}}\\
							 &\geq \frac {\mathcal G_{a,b,R}\big( \varphi_{a,b+\epsilon,R}\big)}R+\frac{\epsilon}R\int \big|(\nabla-i\sigma\Ab_0)\varphi_{a,b+\epsilon,R}\big|^2\,dx-C\frac{(b+\epsilon)^2}{R^{\frac13}}\\
							 &\geq \frac {\mathfrak g_a (b,R)}R+C\epsilon-C\frac{(b+\epsilon)^2}{R^{\frac13}}.
\end{align*}
Hence,
\[\liminf_{\epsilon\rightarrow 0_-}\mathfrak e_a(b+\epsilon)\geq \frac {\mathfrak g_a (b,R)}R-C\frac{b^2}{R^{\frac13}}\]
Taking $R\rightarrow+\infty$,we get the desired inequality.

\subsection{An effective one-dimensional energy}\label{sec:1D}
Assume that $a \in [-1,1)\setminus\{0\}$ and $b>0$. For all $\xi \in \R$,  consider the functional
\begin{multline}\label{eq:E_1D}
\mathcal E_{a,b,\xi}^{1D}(f)=\int_{-\infty}^{0}\left(b|f'(t)|^2+b(at+\xi)^2|f(t)|^2-|f(t)|^2 +\frac 12|f(t)|^4 \right)\,dt\\+\int_{0}^{+\infty}\left(b|f'(t)|^2+b(t+\xi)^2|f(t)|^2-|f(t)|^2+\frac 12|f(t)|^4\right)\,dt\,.
\end{multline}
defined over the space $B^1(\R)$, and let
\begin{equation}\label{eq:1D-gse}
E_{a,b}^{1D}(\xi)=\inf_{f \in B^1(\R)} \mathcal E_{a,b,\xi}^{1D}(f)\,.\end{equation}
We would like to find a relationship between the 2D-energy in~\eqref{eq:m0-mN} and the 1D-energy in~\eqref{eq:1D-gse} for some specific value of $\xi$. The existing results on the Ginzburg--Landau functional with a uniform magnetic field suggest that  we should select $\xi$ so as to minimize the  function $\xi\mapsto E_{a,b}^{1D}(\xi)$, see~\cite{almog2007distribution, Correggi , pan2002surface}.  

In light of Remark~\ref{rem:trivial}, we will assume that $a$ and $b$ satisfy
\begin{equation}\label{eq:assump2}
a \in [-1,0) \qquad \mathrm{and} \qquad b \geq \frac 1 {|a|}\,.
\end{equation}
Under~\eqref{eq:assump2}, the numerical computations indicate that the global minimum $\beta_a$, defined in~\eqref{eq:lamda}, is attained at a non-degenerate {\bfseries unique} point, denoted  by 
$\zeta_a$ in~\eqref{eq:theta3} (see~\cite[Section~1.3]{hislop2016band}).  To our knowledge, such a uniqueness result has not been analytically proven yet. {\bfseries In the sequel, we will assume that uniqueness   of $\zeta_a$ holds.} Under this assumption,  Proposition~\ref{prop:mu_a_lim} yields that
for each fixed value of $b$ such that $1/|a|<b<1/\beta_a$, there exist two real numbers $\xi_1(a,b)$ and $\xi_2(a,b)$  satisfying
\[\xi_1(a,b)<\zeta_a<\xi_2(a,b)\,,\]
and
\[\left(\mu_a\right)^{-1}\left(\big(\beta_a,b^{-1}\big)\right)=\Big(\xi_1(a,b),\xi_2(a,b)\Big)\,.\]
With $\xi_1(a,b)$ and $\xi_2(a,b)$ in hand,  we can list some elementary properties of the functional $\mathcal E^{1D}_{a,b,\xi}$ in~\eqref{eq:E_1D}:
\begin{theorem}	\label{thm:E_1D}
	Let $a \in [-1,0)$ and $b \geq 1/|a|$.
	\begin{enumerate}
		\item The functional $\mathcal E^{1D}_{a,b,\xi}$ has a non-trivial minimizer in $B^1(\R)$ if and only if $1/|a| \leq b <1/\beta_a$. Furthermore, one can find a positive minimizer $f_\xi$, dependent on $a$  and $b$, such that any minimizer has the form $cf_\xi$ where $c \in \C$ and $|c|=1$.
		\item {\bfseries(Assuming that $\zeta_a$ is unique)} For $1/|a| < b <1/\beta_a$, there exists $\xi_0 \in \big(\xi_1(a,b),\xi_2(a,b)\big)$, dependent on $a$ and $b$, such that
		\[E_{a,b}^{1D}(\xi_0)=\inf_{\xi \in \R} E_{a,b}^{1D}(\xi)\,.\]
		\item (Feynman-Hellmann)
		\[\int_{-\infty}^{0}(at+\xi_0)|f_{\xi_0}(t)|^2\,dt+\int_{0}^{+\infty}(t+\xi_0)|f_{\xi_0}(t)|^2\,dt=0\,.\]
	\end{enumerate}
\end{theorem}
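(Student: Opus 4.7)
The plan is to establish the three items in order, relying on the direct method of the calculus of variations, the compact embedding $B^1(\R)\hookrightarrow L^2(\R)$ (afforded by the quadratic growth of $V_a(\xi,\cdot)$ at infinity), the spectral results for $\mathfrak h_a[\xi]$ collected in Section~\ref{sec:sturm2}, and a Feynman--Hellmann argument for item~(3).

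\textbf{Item (1).} Using the min-max bound $q_a[\xi](f)\geq \mu_a(\xi)\|f\|_{L^2(\R)}^2$ one gets
\[
\mathcal E^{1D}_{a,b,\xi}(f)\ \geq\ \bigl(b\mu_a(\xi)-1\bigr)\|f\|_{L^2(\R)}^2+\tfrac12\|f\|_{L^4(\R)}^4.
\]
If $b\geq 1/\beta_a$, then $b\mu_a(\xi)\geq b\beta_a\geq 1$ for every $\xi$, so the right-hand side is non-negative and vanishes only at $f\equiv 0$; no non-trivial minimizer exists. If $1/|a|\leq b<1/\beta_a$, the set $\{\xi:\mu_a(\xi)<1/b\}$ is non-empty (it contains $\zeta_a$), and for such $\xi$ scaling the normalized ground state of $\mathfrak h_a[\xi]$ by a small real constant produces a test function with negative energy. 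For existence, a minimizing sequence $(f_n)$ is bounded in $B^1(\R)\cap L^4(\R)$ by coercivity of $q_a[\xi]$ and of the $L^4$-term; the compact embedding $B^1(\R)\hookrightarrow L^2(\R)$ extracts a strongly convergent subsequence, and weak lower semi-continuity of the gradient/potential terms together with strong $L^4$-convergence (from the $B^1$-control plus interpolation) yields a minimizer $f_\xi$. Replacing $f$ by $|f|$ does not increase the energy (the magnetic/gradient part decreases via $|\,|f|'\,|\leq |f'|$ a.e.), so there is a non-negative minimizer, and the Euler--Lagrange equation $-bf''+bV_a(\xi,\cdot)f=(1-f^2)f$ with matching at $0$ combined with the strong maximum principle promotes it to $f_\xi>0$. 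Uniqueness up to a phase follows from writing an arbitrary minimizer as $he^{i\theta}$, noting that $|f'|^2\geq (h')^2+h^2(\theta')^2$, and deducing from minimality and $h>0$ that $\theta$ is constant.

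\textbf{Item (2).} By construction of $\xi_1(a,b)<\zeta_a<\xi_2(a,b)$, one has $b\mu_a(\xi)\geq 1$ for $\xi\notin(\xi_1,\xi_2)$, so the bound above combined with $\mathcal E^{1D}_{a,b,\xi}(0)=0$ gives $E^{1D}_{a,b}(\xi)=0$ there, and in particular at the two endpoints by continuity. Inside $(\xi_1,\xi_2)$, a scalar multiple of the $\mathfrak h_a[\xi]$-eigenfunction shows $E^{1D}_{a,b}(\xi)<0$. Continuity of $\xi\mapsto E^{1D}_{a,b}(\xi)$ follows by using $f_{\xi_0}$ (respectively $f_\xi$) as a test function for nearby $\xi$ (respectively $\xi_0$) and controlling the error via the weighted moments $\int (1+t^2)|f|^2\,dt$ available from $f\in B^1(\R)$. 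A continuous function which is strictly negative on $(\xi_1,\xi_2)$ and vanishes at its endpoints attains its global minimum at some $\xi_0\in(\xi_1(a,b),\xi_2(a,b))$.

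\textbf{Item (3).} Since $f_{\xi_0}\in B^1(\R)$ is admissible for every $\xi$,
\[
\Phi(\xi):=\mathcal E^{1D}_{a,b,\xi}(f_{\xi_0})\ \geq\ E^{1D}_{a,b}(\xi)\ \geq\ E^{1D}_{a,b}(\xi_0)=\Phi(\xi_0),
\]
so $\xi_0$ is a global minimum of $\Phi$. Only the potential part of $\mathcal E^{1D}_{a,b,\xi}$ depends on $\xi$ and is polynomial in $\xi$, so differentiation under the integral sign (justified via the weighted $L^2$-bounds on $f_{\xi_0}$) gives
\[
\Phi'(\xi)=2b\int_{-\infty}^{0}(at+\xi)|f_{\xi_0}(t)|^2\,dt+2b\int_{0}^{+\infty}(t+\xi)|f_{\xi_0}(t)|^2\,dt.
\]
Setting $\Phi'(\xi_0)=0$ and dividing by $2b$ yields the claimed identity.

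The main obstacle is the existence step in item~(1): compactness of a minimizing sequence on the unbounded domain $\R$ has to be secured despite the non-smoothness of $V_a(\xi,\cdot)$ at $t=0$, and one must carefully check that the transmission condition $f'(0_+)=f'(0_-)$ built into $\dom(\mathfrak h_a[\xi])$ is preserved in the limit (it is, because it is encoded in the form domain $B^1(\R)$ together with the Euler--Lagrange equation). The positivity/uniqueness argument must then handle this same matching condition when invoking the strong maximum principle.
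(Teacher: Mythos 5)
Your approach is correct and is essentially the one the paper intends: the paper gives no proof of Theorem~\ref{thm:E_1D} beyond the remark that it ``may be derived exactly as done in~\cite[Section 14.2]{fournais2010spectral}'', and your argument (direct method via the compact embedding $B^1(\R)\hookrightarrow L^2(\R)$, the min--max lower bound $\mathcal E^{1D}_{a,b,\xi}(f)\geq(b\mu_a(\xi)-1)\|f\|_2^2+\frac12\|f\|_4^4$ to settle triviality versus non-triviality, and the Feynman--Hellmann differentiation of $\xi\mapsto\mathcal E^{1D}_{a,b,\xi}(f_{\xi_0})$ at the global minimum) is precisely that method transplanted to the step potential $V_a(\xi,\cdot)$, where the only new feature is the transmission condition at $t=0$, which you correctly observe lives in the Euler--Lagrange equation rather than in the form domain.

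One step is genuinely incomplete: in item~(1) your phase argument ($f=he^{i\theta}$, $|f'|^2\geq(h')^2+h^2(\theta')^2$) shows that every minimizer is a unimodular constant times a \emph{nonnegative} minimizer, and the maximum principle makes that nonnegative minimizer positive, but this does not yet give the claimed form $cf_\xi$, because you have not shown that the positive minimizer is \emph{unique}. Two distinct positive minimizers are not excluded by anything you wrote. The standard repair, which is also the one used in~\cite[Section 14.2]{fournais2010spectral}, is to rewrite the energy on nonnegative functions in terms of $\rho=f^2$: the kinetic term $\int|(\sqrt\rho)'|^2$ is convex in $\rho$, the terms $b\int V_a(\xi,\cdot)\rho-\int\rho$ are linear, and $\frac12\int\rho^2$ is strictly convex, so the minimizing $\rho$ (hence the positive minimizer $f_\xi=\sqrt\rho$) is unique. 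With that addition, and with the uniform-in-$\xi$ moment bounds you invoke for the continuity of $E^{1D}_{a,b}(\cdot)$ made explicit (they follow from $\|f_\xi\|_\infty\leq1$ and $b\,q_a[\xi](f_\xi)\leq\|f_\xi\|_2^2$), the proof is complete.
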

The proof of Theorem~\ref{thm:E_1D} may be derived exactly as  done in~\cite[Section 14.2]{fournais2010spectral} devoted to the analysis of the following 1D-functional 
\[\mathcal E_{b,\xi}^{1D}(f)=\int_{0}^{+\infty}\left(b|f'(t)|^2+b(t+\xi)^2|f(t)|^2-|f(t)|^2+\frac 12|f(t)|^4\right)\,dt\,,\]
defined over the space $B^1(\R_+)$. We introduce the ground state energy
\begin{equation}\label{eq:E-1D*}
E^{1D}_b(\xi)=\inf_{f\in B^1(\R)}\mathcal E_{b,\xi}^\mathrm{1D}(f)\,.
\end{equation}
The ground state energy in~\eqref{eq:E-1D*} plays a crucial role in the study of surface superconductivity under the presence of a uniform magnetic field (see~\cite{almog2007distribution,fournais2010spectral,helffer2011superconductivity, Correggi }). Let $\mathrm{E}_\mathrm{g.st}^\mathrm{unif}(\kappa,H)$ be the ground state energy of the functional in~\eqref{eq:GL} for $B_0=1$. Assuming that $H=b\kappa$ and $1<b<\Theta_0^{-1}$ ($\Theta_0\in(0,1)$ is a universal (spectral) constant defined in~\eqref{eq:theta1}), then as $\kappa\to+\infty$,
\begin{equation}\label{eq:E_1D_2}
\mathrm{E}_\mathrm{g.st}^\mathrm{unif}(\kappa,H)=|\partial \Omega|\kappa {b^{-\frac 12}} E_b^{1D}+\mathcal O(1)\,,
\end{equation}
where $E_b^{1D}=\inf _{\xi \in \R}E^{1D}_b(\xi)$. That has been conjectured by Pan~\cite{pan2002surface}, then proved by Almog-Helffer and Helffer-Fournais-Persson~\cite{almog2007distribution, helffer2011superconductivity} under a restrictive assumption on  $b$,  using a spectral approach. In the whole regime $b\in(1,\Theta_0^{-1})$, the upper bound part in~\eqref{eq:E_1D_2} easily holds (see~\cite[Section 14.4.2]{fournais2010spectral}), while the matching lower bound is more difficult to obtain and has been finally proved by Correggi-Rougerie~\cite{Correggi }. The proof of Correggi-Rougerie,  based on  the positivity of a certain \emph{cost function}, was markedly different from the  spectral approach of~\cite{almog2007distribution,helffer2011superconductivity}.

Going back to our step magnetic field problem and the one dimensional energy in~\eqref{eq:E_1D},  it is reasonable to make the following conjecture
\begin{conj}\label{conj:1D}
	Assume that $-1\leq a<0$ and $1/|a|< b<1/\beta_a$, where $\beta_a$ is defined in~\eqref{eq:lamda}. Then, the energy $\mathfrak e_a(b)$ introduced in~\eqref{eq:eba1} satisfies
 \[\mathfrak e_a(b)=E_{a,b}^\mathrm{1D}\,,\]
where
\begin{equation}\label{eq:E_1D_3}
E_{a,b}^{1D}=\inf_{\xi \in \R}E_{a,b}^{1D}(\xi)\,.
\end{equation}	
and $E_{a,b}^{1D}(\cdot)$ is defined in~\eqref{eq:1D-gse}.
	\end{conj}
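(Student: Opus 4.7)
The plan is to prove Conjecture~\ref{conj:1D} by matching asymptotic upper and lower bounds on $\mathfrak g_a(b,R)/R$ as $R\to+\infty$, following the template of Pan's surface superconductivity conjecture in the uniform magnetic field case, which was resolved by combining a trial-state upper bound with the cost-function lower bound of Correggi--Rougerie~\cite{Correggi}. The upper bound $\mathfrak e_a(b)\leq E_{a,b}^{1D}$ is straightforward via an explicit trial configuration; the matching lower bound is the principal difficulty, which is why the statement is recorded only as a conjecture.

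For the upper bound, let $(\xi_0,f_0)$ be the optimal pair from Theorem~\ref{thm:E_1D} and test $\mathcal G_{a,b,R}$ against $u(x_1,x_2)=\chi_R(x_1)\,e^{i\xi_0 x_1}f_0(x_2)$, where $\chi_R\in C_c^\infty(-R/2,R/2)$ equals $1$ on $[-R/2+1,R/2-1]$ and has $\|\chi_R'\|_\infty$ bounded uniformly in $R$. A direct computation, exploiting that $\sigma(x_2)x_2$ depends only on $x_2$ and therefore decouples from the $x_1$-integration, gives
$$\mathcal G_{a,b,R}\bigl(e^{i\xi_0 x_1}f_0(x_2)\bigr) = R\,\mathcal E_{a,b,\xi_0}^{1D}(f_0) = R\,E_{a,b}^{1D}.$$
The cut-off introduces an error of size $O(1)$ thanks to the exponential decay of $f_0$ inherited from the bound state of $\mathfrak h_a[\xi_0]$, yielding $\mathfrak g_a(b,R)\leq R\,E_{a,b}^{1D}+O(1)$ and therefore $\mathfrak e_a(b)\leq E_{a,b}^{1D}$.

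For the matching lower bound $\mathfrak e_a(b)\geq E_{a,b}^{1D}$, my strategy is the Correggi--Rougerie cost-function method adapted to the magnetic-barrier geometry. Let $\varphi=\varphi_{a,b,R}$ be the minimizer of Proposition~\ref{prop:Ln} and factor $\varphi(x_1,x_2)=e^{i\xi_0 x_1}f_0(x_2)g(x_1,x_2)$; since $f_0$ may be chosen strictly positive, $g$ is well-defined, and the Dirichlet condition on $\varphi$ transfers to $g$ at $x_1=\pm R/2$. Expanding $\mathcal G_{a,b,R}(\varphi)$, integrating by parts in $x_2$ (the jump at $x_2=0$ vanishes thanks to the matching condition $f_0'(0^+)=f_0'(0^-)$ built into the operator domain) and using the 1D Euler--Lagrange equation for $f_0$ to eliminate the coefficient of $|g|^2$, one arrives at the cost-function identity
\begin{multline*}
\mathcal G_{a,b,R}(\varphi)-R\,E_{a,b}^{1D} = b\int_{S_R}f_0^2\bigl(|\partial_{x_1}g|^2+|\partial_{x_2}g|^2\bigr)dx \\
+ 2b\int_{S_R}(\xi_0+\sigma x_2)f_0^2\,\mathrm{Im}(\overline{g}\,\partial_{x_1}g)\,dx + \tfrac{1}{2}\int_{S_R}f_0^4(|g|^2-1)^2\,dx.
\end{multline*}
The first and third terms are manifestly non-negative; the indefinite cross term is the crux. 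My plan is to control it using the Feynman--Hellmann identity $\int_\R(\xi_0+\sigma x_2)f_0(x_2)^2\,dx_2=0$ from Theorem~\ref{thm:E_1D}(3): decomposing $|g(x_1,\cdot)|^2$ at each $x_1$ into its $f_0^2$-weighted $x_2$-mean plus a fluctuation of vanishing $f_0^2$-mean, the leading contribution to the cross term drops out, and the fluctuation part should be controlled by $\int_{S_R}f_0^2|\partial_{x_2}g|^2\,dx$ through a weighted one-dimensional Poincaré inequality, to be absorbed into the first positive term.

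The hardest step will be making this fluctuation argument quantitative while handling boundary fluxes at $x_1=\pm R/2$, where $g$ cannot be assumed close to $1$; a probable remedy is a mean-value slicing argument, using the global bound~\eqref{eq:phi_decay} to produce slices within $O(1)$ of each vertical boundary on which $\varphi$ and its relevant derivatives are small in an averaged sense, so that one can truncate $g$ at total cost $O(1)$ after dividing by $R$. A further genuine obstruction, already flagged in Section~\ref{sec:1D}, is the standing hypothesis that the minimizer $\zeta_a$ of $\mu_a(\cdot)$ is unique and non-degenerate: this is only numerical at present, and a rigorous proof seems to demand a dedicated Sturm--Liouville analysis analogous to the classical study of $\Theta_0$ for the Neumann half-plane. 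Without uniqueness of $\zeta_a$ the optimal $\xi_0$ need not exist and the factorization underlying the cost-function approach is ill-defined, which is precisely why Theorem~\ref{thm:E_1D}(2) itself carries this uniqueness assumption.
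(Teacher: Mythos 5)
This statement is recorded in the paper as a \emph{conjecture}, and the paper does not prove it: the only case settled there is $a=-1$, where~\eqref{eq:conj*} follows from a symmetry reduction to the half-line problem and the Correggi--Rougerie result $E_\mathrm{surf}=E_b^{1D}$. Your proposal is a strategy outline rather than a proof, and the gap it leaves open is exactly the obstruction the authors themselves identify at the end of Section~\ref{sec:1D}. The upper bound $\mathfrak e_a(b)\leq E_{a,b}^{1D}$ via the trial state $\chi_R(x_1)e^{i\xi_0x_1}f_0(x_2)$ is fine (modulo verifying the decay of $f_0$, which does follow from the confining potential, and modulo the unproven uniqueness of $\zeta_a$ needed for Theorem~\ref{thm:E_1D}(2) to supply $\xi_0$ at all). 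The lower bound is where the argument is not a proof.

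Concretely: after the factorization $\varphi=e^{i\xi_0x_1}f_0g$ and the cost-function identity, the indefinite term is $2b\int(\xi_0+\sigma x_2)f_0^2\,\mathrm{Im}(\overline g\,\partial_{x_1}g)$, which involves the \emph{current} $\mathrm{Im}(\overline g\,\partial_{x_1}g)$ and not $\partial_{x_1}|g|^2$. Your plan to kill its leading part by subtracting the $f_0^2$-weighted $x_2$-mean (using Feynman--Hellmann) and then to control the fluctuation by a weighted Poincar\'e inequality against $\int f_0^2|\partial_{x_2}g|^2$ does not close: the $x_2$-fluctuation of the current is governed by $\partial_{x_2}\mathrm{Im}(\overline g\,\partial_{x_1}g)$, which contains the mixed second derivative $\partial_{x_2}\partial_{x_1}g$, and this is not controlled by the first-order positive terms in the identity. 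The Correggi--Rougerie mechanism avoids this by integrating the cross term by parts in $x_2$ against the primitive $F(t)=\int(\xi_0+\sigma\eta)f_0(\eta)^2\,d\eta$ and then absorbing the resulting term via Cauchy--Schwarz, which requires a \emph{pointwise} positivity property of the cost function built from $f_0^2$ and $F$; it is precisely this positivity that the authors state they cannot establish for $a\in(-1,0)$ because the potential $(\mathsf s(t)t+\xi_0)^2$ is two-sided and sign-inhomogeneous. Until that positivity (and the uniqueness and non-degeneracy of $\zeta_a$) is proved, the lower bound, and hence the conjecture, remains open; your proposal correctly locates these obstructions but does not overcome them.
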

By a symmetry argument,  Conjecture~\ref{conj:1D} trivially holds in the case  $a=-1$, namely
\begin{equation}\label{eq:conj*}
\mathfrak e_{-1}(b)=E_{-1,b}^\mathrm{1D}=E_b^\mathrm{1D}\,.
\end{equation} 
However, there are many points that do not allow us to prove this conjecture in the case where $a \in (-1,0)$. Besides the lack of the uniqueness of the minimum $\zeta_a$, the new potential term
\[(\mathsf s(t) t+\xi )^2\quad\mathrm{where}\quad\mathsf s(t)=\begin{cases}1&\mathrm{if}~t>0\,,\\a&\mathrm{if}~t<0 \,,\end{cases}\]
creates computational difficulties preventing the adoption of the proof in~\cite{Correggi}, (in particular, in the positivity proof of the cost function).
\section{The Frenet Coordinates}\label{sec:bc}

In this section, we assume that the set $\Gamma$ consists of a single smooth curve that may  intersect the boundary of $\Omega$ transversely in two points. In the general case, $\Gamma$ consists of a finite number of such curves. By working on each component separately, we reduce to the simple case above.

To study the  energy contribution along  $\Gamma$, we will use the  \emph{Frenet coordinates} which  are valid in a tubular neighbourhood of $\Gamma$. For more details regarding these coordinates, see e.g.~\cite[Appendix F]{fournais2010spectral}. We will list the basic properties of these coordinates here. 

For $t_0>0$, we define the open set 
\begin{equation}\label{eq:Gam0}
\Gamma(t_0)=\left\{ x \in \Omega~:~\dist(x,\Gamma)<t_0 \right\}\,.
\end{equation}
We introduce the function $t:\R^2\rightarrow\R$ as follows
\begin{equation}\label{eq:t_x}
t(x)=
\begin{cases}
\dist(x,\Gamma)&\mathrm{if~}  x \in \Omega_1\,,\\
-\dist(x,\Gamma)&\mathrm{if}~ x \in \Omega_2\,.
\end{cases}
\end{equation}
Let $\left(-|\Gamma|/2,|\Gamma|/2\right] \ni s\longmapsto M(s)\in\Gamma$ be  the arc-length parametrization of $\Gamma$ oriented counter clockwise. The vector
\begin{equation}\label{eq:T(s)}
T(s):=M'(s)
\end{equation}
is the unit tangent vector to $\Gamma$ at the point $M(s)$. Let $\nu(s)$ be the unit inward normal of $\partial \Omega_1$ at the point $M(s)$.
The  orientation of the parametrization $M$ is displayed as follows
\[\mathrm{det}\big(T(s),\nu(s) \big)=1\,.\]
The  curvature $k_r$ of $\Gamma$  is  defined by
\[T'(s)=k_r(s)\nu(s)\,.\]
 When $t_0$ is sufficiently small, the  transformation
\begin{equation}\label{Frenet}
\Phi~:~\left(-\frac{|\Gamma|}2,\frac{|\Gamma|}2\right]\times(-t_0,t_0)\, \ni (s,t)\longmapsto M(s)+t \nu(s) \in \Gamma(t_0)
\end{equation}
is  a diffeomorphism  whose  Jacobian is 
\[\mathfrak a(s,t)=\mathrm{det}(D\Phi)=1-tk_r(s)\,.\]
The inverse of $\Phi$,  $\Phi^{-1}$, defines a system of coordinates  for the tubular neighbourhood $ \Gamma(t_0)$ of $\Gamma$,
\[\Phi^{-1}(x)=\Big( s(x),t(x)\Big)\,.\]
To each function $u \in H_0^1\big(\Gamma(t_0) \big)$, we associate the function $\tilde{u} \in  H^1\Big(\Phi^{-1}\big(\Gamma(t_0) \big)\Big)$ as follows
\begin{equation}\label{eq:u-tu}
\tilde{u}(s,t)=u\big(\Phi(s,t)\big)\,.
\end{equation}
We also associate to any vector field $A=(A_1,A_2) \in H^1_\mathrm{loc}(\R^2,\R^2)$, the vector field
\[\tilde{A}=(\tilde{A_1},\tilde{A_2}) \in H^1\left(\left(-\frac{|\Gamma|}2,\frac{|\Gamma|}2\right]\times(-t_0,t_0),\R^2 \right)\]
where 
\begin{equation}\label{eq:A_tild1}
\tilde{A_1}(s,t)=\mathfrak a(s,t)A\big(\Phi(s,t) \big)\cdot T(s)\quad\mathrm{and}\quad \tilde{A_2}(s,t)=A\big(\Phi(s,t) \big)\cdot\nu(s)\,.
\end{equation}
Then we have the following change of variable formulae:
\begin{equation}\label{eq:A_tild2}
\int_{\Omega_1}\big|\big(\nabla-i A \big)u \big|^2\,dx=\int_{-\frac{|\Gamma|}2}^{\frac{|\Gamma|}2}\int_0^{t_0}\left(\mathfrak a^{-2}\big|(\partial_s-i\tilde{A_1})\tilde{u}\big|^2+\big|(\partial_t-i\tilde{A_2})\tilde{u}\big|^2 \right)\,  \mathfrak a\,ds\,dt\,,
\end{equation}
\begin{equation}\label{eq:A_tild3}
\int_{\Omega_2}\big|\big(\nabla-i A \big)u \big|^2\,dx=\int_{-\frac{|\Gamma|}2}^{\frac{|\Gamma|}2}\int_{-t_0}^0\left(\mathfrak a^{-2}\big|(\partial_s-i\tilde{A_1})\tilde{u}\big|^2 +\big|(\partial_t-i\tilde{A_2})\tilde{u}\big|^2 \right)\,  \mathfrak a\,ds\,dt\,,
\end{equation}
and
\begin{equation}\label{eq:A_tild4}
\int_{\R^2}|u(x)|^2\,dx=\int_{-\frac{|\Gamma|}2}^{\frac{|\Gamma|}2}\int_{-t_0}^{t_0} |\tilde{u}|^2\,  \mathfrak a\,ds\,dt\,.
\end{equation}
We define
\[\tilde{B}(s,t)=B\big(\Phi(s,t)\big),\qquad \text{for all}\ (s,t) \in \left(-\frac{|\Gamma|}2,\frac{|\Gamma|}2\right]\times(-t_0,t_0)\,.\]
Note that
\begin{align}
\Big(\partial_s \tilde{A_2}(s,t)-\partial_t \tilde{A_1}(s,t)\Big)\,ds\wedge dt
 &=B\big(\Phi(s,t)\big)\,dx\wedge dy \nonumber \\
 &=\big(1-tk_r(s) \big)\tilde{B}(s,t)\,ds\wedge dt\label{eq:B_tild}
\end{align}
which gives us 
\[\curl \tilde{A}=\partial_s \tilde{A_2}-\partial_t \tilde{A_1}=\big(1-tk_r(s) \big)\tilde{B}(s,t)\,.\]

In Propositions~\ref{prop:Anew1} and~\ref{prop:Anew2}, we will construct a special gauge transformation that will  allow us to express a given vector field in a canonical manner.
\begin{proposition}\label{prop:Anew1}
For any vector field $A=(A_1,A_2) \in  H^1(\Omega,\R^2)$, there exists a $H^2$-function $\omega$  such that the vector field defined by $A_{new}=A-\nabla \omega$ satisfies
\[\Big(\tilde{A}_{new}\Big)_2=0\quad \mathrm{in}\ \Gamma\left(\frac{t_0}2\right)\,,\]
where $\tilde{A}_{new}$ is the vector field associated to $A_{new}$ as in~\eqref{eq:A_tild1}.
\end{proposition}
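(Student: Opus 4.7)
The plan is to exploit the gauge freedom in the tubular neighbourhood of $\Gamma$ by choosing $\omega$ whose $t$-derivative in Frenet coordinates matches the normal component $\tilde A_2$ of $A$, and then to cut this off smoothly so we obtain a globally defined function without disturbing the vanishing condition on the smaller neighbourhood $\Gamma(t_0/2)$.

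First I would work in the Frenet chart $\Phi$ from~\eqref{Frenet}, in which the desired identity $(\tilde{A}_{new})_2=0$ is equivalent to the first-order ODE $\partial_t\tilde\omega = \tilde A_2$. I would solve it explicitly by setting, for $(s,t)\in\bigl(-|\Gamma|/2,|\Gamma|/2\bigr]\times(-t_0,t_0)$,
\[
\tilde\omega(s,t) \;:=\; \int_0^t \tilde A_2(s,\tau)\,d\tau,
\]
and then define $\omega_0 := \tilde\omega\circ\Phi^{-1}$ on $\Gamma(t_0)$. Because the Frenet coordinates are orthogonal along each level curve $\{t=\text{const}\}$, one has $\nabla t = \nu\circ s$ and $\nabla s$ tangent to those level curves, so $\nabla s\cdot\nu = 0$. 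Consequently,
\[
\nabla\omega_0\cdot\nu \;=\; \partial_s\tilde\omega\,(\nabla s\cdot\nu)+\partial_t\tilde\omega\,(\nabla t\cdot\nu)\;=\;\partial_t\tilde\omega \;=\; \tilde A_2 \;=\; A\cdot\nu,
\]
which means $(\widetilde{A-\nabla\omega_0})_2 = 0$ throughout $\Gamma(t_0)$.

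To upgrade $\omega_0$ to a function on all of $\Omega$ without upsetting the vanishing condition on $\Gamma(t_0/2)$, I would pick $\chi\in C^\infty_c\bigl(\Gamma(t_0)\bigr)$ with $\chi\equiv 1$ on $\overline{\Gamma(t_0/2)}$, and set $\omega:=\chi\,\omega_0$, extended by zero outside $\Gamma(t_0)$. Inside $\Gamma(t_0/2)$ one has $\chi\equiv 1$ and $\nabla\chi = 0$, hence $\nabla\omega = \nabla\omega_0$ there, so $(\widetilde{A-\nabla\omega})_2=0$ in $\Gamma(t_0/2)$ as required.

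The step I expect to be the main obstacle is the claimed $H^2$-regularity of $\omega$. The fundamental theorem of calculus immediately yields $\partial_{tt}\tilde\omega=\partial_t\tilde A_2\in L^2$ and $\partial_{st}\tilde\omega=\partial_s\tilde A_2\in L^2$ from $\tilde A_2\in H^1$, but the second tangential derivative $\partial_{ss}\tilde\omega$, obtained formally by differentiating under the integral, requires more than $A\in H^1$ alone (consider the test case $\tilde A_2(s,t)=f(s)$, for which $\partial_{ss}\tilde\omega=tf''(s)$ demands $f\in H^2$). In the applications of this proposition later in the paper, $A$ always carries the extra regularity $A-\mathbf F\in H^2(\Omega)$ supplied by Theorem~\ref{thm:priori}, while $\mathbf F\in H^2(\Omega_i)$ ($i=1,2$) by Lemma~\ref{A_1}; transporting this to Frenet coordinates gives $\tilde A_2\in H^2$ separately on $\{t>0\}$ and $\{t<0\}$, with matching at $t=0$ guaranteed by the $H^{1/2}$-trace of the normal component of $A\in H^1$, so the explicit integral formula produces $\tilde\omega\in H^2$ of the Frenet rectangle. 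Multiplication by the smooth compactly supported cut-off $\chi$ and the zero-extension then yield $\omega\in H^2(\Omega)$.
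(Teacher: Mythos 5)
Your construction is essentially the paper's: the authors set $\tilde{\omega}(s,t)=-\chi(t)\int_0^{t}\tilde{A}_2(s,\tilde{t})\,d\tilde{t}$ with a cut-off $\chi$ in the normal variable equal to $1$ on $[-t_0/2,t_0/2]$, which is the same integral-in-$t$ plus cut-off device you use (up to an immaterial sign convention and your choice of a two-dimensional cut-off). The paper's proof is two lines and does not comment on the $H^2$ claim at all, so your closing observation is a genuine addition rather than a deviation: you are right that $A\in H^1(\Omega;\R^2)$ alone does not give $\partial_{ss}\tilde\omega\in L^2$ (your example $\tilde A_2=f(s)$ with $f\in H^1\setminus H^2$ is a valid counterexample), and right that the vector fields to which the proposition is actually applied carry the extra piecewise $H^2$ regularity from Lemma~\ref{A_1} and Theorem~\ref{thm:priori} that makes the conclusion hold.
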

\begin{proof}
Using~\eqref{eq:A_tild1}, we get
\begin{equation}\label{eq:A_tild5}
\Big(\tilde{A}_{new}\Big)_1=\tilde{A}_1+\partial_s\tilde{\omega},\qquad \Big(\tilde{A}_{new}\Big)_2=\tilde{A}_2+\partial_t\tilde{\omega}
\end{equation}
where $\tilde{\omega}(s,t)=\omega\big (\Phi(s,t)\big)$.
Due to the regularity of $t(x)$ in $\Gamma(t_0)$, we may define $\omega$ so that
\[\tilde{\omega}(s,t)=-\chi(t)\int_0^{t}\tilde{A}_2(s,\tilde{t})\,d{\tilde{t}}\,,\]
where $\chi$ is a cut-off function supported in $(-t_0,t_0)$, satisfying $0\leq\chi\leq1$ and $\chi=1$ in $\big[-t_0/2, t_0/2\big]$.
\end{proof}
\begin{proposition}\label{prop:Anew2}
Let $a \in [-1,1)\setminus\{0\}$ and $A=(A_1,A_2)$ be a vector field in $H^1(\Omega,\R^2)$.  
For any $x_0\in \Gamma$, there exists a neighbourhood $V_{x_0}$ of $x_0$ and a function $\omega_{x_0} \in H^2(V_{x_0})$ such that the vector field $A_{new}:=A-\nabla\omega_{x_0}$ satisfies
\[\Big(\tilde{A}_{new}\Big)_2=0\quad \mathrm{in}\ V_{x_0},\qquad \Big(\tilde{A}_{new}\Big)_1=0\quad \mathrm{on}\ \Gamma \cap V_{x_0}\,.\]

Furthermore, if $\curl A=\mathbbm{1}_{\Omega_1}+a\mathbbm{1}_{\Omega_2}$, then we have in $V_{x_0}$
\begin{equation}\label{eq:Anew3}
\Big(\tilde{A}_{new}\Big)_1= 
\begin{cases}
- \big(t-\frac {t^2}2 k_r(s)\big),&\mathrm{if}~t>0\,,\\
-a \big(t-\frac {t^2}2 k_r(s)\big),&\mathrm{if}~t<0\,.
\end{cases}
\end{equation}
\end{proposition}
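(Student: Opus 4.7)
The plan is to build $\omega_{x_0}$ by composing two successive gauge transformations in the Frenet chart around $x_0 = M(s_0) \in \Gamma$. First, apply Proposition~\ref{prop:Anew1} to produce $\omega^{(1)} \in H^2$ such that the vector field $A^{(1)} := A - \nabla \omega^{(1)}$ has vanishing normal component $\tilde A^{(1)}_2 \equiv 0$ on $\Gamma(t_0/2)$. Then choose $V_{x_0}$ inside $\Gamma(t_0/2)$ so that in $(s,t)$-coordinates it is a rectangle $(s_0-\delta, s_0+\delta)\times(-t_1,t_1)$.

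Next, add a tangential-only correction $\omega^{(2)}$ defined in $V_{x_0}$ by
\[
\tilde\omega^{(2)}(s,t) := \int_{s_0}^{s} \tilde A^{(1)}_1(s', 0) \, ds', \qquad \omega^{(2)}(x) := \tilde\omega^{(2)}\bigl(s(x), t(x)\bigr).
\]
The recipe from the proof of Proposition~\ref{prop:Anew1} shows that the Frenet components of $A - \nabla \omega$ are modified by partial derivatives of $\tilde\omega$; since $\partial_t \tilde\omega^{(2)} \equiv 0$, the normal component stays zero, while $\partial_s \tilde\omega^{(2)}(s,0) = \tilde A^{(1)}_1(s,0)$ cancels the tangential component on $\Gamma \cap V_{x_0}$. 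The composite gauge $\omega_{x_0} := \omega^{(1)} + \omega^{(2)}$ is the desired function.

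For the explicit formula~\eqref{eq:Anew3}, feed the hypothesis $\curl A = \mathbbm{1}_{\Omega_1} + a\mathbbm{1}_{\Omega_2}$ into the Frenet identity~\eqref{eq:B_tild}. Since $(\tilde A_{new})_2 \equiv 0$ in $V_{x_0}$, that identity collapses to
\[
-\partial_t (\tilde A_{new})_1(s,t) = \bigl(1-tk_r(s)\bigr)\bigl(\mathbbm{1}_{\{t>0\}}+a\mathbbm{1}_{\{t<0\}}\bigr),
\]
and integrating in $t$ from $0$, using the freshly established condition $(\tilde A_{new})_1(s,0) = 0$, yields the two-sided formula~\eqref{eq:Anew3} at once.

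The only point requiring some care is the $H^2(V_{x_0})$-regularity of $\omega^{(2)}$, which reduces to showing that $s \mapsto \tilde A^{(1)}_1(s,0)$ has enough regularity along $\Gamma$. This follows from the trace theorem applied on the smooth curve $\Gamma$ together with the gain of one derivative from $s$-integration; in the applications of the proposition, the vector potential enjoys the higher regularity supplied by Theorem~\ref{thm:priori}, so no genuine obstruction appears.
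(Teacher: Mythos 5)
Your proposal is correct and follows essentially the same route as the paper: first normalize $\tilde A_2$ via Proposition~\ref{prop:Anew1}, then subtract the gradient of the $s$-antiderivative of $\tilde A_1(\cdot,0)$ to kill the tangential component on $\Gamma$ (the paper merely inserts a cut-off $\chi(s)$ and integrates from $-\ell$ rather than from $s_0$, which coincides with your choice on $V_{x_0}$), and finally integrate the identity~\eqref{eq:B_tild} in $t$ from $0$ to obtain~\eqref{eq:Anew3}. Your closing remark on the $H^2$-regularity of the second gauge function is a point the paper's proof passes over in silence, and your resolution via the extra regularity of the relevant potentials (Theorem~\ref{thm:priori}) is the right one.
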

\begin{proof}
After performing a translation,  we may suppose that the coordinates of $x_0$ are given by $(s=0,t=0)$. Define
\[V_{x_0}=\left\{x=\Phi(s,t) \in \Gamma\left (\frac{t_0}2\right ):-\frac\ell2<s<\frac\ell2 \right\}\,.\]
By Proposition~\ref{prop:Anew1}, after performing a gauge transformation, we may assume that $\tilde{A}_2=0$ in $\Gamma\left(t_0/2\right)$.
We define the function $\omega_{x_0}$ such that
\begin{equation}\label{eq:phi_x}
\tilde\omega_{x_0}(s,t)=\omega_{x_0}\big (\Phi(s,t)\big)=-\chi(s)\int_{-\ell}^s\tilde{A}_1(\tilde{s},0)\,d\tilde{s}\,, 
\end{equation}
where $\chi$ is a cut-off function   supported in $(-\ell,\ell)$, satisfying $0\leq\chi\leq1$ and $\chi=1$ in $\Big[-\ell/2, \ell/2\Big]$. Using~\eqref{eq:A_tild5}, a straightforward computation yields that $\Big(\tilde{A}_{new}\Big)_1 =0$ on $\Gamma \cap V_{x_0}$.
Now since $\Big(\tilde{A}_{new}\Big)_2 =0$ in $V_{x_0}$, then by~\eqref{eq:B_tild}
\begin{equation}\label{eq:Anew4}
\partial_t \big(\tilde{A}_{new}\big)_1=-\tilde{B}_{new}(s,t)\big( 1-tk_r(s)\big)=-\tilde{B}(s,t)\big( 1-tk_r(s)\big)=
\begin{cases}
-\big( 1-tk_r(s)\big), &t>0\,,\\
-a\big( 1-tk_r(s)\big), &t<0\,.
\end{cases}
\end{equation}
Integrating~\eqref{eq:Anew4} with respect to $t$ (starting from $t=0$), we get~\eqref{eq:Anew3}.
\end{proof}
\section{A Local Energy}\label{sec:local_en}

In this section, we will introduce a `local version' of the Ginzburg--Landau functional in~\eqref{eq:GL}. For this local functional, we will be able to write precise estimates of the ground state energy, which in turn will prove useful in estimating the ground state energy of the full functional in~\eqref{eq:GL}.

We start by introducing various (geometric) notations/assumptions. Select a positive number $t_0$ sufficiently small so that the Frenet coordinates of Section~\ref{sec:bc}  are valid in the tubular neighbourhood $\Gamma(t_0)$ defined in~\eqref{eq:Gam0}. Let $0<c_1<c_2$ be fixed constants and $\ell $ be a parameter that is allowed to vary in such a manner  that
\begin{equation}\label{eq:ell=kp}
c_1\kappa^{- \frac 34} < \ell < c_2\kappa^{- \frac 34}\,.
\end{equation}
We will refer to~\eqref{eq:ell=kp} by writing $\ell\approx \kappa^{-3/4}$.
We will assume that $\kappa$ is sufficiently large so that  $\ell<t_0/2$. 

Consider the set
\begin{equation}\label{eq:V_0}
\mathcal V(\ell)=\left\{ (s,t)\in \Gamma(t_0)~:~-\frac{\ell}2< s<\frac{\ell}2,\ -\ell< t< \ell \right\}\,,
\end{equation}
and the magnetic potential $\tilde{F}$ defined in $\mathcal V(\ell)$ by
\begin{equation} \label{F_1}
\tilde{F}(s,t)=\left(\tilde{F}_1(s,t),0\right)=\left(-\sigma\Big( t-\frac {t^2}2k_r(s)\Big),0\right)\,,
\end{equation}
where $\sigma=\sigma(s,t)$ is defined for $(s,t) \in \R^2$ by (see~\eqref{eq:sigma1})
\[\sigma(s,t)=\mathbbm{1}_{\R_+}(t)+a\mathbbm{1}_{\R_-}(t)\,.\]
Consider the domain $\mathcal D_\ell$\,:
\begin{equation}\label{eq:D}
\mathcal D_\ell=\left\{u \in H^1_0\big(\mathcal V(\ell)\big)\cap L^\infty\big(\mathcal V(\ell)\big)~:~\|u\|_\infty \leq 1 \right\}\,.
\end{equation}
For $u \in \mathcal D_\ell$, we define the (local) energy
\begin{equation}\label{eq:G_u}
\mathfrak G\big(u;\mathcal V(\ell)\big)=\int_{\mathcal V(\ell)}\left(\mathfrak a^{-2}\big|(\partial_s-i\kappa H\tilde{F}_1)u\big|^2 +|\partial_tu|^2-\kappa^2|u|^2+\frac{\kappa^2}2 |u|^4\right)\, \mathfrak a\,ds\,dt\,,
\end{equation}
where $\mathfrak a(s,t)=1-tk_r(s)$. Now we introduce the following ground state energy
\begin{equation}\label{eq:infG}
\mathfrak G_0=\mathfrak G_0(\kappa,H,\ell)=\inf_{u \in  \mathcal D_\ell}\mathfrak G\big(u;\mathcal V(\ell)\big)\,.
\end{equation}
Using standard variational methods, one can prove the existence of a minimizer $u_0$ of $\mathfrak G$
\begin{equation*}\label{eq:u_0}
\mathfrak G_0=\mathfrak G\big(u_0;\mathcal V(\ell)\big)\,,
\end{equation*} 
Our aim is to write matching upper and lower bounds for $\mathfrak G_0$ as $\kappa\to+\infty$ in the regime
\begin{equation}\label{eq:A4}
H=b\kappa,\quad a\in [-1,0)\quad \mathrm{and}\quad b \geq \frac1{|a|} \,.
\end{equation}

\subsection{Lower bound of $\mathfrak G_0$}
\begin{lemma}\label{lem:lower_G0}
Under Assumption~\eqref{eq:A4}, there exist two constants $\kappa_0>1$ and $C>0$ dependent only on $a$ and $b$  such that, if
$\kappa\geq\kappa_0$ and $\ell$ as in~\eqref{eq:ell=kp}, then
\begin{equation}\label{eq:loc_lower}
\mathfrak G_0\geq b^{-\frac 12}\kappa \ell \mathfrak e_a(b)-C\,,
\end{equation}
where $\mathfrak G_0$ and $\mathfrak e_a(b)$ are defined in~\eqref{eq:infG} and~\eqref{eq:eba1} respectively.
\end{lemma}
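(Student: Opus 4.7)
The plan is to rescale coordinates so that $\mathfrak G$ transforms into (approximately) the reduced Ginzburg--Landau functional $\mathcal G_{a,b,R}$ introduced in~\eqref{eq:Gb}, and then invoke Theorem~\ref{thm:eba} to conclude. Let $u_0 \in \mathcal D_\ell$ denote a minimizer of $\mathfrak G(\cdot;\mathcal V(\ell))$. Set $y_1 = \kappa\sqrt{b}\,s$, $y_2 = \kappa\sqrt{b}\,t$, and $v(y)=u_0(s,t)$, extending $v$ by zero to the strip $S_R=(-R/2,R/2)\times\R$ with
\[R=\ell\kappa\sqrt{b}\approx\kappa^{1/4}.\]
Since $u_0$ vanishes on $\partial\mathcal V(\ell)$ and satisfies $\|u_0\|_\infty\leq 1$, the extension $v$ lies in $\mathcal D_R$ with $\|v\|_\infty\leq 1$, and $R\to\infty$ as $\kappa\to\infty$.

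Under this change of variables the potential expands as
\[\kappa H\,\tilde F_1(s,t) \;=\; -\kappa\sqrt{b}\,\sigma\,y_2 \;+\; \tfrac12\sigma\,y_2^2\,k_r(s),\]
the first term reproducing the canonical linear potential associated with $\sigma\mathbf{A}_0$ (see~\eqref{canon}) and the second being a lower-order perturbation. The Jacobian satisfies $\mathfrak a = 1+O(\ell)$ throughout $\mathcal V(\ell)$. I would apply $|z+w|^2 \geq (1-\epsilon)|z|^2 - \epsilon^{-1}|w|^2$ with a small parameter $\epsilon>0$ to separate the leading gradient from the $y_2^2$-perturbation, absorb the $\mathfrak a^{\pm 1}$ factors into $(1+O(\ell))$ prefactors, and use $ds\,dt = (b\kappa^2)^{-1}dy$ to deduce
\begin{equation*}
\mathfrak G(u_0;\mathcal V(\ell)) \;\geq\; \frac{1}{b}\int_{S_R}\Bigl\{b(1-\eta)\bigl|(\nabla-i\sigma\mathbf{A}_0)v\bigr|^2 - (1+C\ell)|v|^2 + \tfrac{1-C\ell}{2}|v|^4\Bigr\}\,dy \;-\; \mathfrak R_\kappa,
\end{equation*}
with $\eta = \epsilon + C\ell$ and remainder $\mathfrak R_\kappa \leq C\epsilon^{-1}(b\kappa^2)^{-1}\int_{S_R}y_2^4|v|^2\,dy$.

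The next step is the algebraic rearrangement
\[\int_{S_R}\bigl\{b(1-\eta)|(\nabla-i\sigma\mathbf A_0)v|^2 - |v|^2 + \tfrac12|v|^4\bigr\}\,dy \;=\; (1-\eta)\,\mathcal G_{a,b,R}(v) \;-\; \eta\int_{S_R}\bigl(|v|^2 - \tfrac12|v|^4\bigr)\,dy,\]
combined with $\mathcal G_{a,b,R}(v)\geq\mathfrak g_a(b,R)$ and $\mathfrak g_a(b,R)\leq 0$ (from Theorem~\ref{thm:eba}(1)), together with the volume bound $\int_{S_R}|v|^2\,dy\leq b\kappa^2|\mathcal V(\ell)|\leq 2b\ell^2\kappa^2\approx b\kappa^{1/2}$ arising from $\|v\|_\infty\leq 1$. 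The bound $|y_2|\leq R$ gives $\mathfrak R_\kappa\leq C\epsilon^{-1}b^2\ell^6\kappa^4 \approx C\epsilon^{-1}\kappa^{-1/2}$, while Lemma~\ref{lem:m0_bound} gives $|\mathfrak g_a(b,R)|\leq CR\approx\kappa^{1/4}$. The critical choice $\epsilon=\kappa^{-1/2}$ (so that $\eta\approx\kappa^{-1/2}$) renders every error term of order $O(1)$, yielding $\mathfrak G_0 \geq (1/b)\,\mathfrak g_a(b,R) - C$. Finally, item~(5) of Theorem~\ref{thm:eba} gives $\mathfrak g_a(b,R)\geq R\,\mathfrak e_a(b) = \ell\kappa\sqrt{b}\,\mathfrak e_a(b)$, producing the desired bound.

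The hard part is the calibration of $\epsilon$. The rescaled domain has area $\approx\kappa^{1/2}$, so the crude pointwise bound $|v|\leq 1$ only yields $\int|v|^2\,dy\approx\kappa^{1/2}$, making the $\eta\int|v|^2$ loss borderline; simultaneously the Cauchy--Schwarz remainder $\mathfrak R_\kappa$ from the $y_2^2$-perturbation in the potential scales like $\epsilon^{-1}\kappa^{-1/2}$, which forces precisely $\epsilon\approx\kappa^{-1/2}$. Any sharper argument exploiting decay of $v$ away from $\Gamma$ would be unnecessary at the $O(1)$ accuracy level required here.
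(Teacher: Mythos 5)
Your argument is correct and follows essentially the same route as the paper's proof: bound the Jacobian $\mathfrak a=1+O(\ell)$, remove the curvature correction $\tfrac12\sigma t^2 k_r$ in the potential by a Cauchy inequality with parameter $\epsilon\approx\kappa^{-1/2}$ (the paper uses exactly $\kappa^{-1/2}$), rescale by $\sqrt{\kappa H}$ to reduce to $\mathcal G_{a,b,R}$ with $R=\sqrt{b}\,\kappa\ell$, and invoke $\mathfrak g_a(b,R)\geq R\,\mathfrak e_a(b)$ from Theorem~\ref{thm:eba} together with $\mathfrak e_a(b)\leq 0$. The only (immaterial) differences are that you rescale before applying the Cauchy inequality and fold the Jacobian error into the $(1\pm C\ell)$ prefactors rather than subtracting $C\kappa^2\ell\int|u|^2$ separately.
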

\begin{proof}
Notice that $\mathfrak a(s,t)$ is bounded in the set $\mathcal V(\ell)$ as follows
\begin{equation}\label{eq:a_bound}
1-C\ell\leq \mathfrak a(s,t)\leq 1+C\ell\,.
\end{equation}
Consequently 
\begin{equation}\label{eq:G}
\mathfrak G\big(u;\mathcal V(\ell)\big)\geq  (1-C\ell)\mathcal J(u)-C\kappa^2\ell\int_{\mathcal V(\ell)}|u|^2\,dsdt\,,
\end{equation}
where
\begin{equation}\label{eq:J_u}
\mathcal J(u)=\int_{\mathcal V(\ell)}\left(\big|(\partial_s-i\kappa H\tilde{F}_1)u\big|^2 +|\partial_tu|^2-\kappa^2|u|^2+\frac{\kappa^2}2 |u|^4\right)\, dsdt\,.
\end{equation}
We apply the Cauchy's inequality to get
\begin{align*}
\big|(\partial_s-i\kappa H \tilde{F}_1)u\big|^2&= \left|\left(\partial_s+i\sigma \kappa H\Big( t-\frac {t^2}2k_r(s)\Big)\right)u\right|^2 \\
                                      &\geq (1-\kappa^{-\frac 12})\big|(\partial_s+i\sigma\kappa Ht)u\big|^2-\kappa^{\frac 12}\sigma^2\kappa^2H^2\frac{t^4}4k_r^2(s)|u|^2\\
                                      &\geq (1-\kappa^{-\frac 12})\big|(\partial_s+i\sigma\kappa Ht)u\big|^2-C\kappa^{\frac 52}\ell^4H^2|u|^2\quad\mathrm{because ~}|t|\leq \ell\,.
\end{align*}
Inserting the previous estimate into~\eqref{eq:J_u} and using  the uniform bound $|u|\leq1$, we obtain
\begin{equation}\label{eq:J}
\mathcal J(u)\geq (1-\kappa^{-\frac 12})\mathcal T(u)-C\mathcal R(u)\,,
\end{equation}
where
\[\mathcal T(u)=\int_{\mathcal V(\ell)}\left(\big|(\partial_s+i\sigma \kappa Ht)u\big|^2 +|\partial_tu|^2-\kappa^2|u|^2+\frac{\kappa^2}2 |u|^4\right)\, dsdt\]
and
\[\mathcal R(u)=\kappa^{\frac 32}\ell^2+\kappa^{\frac 52}H^2\ell^6\,.\]
We introduce the following parameters 
\[R=\sqrt{\kappa H} \ell,\quad \gamma=\sqrt{\kappa H}s,\quad \tau=\sqrt{\kappa H} t\,,\]
and  define the re-scaled function
\begin{equation*}
\breve{u}(\gamma,\tau)=
\begin{cases}
u(s,t)&\mathrm{if~} (\gamma,\tau)\in \displaystyle\left(-\frac R2,\frac R2\right)\times(-R,R)\,,\\
0 &\mathrm{otherwise}\,.
\end{cases}
\end{equation*}
Recall the parameter $b= H/\kappa$ in~\eqref{eq:A_2}, in the new scale we may write 
\begin{align*}
\mathcal T(u)&=\int_{-\frac R2}^{\frac R2}\int_{-\infty}^{+\infty}\Big[|(\partial_\gamma+i\sigma\tau)\breve{u}|^2 +|\partial_\tau\breve{u}|^2-\frac \kappa H|\breve{u}|^2+\frac{\kappa}{2H} |\breve{u}|^4\Big]\, d\gamma d\tau\\
                     &=\frac 1b \mathcal G_{a,b,R}(\breve{u})\,,
\end{align*}
$\mathcal G_{a,b,R}$ is the functional introduced in~\eqref{eq:Gb}, and $\breve{u}\in\mathcal D_R$ the domain introduced in~\eqref{eq:DR} (since $u \in \mathcal D_\ell$).  Invoking  Theorem~\ref{thm:eba}, we conclude that
\begin{equation}\label{eq:T}
\mathcal T(u)\geq\frac 1b R\, \mathfrak e_a(b)\,.
\end{equation}
We plug the estimates~\eqref{eq:J} and~\eqref{eq:T} in~\eqref{eq:G}, then use $\mathfrak e_a(b)\leq 0$ and the assumptions on $\kappa$ and $\ell$ to finish the proof of Lemma~\ref{lem:lower_G0}.
\end{proof}

\subsection{Upper bound of $\mathfrak G_0$}
\begin{lemma}\label{lem:upper_G0}
Under Assumption~\eqref{eq:A4}, there exist two constants $\kappa_0>1$ and  $C>0$ dependent only on $a$ and $b$ such that, if
$\kappa\geq\kappa_0$ and $\ell$ as in~\eqref{eq:ell=kp}, then
\begin{equation}\label{eq:loc_upper}
\mathfrak G_0\leq b^{-\frac12}\kappa\ell \mathfrak e_a(b)+C\kappa^{\frac 16}\,,
\end{equation}
where $\mathfrak G_0$ and $\mathfrak e_a(b)$ are  defined in~\eqref{eq:infG} and~\eqref{eq:eba1} respectively.
\end{lemma}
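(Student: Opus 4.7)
The strategy is to build a trial state by rescaling and localizing the minimizer $\varphi_{a,b,R}$ provided by Proposition~\ref{prop:Ln}, with the natural choice $R=\sqrt{b}\,\kappa\ell$. Because of~\eqref{eq:ell=kp} we have $R\approx\kappa^{1/4}\to+\infty$, so Theorem~\ref{thm:eba}(5) and the decay estimates of $\varphi_{a,b,R}$ are effective. Concretely, pick a cutoff $\chi\in C_c^\infty(\R)$ with $\chi\equiv 1$ on $[-\tfrac12,\tfrac12]$, $\supp\chi\subset(-1,1)$, $0\le\chi\le 1$, and set
\[
u(s,t)=\chi(t/\ell)\,\varphi_{a,b,R}\!\bigl(\sqrt{b}\,\kappa\, s,\sqrt{b}\,\kappa\, t\bigr),\qquad (s,t)\in\mathcal V(\ell).
\]
Since $\varphi_{a,b,R}$ vanishes at $\gamma=\pm R/2$, the function $u$ vanishes at $s=\pm\ell/2$; since $\chi(t/\ell)=0$ for $|t|\geq\ell$, it also vanishes at $t=\pm\ell$. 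Combined with $\|\varphi_{a,b,R}\|_\infty\le 1$, this gives $u\in\mathcal D_\ell$.

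Next I would expand the Frenet-coordinate energy. The bounds $|\mathfrak a-1|\leq C\ell$ and $\tilde F_1+\sigma t=\sigma t^2 k_r(s)/2=O(\ell^2)$, combined with Cauchy's inequality (with parameter $\kappa^{-1/2}$) as in the proof of Lemma~\ref{lem:lower_G0}, give
\[
\mathfrak G\bigl(u;\mathcal V(\ell)\bigr)\leq (1+C\ell+\kappa^{-1/2})\,\mathcal T(u)+C\bigl(\kappa^2\ell+\kappa^{5/2}H^{2}\ell^{4}\bigr)\int_{\mathcal V(\ell)}|u|^2\,ds\,dt,
\]
where $\mathcal T(u)$ is the idealized local energy appearing in~\eqref{eq:J_u}. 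Performing the rescaling $\gamma=\sqrt{b}\,\kappa s$, $\tau=\sqrt{b}\,\kappa t$ (exactly as in Lemma~\ref{lem:lower_G0}) and setting $\breve u(\gamma,\tau)=\chi(\tau/R)\varphi_{a,b,R}(\gamma,\tau)$, extended by zero to $S_R$, yields $\breve u\in\mathcal D_R$ and the key identity $\mathcal T(u)=b^{-1}\mathcal G_{a,b,R}(\breve u)$, together with the mass bound $\int_{\mathcal V(\ell)}|u|^2\,ds\,dt\leq C R/\kappa^{2}$ coming from~\eqref{eq:phi_decay}.

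The remaining task is to show $\mathcal G_{a,b,R}(\breve u)\leq \mathfrak g_a(b,R)+o(R)$. Expanding through an IMS-type identity,
\[
b\int_{S_R}\!\bigl|(\nabla-i\sigma\Ab_0)(\chi\varphi_{a,b,R})\bigr|^{2}\,d\gamma d\tau
=b\int_{S_R}\!\chi^{2}\bigl|(\nabla-i\sigma\Ab_0)\varphi_{a,b,R}\bigr|^{2}\,d\gamma d\tau
-\tfrac{b}{2}\!\int_{S_R}\!\Delta(\chi^{2})|\varphi_{a,b,R}|^{2}\,d\gamma d\tau,
\]
together with the crude differences of the quadratic and quartic terms $\int(1-\chi^{2})|\varphi_{a,b,R}|^{2}+\tfrac12\int(\chi^{4}-1)|\varphi_{a,b,R}|^{4}$, reduces everything to tail integrals of $\varphi_{a,b,R}$ on $\{|\tau|\ge R/2\}$. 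These are controlled using the weighted estimates~\eqref{eq:Ln3}--\eqref{eq:Ln4}: the function $|\tau|/(\log|\tau|)^{2}$ is increasing for large $|\tau|$, so
\[
\int_{|\tau|\geq R/2}|\varphi_{a,b,R}|^{2}\,d\gamma d\tau\leq \tfrac{C(\log R)^{2}}{R}\cdot CbR=Cb(\log R)^{2},
\]
and similarly $\int_{|\tau|\geq R/2}|\varphi_{a,b,R}|^{4}\leq Cb^{2}(\log R)^{2}/R^{2}$; the boundary $|\nabla\chi(\tau/R)|,|\Delta\chi(\tau/R)|\leq C/R$ amplify these by at most $R^{-2}$. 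Hence $\mathcal G_{a,b,R}(\breve u)\leq \mathfrak g_a(b,R)+O\bigl(b^{2}(\log R)^{2}\bigr)$. Applying Theorem~\ref{thm:eba}(5), $\mathfrak g_a(b,R)\leq R\mathfrak e_a(b)+Cb^{2}R^{2/3}$, and dividing by $b$ gives the main term $b^{-1/2}\kappa\ell\,\mathfrak e_a(b)+Cb^{4/3}(\kappa\ell)^{2/3}\leq b^{-1/2}\kappa\ell\,\mathfrak e_a(b)+C\kappa^{1/6}$. The prefactors $C\ell$ and $\kappa^{-1/2}$ in front of $\mathcal T(u)$ contribute only $O(1)$ once combined with the bound $\mathcal T(u)=O(R/b)=O(\kappa^{1/4})$, and the remaining error $(\kappa^{2}\ell+\kappa^{5/2}H^{2}\ell^{4})\cdot R/\kappa^{2}$ is easily verified to be $o(1)$ for $\ell\approx\kappa^{-3/4}$.

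The main obstacle is the bookkeeping in the second-to-last step: absolute $L^{p}$ bounds on the far tails of $\varphi_{a,b,R}$ are not stated in Proposition~\ref{prop:Ln}, so one must extract them from the weighted estimates~\eqref{eq:Ln3}--\eqref{eq:Ln4} and check that every piece produced by the cutoff $\chi(\tau/R)$---the kinetic cross term, the truncation of the quadratic term, and the truncation of the quartic term---is smaller than $\kappa^{1/6}$. Everything else (Frenet expansion, rescaling, invocation of Theorem~\ref{thm:eba}(5)) mirrors the steps already executed for the lower bound in Lemma~\ref{lem:lower_G0}.
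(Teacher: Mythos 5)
Your proposal is correct and follows essentially the same route as the paper: the identical trial function $\chi(t/\ell)\varphi_{a,b,R}(\sqrt{\kappa H}\,s,\sqrt{\kappa H}\,t)$ with $R=\sqrt{\kappa H}\,\ell$, the same rescaling identity $\mathcal T(u)=b^{-1}\mathcal G_{a,b,R}(\breve u)$, control of the cutoff tails via the weighted decay estimates~\eqref{eq:Ln3}--\eqref{eq:Ln4} on $\{|\tau|\geq R/2\}$, and the final invocation of~\eqref{eq:eba2} producing the dominant error $Cb^{2}R^{2/3}/b\approx\kappa^{1/6}$. The only cosmetic differences are your choice of Cauchy parameter and your (slightly sharper) tail bound $Cb(\log R)^2$ in place of the paper's cruder $CbR^{1/2}$; neither affects the final error exponent.
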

\begin{proof}
For $R=\ell\sqrt{\kappa H}$, consider $\varphi=\varphi_{a,b,R}$ the minimizer of $\mathcal G_{a,b,R}$ defined in~\eqref{eq:phi_norm}.
We define the function $u$ in  $\mathcal D_\ell$ as follows
\begin{equation}\label{eq:u0}
u(s,t)=\chi\left(\frac t \ell\right)\varphi\left(s \sqrt{\kappa H},t \sqrt{\kappa H}\right)\,,
\end{equation}
where $\chi$ is a standard smooth cut-off function satisfying
\[0\leq \chi\leq 1\ \mathrm{in}\ \R,\quad \chi=1\ \mathrm{in} \ \left[-\frac12,\frac12\right]\quad \mathrm{and}\quad \supp \chi\subset[-1,1]\,.\]
Next, we define the following  function (with the  re-scaled variables)
\[v(\gamma,\tau)=u(s,t)\qquad\Big( (\gamma,\tau) \in \left(-\frac R2,\frac R2\right)\times(-R,R)\Big)\,,\]
with
\[\gamma=\sqrt{\kappa H}s,\quad \tau=\sqrt{\kappa H} t\,.\]
Using  the definition of $v$, the decay of $\varphi$ in~\eqref{eq:phi_decay}, and the bound of $\mathfrak a(s,t)$ in~\eqref{eq:a_bound}, we get
\begin{align}
\mathfrak G(u)&\leq (1+C\ell)\mathcal J(u)+ C\kappa^2\ell\int_{\mathcal V(\ell)}|u|^2\,dsdt\,, \nonumber\\
                                 &\leq (1+C\ell) \mathcal K(v) +C\kappa^2\ell^3\,, \label{eq:E5}
\end{align}
where  
$\mathcal J(u)$ was defined in~\eqref{eq:J_u}, 
\[\mathcal K(v)=\int_{-\frac R2}^{\frac R2}\int_{-R}^{R} \left[\left|\left(\partial_\gamma+i\sigma\left(\tau-\epsilon\frac{\tau^2}2 k_r\Big(\frac \gamma \epsilon\Big)\right)\right)v\right|^2+|\partial_\tau v|^2-\frac \kappa H |v|^2+\frac \kappa {2 H} |v|^4\right]\, d\gamma d\tau\,, \]
and $\epsilon=1/\sqrt{\kappa H}$.\\
Let $\chi_R(\tau)=\chi\big(\tau /R\big)=\chi\big(t/ \ell\big)$. We will estimate now each term of $\mathcal K(v)$ apart, using mainly the decay of the minimizer $\varphi$ in~\eqref{eq:phi_decay} and the properties of the function $\chi_R$.

We start with
\begin{align*}
\int_{-\frac R2}^{\frac R2}\int_{-R}^{R} |\partial_\tau v|^2\, d\gamma d\tau &= \int_{-\frac R2}^{\frac R2}\int_{-R}^{R} \Big|\chi_R(\tau)\partial_\tau \varphi+\varphi \partial_\tau \chi_R(\tau)\Big|^2\, d\gamma d\tau \,,\\
                                                                       &\leq (1+\kappa^{-\frac 14})\int_{-\frac R2}^{\frac R2}\int_{-R}^{R} \big|\chi_R(\tau)\partial_\tau \varphi\big|^2\, d\gamma d\tau  \\
                                                                       &\qquad +C\kappa^{\frac 14}\int_{-\frac R2}^{\frac R2}\int_{-R}^{R}\big|\varphi \partial_\tau \chi_R(\tau)\big|^2\, d\gamma d\tau\,, \\ 
																		&\leq (1+\kappa^{-\frac 14})\int_{-\frac R2}^{\frac R2}\int_{-\infty}^{+\infty} \big|\partial_\tau \varphi\big|^2\, d\gamma d\tau +C\kappa^{-\frac 34}\ell^{-1}\,.  
\end{align*}
Similarly, we have
\begin{align*}
&\int_{-\frac R2}^{\frac R2}\int_{-R}^{R}\left|\left(\partial_\gamma+i\sigma\left(\tau-\epsilon\frac{\tau^2}2 k_r\Big(\frac s \epsilon\Big)\right)\right)v\right|^2\, d\gamma d\tau\,,\\
&\leq (1+\kappa^{-\frac 14}) \int_{-\frac R2}^{\frac R2}\int_{-R}^{R} \big|\big(\partial_\gamma+i\sigma\tau\big)v\big|^2\,d\gamma d\tau
+ C \kappa^{\frac 14}\int_{-\frac R2}^{\frac R2}\int_{-R}^{R}\sigma^2\epsilon^2\tau^4|v|^2\,d\gamma d\tau\,,\\
&
                   \leq (1+\kappa^{-\frac 14}) \int_{-\frac R2}^{\frac R2}\int_{-\infty}^{+\infty} \big|\big(\partial_\gamma+i\sigma\tau\big)\varphi\big|^2\,d\gamma d\tau
                     + C\kappa^{\frac {13}4}\ell^5\quad\mathrm{because ~}|\tau|\leq R \,.\\
\end{align*}
Next, we may select $R_0$ sufficiently large so that, for all $R\geq R_0$, we have:
\begin{equation}\label{eq:R}
 |\tau|\geq \frac R2\Longrightarrow \frac {|\tau|}{\ln^2|\tau|}\geq R^\frac 12\,.
\end{equation}
The decay of $\varphi$ and~\eqref{eq:R} yield
\begin{align*}
 \int_{-\frac R2}^{\frac R2}\int_{-R}^{R} |v|^2 \,d\gamma d\tau&=  \int_{-\frac R2}^{\frac R2}\int_{-R}^{R} \big| \chi_R(\tau)\varphi\big|^2  \,d\gamma d\tau \\
                                                                                 &= \int_{-\frac R2}^{\frac R2}\int_{-\infty}^{+\infty} \big|\varphi\big|^2  \,d\gamma d\tau+\int_{-\frac R2}^{\frac R2}\int_{-\infty}^{+\infty} \big( \chi_R^2(\tau)-1\big)|\varphi|^2  \,d\gamma d\tau \\
& \geq \int_{-\frac R2}^{\frac R2}\int_{-\infty}^{+\infty} |\varphi|^2  \,d\gamma d\tau -  \int_{-\frac R2}^{\frac R2}\int_{{ |\tau|\geq R/2}} |\varphi|^2  \,d\gamma d\tau \\																												
&\geq  \int_{-\frac R2}^{\frac R2}\int_{-\infty}^{+\infty} |\varphi|^2  \,d\gamma d\tau- C\kappa^{\frac 12}\ell^{\frac 12}\,.
\end{align*}
Finally, we write the obvious inequality
\[\int_{-\frac R2}^{\frac R2}\int_{-R}^{R} |v|^4 \,d\gamma d\tau\leq \int_{-\frac R2}^{\frac R2}\int_{-\infty}^{+\infty} |\varphi|^4 \,d\gamma d\tau\,.\]
Gathering the foregoing estimates, we get
\begin{align}
\mathcal K(v) &\leq \frac {(1+\kappa^{-\frac 14})}{b} \mathcal G_{a,b,R}(\varphi)+C\Big(\kappa^{\frac 34}\ell+\kappa^{-\frac 34}\ell^{-1}+\kappa^{\frac {13}4} \ell^5+ \kappa^{\frac 12}\ell^{\frac 12}\Big)\,,\\
   &\leq \frac {(1+\kappa^{-\frac 14})}{b} \mathcal G_{a,b,R}(\varphi)+C \kappa^{\frac 18}\,. \label{eq:A} 
\end{align}
Invoking Theorem~\ref{thm:eba}, we implement~\eqref{eq:A} into~\eqref{eq:E5} to get the desired upper bound.
\end{proof}
\section{Local Estimates}\label{sec:local_est}

The aim of this section is to study the concentration of the minimizers $(\psi,\Ab)$ of the functional in~\eqref{eq:GL} near the set $\Gamma$ that separates the values of the applied magnetic field (see Assumption~\ref{assump}). This will be displayed by local estimates of the Ginzburg--Landau energy and the $L^4$-norm of the Ginzburg--Landau parameter in Theorem~\ref{thm:E_loc1}.

We will introduce the necessary notations and assumptions. Starting with the local energy of the configuration $(\psi,\Ab) \in
H^1(\Om;\C)\times\Hd$ in any open set $D\subset\Om$ as follows
\begin{equation}\label{eq:local0}
\begin{aligned}
&\mathcal E_0(\psi,\Ab;D) =\int_D\left( \big|(\nb-i \kp H \Ab)\psi\big|^2-\kp^2|\psi|^2 +\frac 12 \kp^2|\psi|^4 \right)\,dx\,,\\
&\mathcal E(\psi,\Ab;D)=\mathcal E_0(\psi,\Ab;D) +(\kp H)^2 \int_\Om|\curl\Ab-B_0|^2\,dx.
\end{aligned}
\end{equation}
Choose $t_0>0$ sufficiently small so that the Frenet coordinates of Section~\ref{sec:bc}  are valid in the tubular neighbourhood $\Gamma(t_0)$ defined in~\eqref{eq:Gam0}. For all $x \in \Gamma(t_0)$, define the point $p(x) \in \Gamma$ as follows 
\[\dist(x,p(x))=\dist(x,\Gamma)\,.\]
Let $\ell\approx \kappa^{-3/4}$ be a parameter in~\eqref{eq:ell=kp} (for some fixed choice of the constants $c_1$ and $c_2$). Let $x_0 \in \Gamma\setminus\partial\Omega$ that is allowed to {\bfseries vary} in such a manner that
\begin{equation}\label{eq:con-x0}
\mathrm{dist}(x_0,\partial\Omega)> 2 \ell\,.
\end{equation}
Consider the following neighbourhood of $x_0$,
\begin{equation}\label{eq:N0}
\mathcal N_{x_0}(\ell)=\{x \in \Omega~:~\dist_\Gamma(x_0,p(x)) < \frac \ell 2,\ -\ell <t(x) < \ell\}\,,
\end{equation}
where $t(\cdot)$ is defined in~\eqref{eq:t_x}. For $\kappa$ sufficiently large (hence $\ell$ sufficiently small),  we get that $\mathcal N_{x_0}(\ell)$ does not intersect the boundary $\partial\Omega$, thanks to~\eqref{eq:con-x0}.
 As a consequence of the assumption in~\eqref{eq:con-x0}, {\bfseries all the estimates that we will write will hold uniformly with respect to the point $x_0$}.

We assume that $a\in[-1,0)$ and $b>0$ are fixed and satisfy
\begin{equation}\label{eq:A3}
b>\frac 1{|a|}\,.
\end{equation}
When~\eqref{eq:A3} holds, we are able to use the exponential decay of the Ginzburg--Landau parameter away from the set $\Gamma$ and the surface $\partial\Omega$ (see Theorem~\ref{thm:decay}).

\begin{theorem}\label{thm:E_loc1}
	Let $a \in[-1,0)$ and $b>1/|a|$. There exists $\kappa_0>0$ and a function $\mathfrak r:[\kappa_0,+\infty)\rightarrow (0,+\infty)$ such that $\displaystyle\lim_{\kappa\to+\infty}\mathfrak r(\kappa)=0$ and the following is true. For $\kappa\geq \kappa_0$, $H=b\kappa$ and  $\ell\approx \kappa^{-3/4}$ as in~\eqref{eq:ell=kp}, for any $x_0 \in \Gamma$ satisfying~\eqref{eq:con-x0}, every minimizer $(\psi, \Ab) \in H^1(\Om;\C)\times\Hd $ of the functional  in~\eqref{eq:GL} satisfies
	\begin{equation}\label{eq:En_gamma}
	\left|\mathcal E_0\left(\psi,\Ab;\mathcal N_{x_0}(\ell)\right)-b^{-\frac 12}\kappa \ell \mathfrak e_a(b)\right|\leq \kappa \ell\mathfrak r(\kappa)\,,
	\end{equation}
	and
	\begin{equation}\label{eq:L4_gamma}
	\left|\frac 1 {\ell}\int_{\mathcal N_{x_0}(\ell)}|\psi|^4\,dx+2b^{-\frac 12}\kappa^{-1}\mathfrak e_a(b)\right|\leq \kappa^{-1}\mathfrak r(\kappa)\,,
	\end{equation}\	
	where $\mathcal N_{x_0}(\cdot)$ is the set in~\eqref{eq:N0}, $\mathcal E_0$ is the local energy  in~\eqref{eq:local0}, and $\mathfrak e_a(b)$ is the limiting energy defined in~\eqref{eq:eba1}.
	Furthermore, the function $\mathfrak r$ is independent of the point $x_0\in\Gamma$.
\end{theorem}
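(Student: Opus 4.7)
My plan is to (i) reduce the local energy $\mathcal E_0(\psi,\Ab;\mathcal N_{x_0}(\ell))$ to the model functional $\mathfrak G$ of Section~\ref{sec:local_en} via a gauge change in Frenet coordinates, (ii) bracket it between the matching bounds on $\mathfrak G_0$ from Lemmas~\ref{lem:lower_G0}--\ref{lem:upper_G0}, and (iii) deduce~\eqref{eq:L4_gamma} from the Euler--Lagrange identity with averaging. In preparation, Theorem~\ref{thm:priori} allows me to replace $\Ab$ by $\Fb$ in $\mathcal E_0(\psi,\Ab;\mathcal N_{x_0}(\ell))$ at cost $o(\kappa\ell)$, using Cauchy--Schwarz together with the bounds $\|\Ab-\Fb\|_{C^{0,\alpha}}\leq C/\kappa$ and $\|(\nabla-i\kappa H\Ab)\psi\|_{L^2}\leq C\kappa$. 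Passing to Frenet coordinates and applying Proposition~\ref{prop:Anew2} brings $\Fb$ to the canonical form~\eqref{F_1} without altering $|\psi|$ or $|\psi|^4$, and this identifies $\mathcal E_0(\psi,\Fb;\mathcal N_{x_0}(\ell))$ with $\mathfrak G(\tilde\psi;\mathcal V(\ell))$ for the transplanted, gauge-transformed order parameter $\tilde\psi$.

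For the lower bound in~\eqref{eq:En_gamma}, I localize $\tilde\psi$ by a smooth cutoff $\chi$ supported in $\mathcal V(\ell)$ with $\chi\equiv 1$ on $\mathcal V(\ell/2)$. By Proposition~\ref{prop:psi}, $u:=\chi\tilde\psi\in\mathcal D_\ell$, so $\mathfrak G(u;\mathcal V(\ell))\geq \mathfrak G_0$ and Lemma~\ref{lem:lower_G0} yields the desired lower bound; the IMS-type localization error $\mathfrak G(\tilde\psi)-\mathfrak G(u)$ reduces to $\|\nabla\chi\|_\infty^2\int_{\ell/2\leq |t|\leq \ell}|\tilde\psi|^2$, which is exponentially small by Theorem~\ref{thm:decay} applied in the bulk of the $\Omega_j$. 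For the matching upper bound, I transplant the trial function $u_0$ of Lemma~\ref{lem:upper_G0} back via Frenet coordinates and the gauge $\omega_{x_0}$ to obtain a $\psi_{\mathrm{loc}}$ supported in $\mathcal N_{x_0}(\ell)$, and form a global test state $\psi^\star$ equal to $\psi_{\mathrm{loc}}$ on $\mathcal N_{x_0}(\ell-\delta)$, equal to $\psi$ outside $\mathcal N_{x_0}(\ell)$, and smoothly interpolated in the shell of width $\delta\ll \ell$. Minimality $\mathcal E(\psi,\Ab)\leq\mathcal E(\psi^\star,\Ab)$ cancels the outside contributions, and the shell term is absorbed by the exponential decay of $\psi$ for $|t|\geq \ell-\delta$.

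The $L^4$-estimate~\eqref{eq:L4_gamma} follows by testing the first equation of~\eqref{eq:Euler} against $\bar\psi$ and integrating over $\mathcal N_{x_0}(\ell')$:
\[
\mathcal E_0(\psi,\Ab;\mathcal N_{x_0}(\ell'))+\frac{\kappa^2}{2}\int_{\mathcal N_{x_0}(\ell')}|\psi|^4\,dx=\mathrm{Re}\int_{\partial \mathcal N_{x_0}(\ell')}\bar\psi\,\nu\cdot(\nabla-i\kappa H\Ab)\psi\,ds.
\]
Averaging $\ell'\in[\ell/2,\ell]$ via Fubini turns the boundary integral into a shell integral of $|\psi|\,|(\nabla-i\kappa H\Ab)\psi|$, which Cauchy--Schwarz together with Theorem~\ref{thm:decay} render exponentially small; combining with~\eqref{eq:En_gamma} at the averaged scale then gives~\eqref{eq:L4_gamma}. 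The main technical obstacle is the calibration of $\delta$ in the upper bound: the shell kinetic energy grows like $\delta^{-2}\|\psi\|_{L^2(\mathrm{shell})}^2$, so the polynomial blow-up in $1/\delta$ must be dominated by the factor $e^{-\alpha_0\kappa(\ell-\delta)}$ from Theorem~\ref{thm:decay}; since $\kappa\ell\sim\kappa^{1/4}\to\infty$, any $\delta$ with $\delta=o(\ell)$ and $\delta\gg \kappa^{-1}$ succeeds. Uniformity in $x_0\in\Gamma$ is automatic, since all the constants involved depend only on the $C^2$-norm of the arc-length parametrization of $\Gamma$, which is uniform under Assumption~\ref{assump}.
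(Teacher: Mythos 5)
Your overall skeleton is the paper's: reduce $\mathcal E_0$ to the model functional $\mathfrak G$ via an affine gauge plus Frenet coordinates, sandwich it with Lemmas~\ref{lem:lower_G0}--\ref{lem:upper_G0}, use minimality with a transplanted trial state for the upper bound, and get the $L^4$ estimate from the Euler--Lagrange identity. But there is a recurring quantitative error that breaks all three steps: every localization you perform shrinks or averages the box by a \emph{fixed fraction} of $\ell$ in the tangential direction ($\chi\equiv 1$ only on $\mathcal V(\ell/2)$; averaging $\ell'\in[\ell/2,\ell]$; a shell of width $\delta$ around the full boundary of $\mathcal N_{x_0}(\ell)$), and you then control the transition regions by the exponential decay of Theorem~\ref{thm:decay}. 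That decay is in the variable $t$ only: the order parameter is of size $O(1)$ in a tube $|t|\lesssim \kappa^{-1}$ around the \emph{whole} barrier, including at $|s|\approx\ell/2$. A tangential sub-segment of length $c\ell$ therefore carries energy and $L^4$ mass equal to $c$ times the leading term $b^{-1/2}\kappa\ell\,\mathfrak e_a(b)$, resp.\ $-2b^{-1/2}\kappa^{-1}\ell\,\mathfrak e_a(b)$. Concretely: in the lower bound the discarded term $-\kappa^2\int(1-\chi^2)|\tilde\psi|^2$ over the lateral strips $\{\ell/4\lesssim|s|\leq\ell/2\}$ is of order $\kappa\ell$, the same order as the quantity you are estimating; in the $L^4$ step the domain mismatch $\frac1\ell\int_{\mathcal N_{x_0}(\ell)\setminus\mathcal N_{x_0}(\ell')}|\psi|^4\sim\kappa^{-1}$ is of the same order as the main term; and the lateral part of your interpolation shell is not in the region where Theorem~\ref{thm:decay} applies at all. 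The paper's fix is precisely to dilate by a factor $1+\gamma$ with $\gamma=\gamma(\kappa)=\kappa^{-3/16}\to0$ (so the tangential transition has relative measure $o(1)$ while $\gamma\ell\gg\kappa^{-1}$ still activates the Agmon decay in $t$), to keep the trial state and the cutoffs equal to the target object on the \emph{entire} box $\mathcal N_{x_0}(\ell)$, and to recover the exact scale $\ell$ at the end by starting from $\check\ell=(1+\gamma)^{-1}\ell$.

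A secondary gap: replacing $\Ab$ by $\Fb$ at cost $o(\kappa\ell)$ does not follow from $\|\Ab-\Fb\|_{C^{0,\alpha}}\leq C/\kappa$ and $\|(\nabla-i\kappa H\Ab)\psi\|_{L^2(\Omega)}\leq C\kappa$ alone; the cross term $2\kappa H\,|\Ab-\Fb|\,|\psi|\,|(\nabla-i\kappa H\Ab)\psi|$ is then only $O(\kappa^{9/8})\gg\kappa\ell$. One must first subtract the affine gauge $\nabla\phi_{x_0}$ with $\phi_{x_0}(x)=(\Ab(x_0)-\Fb(x_0))\cdot x$, so that the H\"older seminorm yields the local oscillation bound $|\Ab-\nabla\phi_{x_0}-\Fb|\leq C\ell^{\alpha}/\kappa$ on $\mathcal N_{x_0}(\ell)$ (the paper takes $\alpha=2/3$), which is what makes the error $O(1)$.
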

The proof of Theorem~\ref{thm:E_loc1} follows by collecting the results of Proposition~\ref{prop:L4_gamma_up} and  Proposition~\ref{prop:L4_gamma_low} below, which are derived along the lines of~\cite[Section 4]{helffer2017decay} in the study of local surface superconductivity.

Part of the proof of Theorem~\ref{thm:E_loc1} is based on the following remark. After performing a translation, we may assume that the Frenet coordinates of $x_0$ are $(s=0,t=0)$ (see Section~\ref{sec:bc}).  Recall the local Ginzburg--Landau  energy $\mathcal E_0$ introduced in~\eqref{eq:local0}. Let $\Fb$ be the vector field introduced in Lemma~\ref{A_1}.  We have the following relation
\begin{equation}\label{eq:uv}
\mathcal E_0\big(u,\Fb;\mathcal N_{x_0}(\ell)\big)=\mathfrak G\big(\tilde{v};\mathcal V(\ell)\big)\,,
\end{equation}
where $\mathfrak G$ is defined in~\eqref{eq:G_u}, $u \in H_0^1(\mathcal N_{x_0}(\ell))$, $\tilde{v}$ is the function associated to $v=e^{-i\kappa H\omega_{x_0}}u$ by the transformation $\Phi^{-1}$ (see~\eqref{eq:u-tu}), and $\omega_{x_0}$ is the gauge transformation function defined in Proposition~\ref{prop:Anew2}.

\subsection{Lower bound of the local energy} 
We start by establishing a lower bound for the local energy $\mathcal E_0\big(u,\Ab;\mathcal N_{x_0}(\ell)\big)$ for an arbitrary function $ u \in H_0^1(\mathcal N_{x_0}(\ell))$ satisfying $|u| \leq 1$.  {\bfseries We will work under the assumptions made in this section}, notably, we assume that~\eqref{eq:A3} holds,  and $\ell\approx\kappa^{-3/4}$ (see~\eqref{eq:ell=kp}), and {\bfseries in the regime where $H=b\kappa$}.

\begin{proposition}\label{prop:E0_low}
There exist two constants $\kappa_0>1$ and  $C>0$ such that, for $\kappa \geq \kappa_0$ and for all $x_0\in\Gamma$ satisfying~\eqref{eq:con-x0}, the following is true. If 
\begin{itemize}
\item  $(\psi,\Ab)\in H^1(\Omega;\C)\times \Hd$ is a solution of~\eqref{eq:Euler}\,.
\item $u \in H_0^1(\mathcal N_{x_0}(\ell))$ satisfies $|u| \leq 1$\,.
\end{itemize}  
then 
\begin{equation*}
\mathcal E_0\big(u,\Ab;\mathcal N_{x_0}(\ell)\big) \geq b^{-\frac12}\kappa \ell \mathfrak e_a(b)-C.
\end{equation*} 
where $\mathcal N_{x_0}(\cdot)$ is the neighbourhood defined  in~\eqref{eq:N0}, $\mathcal E_0$ is the functional defined in~\eqref{eq:local0}, and $\mathfrak e_a(b)$ is the limiting energy in~\eqref{eq:eba1}.
\end{proposition}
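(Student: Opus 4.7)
The energy $\mathcal E_0(u,\Ab;\mathcal N_{x_0}(\ell))$ involves the unknown minimizer field $\Ab$, whereas Lemma~\ref{lem:lower_G0} gives a lower bound for the reduced functional $\mathfrak G$ built from the canonical potential $\tilde F$ in Frenet coordinates. The plan is to connect the two in two gauge steps: first replace $\Ab$ by the reference field $\Fb$ using the a priori Hölder estimate in Theorem~\ref{thm:priori}, then pass to $\tilde F$ via the gauge transformation of Proposition~\ref{prop:Anew2} and the identity~\eqref{eq:uv}, and finally invoke Lemma~\ref{lem:lower_G0}.

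Concretely, I would set $\chi(x)=(\Ab(x_0)-\Fb(x_0))\cdot (x-x_0)$, so that $\nabla\chi$ is the constant vector $\Ab(x_0)-\Fb(x_0)$ and $\Ab':=\Ab-\nabla\chi$ agrees with $\Fb$ at $x_0$. By Theorem~\ref{thm:priori}(4) and Hölder continuity, for any fixed $\alpha\in(0,1)$,
\[
\|\Ab'-\Fb\|_{L^\infty(\mathcal N_{x_0}(\ell))}\le \|\Ab-\Fb\|_{C^{0,\alpha}(\overline\Omega)}\,\mathrm{diam}(\mathcal N_{x_0}(\ell))^{\alpha}\le C\kappa^{-1}\ell^{\alpha}.
\]
Setting $u_1:=e^{-i\kappa H\chi}u$, gauge invariance gives $\mathcal E_0(u,\Ab;\mathcal N_{x_0}(\ell))=\mathcal E_0(u_1,\Ab';\mathcal N_{x_0}(\ell))$, and $|u_1|\le 1$. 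A Cauchy-Schwarz expansion with a small parameter $\eta\in(0,1)$ yields
\[
|(\nb-i\kp H\Ab')u_1|^{2}\ge (1-\eta)\,|(\nb-i\kp H\Fb)u_1|^{2}-\eta^{-1}(\kp H)^{2}|\Ab'-\Fb|^{2}|u_1|^{2}.
\]
Integrating over $\mathcal N_{x_0}(\ell)$, using $|\mathcal N_{x_0}(\ell)|\le C\ell^{2}$, $H=b\kp$, and the above Hölder bound, the second error term is controlled by $C\eta^{-1}b^{2}\ell^{2\alpha+2}$. After bounding the kinetic-energy contribution via $\int_{\mathcal N_{x_0}(\ell)}|(\nb-i\kp H\Fb)u_1|^{2}\le \mathcal E_0(u_1,\Fb;\mathcal N_{x_0}(\ell))+C\kp^{2}\ell^{2}$, one obtains
\[
\mathcal E_0(u,\Ab;\mathcal N_{x_0}(\ell))\ge (1-\eta)\,\mathcal E_0(u_1,\Fb;\mathcal N_{x_0}(\ell))-C\eta\,\kp^{2}\ell^{2}-C\eta^{-1}b^{2}\ell^{2\alpha+2}.
\]

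To finish, I would apply Proposition~\ref{prop:Anew2} to produce a second gauge $\omega_{x_0}$ that brings $\Fb$ into the canonical Frenet form $\tilde F$ of~\eqref{F_1} on $\mathcal N_{x_0}(\ell)$; together with the change of variables~\eqref{eq:A_tild2}--\eqref{eq:A_tild4} this gives the identity~\eqref{eq:uv}, so $\mathcal E_0(u_1,\Fb;\mathcal N_{x_0}(\ell))=\mathfrak G(\tilde v;\mathcal V(\ell))\ge\mathfrak G_0$, and Lemma~\ref{lem:lower_G0} provides $\mathfrak G_0\ge b^{-1/2}\kp\ell\,\mathfrak e_a(b)-C$. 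Writing $M:=\mathcal E_0(u_1,\Fb;\mathcal N_{x_0}(\ell))$, the elementary observation $(1-\eta)M\ge \min(0,M)\ge b^{-1/2}\kp\ell\,\mathfrak e_a(b)-C$ (valid because $\mathfrak e_a(b)\le 0$) absorbs the $(1-\eta)$ prefactor without loss. With $\ell\approx\kp^{-3/4}$, choosing $\eta=\kp^{-1/2}$ makes both error terms $O(1)$: $\eta\kp^{2}\ell^{2}=O(1)$ and $\eta^{-1}\ell^{2\alpha+2}=\kp^{-1-3\alpha/2}=O(1)$ for any $\alpha>0$. This delivers the claimed lower bound.

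\textbf{Main obstacle.} The delicate point is that the Hölder estimate $\|\Ab-\Fb\|_{C^{0,\alpha}}\le C/\kp$ is not, on its face, strong enough: a naive replacement of $\Ab$ by $\Fb$ produces an error $\sim(\kp H)^{2}\|\Ab-\Fb\|_\infty^{2}|\mathcal N_{x_0}(\ell)|\sim \kp^{1/2}$, which swamps the target $O(1)$ remainder. The constant-vector gauge-absorption trick (subtracting $\nabla\chi$ so that $\Ab'-\Fb$ vanishes at $x_0$) is what turns this into $\ell^{2\alpha}/\kp^{2}$ and makes the estimate close. The rest of the proof is careful bookkeeping of the $\eta$-balancing and the sign of $M$, together with the standard gauge reduction to the reduced functional already built in Section~\ref{sec:local_en}.
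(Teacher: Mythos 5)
Your argument follows the paper's proof essentially verbatim: the same linear gauge function absorbing the constant vector $\Ab(x_0)-\Fb(x_0)$, the same use of the H\"older bound from Theorem~\ref{thm:priori}, the same Cauchy--Schwarz splitting with parameter $\eta=\kp^{-1/2}$, and the same reduction to $\mathfrak G_0$ via Proposition~\ref{prop:Anew2}, the identity~\eqref{eq:uv} and Lemma~\ref{lem:lower_G0}. The one flaw is an arithmetic slip in the second error term: $(\kp H)^2|\Ab-\nabla\chi-\Fb|^2\,|\mathcal N_{x_0}(\ell)|$ is of order $b^2\kp^4\cdot\kp^{-2}\ell^{2\alpha}\cdot\ell^2=b^2\kp^{2}\ell^{2\alpha+2}$, not $b^2\ell^{2\alpha+2}$, so with $\eta=\kp^{-1/2}$ and $\ell\approx\kp^{-3/4}$ the term is of order $\kp^{1-3\alpha/2}$, which is $O(1)$ only for $\alpha\geq 2/3$ and not ``for any $\alpha>0$'' as you claim. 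Since you allowed yourself any fixed $\alpha\in(0,1)$, the repair is simply to take $\alpha=2/3$ --- exactly the choice made in the paper --- after which your proof is complete.
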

\begin{proof}
	Let $\alpha \in (0,1)$ and $\Fb$ be the vector field introduced in Lemma~\ref{A_1}. Define the function $\phi_{x_0}(x)=\Big(\Ab(x_0)-\Fb(x_0)\Big)\cdot x$. As a consequence of the fourth item in Theorem~\ref{thm:priori}, we get the following useful approximation of the vector potential  $\Ab$
	\begin{equation}\label{eq:AF}
	|\Ab(x)-\nabla\phi_{x_0}(x)-\Fb(x)| \leq \frac C\kappa \ell^\alpha\qquad \mathrm{for}\ x \in \mathcal N_{x_0}(\ell)\,.
	\end{equation} 
{\bfseries We choose}  $\alpha=2/3$ in~\eqref{eq:AF}. Define the function $w=e^{-i\kappa H\phi_{x_0}}u$. Using~\eqref{eq:AF} and Cauchy's inequality, we may write
	\begin{align*}
	\big|(\nb-i \kp H \Ab)u\big|^2 &\geq (1-\kappa^{-\frac 12})\big|(\nb-i \kp H \Fb)w\big|^2 -\kappa^{\frac 12}\kappa^2H^2|\Ab-\nabla\phi_{x_0}-\Fb|^2|w|^2\\
	&\geq (1-\kappa^{-\frac 12})\big|(\nb-i \kp H \Fb)w\big|^2 -\kappa^{\frac 52}\ell^{\frac 43}|w|^2\,.
	\end{align*} 
	By using that $|w| \leq 1$, we get further
	\begin{equation*}
	\mathcal E_0(u,\Ab;\mathcal N_{x_0}(\ell)) \geq (1-\kappa^{-\frac 12})\mathcal E_0(w,\Fb;\mathcal N_{x_0}(\ell))-C\Big(\kappa^{\frac 32}\ell^2+\kappa^{\frac 52}\ell^{\frac {10}3}\Big)\,.
	\end{equation*}
	Now, define the function $v=e^{-i\kappa H\omega_{x_0}}w$, where $\omega_{x_0}$ is introduced in Proposition~\ref{prop:Anew2}.  We may use the relation in~\eqref{eq:uv} to write 
	\begin{equation*}\label{eq:E02}
	\mathcal E_0(u,\Ab;\mathcal N_{x_0}(\ell))\geq  (1-\kappa^{-\frac 12})\mathfrak G\big(\tilde{v};\mathcal V(\ell)\big)-C\Big(\kappa^{\frac 32}\ell^2+\kappa^{\frac 52}\ell^{\frac {10}3}\Big)\,, 
	\end{equation*}
	Finally, we use the lower  bound in Lemma~\ref{lem:lower_G0} together with the inequality $\mathfrak e_a(b) \leq 0$. This finishes the proof of Proposition~\ref{prop:E0_low}.
\end{proof}

	\subsection{Sharp upper bound on $L^4$- norm}
	   We will derive  a lower bound of the local energy $\mathcal E_0\big(\psi,\Ab;\mathcal N_{x_0}(\ell)\big)$ and an upper bound of the $L^4$-norm of $\psi$ valid for any critical point $(\psi,\Ab)$  of the functional  in~\eqref{eq:GL}. Again, we remind the reader that {\bfseries  we assume that~\eqref{eq:A3} holds, $\ell\approx\kappa^{-3/4}$ (see~\eqref{eq:ell=kp}) and $H=b\kappa$}.

\begin{proposition}\label{prop:L4_gamma_up}
There exist two constants $\kappa_0>1$ and $C>0$ such that, for all $x_0\in \Gamma$ satisfying~\eqref{eq:con-x0}, the following is true. If $(\psi, \Ab) \in H^1(\Om;\C)\times\Hd $ is a critical point of the functional in~\eqref{eq:GL} for $\kappa\geq\kappa_0$, then
	 \begin{equation}\label{eq:Epsi_low_asymp}
	 \mathcal E_0\big(\psi,\Ab;\mathcal N_{x_0}({\ell})\big) \geq b^{-\frac12}\kappa \ell \mathfrak e_a(b)-C\kappa^{\frac 3{16}}\,,
	 \end{equation}
	 and
	\begin{equation}\label{eq:psi4_up_asymp}
		\frac 1\ell \int_{\mathcal N_{x_0}(\ell)}|\psi|^4\,dx \leq -2b^{-\frac12}\kappa^{-1}\mathfrak e_a(b)+C\kappa^{-\frac {17}{16}}\,.
	\end{equation}
Here $\mathcal N_{x_0}(\cdot)$, $\mathcal E_0$, and $\mathfrak e_a(b)$ are respectively defined in~\eqref{eq:N0},~\eqref{eq:local0}, and~\eqref{eq:eba1} .
\end{proposition}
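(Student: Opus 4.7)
The plan is to combine the local lower bound of Proposition~\ref{prop:E0_low} with a clean algebraic identity coming from the critical point equation. Let $L=\nabla-i\kappa H\Ab$. For any smooth real cutoff $\chi$ compactly supported in $\Omega$, testing the first equation of~\eqref{eq:Euler} against $\chi^2\bar\psi$ (the boundary term vanishes because $\chi$ is supported away from $\partial\Omega$ by~\eqref{eq:con-x0}) and using $\mathrm{Re}(\bar\psi L\psi)=\tfrac12\nabla|\psi|^2$ yields
\[
\int\chi^2\bigl(|L\psi|^2-\kappa^2|\psi|^2+\tfrac{\kappa^2}{2}|\psi|^4\bigr)\,dx = -\tfrac{\kappa^2}{2}\int\chi^2|\psi|^4\,dx - \tfrac12\int\nabla(\chi^2)\cdot\nabla|\psi|^2\,dx.
\]
Combining with the IMS-type expansion $|L(\chi\psi)|^2=\chi^2|L\psi|^2+|\nabla\chi|^2|\psi|^2+\tfrac12\nabla(\chi^2)\cdot\nabla|\psi|^2$ produces the central identity
\begin{equation}
\mathcal E_0(\chi\psi,\Ab;\mathrm{supp}\,\chi) = -\frac{\kappa^2}{2}\int(2\chi^2-\chi^4)|\psi|^4\,dx + \int|\nabla\chi|^2|\psi|^2\,dx. \tag{$\ast$}
\end{equation}

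For~\eqref{eq:psi4_up_asymp}, I would pick $\chi\in C^\infty_c(\mathcal N_{x_0}(\ell(1+\delta)))$ with $\chi\equiv 1$ on $\mathcal N_{x_0}(\ell)$ and $|\nabla\chi|\leq C/(\ell\delta)$ for a parameter $\delta$ to be chosen. Since $|\psi|\leq 1$ by Proposition~\ref{prop:psi}, the function $\chi\psi$ is admissible in Proposition~\ref{prop:E0_low} applied to the (admissibly) slightly enlarged box, giving
\[
\mathcal E_0(\chi\psi,\Ab;\mathcal N_{x_0}(\ell(1+\delta)))\geq b^{-1/2}\kappa\ell(1+\delta)\,\mathfrak e_a(b)-C.
\]
Since $2\chi^2-\chi^4\geq\mathbbm{1}_{\mathcal N_{x_0}(\ell)}$, $\int|\nabla\chi|^2|\psi|^2\leq C/\delta$, and $\mathfrak e_a(b)\leq 0$, identity~($\ast$) rearranges to
\[
\int_{\mathcal N_{x_0}(\ell)}|\psi|^4\,dx \leq -2b^{-1/2}\kappa^{-1}\ell\mathfrak e_a(b) + C\kappa^{-7/4}\delta + \frac{C}{\kappa^2\delta} + \frac{C}{\kappa^2}.
\]
The choice $\delta\sim\kappa^{-1/8}$ balances the dominant errors and, after division by $\ell$, yields~\eqref{eq:psi4_up_asymp}.

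For~\eqref{eq:Epsi_low_asymp}, I would translate the $L^4$-bound back into an energy bound via the critical point identity. Pick a second cutoff $\widetilde\chi\in C^\infty_c(\mathcal N_{x_0}(\ell))$ with $\widetilde\chi\equiv 1$ on $\mathcal N_{x_0}(\ell(1-\delta'))$. The critical point identity gives $\mathcal E_0^{\widetilde\chi}(\psi,\Ab)=-\tfrac{\kappa^2}{2}\int\widetilde\chi^2|\psi|^4+O(1/\delta')$, and combining with the $L^4$-bound just obtained (using $\int\widetilde\chi^2|\psi|^4\leq\int_{\mathcal N_{x_0}(\ell)}|\psi|^4$) yields a lower bound on $\mathcal E_0^{\widetilde\chi}$. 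Since $\mathcal E_0(\psi,\Ab;\mathcal N_{x_0}(\ell))$ differs from $\mathcal E_0^{\widetilde\chi}$ only through the thin annulus contribution, the last step is to bound this difference from below using $|\psi|\leq 1$ and the exponential decay of Theorem~\ref{thm:decay} on the portion of the annulus far from $\Gamma$; optimizing $\delta'$ then delivers the stated error.

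The main obstacle is the careful bookkeeping of cutoff and annulus errors. Because the paper deliberately avoids the pointwise estimate $\|L\psi\|_\infty\leq C\kappa$ and uses only the $L^2$-bound of Theorem~\ref{thm:priori}(1), all finer local gradient control on the annulus must be extracted from identity~($\ast$) itself rather than by direct estimation. The specific exponents $\kappa^{3/16}$ in~\eqref{eq:Epsi_low_asymp} and $\kappa^{-17/16}$ in~\eqref{eq:psi4_up_asymp} reflect the resulting compromise between the error $\kappa^{1/8}$ propagated from the $L^4$-bound, the cutoff error $1/\delta'$, and the annulus loss $\sim\kappa^{1/2}\delta'$.
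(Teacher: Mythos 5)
Your proposal is correct and takes essentially the same route as the paper: your identity $(\ast)$ is exactly the paper's~\eqref{eq:f_psi_1}, the key input is the same Proposition~\ref{prop:E0_low} applied to a truncated $\psi$, and the annulus errors are handled the same way (small-area near-barrier piece plus exponential decay from Theorem~\ref{thm:decay} on the far piece); the only structural difference is that you prove the $L^4$ bound first and then recover the energy lower bound from it, whereas the paper proves the energy lower bound directly via the comparison~\eqref{eq:f3} and then deduces the $L^4$ bound, both variants requiring the same annulus comparison. One small correction in your last step: the identity carries the weight $2\widetilde\chi^{\,2}-\widetilde\chi^{\,4}$ rather than $\widetilde\chi^{\,2}$, but since $2t^{2}-t^{4}\le 1$ pointwise this weight is still dominated by $\mathbbm{1}_{\mathcal N_{x_0}(\ell)}$, so the lower bound closes with the correct constant $b^{-1/2}\kappa\ell\,\mathfrak e_a(b)$ and not twice it.
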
	
\begin{proof}
 In the sequel,
$\gamma=\kappa^{-3/16}$ and $\kappa$ is sufficiently large so that $\gamma \in (0,1)$. We denote by $\hat{\ell}=(1+\gamma)\ell$.

Consider a smooth function  $f$ satisfying
\begin{equation}\label{eq:f}
\begin{aligned}
&f=1\ \mathrm{in}\ \mathcal N_{x_0}(\ell)\,,\quad f=0 \ \mathrm{in}\  \mathcal N_{x_0}\big(\hat\ell\,\big)^\complement\,,\\
& 0\leq f \leq 1,\quad |\nabla f| \leq C\gamma^{-1} \ell^{-1}\ \mathrm{and}\  |\Delta f| \leq C\gamma^{-2} \ell^{-2}\ \mathrm{in}\ \Omega\,.
\end{aligned}
\end{equation}
\emph{Proof of~\eqref{eq:Epsi_low_asymp}.}
We will extract a lower bound of $\mathcal E_0\left(\psi,\Ab;\mathcal N_{x_0}(\hat{\ell})\right)$.
We use the following simple identity (see~\cite[p.~2871]{kachmar2016distribution})
	\begin{equation}\label{eq:Delta}
  \int_{\mathcal N_{x_0}(\hat{\ell})}\big|(\nb-i \kp H \Ab)f\psi\big|^2\,dx =\int_{\mathcal N_{x_0}(\hat{\ell})}\big|f(\nb-i \kp H \Ab)\psi\big|^2\,dx-\int_{\mathcal N_{x_0}(\hat{\ell})}f\Delta f|\psi|^2\,dx\,,
\end{equation}
obtained using integration by parts.
Having in hand~\eqref{eq:Delta}, $|\psi|\leq 1$ and $\big|\supp (\Delta f)\big|\leq C\gamma \ell^2$, we can write
\begin{align*}
\int_{\mathcal N_{x_0}(\hat{\ell})}\big|(\nb-i \kp H \Ab)f\psi\big|^2\,dx & \leq \int_{\mathcal N_{x_0}(\hat{\ell})}\big|f(\nb-i \kp H \Ab)\psi\big|^2\,dx+\int_{\mathcal N_{x_0}(\hat{\ell})}f|\Delta f||\psi|^2\,dx\,, \\
& \leq \int_{\mathcal N_{x_0}(\hat{\ell})}\big|f(\nb-i \kp H \Ab)\psi\big|^2\,dx+ C\gamma^{-2}\ell^{-2}\int_{\supp (\Delta f)}|\psi|^2\,dx\,,\\
& \leq \int_{\mathcal N_{x_0}(\hat{\ell})}\big|f(\nb-i \kp H \Ab)\psi\big|^2\,dx+ C\gamma^{-1}\,.\\
\end{align*}
On the other hand, we write
\begin{equation}\label{eq:1-f2}
\begin{aligned}
&\int_{\mathcal N_{x_0}(\hat{\ell})}f^2|\psi|^2\,dx\\
 &=\int_{\mathcal N_{x_0}(\hat{\ell})}|\psi|^2\,dx -\int_{\mathcal N_{x_0}(\hat{\ell})}(1-f^2)|\psi|^2\,dx\\
                                          &=\int_{\mathcal N_{x_0}(\hat{\ell})}|\psi|^2\,dx -\int_{\mathcal N_{x_0}(\hat{\ell})\cap \{|t(x)|\leq \gamma\ell\}}(1-f^2)|\psi|^2\,dx  -\int_{\mathcal N_{x_0}(\hat{\ell})\cap \{|t(x)|> \gamma\ell\}}(1-f^2)|\psi|^2\,dx
\end{aligned}
\end{equation}
where $t(\cdot)$ is the distance function defined in~\eqref{eq:t_x}. Recall that $\gamma=\kappa^{-3/16}$, then $\gamma\ell\gg \kappa^{-1}$ which, together with~\eqref{eq:A3}, allow us to use the exponential decay of $|\psi|^2$ in $\mathcal N_{x_0}(\hat{\ell})\cap \{|t(x)|> \gamma\ell\}$ (see~Theorem~\ref{thm:decay}).
Consequently, the integral over $\mathcal N_{x_0}(\hat{\ell})\cap \{|t(x)|> \gamma\ell\}$ in~\eqref{eq:1-f2} is exponentially small when $\kappa \rightarrow +\infty$ (having $0 \leq f \leq 1$). In addition, we have 
\[\left|\supp(1-f^2) \cap \mathcal N_{x_0}(\hat{\ell})\cap \{|t(x)|\leq \gamma\ell\}\right|=\mathcal O(\gamma^2\ell^2)\,.\]
This yields
\begin{equation*}
\int_{\mathcal N_{x_0}(\hat{\ell})}f^2|\psi|^2\,dx \geq \int_{\mathcal N_{x_0}(\hat{\ell})}|\psi|^2\,dx-C\gamma^2\ell^2\,.
\end{equation*}
Hence, 
\begin{align}
\mathcal E_0\big(f\psi,\Ab;\mathcal N_{x_0}(\hat{\ell})\big) &\leq \mathcal E_0\big(\psi,\Ab;\mathcal N_{x_0}(\hat{\ell})\big) +C\kappa^2\gamma^2\ell^2+C\gamma^{-1}\,.\nonumber\\
                                                                 &\leq \mathcal E_0\big(\psi,\Ab;\mathcal N_{x_0}(\hat{\ell})\big) +C\kappa^{\frac 3{16}}\,.\label{eq:f3}
\end{align}
The fact that $f\psi \in H_0^1\Big(\mathcal N_{x_0}(\hat{\ell})\Big)$ and $|f\psi| \leq 1$ allows us to use the lower bound result established in Proposition~\ref{prop:E0_low} for $u=f\psi$. This yields together with~\eqref{eq:f3} 
\begin{equation}\label{eq:E_psi_lower*}
\mathcal E_0\big(\psi,\Ab;\mathcal N_{x_0}(\hat{\ell})\big) \geq b^{-\frac12}\kappa \hat\ell \mathfrak e_a(b)-C\kappa^\frac 3{16}
\end{equation}
This finishes the proof of~\eqref{eq:Epsi_low_asymp}, but with $\hat\ell$ appearing instead of $\ell$. However, this is not harmful, as we could start the argument with $\check\ell =(1+\gamma)^{-1}\ell$ in place of $\ell$ and then modify $\hat\ell$ accordingly; in this case we would get $\hat\ell=(1+\gamma)\check\ell=\ell$ as required.

\emph{Proof of~\eqref{eq:psi4_up_asymp}.}
In light of the first equation in~\eqref{eq:Euler} satisfied by $(\psi,\Ab)$, we get using integration  by parts  (see~\cite[(6.2)]{fournais2011nucleation})
	\begin{equation*}\label{eq:f_psi}
	\int_{\mathcal N_{x_0}(\hat{\ell})}\left(\big|(\nb-i \kp H \Ab)f\psi\big|^2 -|\nabla f|^2|\psi|^2 \right)\,dx=\kappa^2\int_{\mathcal N_{x_0}(\hat{\ell})}\left(|\psi|^2-|\psi|^4\right)f^2\,dx\,.
	\end{equation*}
	Consequently,
	\begin{equation}\label{eq:f_psi_1}
	\mathcal E_0\big(f\psi,\Ab;\mathcal N_{x_0}(\hat{\ell})\big)=\kappa^2\int_{\mathcal N_{x_0}(\hat{\ell})}f^2\Big(-1+\frac 12 f^2\Big)|\psi|^4\,dx+\int_{\mathcal N_{x_0}(\hat{\ell})}|\nabla f|^2|\psi|^2\,dx\,.
	\end{equation}
	Since $f = 1$ in $\mathcal N_{x_0}(\ell)$ and $-1+1/2 f^2 \leq - 1/2$ in $\mathcal N_{x_0}(\hat{\ell})$, we get
	\[\int_{\mathcal N_{x_0}(\hat{\ell})}f^2\Big(-1+\frac 12 f^2\Big)|\psi|^4\,dx \leq -\frac 12\int_{\mathcal N_{x_0}(\ell)}|\psi|^4\,dx\,.\]
	We use the previous inequality,~\eqref{eq:f_psi_1} and the estimate $\big|\supp |\nabla f|\big|\leq C\gamma \ell^2$ to obtain
	\begin{align}
	\mathcal E_0\big(f\psi,\Ab;\mathcal N_{x_0}(\hat{\ell})\big) &\leq -\frac {\kappa^2}2\int_{\mathcal N_{x_0}(\ell)}|\psi|^4\,dx+C\gamma^{-1}\,,\nonumber \\
	&\leq -\frac {\kappa^2}2\int_{\mathcal N_{x_0}(\ell)}|\psi|^4\,dx+C\kappa^{\frac 3{16}}\,.\label{eq:E03}
	\end{align}
	We insert the lower bound in Proposition~\ref{prop:E0_low} into~\eqref{eq:E03}  to get the upper bound of the $L^4$-norm in~\eqref{eq:psi4_up_asymp}.
	\end{proof}

	\subsection{Sharp lower bound on the $L^4$-norm}
Complementary to Proposition~\ref{prop:L4_gamma_up}, we will prove  Proposition~\ref{prop:L4_gamma_low} below, whose conclusion holds for minimizing configurations only. We continue working under {\bfseries the assumption  that~\eqref{eq:A3} holds, $\ell\approx\kappa^{-3/4}$ (see~\eqref{eq:ell=kp}) and $H=b\kappa$}.	
	\begin{proposition}\label{prop:L4_gamma_low}
There exist two constants $\kappa_0>1$ and $C>0$ such that, for all $x_0\in \Gamma$ satisfying~\eqref{eq:con-x0}, the following is true. If $(\psi, \Ab) \in H^1(\Om;\C)\times\Hd $ is a \textit{\textbf{minimizer}} of the functional in~\eqref{eq:GL} for $\kappa\geq\kappa_0$ , then
		\begin{equation}\label{eq:Epsi_up_asymp}
		\mathcal E_0\big(\psi,\Ab;\mathcal N_{x_0}(\ell)\big) \leq b^{-\frac12}\kappa \ell \mathfrak e_a(b)+C\kappa^{\frac 3{16}}\,,
		\end{equation}
		and
		\begin{equation}\label{eq:psi4_low_asymp}
		\frac 1\ell \int_{\mathcal N_{x_0}(\ell)}|\psi|^4\,dx \geq -2b^{-\frac12}\kappa^{-1}\mathfrak e_a(b)-C\kappa^{-\frac {17}{16}}\,.
		\end{equation}
Here $\mathcal N_{x_0}(\cdot)$, $\mathcal E_0$, and $\mathfrak e_a(b)$ are respectively defined in~\eqref{eq:N0},~\eqref{eq:local0}, and~\eqref{eq:eba1}.
	\end{proposition}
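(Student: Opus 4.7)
The plan is to prove~\eqref{eq:Epsi_up_asymp} by testing the \emph{global} minimality of $(\psi,\Ab)$ against a configuration built from the near-minimizer of Lemma~\ref{lem:upper_G0}, and then to extract~\eqref{eq:psi4_low_asymp} by revisiting the Euler--Lagrange identity used in the proof of Proposition~\ref{prop:L4_gamma_up}. Throughout, set $\gamma=\kappa^{-3/16}$ and $\hat\ell=(1+\gamma)\ell$, let $\phi_{x_0}(x)=(\Ab(x_0)-\Fb(x_0))\cdot x$, and let $\omega_{x_0}$ denote the gauge of Proposition~\ref{prop:Anew2} whose neighbourhood parameter is taken large enough to contain $\mathcal N_{x_0}(\hat\ell)$. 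Denote by $u$ the Cartesian representative (via $\Phi^{-1}$ in~\eqref{Frenet}) of the test function built in the proof of Lemma~\ref{lem:upper_G0}; it is supported in $\overline{\mathcal N_{x_0}(\ell)}$ with $u=0$ on $\partial\mathcal N_{x_0}(\ell)$.

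\emph{Upper bound on $\mathcal E_0$.} Take a smooth cutoff $\chi$ equal to $1$ on $\mathcal N_{x_0}(\ell)$, vanishing outside $\mathcal N_{x_0}(\hat\ell)$, with $|\nabla\chi|\le C/(\gamma\ell)$, and define the $H^1$ test function $\psi_{\mathrm{test}}:=\chi\,e^{i\kappa H(\phi_{x_0}+\omega_{x_0})}u+(1-\chi)\psi$. Since $\psi_{\mathrm{test}}=\psi$ off $\mathcal N_{x_0}(\hat\ell)$ and the curl term is unchanged, global minimality yields $\mathcal E_0(\psi,\Ab;\mathcal N_{x_0}(\hat\ell))\le\mathcal E_0(\psi_{\mathrm{test}},\Ab;\mathcal N_{x_0}(\hat\ell))$. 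On $\mathcal N_{x_0}(\ell)$ we have $\psi_{\mathrm{test}}=e^{i\kappa H(\phi_{x_0}+\omega_{x_0})}u$; the $C^{0,2/3}$-closeness $|\Ab-\nabla\phi_{x_0}-\Fb|\le C\kappa^{-1}\ell^{2/3}$ from Theorem~\ref{thm:priori}(4) (treated exactly as in the proof of Proposition~\ref{prop:E0_low}), the gauge transform by $\omega_{x_0}$, the identity~\eqref{eq:uv}, and Lemma~\ref{lem:upper_G0} then give
\[
\mathcal E_0(\psi_{\mathrm{test}},\Ab;\mathcal N_{x_0}(\ell))\le b^{-1/2}\kappa\ell\,\mathfrak e_a(b)+C\kappa^{1/6}.
\]
On the buffer $\mathcal B=\mathcal N_{x_0}(\hat\ell)\setminus\mathcal N_{x_0}(\ell)$ we have $\psi_{\mathrm{test}}=(1-\chi)\psi$, and expansion of the magnetic gradient with Cauchy's inequality gives
\[
\mathcal E_0(\psi_{\mathrm{test}},\Ab;\mathcal B)\le\mathcal E_0(\psi,\Ab;\mathcal B)+C\!\int_{\mathcal B}\!|\nabla\chi|^2|\psi|^2\,dx+C\kappa^2\!\int_{\mathcal B}\!\bigl(1-(1-\chi)^2\bigr)|\psi|^2\,dx.
\]
Summing and subtracting $\mathcal E_0(\psi,\Ab;\mathcal B)$ produces~\eqref{eq:Epsi_up_asymp}, provided the two error integrals on $\mathcal B$ are $O(\kappa^{3/16})$.

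\emph{Handling the buffer — the main obstacle.} The cutoff-gradient term is bounded by $C\gamma^{-1}=C\kappa^{3/16}$. The term $\kappa^2\int_{\mathcal B}|\psi|^2\,dx$ is delicate: a direct bound using $|\psi|\le 1$ on the whole buffer (of area $\asymp\gamma\ell^2$) would produce $C\kappa^{5/16}$ and spoil the rate. The remedy, already visible in Proposition~\ref{prop:L4_gamma_up}, is to split $\mathcal B$ by the normal distance $t(x)$ to $\Gamma$: on $\{|t|>\gamma\ell\}\cap\mathcal B$ the exponential decay of Theorem~\ref{thm:decay} applies (because $\gamma\ell\gg\kappa^{-1}$), while the remaining part $\{|t|\le\gamma\ell\}\cap\mathcal B$ consists only of two thin tangential strips of total area $O(\gamma^2\ell^2)$, contributing at most $C\kappa^2\gamma^2\ell^2=C\kappa^{1/8}$.

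\emph{From~\eqref{eq:Epsi_up_asymp} to~\eqref{eq:psi4_low_asymp}.} With the cutoff $f$ of Proposition~\ref{prop:L4_gamma_up}, the identity~\eqref{eq:f_psi_1} rearranges as
\[
\frac{\kappa^2}{2}\!\int_{\mathcal N_{x_0}(\ell)}\!|\psi|^4\,dx=-\mathcal E_0(f\psi,\Ab;\mathcal N_{x_0}(\hat\ell))+E_{\mathcal B}+\!\int\!|\nabla f|^2|\psi|^2\,dx,
\]
where $E_{\mathcal B}=\kappa^2\!\int_{\mathcal B}\!f^2(-1+f^2/2)|\psi|^4\,dx\ge-\tfrac{1}{2}\kappa^2\!\int_{\mathcal B}|\psi|^2\,dx\ge -C\kappa^{1/8}$ by the same buffer splitting, and $\int|\nabla f|^2|\psi|^2\ge 0$. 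The IBP identity $\int|(\nabla-i\kappa H\Ab)f\psi|^2=\int f^2|(\nabla-i\kappa H\Ab)\psi|^2-\int f\Delta f|\psi|^2$, combined with $\|f\Delta f\|_\infty|\supp\Delta f|=O(\gamma^{-1})$ and $\kappa^2\int(1-f^2)|\psi|^2=O(\kappa^{1/8})$, gives $\mathcal E_0(f\psi,\Ab;\mathcal N_{x_0}(\hat\ell))\le\mathcal E_0(\psi,\Ab;\mathcal N_{x_0}(\hat\ell))+C\kappa^{3/16}$. Inserting~\eqref{eq:Epsi_up_asymp} applied with $\hat\ell$ in place of $\ell$ (the mismatch $\hat\ell-\ell=\gamma\ell$ contributes $b^{-1/2}\kappa\gamma\ell|\mathfrak e_a(b)|=O(\kappa^{1/16})\subset O(\kappa^{3/16})$, and one may alternatively rerun the whole argument with $\check\ell=(1+\gamma)^{-1}\ell$ in place of $\ell$) and dividing by $\kappa^2\ell/2$ yields~\eqref{eq:psi4_low_asymp}.
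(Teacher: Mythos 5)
Your proposal is correct and follows essentially the same route as the paper: glue the (gauge-transformed, rescaled) minimizer of the reduced functional to $\psi$ across a buffer annulus of width $\gamma\ell$, invoke global minimality, control the buffer via the exponential decay of Theorem~\ref{thm:decay} split along $\{|t|\gtrless\gamma\ell\}$, and then feed the resulting energy upper bound into the identity~\eqref{eq:f_psi_1}; the paper merely places the model function on $\mathcal N_{x_0}(\hat\ell)$ and transitions out to $(1+2\gamma)\ell$, then removes the $\hat\ell$/$\ell$ mismatch by restarting from $\check\ell=(1+\gamma)^{-1}\ell$. One small imprecision: your displayed buffer inequality with error $C\int_{\mathcal B}|\nabla\chi|^2|\psi|^2$ does not follow from a plain Cauchy expansion (which leaves an uncontrolled multiple of $\int_{\mathcal B}|(\nabla-i\kappa H\Ab)\psi|^2$); you should instead use the exact integration-by-parts identity~\eqref{eq:Delta} with $f=1-\chi$, which you already invoke for the analogous step and which yields the same $O(\gamma^{-1})=O(\kappa^{3/16})$ error.
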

	\begin{proof}
	The proof is divided into five steps.

	\paragraph{\itshape Step~1. Construction of a test function and decomposition of the energy}
		
		The construction of the test function is inspired from that by Sandier and Serfaty in their study of bulk superconductivity in~\cite{sandier2003decrease}. Let $(\psi,\Ab)$ be a minimizer of the function in~\eqref{eq:GL}. For $\gamma=\kappa^{- 3/16}$ and $\hat{\ell}=(1+\gamma)\ell$, we define the function
		\begin{equation}\label{eq:test}
		u(x)=\mathbbm{1}_{\mathcal N_{x_0}(\hat{\ell})}(x) e^{i\kappa H\phi_{x_0}(x)}v_0(x)+\eta(x)\psi(x)\,,
		\end{equation}
		where $v_0(x)=e^{i\kappa H\omega_{x_0}(x)}u_0\circ \Phi^{-1}(x)$, $\phi_{x_0}$ and $\omega_{x_0}$ are the gauge transformation functions introduced respectively in~\eqref{eq:AF} and Proposition~\ref{prop:Anew2}, $\Phi$ is the coordinate transformation in~\eqref{Frenet},
		$u_0$ is a minimizer of the functional $\mathfrak G\big(\cdot,\mathcal V(\hat{\ell})\big)$ defined in~\eqref{eq:G_u}, and $\eta$ is a smooth function satisfying
		\begin{equation}\label{eq:eta}
\begin{aligned}
&\eta=0\ \mathrm{in}\ \mathcal N_{x_0}\big(\hat{\ell}\,\big)\,,\quad \eta=1\ \mathrm{in}\ \mathcal N_{x_0}\big((1+2\gamma)\ell\big)^\complement,\\
&0\leq \eta \leq 1,\quad |\nabla \eta| \leq C\gamma^{-1} \ell^{-1}\ \mathrm{and}\  |\Delta \eta| \leq C\gamma^{-2} \ell^{-2}\ \mathrm{in}\ \Omega\,.
\end{aligned}
		\end{equation}
		Recalling the energies defined in~\eqref{eq:GL} and~\eqref{eq:local0}, we write
	\[\mathcal E_{\kappa,H}(\cdot,\Ab)=\mathcal E_0\big(\cdot,\Ab; \mathcal N_{x_0}(\hat{\ell})\big)+\mathcal E\big(\cdot,\Ab; \mathcal N_{x_0}(\hat{\ell})^\complement \big)\,.\]
	We denote by 
	\[\mathcal E_1(\cdot,\Ab)=\mathcal E_0\big(\cdot,\Ab; \mathcal N_{x_0}(\hat{\ell})\big),\
\mathcal E_2(\cdot,\Ab)=\mathcal E_0\big(\cdot,\Ab; \mathcal N_{x_0}(\tilde{\ell})\setminus\mathcal N_{x_0}(\hat{\ell})\big),\ \mathcal E_3(\cdot,\Ab)=\mathcal E\big(\cdot,\Ab; \mathcal N_{x_0}(\tilde{\ell})^\complement\big),\]
where $\tilde{\ell}=(1+2\gamma)\ell$.	
	Hence, we get the following decomposition of the functional in~\eqref{eq:GL},
	\[\mathcal E_{\kappa,H}(\cdot,\Ab)=\mathcal E_1(\cdot,\Ab)+\mathcal E_2(\cdot,\Ab)+\mathcal E_3(\cdot,\Ab)\,.\]

	\paragraph{\itshape Step~2. Estimating $\mathcal E_1(u,\Ab)$}
	Using the approximation in~\eqref{eq:AF} for $\alpha=2/3$, the Cauchy-Schwarz inequality and the  uniform bound  $|v_0| \leq 1$, we may write 
	\begin{align}
    \mathcal E_1(u,\Ab)  & \leq (1+\kappa^{-\frac 12})\mathcal E_0\big(v_0,\Fb; \mathcal N_{x_0}(\hat{\ell})\big)+\kappa^{-\frac 12}\kappa^2\int_{\mathcal N_{x_0}(\hat{\ell})}|v_0|^2\,dx \nonumber\\
    &     \quad +\kappa^{\frac 12}\kappa^2H^2\int_{\mathcal N_{x_0}(\hat{\ell})}|\Ab-\nabla\phi_{x_0}-\Fb|^2|v_0|^2\,dx\,,\nonumber\\
    &                      \leq (1+\kappa^{-\frac 12})\mathcal E_0\big(v_0,\Fb; \mathcal N_{x_0}(\hat{\ell})\big)+C\,.\label{eq:E1_0}
	\end{align}
		But  by~\eqref{eq:uv}, we have $\mathcal E_0\big(v_0,\Fb; \mathcal N_{x_0}(\hat{\ell})\big)=\mathfrak G\big(u_0,\mathcal V(\hat{\ell})\big)$. Hence, we insert the upper bound in Lemma~\ref{lem:upper_G0} into~\eqref{eq:E1_0} to get
	\begin{equation}\label{eq:up_E1_u}
	\mathcal E_1(u,\Ab) \leq  b^{-\frac12}\kappa\hat\ell \mathfrak e_a(b)+C\kappa^{\frac 16}\,. 
	\end{equation}
	
	\paragraph{\itshape Step~3. Estimating $\mathcal E_2(u,\Ab)$}
	Notice that $u=\eta\psi$ with $0 \leq \eta \leq 1$ in $\mathcal N_{x_0}(\tilde{\ell})\setminus\mathcal N_{x_0}(\hat{\ell})$. Then, we do a straightforward computation, similar to the one done in the proof of~\eqref{eq:f3}, replacing $f$ by $\eta$ and $\mathcal N_{x_0}(\hat{\ell})$ by $\mathcal N_{x_0}(\tilde{\ell})\setminus\mathcal N_{x_0}(\hat{\ell})$. This gives the following relation between $\mathcal E_2(u,\Ab)$ and $\mathcal E_2(\psi,\Ab)$
	\begin{equation}\label{eq:up_E2}
	\mathcal E_2(u,\Ab) \leq \mathcal E_2(\psi,\Ab)+C\kappa^{\frac 3{16}}\,.
	\end{equation}	
	
	\paragraph{\itshape Step~4. Estimating $\mathcal E_1(\psi,\Ab)$}
Since $(\psi,\Ab)$ is a minimizer of the functional  $\mathcal E_{\kappa,H}$ defined in~\eqref{eq:GL}, we write
	\[\mathcal E_{\kappa,H}(\psi,\Ab) \leq \mathcal E_{\kappa,H}(u,\Ab)\,,\]
	that is
	\[\mathcal E_1(\psi,\Ab)+\mathcal E_2(\psi,\Ab)+\mathcal E_3(\psi,\Ab)\leq \mathcal E_1(u,\Ab)+\mathcal E_2(u,\Ab)+\mathcal E_3(u,\Ab)\,.\]
    Noticing that $\mathcal E_3(u,\Ab)=\mathcal E_3(\psi,\Ab)$, we get
    \[\mathcal E_1(\psi,\Ab)+\mathcal E_2(\psi,\Ab)\leq \mathcal E_1(u,\Ab)+\mathcal E_2(u,\Ab)\,.\]
We use the estimate of $\mathcal E_2(u,\Ab)$ in~\eqref{eq:up_E2} to get
	\[\mathcal E_1(\psi,\Ab) \leq \mathcal E_1(u,\Ab)+C\kappa^{\frac 3{16}}\,.\]
	We insert the upper bound of $\mathcal E_1(u,\Ab)$ in~\eqref{eq:up_E1_u} in the previous inequality to get
	\begin{equation}\label{eq:up_E1_psi}
	\mathcal E_1(\psi,\Ab) \leq b^{-\frac12}\kappa\hat\ell \mathfrak e_a(b)+C\kappa^{\frac 3{16}}\,.
	\end{equation}
Recalling that $\mathcal E_1(\psi,\Ab)=\mathcal E_1\big(\psi,\Ab;\mathcal N_{x_0}(\hat\ell)\big)$, 	we see that~\eqref{eq:up_E1_psi} is nothing but~\eqref{eq:Epsi_up_asymp} with $\hat\ell$ appearing instead of $\ell$. Starting the argument with $\ell$ replaced by $\check\ell=(1+\gamma)^{-1}\ell$, we get~\eqref{eq:up_E1_psi} for $\hat\ell=(1+\gamma)\check\ell=\ell$, as required. Therefore, we finished the proof of~\eqref{eq:Epsi_up_asymp}.
	
	\paragraph{\itshape Step~5. Lower bound of the $L^4$-norm of $\psi$}
	
	Consider the function $f$ defined in~\eqref{eq:f}. 
	We use the properties of this function, mainly that $f=1$ in $\mathcal N_{x_0}(\ell)$ and $0 \leq f \leq 1$ in $\Omega$,
	to obtain
	\begin{align*}
	\int_{\mathcal N_{x_0}(\hat{\ell})}f^2\Big(-1+\frac 12 f^2\Big)|\psi|^4\,dx &= -\frac 12\int_{\mathcal N_{x_0}(\ell)}|\psi|^4\,dx+\int_{\mathcal N_{x_0}(\hat{\ell})\setminus \mathcal N_{x_0}(\ell)}f^2\Big(-1+\frac 12 f^2\Big)|\psi|^4\,dx \\
	                                     &\geq -\frac 12\int_{\mathcal N_{x_0}(\ell)}|\psi|^4\,dx-\int_{\mathcal N_{x_0}(\hat{\ell})\setminus \mathcal N_{x_0}(\ell)}|\psi|^4\,dx\,. 
	\end{align*}
	Following an argument similar to the one for~\eqref{eq:1-f2}, we divide the set $\mathcal N_{x_0}(\hat{\ell})\setminus \mathcal N_{x_0}(\ell)$ into the two sets 
	$\left(\mathcal N_{x_0}(\hat{\ell})\setminus \mathcal N_{x_0}(\ell)\right)\cap \{|t(x)|\leq \gamma\ell\}$ and $\left(\mathcal N_{x_0}(\hat{\ell})\setminus \mathcal N_{x_0}(\ell)\right) \cap \{|t(x)|> \gamma\ell\}$, and we use this time the exponential decay of $|\psi|^4$ deduced from~Theorem~\ref{thm:decay} to get 
	\begin{equation}\label{eq:f2}
	\int_{\mathcal N_{x_0}(\hat{\ell})}f^2\Big(-1+\frac 12 f^2\Big)|\psi|^4\,dx \geq -\frac 12\int_{\mathcal N_{x_0}(\ell)}|\psi|^4\,dx-C\kappa^{-\frac{15}{8}}\,.
	\end{equation}
	Inserting~\eqref{eq:f2} into the identity in~\eqref{eq:f_psi_1} 
	gives us
	\[\mathcal E_1(f\psi,\Ab) \geq -\frac {\kappa^2}2\int_{\mathcal N_{x_0}(\ell)}|\psi|^4\,dx-C\kappa^\frac 18\,.\]
	The previous inequality together with~\eqref{eq:f3} and~\eqref{eq:up_E1_psi} establish the lower bound of the $L^4$-norm of $\psi$ as $\kappa \rightarrow +\infty$.
		\end{proof}

\subsection{Proof of Theorem~\ref{thm:E_loc1}}	Estimate~\eqref{eq:En_gamma} in Theorem~\ref{thm:E_loc1} is obtained by gathering results in~\eqref{eq:Epsi_low_asymp} and in~\eqref{eq:Epsi_up_asymp}, while Estimate~\eqref{eq:L4_gamma} follows from~\eqref{eq:psi4_up_asymp} and in~\eqref{eq:psi4_low_asymp}. 
\section{Surface Superconductivity}\label{sec:surf}
In Section 6, we worked under the assumption
\begin{equation*}
b>1/|a|,\qquad\ a \in[-1,0)\,.
\end{equation*}
We investigated the local behaviour of the sample in a tubular neighbourhood of $\Gamma$. In this section, and {\bfseries under the same assumption}, we are concerned in the local behaviour of the sample near the boundary of $\Omega$.

The analysis of superconductivity near $\partial \Om$ in our case of  a step  magnetic field ($B_0$ satisfying~\ref{assump}) is essentially  the same as that in the uniform  field case, since $B_0$ is constant in each of $\Om_1$ and $\Om_2$. Thereby, the results presented in this section are well-known in literature since the celebrated work of Saint-James and de\,Gennes~\cite{saint1963onset}. We refer to~\cite{almog2007distribution,Correggi ,fournais2005energy,helffer2011superconductivity,fournais2011nucleation,fournais2013ground,lu1999estimates,pan2002surface} for rigorous results in general 2D and 3D samples subjected to a constant magnetic field, and to~\cite{ning2009observation} for recent experimental results. Particularly, local surface estimates were recently established in~\cite{helffer2017decay}, when  $B_0 \in C^{0,\alpha}(\overline{\Omega})$ for some $\alpha \in (0,1)$. We will adapt these results to our discontinuous magnetic field  (see Theorem~\ref{thm:surf} below).

The statement of our main result,  Theorem~\ref{thm:surf}, involves the surface energy  $E_\mathrm{surf}$ that we  introduce in the next  section.

\subsection{The surface energy function}
Let $b>1$ and $R>0$. Consider the reduced Ginzburg--Landau
functional:
\[\mathcal W(U_R) \ni \phi \mapsto \mathcal E_{b,R}(\phi)=\int_{U_R} \left(b\left|(\nabla-i\Ab_0)\phi\right|^2-|\phi|^2+\frac 12|\phi|^4\right)\,d\gamma d\tau \,,\]
where $(\gamma,\tau) \in \R^2$, $\Ab_0(\gamma,\tau)=(-\tau,0)$, $U_R=\left(-R/2,R/2\right) \times (0,+\infty)$ and
\[\mathcal W(U_R)=\{u\in L^2(U_R)~:~(\nabla-i\Ab_0)u \in L^2(U_R),\,u(\pm R,\cdot)=0 \}\,.\]
Let $d(b,R)$ be the ground state energy defined by
\[d(b,R)=\inf_{\phi \in \mathcal W(U_R)}\mathcal E_{b,R}(\phi)\,.\]
Pan proved in~\cite{pan2002surface} the existence of a non-decreasing continuous function $E_\mathrm{surf}:[1,\Theta_0^{-1}] \rightarrow (-\infty,0]$ such that 
\begin{equation}\label{eq:Esurf}
E_\mathrm{surf}(b)=\lim_{R\rightarrow +\infty}\frac {d(b,R)}{R}\,,
\end{equation}
where $\Theta_0$ is defined in~\eqref{eq:theta1}. The surface energy $E_\mathrm{surf}$ has also been described by a 1D-problem (see~\cite{almog2007distribution,helffer2011superconductivity}). Recently,  Correggi--Rougerie~\cite{Correggi}  proved that for all $b\in(1,\Theta_0^{-1})$, $E_\mathrm{surf}=E^{1D}_b$, where $E^{1D}_b$ is the energy introduced in~\eqref{eq:E-1D*}.
One important property of the function $E_\mathrm{surf}$ is  (see~\cite{fournais2005energy})
\begin{equation}\label{eq:Esurf-prop}
E_\mathrm{surf}(\Theta_0^{-1})=0\ \mathrm{and}\ E_\mathrm{surf}(b)<0,\ \text{for all}\ b \in\left[1,\Theta_0^{-1}\right)\,.
\end{equation} 
This property allows us to extend the function $E_\mathrm{surf}$ continuously to $[1,+\infty)$, by setting it to zero on $[\Theta_0^{-1},+\infty)$. This extension of the surface energy  is still denoted by $E_\mathrm{surf}$ for simplicity.

\subsection{Local surface superconductivity}
Let $t_0>0$ and $j \in \{1,2\}$. We define the following set:
\begin{equation}\label{eq:Om0j}
\Omega_j(t_0)=\left\{ x \in \Omega_j~:~\dist\left(x,\partial \Omega_j \cap \partial \Omega\right)<t_0 \right\}\,.
\end{equation}
 Assume that $t_0$ is sufficiently small, then for any $x \in \Omega_j(t_0)$, there exists a unique point $p(x) \in \partial \Omega_j \cap \partial \Omega$ satisfying
\[\dist\left(x,\partial \Omega_j \cap \partial \Omega\right)=\dist\left(x,p(x)\right)\,.\]
Let $\ell\approx \kappa^{-3/4}$ be the parameter in~\eqref{eq:ell=kp}. Choose $x_0 \in \partial \Omega_j \cap \partial \Omega$ satisfying
\begin{equation}\label{eq:con-x0-1}
\mathrm{dist}(x_0,\Gamma)> 2\ell\,.
\end{equation}
We introduce the following small neighbourhood of $x_0$:
\begin{equation}\label{eq:N_j}
\mathcal N^j_{x_0}(\ell)=\{x \in \Omega_j~:~\dist_{\partial \Omega}\left(x_0,p(x)\right)< \frac \ell 2,\ \dist_\Omega\left(x,p(x)\right)< \ell\}\,.
\end{equation} 
The assumption on $x_0$ in~\eqref{eq:con-x0-1} guarantees that  $\mathcal N^j_{x_0}(\ell)$ and $\Gamma$ do not intersect, for sufficiently large $\kappa$ (hence sufficiently small $\ell$). Consequently,  the estimates in this section (and particularly in Theorem~\ref{thm:surf} below) hold uniformly with respect to the point $x_0$.

Recall the magnetic field $B_0$ defined in~\ref{assump} ($B_0={\mathbbm 1}_{\Omega_1}+a{\mathbbm 1}_{\Omega_2}$).
\begin{theorem}\label{thm:surf}
 Let $a \in [-1,0)$ and $b>1/|a|$ . There exists $\kappa_0>0$ and a function $ \breve{\mathfrak r}:[\kappa_0,+\infty)\rightarrow (0,+\infty)$ such that $\displaystyle\lim_{\kappa\to+\infty} \breve{\mathfrak r}(\kappa)=0$ and the following is true. For $\kappa\geq \kappa_0$, $H=b\kappa$, $\ell$ as in~\eqref{eq:ell=kp}, $j\in\{1,2\}$, $x_0 \in \partial \Omega_j \cap \partial \Omega$ satisfying~\eqref{eq:con-x0-1}, and every minimizer $(\psi, \Ab) \in H^1(\Om;\C)\times\Hd $ of the functional  in~\eqref{eq:GL}, we have
	\[\left|\mathcal E_0\left(\psi,\Ab;\mathcal N^j_{x_0}(\ell)\right)-b^{-\frac 12}|B_0(x_0)|^{-\frac 12}\kappa \ell E_\mathrm{surf}\left(b|B_0(x_0)|\right)\right|\leq \kappa \ell \breve{\mathfrak r}(\kappa)\,,\]
	and
	\[\left|\frac 1 {\ell}\int_{\mathcal N^j_{x_0}(\ell)}|\psi|^4\,dx+2b^{-\frac 12}|B_0(x_0)|^{-\frac 12}\kappa^{-1} E_\mathrm{surf}\left(b|B_0(x_0)|\right)\right|\leq  \kappa^{-1}\breve{\mathfrak r}(\kappa)\,,\]	
	where $\mathcal N^j_{x_0}(\cdot)$ is defined in~\eqref{eq:N_j}, and $\mathcal E_0$ is the local energy defined in~\eqref{eq:local0}.
	Furthermore, the function $\breve{\mathfrak r}$ is independent of the point $x_0$.
\end{theorem}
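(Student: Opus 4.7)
The plan is to mirror the strategy of Section~\ref{sec:local_est}, but with the uniform-field surface energy $E_\mathrm{surf}$ replacing the barrier energy $\mathfrak e_a$. The crucial observation is that, under~\eqref{eq:con-x0-1}, the neighbourhood $\mathcal N^j_{x_0}(\ell)$ lies entirely in $\Omega_j$ and is separated from $\Gamma$, so $B_0\equiv B_0(x_0)\in\{1,a\}$ is \emph{constant} on $\mathcal N^j_{x_0}(\ell)$. Hence the local problem reduces to surface superconductivity in a uniform magnetic field of strength $|B_0(x_0)|$, and the effective GL parameter is $b|B_0(x_0)|\in[1,+\infty)$.

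First, I would introduce boundary Frenet coordinates $(s,t)$ near $x_0$, where $s$ is arc length along $\partial\Omega$ and $t=\dist(x,\partial\Omega)$, as in~\cite[Appendix~F]{fournais2010spectral}. By Theorem~\ref{thm:priori} (item~4) and a gauge transformation analogous to Propositions~\ref{prop:Anew1}--\ref{prop:Anew2}, one replaces $\Ab$ by the canonical potential $\tilde{F}(s,t)=\bigl(-B_0(x_0)\bigl(t-\tfrac{t^2}{2}k(s)\bigr),0\bigr)$ with an error $\|\Ab-\nabla\varphi_{x_0}-\Fb\|_{L^\infty}\leq C\kappa^{-1}\ell^{\alpha}$, where $k$ is the curvature of $\partial\Omega$ at $x_0$. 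Define a reduced functional
\[
\mathfrak H\bigl(u;\mathcal V^\partial(\ell)\bigr)=\int_{\mathcal V^\partial(\ell)}\Bigl(\mathfrak a^{-2}|(\partial_s-i\kappa H\tilde F_1)u|^2+|\partial_t u|^2-\kappa^2|u|^2+\tfrac{\kappa^2}{2}|u|^4\Bigr)\mathfrak a\,ds\,dt,
\]
on $\mathcal V^\partial(\ell)=(-\ell/2,\ell/2)\times(0,\ell)$, paralleling~\eqref{eq:G_u}. After the rescaling $(\gamma,\tau)=\sqrt{\kappa H|B_0(x_0)|}\,(s,t)$ with $R=\ell\sqrt{\kappa H|B_0(x_0)|}$, the leading-order part of $\mathfrak H$ becomes $(b|B_0(x_0)|)^{-1}\mathcal E_{b|B_0(x_0)|,R}(\cdot)$, i.e.\ Pan's half-plane functional whose ground state energy satisfies $d(b|B_0(x_0)|,R)/R\to E_\mathrm{surf}(b|B_0(x_0)|)$ by~\eqref{eq:Esurf}.

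From here the two-sided matching is carried out exactly as in Lemmas~\ref{lem:lower_G0}--\ref{lem:upper_G0}: Cauchy's inequality absorbs the curvature correction $t^2k(s)/2$ into lower-order errors (using $|t|\leq\ell\approx\kappa^{-3/4}$ and the bound $|u|\leq 1$); the upper bound uses a suitably truncated minimizer of $\mathcal E_{b|B_0(x_0)|,R}$, relying on analogs of the decay estimates from Pan~\cite{pan2002surface} and Fournais--Helffer for the reduced problem. Then, as in Propositions~\ref{prop:L4_gamma_up}--\ref{prop:L4_gamma_low}, I deduce two-sided estimates for the true local energy $\mathcal E_0(\psi,\Ab;\mathcal N^j_{x_0}(\ell))$: the upper bound $\leq b^{-1/2}|B_0(x_0)|^{-1/2}\kappa\ell E_\mathrm{surf}(b|B_0(x_0)|)+o(\kappa\ell)$ comes from a Sandier--Serfaty-type test configuration $u(x)=\mathbbm{1}_{\mathcal N^j_{x_0}(\hat\ell)}e^{i\kappa H\phi_{x_0}}v_0+\eta\psi$ plugged into minimality, while the matching lower bound uses the cut-off $f\psi$ (with $f$ as in~\eqref{eq:f}) together with the identity $\int|(\nabla-i\kappa H\Ab)f\psi|^2-|\nabla f|^2|\psi|^2=\kappa^2\int(|\psi|^2-|\psi|^4)f^2$ from the first GL equation. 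The $L^4$-asymptotic in the second display follows from this identity combined with the energy asymptotic, as in the proof of~\eqref{eq:psi4_up_asymp}--\eqref{eq:psi4_low_asymp}.

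The main technical nuisance, rather than obstacle, is controlling the "inner" boundary of $\mathcal N^j_{x_0}(\ell)$ (the side facing the bulk of $\Omega_j$): the truncation by $f$ and $\eta$ introduces $|\nabla f|^2|\psi|^2$ terms supported there. Since $b>1/|a|\geq 1$ and the distance to $\partial\Omega_j$ on that part is $\gtrsim\gamma\ell\gg\kappa^{-1}$, Theorem~\ref{thm:decay} yields exponential smallness of these contributions, so they are absorbed into the error term $\breve{\mathfrak r}(\kappa)$. Uniformity of $\breve{\mathfrak r}$ in $x_0$ follows because all constants in the a priori estimates of Section~\ref{sec:a-p-est}, the Frenet framework, and the reduced-energy bounds depend only on $\Omega$, $a$, and $b$.
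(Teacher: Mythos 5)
Your proposal is correct and amounts to writing out in full the argument that the paper delegates to a citation: the paper's entire ``proof'' of Theorem~\ref{thm:surf} is the observation that the local surface estimates of~\cite{helffer2017decay} (stated there for $B_0\in C^{0,\alpha}(\overline\Omega)$) apply verbatim here because, under~\eqref{eq:con-x0-1}, the set $\mathcal N^j_{x_0}(\ell)$ lies in $\Omega_j$ where $B_0\equiv B_0(x_0)$ is constant. Your reconstruction — boundary Frenet coordinates, gauge fixing via Theorem~\ref{thm:priori}(4), rescaling to Pan's half-plane energy with effective parameter $b|B_0(x_0)|>1$, and the two-sided matching with cut-off errors absorbed by the Agmon decay of Theorem~\ref{thm:decay} — is exactly the scheme of Sections~\ref{sec:local_en}--\ref{sec:local_est} transplanted to the boundary, so it is consistent with both the cited proof and the paper's own treatment of the barrier.
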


The  estimates in Theorem~\ref{thm:surf} are already established in~\cite{helffer2017decay} when  $B_0 \in C^{0,\alpha}(\overline{\Omega})$ for some $\alpha \in (0,1)$. They still hold in our case because $B_0$ is constant in $\Omega_1$ and $\Omega_2$ (see~Assumption~\ref{assump}) by repeating the proof given in~\cite{helffer2017decay}. 

\section{Proof of Main Results}\label{sec:main}
\subsection{Proof of Theorem~\ref{thm:Eg}}
We will work under the assumptions of Theorem~\ref{thm:Eg} restricted to the non-trivial case
\[a \in[-1,0)\quad \mathrm{and}\quad b>1/|a|\,,\]
and we will gather the results of the two previous sections to establish, as $\kappa$ tends to $+\infty$, asymptotic estimates of the global ground state energy $\Es$ in~\eqref{eq:gr_st} and of the $L^4$-norm of the order parameter $\psi$, where $(\psi,\Ab)$ is a minimizer of this energy.
	\begin{figure}[h]
		\centering
		\includegraphics[scale=0.5]{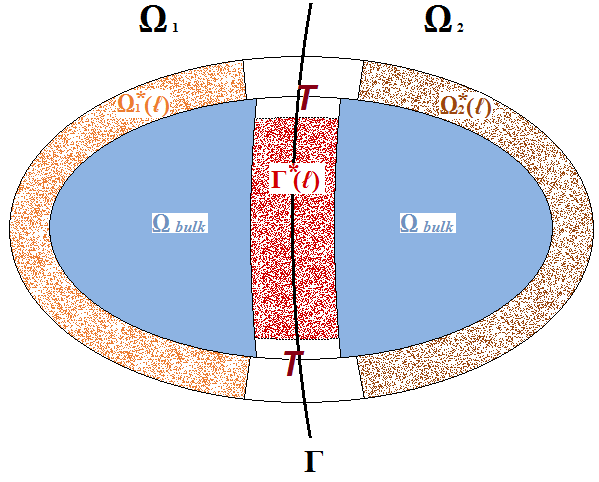} 
		\caption{Schematic representation of the sets $\Gamma^*(\ell)$, $\Omega_1^*(\ell)$, $\Omega_2^*(\ell)$, $\Omega_\text{bulk}$ and $T$, where $\ell \approx \kappa^{-3/4}$. In the regime where $a \in [-1,0)$, $H =b\kp$ and $1/|a|<b$, the blue region is in the  normal state, while the other regions  may carry superconductivity.}
		\label{fig:gamma_l}
	\end{figure}

 \subsubsection{ Energy lower bound.}
		Let $(\psi,\Ab)$ be a minimizer of~\eqref{eq:GL}. The definition of $\Es$ obviously  implies that
		\[\Es=\mathcal E_0\big(\psi,\Ab;\Omega\big) +\kp^2H^2\int_\Om\big|\curl\Ab-B_0\big|^2\,dx\geq \mathcal E_0\big(\psi,\Ab;\Omega\big)\,,\]
		where $\mathcal E_0$ is defined in~\eqref{eq:local0} and $B_0$ is defined in~\ref{assump}. Hence, it suffices to find a relevant lower bound of $\mathcal E_0\big(\psi,\Ab;\Omega\big)$. To that end, we will decompose the sample $\Omega$ into the sets 
		$\Gamma^*(\ell)$, $\Omega_1^*(\ell)$, $\Omega_2^*(\ell)$, $\Omega_\text{bulk}$ and $T$ introduced later in this section (see~Figure~\ref{fig:gamma_l}) and will establish a lower bound of the energy  $\mathcal E_0\big(\psi,\Ab;\cdot)$ in each of the decomposition sets.
		{\bfseries We assume $\ell$ to be the parameter in~\eqref{eq:ell=kp}} which satisfies $\ell\approx \kappa^{-3/4}$.

		\paragraph{\itshape Lower bound in a neighbourhood of the magnetic barrier} 
		We start by introducing the set $\Gamma^*(\ell)$ which covers almost all of the set $\Gamma$. Recall the assumption that $\Gamma$ consists of a finite collection of simple smooth curves that may intersect $\partial\Omega$ transversely. For the simplicity of the exposition, we will focus on the particular case of a single curve intersecting $\partial\Omega$ at two points. The construction below may be adjusted to cover the general case by considering every single component of $\Gamma$ separately.
We may select two constants $\ell_0\in(0,1)$ and $c>2$, and for all $\ell\in(0,\ell_0)$,   a collection of pairwise distinct  points $(x_i)_{i=1}^N\subset\Gamma$  such that, 
\begin{equation}\label{eq:l*}
(x_i)_{i=1}^N\subset \{u\in\Gamma~:~\mathrm{dist}(u,\partial\Omega)>2\ell\}\,,
\end{equation}
\begin{equation}\label{eq:I}
\forall~i\in\{1,...,N-1\},\quad  \dist_{\Gamma}(x_i,x_{i+1})=\ell\,,
		\end{equation}
	\begin{equation}\label{eq:NI}
		|\Gamma|\ell^{-1}-c\leq N \leq |\Gamma|\ell^{-1}\,,
		\end{equation}
		and 
		\begin{equation}\label{eq:gamma*}
		\{x\in\Omega~:~\mathrm{dist}(x,\Gamma)<\ell\,,\ \mathrm{dist}(x,\partial\Omega)>c\ell\}\subset \Gamma^*(\ell):=\left(\bigcup_{i=1}^N	\overline{\mathcal N_{x_i}(\ell)}\right)^{\circ}\,,
		\end{equation}
		where $\mathcal N_{x_i}(\ell)$ is the set introduced in~\eqref{eq:N0}. Note that the family $\left(\mathcal N_{x_i}(\ell)\right)_{1\leq i\leq N}$ consists of pairwise disjoint sets. 
		Consequently,
		\[ \mathcal E_0\big(\psi,\Ab;\Gamma^*(\ell)\big)=\sum_{i=1}^N\mathcal E_0\big(\psi,\Ab;\mathcal N_{x_i}(\ell)\big)\,.\]
		A uniform lower bound for the local energies $\Big(\mathcal E_0\big(\psi,\Ab;\mathcal N_{x_i}(\ell)\big)\Big)_{1\leq i \leq N}$ is already established in Theorem~\ref{thm:E_loc1}. Using the results of the aforementioned theorem, we write
		\begin{align}
		\mathcal E_0\big(\psi,\Ab;\Gamma^*(\ell)\big)
		&\geq N\left(b^{-\frac 12}\kappa\ell \mathfrak e_a(b)-\kappa\ell\mathfrak r(\kappa)\right)\,,\nonumber\\
		&\geq |\Gamma|b^{-\frac 12}\kappa \mathfrak e_a(b)-C\kappa\mathfrak r(\kappa)\,. \label{eq:Gamma_l}
		\end{align}
		The last inequality follows from~\eqref{eq:NI} and the fact that $\mathfrak e_a(b)\leq0$.

	\paragraph{\itshape Lower bound in a neighbourhood of the boundary} 
		Now, we define the two sets $\Omega_1^*(\ell)$ and $\Omega_2^*(\ell)$ which cover almost all of the set $\partial \Omega$. 
		In a similar fashion of the definition of $\Gamma^*(\ell)$, we fix $\ell_0\in(0,1)$ and $c>2$ and  
		we select collections of points $(y_j)_{j=1}^{N_1}\subset\partial \Om_1$ and $(z_k)_{k=1}^{N_2}\subset\partial \Om_2$ such that, 
		\begin{equation}\label{eq:yz}
		(y_j)_{j=1}^{N_1}\subset \{u\in\partial \Om_1~:~\mathrm{dist}(u,\Gamma)>2\ell\}\,,\qquad (z_k)_{k=1}^{N_2}\subset \{u\in\partial \Om_2~:~\mathrm{dist}(u,\Gamma)>2\ell\}\,,
		\end{equation}
		\begin{equation}\label{eq:JK}
		\forall~j\in\{1,...,N_1-1\},\quad  \dist_{\partial \Om_1}(y_j,y_{j+1})=\ell\,,\qquad \forall~k\in\{1,...,N_2-1\},\quad  \dist_{\partial \Om_2}(z_k,z_{k+1})=\ell\,,
		\end{equation}
		\begin{equation}\label{eq:N12}
		|\partial \Om_1|\ell^{-1}-c\leq N_1 \leq |\partial \Om_1|\ell^{-1}\,,\qquad 	|\partial \Om_2|\ell^{-1}-c\leq N_2 \leq |\partial \Om_2|\ell^{-1}\,,
		\end{equation}
		and 
		\begin{align}
		\{x\in\Omega~:~\mathrm{dist}(x,\partial \Om_1)<\ell\,,\ \mathrm{dist}(x,\Gamma)>c\ell\}\subset \Om_1^*(\ell)&:=\left(\bigcup_{j=1}^{N_1}	\overline{\mathcal N_{y_j}(\ell)}\right)^{\circ}\,, \label{eq:Om1*}\\
		\{x\in\Omega~:~\mathrm{dist}(x,\partial \Om_2)<\ell\,,\ \mathrm{dist}(x,\Gamma)>c\ell\}\subset \Om_2^*(\ell)&:= \left(\bigcup_{k=1}^{N_2}\overline{\mathcal N_{z_k}(\ell)}\right)^{\circ}
		\,. \label{eq:Om2*}
		\end{align}
		where $\mathcal N^1_{y_j}(\ell)$ and $\mathcal N^2_{z_k}(\ell)$  were defined in~\eqref{eq:N_j}. 
Hence following similar steps as in~\eqref{eq:Gamma_l}, we use the uniform lower bound in Theorem~\ref{thm:surf} together with the estimates in~\eqref{eq:N12} to get 
		\begin{equation}\label{eq:Om1_l}
		\mathcal E_0\big(\psi,\Ab;\Omega_1^*(\ell)\big)\geq |\partial\Omega_1 \cap \partial\Omega|b^{-\frac 12}\kappa E_\mathrm{surf}(b)-C\kappa\breve{\mathfrak r}(\kappa)\,, 
		\end{equation} 
		and 
		\begin{equation}\label{eq:Om2_l}
		\mathcal E_0\big(\psi,\Ab;\Omega_2^*(\ell)\big)\geq |\partial\Omega_2 \cap \partial\Omega|b^{-\frac 12}|a|^{-\frac12}\kappa E_\mathrm{surf}\big(b|a|\big)-C\kappa \breve{\mathfrak r}(\kappa)\,. 
	\end{equation}

\paragraph{\itshape Lower bound in the bulk} Next, we introduce the set  $\Omega_\text{bulk}$ representing the bulk of the sample. Let
\begin{equation}\label{eq:Om_bulk}
\Omega_\text{bulk}=\left\{x \in \Omega~:~\dist(x,\partial \Omega_1 \cup \partial \Omega_2) > \ell \right\}\,.
\end{equation}
Under our assumptions on $b$ in \eqref{eq:A3} and $\ell$ in \eqref{eq:ell=kp}, the exponential decay in Theorem~\ref{thm:decay} allows us to neglect the energy contribution in the bulk, and to particularly write
\begin{equation}\label{eq:bulk}
	|\mathcal E_0\big(\psi,\Ab;\Omega_\text{bulk}\big)|\leq C \hat{\mathfrak r}(\kappa)\,.
\end{equation}
where $\hat{\mathfrak r}$ is a real-valued function satisfying $\lim_{\kappa\rightarrow +\infty} \hat{\mathfrak r}(\kappa)=0$.

\paragraph{\itshape Lower bound in a neighbourhood of the $T$-zone} We finally introduce the remaining set in the decomposition of $\Omega$, the neighbourhood $T$ of  $\Gamma \cap \partial \Omega$
\[T=\Omega \setminus\overline {\left(\bigcup_{j=1}^2 \Omega^*_j(\ell)\cup \Gamma^*(\ell)\cup \Omega_\text{bulk} \right)}\,.\]
The definition of the sets $\Gamma^*(\ell)$, $\Omega_1^*(\ell)$, $\Omega_2^*(\ell)$ and $\Omega_\text{bulk}$ in~\eqref{eq:gamma*},~\eqref{eq:Om1*},~\eqref{eq:Om2*} and~\eqref{eq:Om_bulk} ensures that $|T|=\mathcal O(\ell^2)$ as $\ell \rightarrow 0$. This small size of $T$ together with $|\psi| \leq 1$ and the first item in Theorem~\ref{thm:priori} imply the following
\begin{align}
|\mathcal E_0\big(\psi,\Ab;T\big)|&\leq C\kappa^2\ell^2\,, \nonumber\\
                                  &\leq C\kappa^\frac 12\,. \label{eq:remain1}
\end{align}
We used the assumption $\ell\approx \kappa^{- 3/4}$ in the last inequality.

Now it is time to gather pieces, and to derive a lower bound for the global GL energy $ \mathcal E_0\big(\psi,\Ab;\Omega\big)$. So, we use the estimates  in~\eqref{eq:Gamma_l},~\eqref{eq:Om1_l},~\eqref{eq:Om2_l},~\eqref{eq:bulk} and~\eqref{eq:remain1} to conclude
\begin{align}
\mathcal E_0\big(\psi,\Ab;\Omega\big)&= \mathcal E_0\big(\psi,\Ab;\Gamma^*(\ell)\big)+\mathcal E_0\big(\psi,\Ab;\Omega_1^*(\ell)\big) +\mathcal E_0\big(\psi,\Ab;\Omega_2^*(\ell)\big) \nonumber\\
                                         	&\quad  +\mathcal E_0\big(\psi,\Ab;\Omega_\text{bulk}\big)+ \mathcal E_0\big(\psi,\Ab;T\big) \nonumber\\
                                           	& \geq |\Gamma|b^{-\frac 12}\kappa \mathfrak e_a(b)+|\partial \Omega_1 \cap \partial \Omega|b^{-\frac 12}\kappa E_\mathrm{surf}(b) \nonumber\\
                                           	&\quad+|\partial \Omega_2 \cap \partial \Omega|b^{-\frac 12}|a|^{-\frac 12}\kappa E_\mathrm{surf}\big(b|a|\big)-C\kappa\,\mathrm R(\kappa)\,.\label{eq:E0_ful_low}
\end{align}		
where 
$\mathrm R(\kappa)= \mathfrak r(\kappa)+ \breve{\mathfrak r}(\kappa)+ \hat{\mathfrak r}(\kappa)+\kappa^{-1/2}=o(1)$, when $\kappa \rightarrow +\infty$.
	
\subsubsection{Energy upper bound}

The upper bound of the ground state energy $\Es$ can be derived by the help of a suitable trial configuration.

In this section, {\bfseries we are still considering the parameter $\ell$ as in~\eqref{eq:ell=kp}}. Let $\Fb$ be the magnetic potential introduced in Lemma~\ref{A_1}. We define the function $w_\Gamma \in H^1(\Omega;\C)\cap H^1_0\big(\Gamma^*(\ell)\big)$ 
\[w_\Gamma(x)=\sum_{i=1}^N \mathbbm{1}_{\mathcal N_{x_i}(\ell)}(x) v_i(x)\,,\]
where $\Gamma^*(\ell)$ is the set defined in~\eqref{eq:gamma*}, $\mathcal N_{x_i}(\ell)$ is the set in~\eqref{eq:N0}, $v_i(x)=e^{i\kappa H\omega_{x_i}(x)}u_i \circ \Phi^{-1}(x)$, $\omega_{x_i}$ is the gauge transformation function defined in Proposition~\ref{prop:Anew2}, $\Phi$ is the coordinate transformation in~\eqref{Frenet}, $u_i$ is defined by $u_i(s,t)=u_0(s-s_i,t)$ where $(s_i,t_i)=\Phi^{-1}(x_i)$, and $u_0$ is the minimizer of the functional $\mathfrak G(\cdot,\mathcal V(\ell))$ defined in~\eqref{eq:G_u}.\\
From the definition of~$v_i$, we derive the following identity (see~\eqref{eq:uv})
\[\mathcal E_0\big(v_i,\Fb;\mathcal N_{x_i}(\ell)\big)=\mathfrak G\left(u_0,\mathcal V(\ell)\right)\,,\]
where $\mathcal E_0$ is the energy in~\eqref{eq:local0}.
We use this identity, the results in Lemma~\ref{lem:upper_G0},~\eqref{eq:NI} and~($\ell\approx \kappa^{- 3/4}$) to derive the following upper bound
\begin{align}
\mathcal E_0\big(w_\Gamma,\Fb;\Omega\big)
&=\sum_{i=1}^N\mathcal E_0\big(v_i,\Fb;\mathcal N_{x_i}(\ell)\big) \nonumber\\
&\leq  |\Gamma|b^{- 1/2}\kappa \mathfrak e_a(b)+C\kappa^\frac{11}{12} \,. \label{eq:gamma_up}
\end{align}
Similarly, for $j \in\{1,2\}$, using the results of Theorem~\ref{thm:surf}, one may define a function $w_j \in H^1(\Omega;\C)\cap H^1_0\big(\Omega^*_j(\ell)\big)$  satisfying
\begin{align}
\mathcal E_0\big(w_1,\Fb;\Omega^*_1(\ell))&\leq |\partial \Omega_1 \cap \partial \Omega|b^{-\frac 12}\kappa E_\mathrm{surf}(b)+C\kappa\mathfrak r_1(\kappa)\,,\nonumber\\
\mathcal E_0\big(w_2,\Fb;\Omega^*_2(\ell))&\leq |\partial \Omega_2 \cap \partial \Omega|b^{-\frac 12}|a|^{-\frac 12}\kappa E_\mathrm{surf}\big(b|a|\big)+C\kappa\mathfrak r_2(\kappa)\,,  \label{eq:Omi_up}
\end{align}
where  $\Omega^*_j(\ell)$ is defined in~\eqref{eq:Om1*} and~\eqref{eq:Om2*}, and $\mathfrak r_j$ is a real-valued function tending to zero when $\kappa \rightarrow +\infty$.
Now, we define the trial function
\[w(x)= \mathbbm{1}_{\Gamma^*(\ell)}(x) w_\Gamma(x)+\mathbbm{1}_{\Omega^*_1(\ell)}(x)w_1(x)+\mathbbm{1}_{\Omega^*_2(\ell)}(x)w_2(x)\,,\]
Gathering results in~\eqref{eq:gamma_up} and~\eqref{eq:Omi_up}, we get
\begin{align*}
\mathcal E\big(w,\Fb;\Omega\big)&=\mathcal E_0\big(w,\Fb;\Omega\big)\\
                                     &=\mathcal E_0\big(w_\Gamma,\Fb;\Gamma^*(\ell)\big)+\mathcal E_0\big(w_1,\Fb;\Omega^*_1(\ell)\big)+\mathcal E_0\big(w_2,\Fb;\Omega^*_2(\ell)\big)\\
                                     &\leq  |\Gamma|b^{-\frac 12}\kappa \mathfrak e_a(b)+|\partial \Omega_1 \cap \partial \Omega|b^{-\frac 12}\kappa E_\mathrm{surf}(b)\\
                                     &\quad +|\partial \Omega_2 \cap \partial \Omega|b^{-\frac 12}|a|^{-\frac 12}\kappa E_\mathrm{surf}\big(b|a|\big)+C\kappa \tilde{\mathsf R}(\kappa)\,,
\end{align*}
where 
$\tilde {\mathsf R}(\kappa)= \mathfrak r_1(\kappa)+ \mathfrak r_2(\kappa)+\kappa^{-1/12}=o(1)$
, when $\kappa \rightarrow +\infty$.
We finally derive the upper bound in~\eqref{eq:Eg} from the fact that $\Es \leq \mathcal E(w,\Fb;\Omega)$.
 \subsubsection{$L^4$-norm asymptotics}
In light of~\eqref{eq:Euler}, any minimizing configuration $(\psi,\Ab)$ of the functional in~\eqref{eq:GL} satisfies
\begin{equation}\label{eq:Euler1}
\big(\kn\big)^2\psi=\kp^2(|\psi|^2-1)\psi \ \mathrm{in}\ \Om\,.
\end{equation}
We multiply both sides of~\eqref{eq:Euler1} by $\overline \psi$ and integrate by parts, using the boundary condition in~\eqref{eq:Euler}. We get
\begin{equation}\label{eq:Euler2}
\mathcal E_0(\psi,\Ab;\Omega)=-\frac 12 \kappa^2\int_{\Omega}|\psi|^4\,dx\,.
\end{equation}
Having 
\[\mathcal E_0(\psi,\Ab;\Omega)\leq \Es\,,\]
the lower bound in~\eqref{eq:psi4} follows from~\eqref{eq:Euler2} and~\eqref{eq:Eg}. The upper bound is derived from~\eqref{eq:Euler2} and~\eqref{eq:E0_ful_low}. 

\subsection{Proof of Theorem~\ref{thm:E_loc}} The proof of Theorem~\ref{thm:E_loc} follows from the local estimates stated in Theorems~\ref{thm:E_loc1} and~\ref{thm:surf} together with the decay result in Theorem~\ref{thm:decay}.
 
\appendix 
\section{Some Spectral Properties of Fiber Operators}\label{sec:}
Let $a \in [-1,1)\setminus\{0\}$ and $\xi \in \R$.
Recall the operator $\mathfrak h_a[\xi]$ introduced in~\eqref{eq:potential} and its associated quadratic form $q_a[\xi]$ defined in~\eqref{eq:quad}. The embedding of the domain of $q_a[\xi]$ is compact in $L^2(\R)$, hence the spectrum of $\mathfrak h_a[\xi]$  is an increasing sequence of eigenvalues converging to $+\infty$. 
The first eigenvalue $\mu_a(\xi)$ of this operator was defined in~\eqref{mu_a_1} by the min-max principle.

The result in the following proposition may be derived similarly as done in~\cite[Section~3.2.1]{fournais2010spectral}:
\begin{proposition}\label{mu_simple}
The first eigenvalue $\mu_a(\xi)$ of $\mathfrak h_a[\xi]$ is simple. Furthermore, there exists a positive eigenfunction $f_{a,\xi}$ normalized with respect to the norm of $\|\cdot\|_{L^2(\R)}$. $f_{a,\xi}$ is the unique function satisfying such properties.
\end{proposition}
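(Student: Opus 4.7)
The plan is to follow the classical pattern for one-dimensional Schr\"odinger operators on $\R$ with confining potential, adapted to our transmission setting at $t=0$. The three claims (simplicity, existence of a positive normalized eigenfunction, uniqueness of such a function) will be handled in this order: first positivity, then simplicity via a standard linear-combination trick, and finally uniqueness as a direct consequence.

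\emph{Step 1 (positivity).} Since $V_a(\xi,\cdot)$ is real, non-negative, locally bounded and tends to $+\infty$ at $\pm\infty$, the form $q_a[\xi]$ is real and its minimum $\mu_a(\xi)$ is attained on the compact-embedding form domain $B^1(\R)$. Any minimizer can be chosen real (replace $f$ by $\mathrm{Re}\, f$ or $\mathrm{Im}\, f$). If $f$ is a real minimizer, then $|f|\in B^1(\R)$ with $q_a[\xi](|f|)=q_a[\xi](f)$ and $\||f|\|_{L^2}=\|f\|_{L^2}$, so $|f|$ is also a minimizer, hence a weak solution of $\mathfrak h_a[\xi]u=\mu_a(\xi)u$. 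On each half-line $\R_{\pm}$ the potential is smooth, so elliptic regularity yields $|f|\in C^2(\R\setminus\{0\})\cap C^1(\R)$ with $|f|'(0_+)=|f|'(0_-)$. The strong maximum principle applied on each half-line to the non-negative solution $|f|$ of the linear ODE $-u''+(V_a(\xi,\cdot)-\mu_a(\xi))u=0$ forces either $|f|\equiv 0$ on that half-line or $|f|>0$ there. Using the continuity of $|f|$ and of $|f|'$ at $0$, a zero on one half-line would propagate to the other by the ODE uniqueness theorem and kill $|f|$ entirely, which is impossible for an eigenfunction. Therefore $f_{a,\xi}:=|f|>0$ on all of $\R$.

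\emph{Step 2 (simplicity).} Suppose, for contradiction, that the eigenspace $E_{\mu_a(\xi)}$ is at least two-dimensional. Pick two linearly independent real eigenfunctions $f$ and $g$; by Step~1 applied to each, we may assume $f>0$ and $g>0$ on $\R$. Choose $\alpha,\beta\in\R$, not both zero, such that the eigenfunction $h=\alpha f+\beta g$ satisfies $h(0)=0$. Since $f(0),g(0)>0$, this fixes the ratio $\alpha/\beta$, and by linear independence $h\not\equiv 0$. Apply Step~1 to $h$: the function $|h|$ is then a non-negative eigenfunction in $E_{\mu_a(\xi)}$ which vanishes at $0$, contradicting strict positivity of any non-trivial eigenfunction established in Step~1. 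Hence $E_{\mu_a(\xi)}$ is one-dimensional.

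\emph{Step 3 (uniqueness).} By Step~2 the eigenspace is spanned by $f_{a,\xi}$. Any $L^2$-normalized eigenfunction is therefore of the form $e^{i\theta}f_{a,\xi}/\|f_{a,\xi}\|_{L^2}$ with $\theta\in\R$; imposing positivity forces $e^{i\theta}=1$, which yields the asserted uniqueness once $f_{a,\xi}$ is rescaled to have unit $L^2$-norm.

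The only real subtlety is the matching condition at $t=0$: one must check that the strong maximum principle and ODE uniqueness used in Steps~1 and 2 are compatible with the $C^1$-transmission $u'(0_+)=u'(0_-)$ encoded in $\mathrm{Dom}(\mathfrak h_a[\xi])$. This is routine because the coefficients of the ODE, though not smooth at $0$, are bounded, so the standard existence/uniqueness theorem for linear ODEs and the Hopf-type boundary lemma still apply on each half-line, while the $C^1$ matching at $0$ glues the two pictures together without loss.
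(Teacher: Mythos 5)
Your proposal is correct and is essentially the same argument the paper relies on: the paper gives no written proof of Proposition~\ref{mu_simple} but delegates it to the classical ground-state analysis in~\cite[Section~3.2.1]{fournais2010spectral}, which is exactly the positivity-of-$|f|$ plus linear-combination-vanishing-at-a-point scheme you carry out, with the transmission condition $u'(0_+)=u'(0_-)$ handled by one-dimensional ODE uniqueness as you note.
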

\textbf{Notation.} For the positive eigenfunction $f_{a,\xi}$ defined in the previous proposition, we associate the de\,Gennes parameter:
\begin{equation}\label{eq:gamma_a}
\gamma_a(\xi)= \left (\frac{f'_{a,\xi}}{f_{a,\xi}}  \right)(0_+)=\left ( \frac{f'_{a,\xi}}{f_{a,\xi}}  \right)(0_{-})
\end{equation}

The next theorem calls some regularity properties: 
\begin{theorem}\label{thm:regularity}
The functions $\xi\mapsto \mu_a(\xi)$, $\xi\mapsto f_{a,\xi}$, and $\xi\mapsto \gamma_a(\xi)$ are in $C^\infty$\,.
\end{theorem}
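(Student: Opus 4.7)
The plan is to exhibit $(\mathfrak h_a[\xi])_{\xi \in \mathbb C}$ as a self-adjoint analytic family of type (B) in Kato's sense, deduce analyticity of $\mu_a$ and of a local branch of eigenfunctions from standard perturbation theory, and finally bootstrap via the eigenvalue equation to gain the pointwise control needed for $\gamma_a$.

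As a first step, I would expand $V_a(\xi,t) = \xi^2 + 2\xi\,\mathsf s(t)\,t + \mathsf s(t)^2 t^2$ with $\mathsf s(t) = \mathbbm 1_{\mathbb R_+}(t) + a\,\mathbbm 1_{\mathbb R_-}(t)$ to rewrite
\[q_a[\xi](u) = \int_{\mathbb R} \bigl(|u'|^2 + \mathsf s(t)^2 t^2 |u|^2\bigr)\,dt + 2\xi \int_{\mathbb R} \mathsf s(t)\, t\, |u|^2\,dt + \xi^2 \|u\|^2_{L^2(\mathbb R)},\]
which is a polynomial (hence entire) function of $\xi \in \mathbb C$ on the $\xi$-independent form domain $B^1(\mathbb R)$. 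Since the potential is confining at infinity, the embedding $B^1(\mathbb R) \hookrightarrow L^2(\mathbb R)$ is compact and the spectrum of $\mathfrak h_a[\xi]$ is purely discrete. Proposition~\ref{mu_simple} ensures that $\mu_a(\xi)$ is simple for every $\xi\in\mathbb R$, so it is isolated in the spectrum. Kato's analytic perturbation theorem then yields, locally around each $\xi_0 \in \mathbb R$, an analytic extension of $\mu_a$ and an analytic family $\xi \mapsto \widetilde f_\xi \in B^1(\mathbb R)$ satisfying $\mathfrak h_a[\xi] \widetilde f_\xi = \mu_a(\xi)\,\widetilde f_\xi$. Because $\mathfrak h_a[\xi]$ commutes with complex conjugation for real $\xi$, one may choose $\widetilde f_\xi$ real-valued locally; dividing by the analytic quantity $\|\widetilde f_\xi\|_{L^2(\mathbb R)}$ and selecting the sign to match the positivity convention of Proposition~\ref{mu_simple} produces $f_{a,\xi}$ as an analytic function of $\xi$ with values in $B^1(\mathbb R)$, and in particular as a $C^\infty$ function.

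To show $\gamma_a \in C^\infty$ I will bootstrap the above regularity using the pointwise identity
\[f''_{a,\xi} = \bigl(V_a(\xi,\cdot) - \mu_a(\xi)\bigr) f_{a,\xi},\]
whose right-hand side is analytic as a map $\xi \mapsto L^2_\mathrm{loc}(\mathbb R)$ (the coefficient is polynomial in $\xi$ with $L^\infty_\mathrm{loc}$ data, $\mu_a$ is analytic by the previous step, and $f_{a,\xi}$ is analytic into $L^2_\mathrm{loc}$). This promotes $\xi \mapsto f_{a,\xi}$ to an analytic map into $H^2([-1,1])$, and the one-dimensional Sobolev embedding $H^2([-1,1]) \hookrightarrow C^{1,1/2}([-1,1])$ shows that the traces $f_{a,\xi}(0)$ and $f'_{a,\xi}(0_\pm)$ depend analytically on $\xi$. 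Since $f_{a,\xi} > 0$ near $0$, the quotient $\gamma_a(\xi) = f'_{a,\xi}(0_+)/f_{a,\xi}(0)$ is well defined and analytic in $\xi$, whence $C^\infty$.

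The main obstacle I expect is precisely this last step: Kato's abstract theorem delivers regularity only in form- or operator-domain topologies, in which pointwise evaluation of $f'$ at the interface $t = 0$ is not continuous. Using the eigenvalue equation as a regularity bootstrap from $B^1(\mathbb R)$ to $H^2_\mathrm{loc}(\mathbb R)$ is what unlocks the trace theorem needed for $\gamma_a$; everything else reduces to checking Kato's hypotheses for the explicit family $\mathfrak h_a[\xi]$ written above.
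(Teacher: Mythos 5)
Your proposal is correct and follows essentially the same route as the paper, which likewise invokes Kato's analytic perturbation theory for the family $(\mathfrak h_a[\xi])_\xi$ with $\xi$-independent domain (the paper verifies type (A) through the operator domain, you verify type (B) through the form; both work since $q_a[\xi]$ is polynomial in $\xi$ and $|\mathsf s(t)\,t|$ is form-bounded by $\mathsf s(t)^2t^2$). Your explicit bootstrap through the eigenvalue equation to obtain analyticity of $\xi\mapsto f_{a,\xi}$ into $H^2([-1,1])$, and hence of the traces $f_{a,\xi}(0)$ and $f'_{a,\xi}(0_\pm)$ entering $\gamma_a$, supplies a detail the paper leaves implicit but which is exactly what is needed to conclude.
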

\begin{proof}
	Note that the domain of the operator  $\mathfrak h_a[\xi]$ can be expressed, independently of $\xi$, by
	\[\dom\big(\mathfrak h_a[\xi]\big)=\left\{u\in B^1(\R)~:~\Big(-\frac{d^2}{dt^2}+\mathsf s^2t^2\Big)u \in L^2(\R)\,,~u'(0_+)=u'(0_-)\right\}\,,\]
	where $\mathsf s(t)=\mathbbm{1}_{\R_+}(t)+a\mathbbm{1}_{\R_-}(t)$. 
	Also, given a function $u\in \dom\big(\mathfrak h_a[\xi]\big)$, the function
	\[\R\ni\xi\mapsto \mathfrak h_a[\xi](u)\]
	is holomorphic.
	Hence, $\Big( \mathfrak h_a[\xi]\Big)_{\xi}$ is a self-adjoint holomorphic family of type (A) with compact resolvent defined for $\xi \in \R$. The regularity results follow then from the perturbation theory of Kato (see~\cite[Theorem C.2.2]{fournais2010spectral}).
\end{proof}

Next, we establish some bounds of $\mu_a(\xi)$ involving the first Neumann and Dirichlet eigenvalues $\mu^N(\cdot)$ and $\mu^D(\cdot)$ respectively defined in~\eqref{eq:mu_n} and~\eqref{eq:mu_D}. These bounds are used to set down important limits in Proposition~\ref{prop:mu_a_lim}.
\begin{lemma}\label{lem:mu_a_bound}
Let $a \in [-1,1)\setminus\{0\}$. It holds
\begin{itemize}
\item If $a \in (0,1)$, then 
\[\min \left(\mu^N(-\xi),\ a\mu^N\Big(\frac \xi{\sqrt{a}}\Big)\right)\leq \mu_a(\xi) \leq \min \left(\mu^D(-\xi),\ a\mu^D\Big(\frac \xi{\sqrt{a}}\Big)\right)\,.\]
\item If $a \in [-1,0)$, then 
\[\min \left(\mu^N(-\xi),\ |a|\mu^N\Big(-\frac \xi{\sqrt{|a|}}\Big)\right)\leq \mu_a(\xi) \leq \min \left(\mu^D(-\xi),\ |a|\mu^D\Big(-\frac \xi{\sqrt{|a|}}\Big)\right)\,.\]
\end{itemize}
\end{lemma}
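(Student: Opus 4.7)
The strategy is to compare $\mathfrak h_a[\xi]$ with the decoupled half-line problems on $\R_+$ and $\R_-$, obtaining Neumann eigenvalues as lower bounds (by enlarging the form domain) and Dirichlet eigenvalues as upper bounds (by restricting the class of test functions). The two inequalities on each half-line are then linked to the standard operators $H^N[\cdot]$ and $H^D[\cdot]$ via an affine change of variable.

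\textbf{Step 1 (Half-line models).} On $\R_+$, the restriction of $V_a(\xi,\cdot)$ is $(t+\xi)^2 = (t-(-\xi))^2$, so the Neumann (resp.\ Dirichlet) realization of $-d^2/dt^2+V_a(\xi,t)$ on $\R_+$ is unitarily equivalent to $H^N[-\xi]$ (resp.\ $H^D[-\xi]$), with the same first eigenvalue $\mu^N(-\xi)$ (resp.\ $\mu^D(-\xi)$). On $\R_-$, I will use the change of variable $s=-t$ to map $\R_-$ onto $\R_+$ and then a rescaling $\tau=\sqrt{|a|}\,s$ to bring $-d^2/ds^2+(\xi+at)^2$ into the standard form $-d^2/d\tau^2+(\tau-\eta)^2$ on $L^2(\R_+)$. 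A direct computation of the quadratic form and $L^2$-norm under these substitutions gives the ground state energy
\[
\begin{cases}
a\,\mu^N(\xi/\sqrt{a}) \text{ (resp.\ }a\,\mu^D(\xi/\sqrt{a})\text{)}, & a\in(0,1),\\
|a|\,\mu^N(-\xi/\sqrt{|a|}) \text{ (resp.\ }|a|\,\mu^D(-\xi/\sqrt{|a|})\text{)}, & a\in[-1,0),
\end{cases}
\]
for the Neumann (resp.\ Dirichlet) realization on $\R_-$.

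\textbf{Step 2 (Lower bound via Neumann decoupling).} The form domain $B^1(\R)$ of $q_a[\xi]$ (see~\eqref{eq:dom_q}) embeds naturally into $B^1(\R_+)\oplus B^1(\R_-)$ by restriction. For any $u\in B^1(\R)$, splitting the integral defining $q_a[\xi](u)$ in~\eqref{eq:quad} at $t=0$ yields
\[
q_a[\xi](u)=q_+[\xi](u|_{\R_+})+q_-[\xi](u|_{\R_-}),
\]
where $q_\pm[\xi]$ are the Neumann quadratic forms on the two half-lines (no boundary term at $0$ arises because there is no Robin coefficient in~\eqref{eq:quad}). Applying the min–max principle on each half-line and using Step~1, I obtain
\[
q_a[\xi](u)\geq \lambda_+\|u|_{\R_+}\|^2+\lambda_-\|u|_{\R_-}\|^2\geq \min(\lambda_+,\lambda_-)\,\|u\|_{L^2(\R)}^2,
\]
with $\lambda_+=\mu^N(-\xi)$ and $\lambda_-$ the Neumann ground state energy on $\R_-$ identified in Step~1. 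Dividing by $\|u\|^2$ and infimising over $u\in B^1(\R)$ gives the required lower bounds.

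\textbf{Step 3 (Upper bound via Dirichlet trial states).} Let $\varphi_+\in B^1(\R_+)\cap H^1_0(\R_+)$ be a Dirichlet ground state on $\R_+$ of $-d^2/dt^2+(t+\xi)^2$. Its extension by $0$ to $\R$ lies in $B^1(\R)$ (continuity at $0$ holds with value $0$), hence is an admissible trial function for the min–max characterisation~\eqref{mu_a_1}. The form evaluated on this extension equals the Dirichlet half-line form, so
\[
\mu_a(\xi)\leq \frac{q_a[\xi](\tilde\varphi_+)}{\|\tilde\varphi_+\|^2}=\mu^D(-\xi).
\]
Repeating the argument with a Dirichlet ground state on $\R_-$ (for $-d^2/dt^2+(\xi+at)^2$), extended by $0$ to $\R_+$, and using the rescaling of Step~1 to identify its eigenvalue, produces the second upper bound $a\,\mu^D(\xi/\sqrt{a})$ or $|a|\,\mu^D(-\xi/\sqrt{|a|})$, respectively. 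Taking the minimum of the two gives the stated bounds.

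The only non-routine point is bookkeeping in the affine change of variables on $\R_-$ (signs depending on the sign of $a$, and correctly tracking the Jacobian of the rescaling $\tau=\sqrt{|a|}\,s$ in both the quadratic form and the $L^2$-norm); once the rescaling is written down carefully, the bounds follow immediately from the half-line min–max principle.
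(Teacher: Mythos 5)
Your proof is correct; the paper gives no argument of its own for this lemma, referring instead to \cite[Lemma 3.2.2]{Kachmar}, and the Dirichlet--Neumann bracketing you carry out (decoupling the form at $t=0$ onto the two half-lines for the lower bound, zero-extension of Dirichlet ground states as trial functions for the upper bound, together with the affine rescaling $\tau=\sqrt{|a|}\,s$ on $\R_-$ whose sign bookkeeping you have done correctly in both cases) is precisely the standard argument that reference employs. No gaps.
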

\begin{proof} The proof is similar to the one done in~\cite[Lemma 3.2.2]{Kachmar}. \end{proof}
\begin{proposition}\label{prop:mu_a_lim}
Let $a \in [-1,1)\setminus\{0\}$. We have
\begin{itemize}
\item For  $a \in (0,1)$, 
\[
\lim_{\xi\rightarrow-\infty}\mu_a(\xi)= 1 \quad \mathrm{and}\quad  \lim_{\xi\rightarrow+\infty}\mu_a(\xi)=a\,.
\]
\item For $a \in [-1,0)$, 
\[
 \lim_{\xi\rightarrow-\infty}\mu_a(\xi)= |a|  \quad \mathrm{and}\quad \lim_{\xi\rightarrow+\infty}\mu_a(\xi)=+\infty\,.
\]
\end{itemize}
\end{proposition}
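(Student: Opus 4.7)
The strategy is to exploit the two-sided bounds for $\mu_a(\xi)$ provided by Lemma~\ref{lem:mu_a_bound} and push the parameter $\xi$ to $\pm\infty$, using the known limits of the Neumann eigenvalue $\mu^N(\xi)$ (given by Theorem~\ref{thm:sturm} with $\gamma=0$) and the Dirichlet eigenvalue $\mu^D(\xi)$ (given by Theorem~\ref{thm:dirichlet}). Recall that both functions share the limiting behaviour
\[\lim_{\eta\to -\infty}\mu^N(\eta)=\lim_{\eta\to -\infty}\mu^D(\eta)=+\infty,\qquad \lim_{\eta\to +\infty}\mu^N(\eta)=\lim_{\eta\to +\infty}\mu^D(\eta)=1,\]
so the sandwich from Lemma~\ref{lem:mu_a_bound} pins the limit of $\mu_a(\xi)$ to a unique value in each of the four cases.

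\textbf{Case $a\in(0,1)$, $\xi\to -\infty$.} Here $-\xi\to+\infty$ while $\xi/\sqrt{a}\to -\infty$. The lower bound in Lemma~\ref{lem:mu_a_bound} gives
\[\mu_a(\xi)\ge \min\!\left(\mu^N(-\xi),\,a\,\mu^N\!\left(\tfrac{\xi}{\sqrt{a}}\right)\right)\xrightarrow[\xi\to -\infty]{} \min(1,+\infty)=1,\]
while the matching upper bound gives the same limit $\min(1,+\infty)=1$. For $\xi\to +\infty$, the roles swap: $-\xi\to-\infty$ so $\mu^N(-\xi),\mu^D(-\xi)\to+\infty$, while $\xi/\sqrt{a}\to+\infty$ so $\mu^N(\xi/\sqrt a),\mu^D(\xi/\sqrt a)\to 1$; thus both sides of the sandwich tend to $\min(+\infty,a)=a$.

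\textbf{Case $a\in[-1,0)$, $\xi\to -\infty$.} Now $-\xi\to+\infty$ and also $-\xi/\sqrt{|a|}\to+\infty$. By the same reasoning, both Neumann quantities tend to $1$ and $|a|$ respectively, and similarly for the Dirichlet ones. Since $|a|\le 1$, both sides of Lemma~\ref{lem:mu_a_bound} converge to $\min(1,|a|)=|a|$. For $\xi\to+\infty$, both $-\xi$ and $-\xi/\sqrt{|a|}$ tend to $-\infty$, so all four quantities $\mu^N(-\xi)$, $|a|\mu^N(-\xi/\sqrt{|a|})$, $\mu^D(-\xi)$, $|a|\mu^D(-\xi/\sqrt{|a|})$ tend to $+\infty$, forcing $\mu_a(\xi)\to+\infty$.

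\textbf{Main obstacle.} There is no substantial obstacle once Lemma~\ref{lem:mu_a_bound} is in hand; the proof reduces to bookkeeping the four limit regimes of $\mu^N$ and $\mu^D$. The only subtle point is that the relevant $\min$ at each limit must be read off correctly (using $|a|\le 1$ in the negative-$a$ case to identify which of the two terms dominates), and one must invoke the standard limiting values for both the Dirichlet and the Neumann realizations to conclude that the two-sided bound is sharp in the limit rather than merely providing a finite cluster point.
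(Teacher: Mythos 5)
Your proposal is correct and follows exactly the paper's own route: the paper's proof is the one-line instruction to combine the sandwich of Lemma~\ref{lem:mu_a_bound} with the limiting values of $\mu^N$ and $\mu^D$ from Theorems~\ref{thm:sturm} and~\ref{thm:dirichlet}, which is precisely the bookkeeping you carry out. Your case-by-case verification, including the observation that $|a|\leq 1$ selects the correct term in the minimum for $a\in[-1,0)$, is a faithful (and more detailed) version of the intended argument.
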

\begin{proof}

It is sufficient to apply Lemma~\ref{lem:mu_a_bound}, and Theorems~\ref{thm:dirichlet} and~\ref{thm:sturm}.
\end{proof}
We conclude this section by determining the derivative of $\mu_a(\xi)$ with respect to $\xi$. Note that we are interested in establishing the result only in the case where $a \in [-1,0)$\,.
\begin{proposition}\label{prop:mu_deriv}
For any $a\in[-1,0)$ and $\xi \in \R$ we have 
\begin{equation}\label{eq:mu_deriv}
\partial_\xi \mu_a(\xi)=\left(1-\frac1a\right)\left (\gamma^2_a(\xi)+\mu_a(\xi)-\xi^2 \right)|f_{a,\xi}(0)|^2\,.
\end{equation}
where $f_{a,\xi}$ is the eigenfunction in Proposition~\ref{mu_simple}, and $\gamma_a(\xi)$ as in~\eqref{eq:gamma_a}.
\end{proposition}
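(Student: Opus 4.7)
The strategy is the standard Feynman--Hellmann route, followed by a two-stage integration by parts on each half-line that converts the potential integral into boundary data at $t=0$.

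\textbf{Step 1 (Feynman--Hellmann identity).} Since $(\mathfrak h_a[\xi])_\xi$ is a self-adjoint holomorphic family of type (A) with compact resolvent (Theorem~\ref{thm:regularity}) and $\mu_a(\xi)$ is simple (Proposition~\ref{mu_simple}), both $\mu_a(\xi)$ and the normalized positive eigenfunction $f=f_{a,\xi}$ depend smoothly on $\xi$. Differentiating the eigenvalue equation $-f''+V_a(\xi,\cdot)f=\mu_a(\xi)f$ with respect to $\xi$, pairing with $f$, using self-adjointness of $\mathfrak h_a[\xi]$ on $\dom(\mathfrak h_a[\xi])$, and $\|f\|_{L^2(\R)}=1$, yields
\[\partial_\xi \mu_a(\xi)=\int_{\R} (\partial_\xi V_a)(\xi,t)\,f(t)^2\,dt=2\int_{-\infty}^0(\xi+at)f^2\,dt+2\int_0^{+\infty}(\xi+t)f^2\,dt.\]

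\textbf{Step 2 (Reducing each half-line integral to boundary data).} Observe that, on each half-line, $\partial_\xi V_a$ is proportional to $\partial_t V_a$: on $t<0$ one has $2(\xi+at)=\tfrac1a\partial_t(\xi+at)^2$, while on $t>0$ one has $2(\xi+t)=\partial_t(\xi+t)^2$. Integrating by parts (the boundary contributions at $\pm\infty$ vanish thanks to the decay of $f$) gives, for instance on $(-\infty,0)$,
\[\int_{-\infty}^0 2(\xi+at)f^2\,dt=\frac{\xi^2}{a}f(0)^2-\frac{2}{a}\int_{-\infty}^0(\xi+at)^2 ff'\,dt,\]
and analogously on $(0,+\infty)$. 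Now I substitute the eigenvalue equation $(\xi+at)^2 f=\mu_a(\xi)f+f''$ (resp.\ $(\xi+t)^2f=\mu_a(\xi)f+f''$) into the remaining integral, and use the elementary primitives $ff'=\tfrac12(f^2)'$ and $f''f'=\tfrac12((f')^2)'$. This produces only boundary terms at $t=0_\pm$: specifically $f(0)^2$ and $f'(0_\pm)^2$.

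\textbf{Step 3 (Using the matching conditions).} By definition of the de\,Gennes parameter in \eqref{eq:gamma_a}, $f'(0_+)=f'(0_-)=\gamma_a(\xi)f(0)$, so both boundary values $f'(0_\pm)^2$ equal $\gamma_a(\xi)^2 f(0)^2$. Collecting the computations, the contribution of $(-\infty,0)$ becomes
\[\int_{-\infty}^0 2(\xi+at)f^2\,dt=\frac{1}{a}\bigl(\xi^2-\mu_a(\xi)-\gamma_a(\xi)^2\bigr)f(0)^2,\]
and the contribution of $(0,+\infty)$ becomes
\[\int_0^{+\infty} 2(\xi+t)f^2\,dt=\bigl(-\xi^2+\mu_a(\xi)+\gamma_a(\xi)^2\bigr)f(0)^2.\]
Summing these two yields
\[\partial_\xi\mu_a(\xi)=\Bigl(\tfrac1a-1\Bigr)\bigl(\xi^2-\mu_a(\xi)-\gamma_a(\xi)^2\bigr)f(0)^2=\Bigl(1-\tfrac1a\Bigr)\bigl(\gamma_a(\xi)^2+\mu_a(\xi)-\xi^2\bigr)|f_{a,\xi}(0)|^2,\]
which is the desired identity.

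\textbf{Main obstacle.} The computation itself is a clean algebraic bookkeeping; the only subtle point is justifying Step~1, namely that the formal Feynman--Hellmann identity actually holds in this setting, since $f$ lives in a domain encoding the transmission condition $f'(0_+)=f'(0_-)$ and $f(0_+)=f(0_-)$. This is handled by the type (A) analyticity from Theorem~\ref{thm:regularity}, which guarantees that $\xi\mapsto f_{a,\xi}$ is $C^\infty$ with values in $\dom(\mathfrak h_a[\xi])$, and hence that $\partial_\xi f$ inherits those matching conditions, validating the integration by parts used in pairing $(\mathfrak h_a[\xi]-\mu_a(\xi))\partial_\xi f$ with $f$.
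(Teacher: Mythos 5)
Your proposal is correct and follows essentially the same route as the paper: the Feynman--Hellmann identity $\partial_\xi\mu_a(\xi)=2\int_{-\infty}^0(\xi+at)f^2\,dt+2\int_0^{+\infty}(\xi+t)f^2\,dt$, followed by integration by parts on each half-line using the eigenvalue equation and the matching condition $f'(0_+)=f'(0_-)=\gamma_a(\xi)f(0)$. Your Steps 2--3 simply make explicit the bookkeeping that the paper leaves to the reader, and the signs all check out.
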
 
\begin{proof}\textit{(Feynman-Hellmann)}.
	For simplicity, we write $\mu$, $f$ and $\mathfrak h$ respectively for $\mu_a(\xi)$, $f_{a,\xi}$, and $\mathfrak h_a[\xi]$\,. Differentiating with respect to $\xi$ and integrating by parts in
	\begin{equation}\label{eq:h}
(\mathfrak h-\mu)f=0.
	\end{equation}
we get 
\[\langle(\partial_\xi \mathfrak h-\partial_\xi \mu)f,f \rangle+ \langle (\mathfrak h-\mu)\partial_\xi f,f \rangle=0\,.  \]	
Hence using 
\[\langle (\mathfrak h-\mu)\partial_\xi f,f \rangle=\langle \partial_\xi f,(\mathfrak h-\mu)f \rangle=0\,,\]
and recalling that $f$ is normalized, we obtain
\begin{align}
\partial \mu_\xi&=\langle \partial_\xi\mathfrak hf,f \rangle \nonumber\\
                &=2\int_{-\infty}^0(\xi+at)f^2\,dt+ 2\int_0^{+\infty}(\xi+t)f^2\,dt\,. \label{eq:mu_xi}
\end{align}
Integrating by parts the right hand side of~\eqref{eq:mu_xi}, and using~\eqref{eq:h} together with the Neumann condition $\Big(f'(0_-)=f'(0_+)\Big)$ establish the result.
\end{proof}

\newcommand{\etalchar}[1]{$^{#1}$}

\end{document}